\numberwithin{equation}{section}
\def\cal{\mathcal}
\def\Bbb{\mathbb}
\def\C{{\Bbb C}}
\def\R{{\Bbb R}}
\def\Z{{\Bbb Z}}
\def\<{\left<}
\def\>{\right>}
\def\PSet{\mbox{\rm I\kern-.22em P}}
\def\S{{\cal S}}
\def\P{{\bf P}}
\def\p{{\bf p}}
\def\I{{\cal I}}
\def\F{{\cal F}}
\def\T{{\bf T}}
\def\({( \hspace{-0.335em}(}
\def\){) \hspace{-0.335em})}
\def\supp{{\rm supp\,}}
\def\fu2{\frac{n}{2}}
\def\l({\left(}
\def\r){\right)}
\def\be{\begin{enumerate}}
\def\ee{\end{enumerate}}
\def\size{{\rm size}}
\def \diam {{\rm diam}}
\def \dist {{\rm dist}}
\newtheorem{theorem}{Theorem}[section]
\newtheorem{definition}[theorem]{Definition}
\newtheorem{lemma}[theorem]{Lemma}
\newtheorem{corollary}[theorem]{Corollary}
\newtheorem{thm}{Theorem}[section]
\newtheorem{remark}[thm]{Remark}
\begin{document}

\title{Modulation invariant bilinear T(1) theorem}
\author[\'A. B\'enyi]{\'Arp\'ad B\'enyi}
\address{Department of Mathematics, Western Washington University, Bellingham, WA 98225}
\email{{\tt arpad.benyi@wwu.edu}}
\author[C. Demeter]{Ciprian Demeter}
\address{Department of Mathematics, UCLA, Los Angeles CA 90095-1555}
\email{demeter@math.ucla.edu}
\author[A.R. Nahmod]{Andrea R. Nahmod}
\address{Department of Mathematics, University of Massachusetts, Amherst, MA 01003}
\email{{\tt nahmod@math.umass.edu}}
\author[C.M.  Thiele]{Christoph M. Thiele}
\address{Department of Mathematics, UCLA, Los Angeles, CA 90095}
\email{{\tt thiele@math.ucla.edu}}
\author[R.H. Torres]{Rodolfo H. Torres}
\address{Department of Mathematics, University of Kansas, Lawrence, KS 66045}
\email{{\tt torres@math.ku.edu}}
\author[P. Villarroya]{Paco Villarroya}
\address{Department of Mathematics, UCLA, Los Angeles, CA 90095}
\email{{\tt pvilla@math.ucla.edu}}

\thanks{2000 {\em Mathematical Subject Classification:} 42B15, 42B20, 42A20}
\thanks{{\em Key words and phrases:} bilinear operator, trilinear form, modulation invariant, T(1) theorem}

\date{\today}

\begin{abstract}

We prove a  T(1) theorem for bilinear singular integral operators (trilinear forms)
with a one-dimensional modulation symmetry.

\end{abstract}

\maketitle

\section[]{Introduction}
The $T(1)$ Theorem is a criterion that gives necessary and
sufficient conditions for the $L^2$ boundedness of non-convolution
singular integral operators. It arose as a culmination of decade
long efforts to understand the Cauchy integral operator on a
Lipschitz graph and the related Calderón commutators. In the
original statement of the theorem, proved by G. David and J.L. Journé
(\cite{DJ}), the necessary and sufficient conditions are expressed by the requirement that some properly defined functions $T(1)$ and $T^*(1)$ belong to $\rm BMO$ - hence the name of the theorem - together with the so called weak
boundedness property. This latter condition requires  the $L^2$ bounds when
tested weakly on a restricted class of bump functions
$$
|\langle T(\varphi_{x,R}),\varphi_{y,R}\rangle |\lesssim R,
$$
where $\varphi_{x,R}(t)=\varphi(R^{-1}(t-x))$.

In (\cite{ST}), E. Stein reformulated the necessary and sufficient
conditions into what he called the restricted boundedness property. This amounts to the existence of $L^2$ bounds when strongly tested on
the same class of bump functions, that is
$$
\| T(\varphi_{x,R})\|_{2}\lesssim R^{1/2}
$$
and likewise for $T^*$. Both forms of the  $T(1)$ theorem 
will be used in this paper and further developed in a bilinear 
 version to study certain modulation invariant bilinear singular integrals.

A basic operator in the study of the Cauchy integral on a Lipschitz
graph is Calder\'on's first commutator. This operator can be written
as a superposition of bilinear singular integral operators of the
form
$$
T_{\alpha }(f_{1},f_{2})(x)={\rm p.v.}\int_{\mathbb R
}f_{1}(x-t)f_{2}(x-\alpha t)\frac{dt}{t}
$$
with parameter $\alpha \notin \{0,1\}$, called bilinear Hilbert
transforms. One of Calderón's early attempts to bound his commutator
was to show the boundedness of the bilinear Hilbert transforms from
$L^2\times L^\infty $ to $L^2$. However, he gave up on this approach
and proved bounds on the commutator by different means
\cite{calderon1}, \cite{calderon2}.

The bounds for the bilinear Hilbert transform conjectured by
Calderón remained an open problem for more than 30 years. M. Lacey and C. Thiele proved \cite{LT1},\cite{LT2}, that the bilinear Hilbert transforms
are bounded from $L^{p_1}\times L^{p_2}$ to $L^{p}$ for
$1<p_{1},p_{2}\leq \infty $, $p^{-1}=p_{1}^{-1}+p_{2}^{-1}$ and
$2/3<p<\infty $. Appropriate control on the growth of the constants
associated with these bounds as $\alpha$ approaches the forbidden values
$\{0,1\}$, was established in \cite{Th1}. This step was necessary in order to complete Calder\'on's program of estimating the commutator as superposition of bilinear Hilbert transforms. Thiele's results were strengthened to uniform bounds in some range of exponents by L. Grafakos and X. Li \cite{GL1}, \cite{GL2}.

The main feature that distinguishes the bilinear Hilbert Transform from its classical linear counterpart is the fact that the former has modulation invariance. A similar property is shared by Carleson's maximal operator, which controls convergence of the Fourier series, \cite{carleson}. The resolution of both problems resides in the representation of these operators in a wave packet frame that is itself invariant under modulation.

The natural question regarding bounds on more general bilinear singular
integral operators than the bilinear Hilbert transform, where the
kernel $1/t$ is replaced by  more general Calder\'on-Zygmund kernels was first
addressed by J. Gilbert and A. Nahmod (see \cite{GN1}, \cite{GN2}, \cite{GN3}).
They proved bounds for the class of kernels $K(t)$ which are
$x$-independent. Uniform bounds in $\alpha$ for these operators
were then shown by C. Muscalu, T. Tao and C. Thiele (\cite{MTT2}).

The purpose of the current article is to address the case of
kernels $K(x,t)$ that have both  $t$ and $x$ dependence; this corresponds to
the non-convolution case in the classical linear theory. The results we obtain, in particular Theorem \ref{main}, are  different
 in nature from the
bilinear or multilinear $T(1)$ theorems of M. Christ and J.L. Journ\'e \cite{CJ}
and L. Grafakos and R. Torres \cite{GT} since, as we shall soon describe, we treat operators associated with far more singular kernels. The main new feature that distinguishes the operators we analyze is that they have modulation invariance in a certain direction. We seek a theory for them analogous to the one involved in the classical $T(1)$ Theorem. We will  have to incorporate, however,  time-frequency techniques that reflect the modulation invariance of the bilinear operators treated. 

\bigskip

\noindent{\bf Acknowledgments:} C. Demeter,  A.R. Nahmod, C.M.  Thiele and R.H.
Torres were supported in part by NSF under grants DMS-0556389, DMS 0503542, DMS 0400879,
and DMS 0400423, respectively. P. Villarroya was supported  in part by grant MTM2005-08350-C03-03 and EX2004-0510.

\newpage

\section{The main theorem, applications, and road map of the proof}

\subsection{Modulation invariant bilinear $T(1)$ Theorem.\\}
\label{sec2.1}

We start with a few definitions and examples that will lead us into formulating the classical (linear) $T(1)$ Theorem in its dual version and its bilinear counterpart. Throughout the whole paper we will restrict our attention to the one dimensional case. 

\begin{definition}
A function $K:\R\times (\R \setminus \{0\})\to R$ is called a 
Calder\'on-Zygmund kernel if for some $0<\delta\le 1$ and some
constant $C_{K}$ we have
\begin{equation}
\label{e.e1Cal1}
|K(x,t)|\le C_{K} |t|^{-1}
\end{equation}
\begin{equation}
\label{e.e1Cal2}
|K(x,t)-K(x',t')|\le C_{K} \|(x,t)-(x',t')\|^\delta |t|^{-1-\delta}
\end{equation}
whenever $\|(x,t)-(x',t')\|\le |t|/2$, where $\|\cdot\|$ denotes the euclidian norm.
\end{definition}

\begin{definition}
A bilinear form $\Lambda$, defined on the product of Schwartz spaces
$$\Lambda:\S(\R)\times \S(\R)\to \C$$
is said to be associated with a standard Calder\'on-Zygmund kernel $K$
if for some $\beta=(\beta_1,\beta_2)$ and for all Schwartz functions $f_1,f_2\in {\cal S(\R)}$ whose supports are
disjoint\footnote{The requirement that the supports are disjoint is sufficient -due to \eqref{e.e1Cal1}- to guarantee the convergence of the integral; it is also necessary in general, as it is easily seen by working with $K(x,t)=|t|^{-1}$. }, we have
$$\Lambda(f_1,f_2)=\int_{\R^2} f_1(x+\beta_1 t)f_2(x+\beta_2 t) K(x,t)
\, dx \, dt.$$
If the form is continuous on ${\cal
S}(\R) \times {\cal S}(\R)$ then it will be referred to as a bilinear Calder\'on-Zygmund form.
\end{definition}

The above representation of $\Lambda$ is convenient for the
formulation of the trilinear forms that we will study. Note,
however, that the simple change of variables $x':=x+\beta_2t,
t':=x+\beta_1t$ gives the more classical representation
\begin{equation}
\label{SKREP}
\Lambda (f_1, f_2)=\int_{{\R}^2}f_1(t')f_2(x')\tilde K(x',
t')\,dt'dx'
\end{equation}
 where
$$\tilde K(x, t)=\frac{1}{|\beta_1-\beta_2|}K(\frac{\beta_1x-\beta_2t}{\beta_1-\beta_2},
\frac{t-x}{\beta_1-\beta_2})$$ 
satisfies
$$|\tilde K (x, t)|\leq C_K|x-t|^{-1}$$
and
$$|\tilde K(x, t)- \tilde K (x', t')|\le C_K(2+\|\beta\|)^{\delta}
\|(x,t)-(x',t')\|^{\delta}|x-t|^{-1-\delta}$$ whenever $c_\beta
\|(x,t)-(x',t')\| \leq |x-t|$ and $c_\beta=2(1+\|\beta\|)\geq 2$.
 
We will use the notation $\langle \cdot | \cdot \rangle$ to denote the pairing of a distribution with a test function, which we take to be linear in both entries.  We reserve the notation $\langle \cdot , \cdot \rangle$ to denote the usual Hilbert space inner product of $L^2$, conjugate linear in the second entry.
With this notation we can associate to $\Lambda$ the linear dual operators $T$ and $T^*$, continuous from ${\cal S} (\R)$ to ${\cal S'} (\R)$,
given by  $$\Lambda(f_1,f_2)= \langle T(f_1) |  f_2 \rangle = \langle T^*(f_2) |  f_1\rangle .$$
We see from \eqref{SKREP} that the Schwartz  kernel of $T$ restricted away from the diagonal of $\R^2$  agrees with  the function $\tilde K$, as usually stated for linear Calder\'on-Zygmund operators. When convenient in our computations, and without loss of
generality, we will often assume this more classical representation \eqref{SKREP} for $\Lambda$.

\begin{definition}
A trilinear form $\Lambda$ defined on ${\cal S}(\R)\times {\cal
S}(\R) \times {\cal S}(\R)$, is said to be associated with a standard
Calder\'on-Zygmund kernel $K$ if for some $\beta=(\beta_1,\beta_2,\beta_3)$ and for all functions
$f_1,f_2,f_3\in {\cal S(\R)}$ such that the intersection of the
three supports is empty, we have
\begin{equation}\label{interep}
\Lambda(f_1,f_2,f_3)=\int_{{\mathbb R}^2} \prod_{j=1}^3
f_j(x+\beta_j t) K(x,t) \, dx \, dt.
\end{equation}
If the form is continuous on ${\cal S}(\R)\times {\cal
S}(\R) \times {\cal S}(\R)$ then it will be referred to as a trilinear Calder\'on-Zygmund form.
\end{definition}

Now the trilinear form $\Lambda$ is associated to the bilinear dual operators given by
$$
\Lambda (f_1,f_2,f_3)= \langle T_3(f_1,f_2) |  f_3\rangle = \langle T_1(f_2,f_3) |  f_1\rangle =\langle T_2(f_1,f_3) |  f_2\rangle,
$$
but  unlike the bilinear case, $K$ is no longer the restriction of the Schwartz kernel of $T_3$.
In the sequel, we shall assume that $\beta_1,\beta_2,\beta_3$ are pairwise
different, as otherwise the trilinear form reduces to a combination
of a pointwise product and bilinear form.  If needed, by a simple change of variables and 
appropriately modifying the constants involved in the definition of a Calder\'on-Zygmund kernel, 
we can assume $\beta$ to be of unit length and perpendicular to $\alpha =(1,1,1)$. 
Let $\gamma$ be a unit vector 
perpendicular to $\alpha$ and $\beta$, the sign of $\gamma$ being of
no importance. The condition that no two components of $\beta$ are
equal is equivalent to no component of $\gamma$ being zero. The
integral representing  $\Lambda$ 
 for functions with disjoint supports satisfies the {\it modulation symmetry along the direction of $\gamma $}:
\begin{equation}\label{modulationinv}
\Lambda(f_1,f_2,f_3)=\Lambda (M_{\gamma_1\xi } f_1, M_{\gamma_2\xi }f_2, M_{\gamma_3\xi } f_3)
\end{equation}
for all $\xi \in \mathbb R$. Here modulation is defined as $M_\eta
f(x)= e^{2\pi i \eta x} f(x)$. Note, however, that the kernel representation does
not guarantee the modulation invariance (\ref{modulationinv}) for
arbitrary triples of Schwartz functions $f_1,f_2,f_3$. 

\begin{definition}
A trilinear Calder\'on-Zygmund  form $\Lambda$ associated with a standard
kernel $K$  for some $\beta$ is said to have
modulation symmetry in the direction $\gamma$,  with $\gamma$ of unit length and perpendicular to the
plane generated by $\beta$ and $\alpha$, if  \eqref{modulationinv}
is satisfied for all functions
$f_1,f_2,f_3\in {\cal S(\R)}$.
\end{definition}

Let us look at typical examples of these operators given in pseudodifferential form.
Consider again the bilinear Hilbert transform
$$T(f_1, f_2)(x)= {\rm p.v.}  \int_{\mathbb R}f_1(x-t)f_2(x+t)\frac{dt}{t},$$
or equivalently
$$T(f_1, f_2)(x)=\int_{{\mathbb R}^2} {\rm sign} (\xi - \eta) \widehat f_1(\xi)\widehat
f_2(\eta)e^{2\pi ix\cdot (\xi+\eta)}\,d\xi d\eta.$$
More generally, one can consider operators of the form
$$T(f_1, f_2)(x)=\int_{{\mathbb R}^2}m(\xi - \eta)\widehat f_1(\xi)\widehat
f_2(\eta)e^{2\pi ix\cdot (\xi+\eta)}\,d\xi d\eta,$$ where $m$ is a
 multiplier satisfying the classical conditions
$$ |m^{(n)}(u)|\leq C |u|^{-n},\;n\le N.$$
Undoing the Fourier transforms of $f_1$, $f_2$, one arrives to the kernel
representation of $T$, namely
$$T(f_1, f_2)(x)= {\rm p.v.}  \int_{\mathbb R}f_1(x-t)f_2(x+t) K(t) \, dt,$$
where $K$ is a classical CZ kernel of convolution type and $\widehat
K (u) = m(u)$. These bilinear operators fall under the scope of the
more general boundedness results stated in \cite{GN1} and
\cite{MTT1}.

To introduce
$x$-dependent kernels consider now bilinear operators of the form
$$T(f_1, f_2)(x)=\int_{{\mathbb R}^2} \sigma (x, \xi - \eta)\widehat
f_1(\xi)\widehat f_2(\eta)e^{2\pi ix(\xi+\eta)}d\xi d\eta,$$
where
$\sigma (x, u)$ is a symbol in the H\"ormander class $S^0_{1,0}$, so that
$$|\partial^\mu_x\partial_{\xi,\eta}^\alpha \sigma (x, \xi -\eta)|\leq C_\alpha (1+|\xi -\eta|)^{-|\alpha|}.$$
Then, undoing again the Fourier transforms,  we arrive to the following integral representation of $T$
valid at least for functions with disjoint support:
$$T(f_1, f_2)(x)=\int_{{\mathbb R}^2}  K (x, x-y) \delta (z-2x+y) f_1(y)f_2(z)\,dydz$$
$$= \int_{\mathbb R}  K (x, t) f_1(x-t)f_2(x+t)\,dt,$$
where $K(x,x-y)=({\cal F}^{-1} \sigma)(x,x-y)$ and the inverse Fourier transform  is taken in the second variable.
It is well-known that such a $K$ is a  Calder\'on-Zygmund kernel (and with $\delta = 1$).
These bilinear operators give rise then to trilinear forms of the type (\ref{interep}) with
$\beta = (-1,1, 0)$.

In the previous example, the Schwartz  kernel of the bilinear operator $T$ is 
given by  $k(x,y,z)= K (x, x-y) \delta(z-2x+y)$ and hence it is too singular to
fall under the scope of other multilinear $T(1)$ theorems in \cite{CJ} and  \cite{GT}, which 
essentially apply to pseudodifferential operators of the form
$$T(f_1, f_2)(x)=\int_{{\mathbb R}^2} \sigma (x, \xi,\eta)\widehat
f_1(\xi)\widehat f_2(\eta)e^{2\pi ix(\xi+\eta)}d\xi d\eta,$$
where
$ \sigma (x, \xi,\eta)$ satisfies the classical Coifman-Meyer estimates
$$|\partial^\mu_x\partial_{\xi,\eta}^\alpha \sigma (x, \xi,\eta)|\leq C_\alpha (1+|\xi|+|\eta|)^{-|\alpha|},$$
so that the  (restricted) Schwartz kernels satisfy
$$|\partial^\alpha k(x,y,z)| \leq C_\alpha  (|x-y|+|y-z|+|z-x|)^{-(|\alpha|+1)}.$$

\begin{definition}
An $L^p$- normalized bump function $\phi:\R^m\to\C$ is said to be $C$-adapted of order $N$ to a box $I:=I_1\times\ldots\times I_m$ if 
$$|\partial^{\alpha}\phi(x)|\le C\prod_{m'=1}^{m}|I_{m'}|^{-1/p-\alpha_{m'}}\chi_{I}^N(x),$$ 
for each $0\leq  |\alpha| \le N$.
\end{definition}
We will use the notation 
$$\chi_I(x)=\left(1+\left\|\left(\frac{x_1-c(I_1)}{|I_1|},\ldots,\frac{x_m-c(I_m)}{|I_m|}\right)\right\|^2\right)^{-1/2},$$
with $c(I)$ denoting the center of the interval $I$. Often times we will simply call a function $L^p$- adapted to $I$ (or $L^p$- adapted to $I$ of some order $N_0$), if it is $L^p$- normalized and $C_N$- adapted of each order $N$ (or of order $N_0$),  for some $C_N$  whose value will not be specified. 
When no $L^p$ normalization will be mentioned for a bump, it will be implicitly understood that the normalization is taken in $L^2$.

The implicit bounds hidden in the notation $a\lesssim b$ that we shall use, will be allowed to depend on the constants of adaptation and on fixed parameters like $\gamma$, $\delta$, $\alpha$ or $C_{K}$. The notation $A\approx B$ will mean that $A\lesssim B$ and $B\lesssim A$.

Before we state our main result we recall the classical $T(1)$ theorem in a form useful for our purposes.

\begin{theorem}[Linear $T(1)$ theorem]\label{classicalt1}
Assume $\Lambda$ is a Calder\'on-Zygmund form on ${\cal
S}(\R)\times {\cal S}(\R)$. Then $\Lambda$ extends to a bounded
bilinear form on $L^2(\R)\times L^2(\R)$ if and only if there exists
an $N$ such that
\begin{equation}\label{linearrestricted}
|\Lambda(\phi_I,f)|\lesssim\|f\|_2
\end{equation}
$$|\Lambda(f, \phi_I)|\lesssim \|f\|_2$$
for any interval $I$, any $L^2$- adapted bump function $\phi_I$ of order
$N$ which is supported in $I$ and any Schwartz function $f$.
If these equivalent conditions are satisfied, then the bilinear form
extends also to a bounded form on $L^p(\R)\times L^{p'}(\R)$ for
$1<p<\infty$ and $1/p+1/p'=1$.
\end{theorem}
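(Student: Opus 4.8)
The forward implication is immediate: if $\Lambda$ extends to a bounded bilinear form on $L^2(\R)\times L^2(\R)$, then, since an $L^2$-adapted bump $\phi_I$ has $\|\phi_I\|_2\lesssim 1$, boundedness of $\Lambda$ gives $|\Lambda(\phi_I,f)|\lesssim\|\phi_I\|_2\|f\|_2\lesssim\|f\|_2$, and likewise with the two arguments exchanged, so the testing conditions hold for every $N$. The content is the converse, and the plan is to deduce from \eqref{linearrestricted} the David--Journ\'e hypotheses and then run the classical machinery. Writing $\Lambda(\phi_I,f)=\langle T(\phi_I)\,|\,f\rangle$ and taking the supremum over Schwartz $f$ with $\|f\|_2\le 1$, the density of $\S(\R)$ in $L^2(\R)$ shows that \eqref{linearrestricted} is equivalent to the assertion that $T(\phi_I)$ and $T^{*}(\phi_I)$ lie in $L^2(\R)$ with $\|T(\phi_I)\|_2\lesssim 1$ and $\|T^{*}(\phi_I)\|_2\lesssim 1$, uniformly over all intervals $I$ and all $L^2$-adapted bumps $\phi_I$ of order $N$ supported in $I$; this is Stein's restricted boundedness formulation \cite{ST}. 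The weak boundedness property is then immediate from Cauchy--Schwarz: for $L^2$-normalized bumps $\varphi,\psi$ adapted to one and the same interval, $|\langle T(\varphi)\,|\,\psi\rangle|\le\|T(\varphi)\|_2\,\|\psi\|_2\lesssim 1$.

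The crucial step is to show that $T(1)\in\BMO$ and, symmetrically, $T^{*}(1)\in\BMO$. Here $T(1)$ is defined modulo constants by its action on mean-zero test functions, via the classical kernel representation \eqref{SKREP}. Fix an interval $Q$ and an $L^2$-normalized $H^1$-atom $a$ on $Q$ (so $\supp a\subset Q$, $\int a=0$, and $\|a\|_2\lesssim|Q|^{-1/2}$), and split $1=\eta+(1-\eta)$, where $\eta$ is a smooth cutoff that equals $1$ on a fixed dilate $c_0Q$ and is supported in $2c_0Q$. The near piece $\eta$ is a fixed multiple of $|Q|^{1/2}$ times an $L^2$-adapted bump supported in $2c_0Q$, so the restricted boundedness gives $\|T(\eta)\|_2\lesssim|Q|^{1/2}$ and hence $|\langle T(\eta)\,|\,a\rangle|\le\|T(\eta)\|_2\|a\|_2\lesssim 1$. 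For the far piece, $1-\eta$ vanishes on $c_0Q$; using $\int a=0$ to subtract $\tilde K(c(Q),t)$ from the kernel and then the regularity estimate \eqref{e.e1Cal2}, the defining integral converges absolutely and is $\lesssim\|a\|_1\lesssim|Q|^{1/2}\|a\|_2\lesssim 1$. Thus $|\langle T(1),a\rangle|\lesssim 1$ for every such atom, i.e. $\|T(1)\|_{\BMO}\lesssim 1$; the bound for $T^{*}(1)$ follows identically with the two arguments of $\Lambda$ interchanged.

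With the weak boundedness property and $T(1),T^{*}(1)\in\BMO$ in hand, the $L^2$ bound is exactly the David--Journ\'e theorem \cite{DJ}: one subtracts the paraproducts $\Pi_{T(1)}$ and $\Pi^{*}_{T^{*}(1)}$, which are themselves $L^2$-bounded Calder\'on--Zygmund operators, to reduce to the case $T(1)=T^{*}(1)=0$, and then an almost-orthogonality estimate of the shape $\|Q_jTQ_k\|_{2\to 2}\lesssim 2^{-\varepsilon|j-k|}$ for a Littlewood--Paley decomposition $\{Q_j\}$ --- which uses the kernel bounds \eqref{e.e1Cal1}--\eqref{e.e1Cal2}, the weak boundedness property, and the two cancellation conditions --- together with Cotlar's lemma yields $\|T\|_{2\to 2}\lesssim 1$. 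Finally, $L^p\times L^{p'}$ boundedness is standard Calder\'on--Zygmund theory: an operator with kernel obeying \eqref{e.e1Cal1}--\eqref{e.e1Cal2} that is bounded on $L^2(\R)$ is of weak type $(1,1)$, hence bounded on $L^p(\R)$ for $1<p\le 2$ by interpolation, while $2\le p<\infty$ follows by duality from the same fact applied to $T^{*}$; consequently $|\Lambda(f_1,f_2)|=|\langle T(f_1)\,|\,f_2\rangle|\le\|T(f_1)\|_p\|f_2\|_{p'}\lesssim\|f_1\|_p\|f_2\|_{p'}$.

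The main obstacle I anticipate is the second step --- extracting a genuine $\BMO$ bound for $T(1)$ (and $T^{*}(1)$) from the dual testing hypothesis \eqref{linearrestricted} --- where the care lies in the near/far bookkeeping and in applying the restricted boundedness only to honest bumps supported in an interval. Once the David--Journ\'e hypotheses have been verified, the almost-orthogonality estimate and the passage to $L^p$ are entirely routine.
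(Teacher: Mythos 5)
Your proposal is correct and follows exactly the classical route that the paper itself relies on for this statement: the paper does not reprove the linear $T(1)$ theorem but cites \cite{DJ} and \cite{ST}, and your reduction of Stein's restricted boundedness hypothesis to the David--Journ\'e hypotheses (weak boundedness via Cauchy--Schwarz, plus $T(1),T^{*}(1)\in\BMO$ via the near/far splitting against atoms), followed by the standard paraproduct subtraction, Cotlar almost-orthogonality, and Calder\'on--Zygmund theory for the $L^p\times L^{p'}$ bounds, is precisely the equivalence of formulations alluded to in the introduction and in Remark \ref{equivTcondbmo}. No gaps beyond routine bookkeeping (e.g.\ defining $T(1)$ modulo constants independently of the cutoff), which you acknowledge.
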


The condition (\ref{linearrestricted}) and its symmetric form are
called the restricted boundedness conditions. In Lemma
\ref{compactclassical} we will see a slightly stronger result,
namely that it is sufficient to test the restricted boundedness
condition only for those $f$ supported in $I$.

\begin{remark}
\label{equivTcondbmo}
There are a few other equivalent formulations of the $T(1)$ theorem.  We just recall, as mentioned in the introduction, that the boundedness of $T$ is also equivalent to  the weak boudedness property (that is $\Lambda(\varphi_I,\varphi_I)\lesssim 1$ for all $\varphi_I$ which are $L^2$- adapted to $I$) together with the fact that some appropriately defined functions $\Lambda(1,.),\Lambda(.,1)$ are in BMO. 
\end{remark}

We now state our new result for trilinear forms.

\begin{theorem}[Main theorem]\label{main}

Assume $\Lambda$ is a trilinear  Calder\'on-Zygmund form on ${\cal
S}(\R)\times {\cal S}(\R)\times {\cal S}(\R)$ associated with a  kernel $K$
with parameter $\delta $, and with modulation
symmetry \eqref{modulationinv} in the direction of $\gamma$ (with
$\gamma_i\neq 0$).

Then $\Lambda$ extends to a bounded form
$$|\Lambda(f_1,f_2,f_3)|\lesssim \prod_{j=1}^3 \|f_j\|_{p_j}$$
for all exponents $2\le p_1,p_2,p_3\le \infty$ with
$$\frac 1{p_1}+\frac 1{p_2}+\frac 1{p_3}=1$$
if and only if there is an\footnote{As in the case of the classical $T(1)$ Theorem, the value of $N$ is not important. Once the theorem holds for some $N$, it also holds for any larger value. This observation will be used repeatedly throughout the argument.} 
$N$ such that the following three
estimates hold for all intervals $I$, all $L^2$- adapted functions
$\phi_I$ and $\psi_I$ of order $N$ which are also  supported in $I$,
and all Schwartz functions $f$
\begin{equation}\label{restrictedbounded}
|\Lambda(\phi_I,\psi_I,f)|\lesssim |I|^{-1/2}\|f\|_2,
\end{equation}
$$|\Lambda(\phi_I,f, \psi_I)|\lesssim |I|^{-1/2}\|f\|_2,$$
$$|\Lambda(f, \phi_I,\psi_I)|\lesssim |I|^{-1/2}\|f\|_2.$$

\vskip 10pt

Moreover, if these equivalent conditions hold, then for
$\sum_j\alpha_j=1$ the following holds: If $0\le \alpha_i
<\min(1/2+\delta,1)$ for $1\le i\le 3$, then
$$|\Lambda(f_1,f_2,f_3)|\lesssim \prod_{i=1}^3 \|f_i\|_{1/\alpha_i}\ .$$
If $\max(-\delta ,-1/2)<\alpha_j<0$ for only one index $j$ and $0\le
\alpha_i <\min(1/2+\delta,1)$ for the other two indices, then the
dual operator $T_j$  satisfies
$$\|T_j((f_i)_{i\neq j})\|_{1/(1-\alpha_j)}\lesssim \prod_{i\neq j}\|f_i\|_{1/\alpha_i}.$$
\end{theorem}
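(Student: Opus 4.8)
The plan is to prove Theorem~\ref{main} by reducing it to a discretized model sum and then applying time-frequency methods analogous to the Lacey--Thiele analysis of the bilinear Hilbert transform. First I would establish the easy direction: if $\Lambda$ is bounded on the stated product of $L^{p_j}$ spaces, then the restricted boundedness conditions \eqref{restrictedbounded} follow immediately, since an $L^2$-adapted bump $\phi_I$ supported in $I$ has $\|\phi_I\|_{p}\lesssim |I|^{1/p-1/2}$ for $p\ge 2$, and choosing the exponents appropriately gives the factor $|I|^{-1/2}$. The substance is the converse. Here the strategy is to decompose $\Lambda$ into a smooth part, handled by a bilinear Coifman--Meyer / Calder\'on--Zygmund argument (no modulation issues, since the kernel is a genuine CZ kernel away from the diagonal), plus a ``modulation-invariant'' part that must be treated by wave packet techniques. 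The reduction to a model sum proceeds by writing the kernel, after the change of variables \eqref{SKREP}, in terms of a continuous family of wave packets adapted to tri-tiles in phase space; averaging over dilations and translations and discretizing the frequency parameter in the direction of $\gamma$ reduces matters to a sum over a collection of tri-tiles $\sum_{P} |I_P|^{-1/2}\langle f_1,\phi_{P,1}\rangle\langle f_2,\phi_{P,2}\rangle\langle f_3,\phi_{P,3}\rangle$, where the coefficients are controlled by the restricted boundedness hypothesis tested against the wave packets themselves.

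Next I would run the tree/size/energy machinery on this model sum. One defines, for each tree $T$ of tri-tiles with common top, the ``size'' $\mathrm{size}_j(T)$ measuring the normalized $L^2$ mass of the wave packet coefficients of $f_j$ over $T$, and the global ``energy'' $\mathrm{energy}_j$ as a supremum of such sizes over disjoint families of trees. The two key lemmas are the \emph{single tree estimate}, bounding the contribution of one tree by $|I_T|\prod_j \mathrm{size}_j(T)$ (this is where the restricted boundedness hypothesis \eqref{restrictedbounded} enters crucially, replacing the explicit Fourier support disjointness available for the bilinear Hilbert transform), and the \emph{tree selection / Bessel-type argument}, which organizes all tri-tiles into a bounded number of forests so that $\sum_T |I_T| \lesssim 2^{-2n}\min_j(\text{global energies})^{?}$ at each scale $n$ of the sizes. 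Summing the resulting geometric series over the dyadic scales of the three sizes, and interpolating, yields the full range $2\le p_1,p_2,p_3\le\infty$ with $\sum 1/p_j=1$. The extension to $\sum\alpha_j=1$ with $\alpha_j$ possibly slightly negative and below $\min(1/2+\delta,1)$ comes from a finer accounting: the CZ regularity parameter $\delta$ buys extra decay in the off-diagonal wave packet interactions, which improves the exponent range beyond the naive $L^2$ duality; the dual operator bound $\|T_j((f_i)_{i\neq j})\|_{1/(1-\alpha_j)}$ is then obtained by the standard duality between the model sum and its $j$-th dual form.

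The main obstacle, and the heart of the paper, will be the single tree estimate in the absence of exact modulation invariance of the individual wave packets. For the bilinear Hilbert transform the wave packets have literally disjoint or nested Fourier supports, so on a single tree one gets orthogonality for free; here $\Lambda$ is only \emph{assumed} modulation invariant as a trilinear form, and the kernel piece that carries this symmetry must be expanded and re-synthesized so that testing against a tree's wave packets is legitimately controlled by \eqref{restrictedbounded}. Concretely, the difficulty is to show that $\Lambda$ applied to a ``lacunary'' wave packet in two slots and an ``overlapping'' wave packet in the third is governed by the restricted bound with the correct $|I|^{-1/2}$ normalization, uniformly over the tree, after summing the non-convolution error terms coming from \eqref{e.e1Cal2}. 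I expect this to require a careful multi-scale paraproduct-type decomposition of the kernel, separating the diagonal behavior (where CZ cancellation is used) from the modulation-carrying far-diagonal behavior (where the hypothesis is used), together with an induction on scales to absorb the tails $\chi_I^N$ of the adapted bumps. Once this local estimate is in place, the global tree-selection argument is essentially the standard one and should go through with only technical modifications.
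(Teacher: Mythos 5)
Your overall flavor (discretize to a wave-packet model, run a tree/size selection argument, interpolate) matches the paper, but two ideas that carry the actual proof are missing, and their absence breaks the plan. First, you never reduce to the special cancellation case $T_j(1,1)=0$. You propose to control the model coefficients ``by the restricted boundedness hypothesis tested against the wave packets themselves,'' but restricted boundedness alone only yields bounds of size $|I|^{-1/2}$ with \emph{no decay in the spatial separation} of the three bumps; the crucial factor $(1+2^{-k}\,\diam(I_1,I_2,I_3))^{-1-\delta'}$ in \eqref{concloflemma} comes from Lemma~\ref{threebumplemma}, which rests on Lemma~\ref{waveletstomolecules} and genuinely requires the cancellation $T(1)=0$ (just as $T\phi_J$ is a decaying molecule only when $T(1)=0$ in the linear theory). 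Without that decay the sums over translations and over the diameter parameter diverge, and no single-tree estimate can rescue this. The paper's route is the T(1)-theorem structure you omitted: give rigorous meaning to $T_j(1,1)$ as BMO elements (Lemmas~\ref{definebmo}--\ref{uniformBMO}), construct three \emph{modulation invariant paraproducts} realizing prescribed $T_j(1,1)=b$ with the other two zero, prove these are bounded (a separate BHT-type argument using a Carleson condition, Section~\ref{sec7}), and subtract them. This reduction is not a technicality you can absorb into a tree lemma; it is where the third BMO condition, which distinguishes bounded trilinear from bounded bilinear forms, enters.

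Second, your model is the wrong one for $x$-dependent kernels. Discretizing only ``the frequency parameter in the direction of $\gamma$'' reproduces the one-parameter tri-tile family of the bilinear Hilbert transform, which is adequate precisely when $K$ is $x$-independent, since then $\widehat\Lambda$ is supported on $\alpha^\perp$ and singular only along $\langle\gamma\rangle$. Here the frequency representation \eqref{frequencyside} is singular on the union of three planes $S=\bigcup_j P_j$ through $\langle\gamma\rangle$, and the paper must perform a Whitney decomposition of $\R^3\setminus S$ into a \emph{two-parameter} family of tubes, producing multi-tiles sitting inside multi-rectangles with an eccentricity/area parameter $A$. This changes the combinatorics in an essential way: the rank properties are weaker (e.g.\ $\omega_{p_3}$ determines $\omega_{p_j}$ only within $O(2^\kappa)$ choices), the Bessel-type $TT^*$ inequalities and the sizes pick up powers of $A$, and Theorem~\ref{mainmodels} must produce bounds $O(A^{\delta''})$ that are then offset by the kernel-regularity gain $A^{-\delta'}$ from \eqref{rpmodel} — this interplay, not ``extra decay in off-diagonal wave packet interactions,'' is also what produces the $\delta$-dependent exponent range $\max(-\delta,-1/2)<\alpha_j<\min(1/2+\delta,1)$. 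Your proposed split of $\Lambda$ into a ``smooth'' Coifman--Meyer part plus a ``modulation-invariant'' part has no counterpart in (and no clear meaning for) a form that is globally modulation invariant, and it does not substitute for either of these two missing ingredients.
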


To summarize, there is an a priori estimate for the form $\Lambda$
and a tuple $\alpha$ of reciprocals of exponents provided that
$\sum_j\alpha_j=1$ and $\max(-\delta
,-1/2)<\alpha_j<\min(1/2+\delta,1)$ for all $j$. Interestingly, the range of exponents $\alpha$ for which the theorem guarantees boundedness is the same for each $\delta\in[\frac12,1]$, while the range shrinks for $\delta<\frac12$, as $\delta$ approaches 0. We do not know if this range is optimal. Note also that for $\delta\ge 1/2$ in Theorem \ref{main} we recover the same range in which the bilinear Hilbert transform is known to be bounded. 
\\

It is worthwhile noting that the necessity of condition
(\ref{restrictedbounded}) and its symmetric counterparts is clear,
as such conditions follow from the claimed estimates applied to special test
functions. We call these conditions the {\it (trilinear) restricted boundedness
conditions}. One can see that it is also enough to  test the conditions  for $C^\infty$ functions $f$ supported
in an interval containing $I$ of length $C|I|$, where $C>0$
 is a
universal constant. See Lemma~\ref{equivalentrestricted} below for more details.

Note  also that, formally, if $\Lambda$ is a
trilinear Calder\'on-Zygmund form satisfying the conditions of the main theorem, then
$$\Lambda(f_1,f_2,1)$$ is a bilinear Calder\'on-Zygmund form satisfying the conditions of the
classical $T(1)$ Theorem. We will make this reduction to bilinear forms
rigorous in the next section. However, not every bounded bilinear
Calder\'on-Zygmund form can be obtained this way, since for the
trilinear form to be bounded more conditions need to be satisfied.
We will see concrete examples in Section~\ref{sec7}.

\subsection{ An application of the main theorem.\\}

Of course, the relevance of  Theorem \ref{main} is that it applies to operators with $x$-dependent kernels. We present one application to the bilinear pseudodifferential operators mentioned before.\\

  Consider again the trilinear form
$$\Lambda (f_1,f_2,f_3) = \int_{\R} T_3(f_1,f_2)(x) f_3(x)\,dx$$
$$=\int_{{\R}^3}  \sigma (x, \xi - \eta)\widehat
f_1(\xi)\widehat f_2(\eta) f_3(x) e^{2\pi ix(\xi+\eta)}d\xi d\eta dx$$
with $\sigma$ in $S^0_{1,0}$. Note that this form has modulation symmetry in the
direction $\gamma = (1,1,-2)/\sqrt{6}$ for all triples $f_1$, $f_2$, $f_3$, not just the ones with
disjoint supports.  To check the first of the restricted bounded conditions  we may
assume $f$ is supported in $CI$ and compute
$$|\Lambda (\phi_I, \psi_I, f)| \lesssim \|\widehat \phi_I \|_{L^1} \|\widehat \psi_I \|_{L^1}
\|f \|_{L^1} \lesssim \|\widehat \phi_I \|_{L^1} \|\widehat \psi_I \|_{L^1} |I|^{1/2}\|f \|_{L^2}$$
$$\lesssim |I|^{-1/2} \|f \|_{L^2}.$$
Here, we used that $\widehat \phi_I$ and $\widehat \psi_I$ are $L^2$-normalized and adapted to intervals of length $|I|^{-1}$. More precisely, $\phi_I$ can be written as
$\phi_I (x)= |I|^{-1/2} \phi_0((x-x_0)/|I|)$ where $\phi_0$ is adapted to and supported in the unit
interval centered at the origin. It follows easily now that
$$\|\widehat \phi_I \|_{L^1}=|I|^{-1/2} \|\widehat \phi_0 \|_{L^1}\leq C |I|^{-1/2}$$
where $C$ depends only on finitely many derivatives of $\phi_0$.  The same estimate applies to  $\psi_I$. To
obtain the other restricted boundedness conditions, write
$$\Lambda (\phi_I, f, \psi_I) = \int_{\R} T_2(\phi_I,\psi_I)(x) f(x) \, dx$$
and
$$\Lambda ( f, \phi_I, \psi_I) =\int_{\R} T_1(\phi_I,\psi_I)(x) f(x) \, dx.$$
It was proved by \'A. B\'enyi, A. Nahmod and R. Torres \cite{BNT}, that $T_1$ and $T_2$  can be computed
from $\sigma$ and they admit pseudodifferential representations of the form\footnote{\, $T_1(f,g)=T^{*1}(g,f)$ and $T_2(f,g)=T^{*2}(f,g)$ in the notation of \cite{BNT}.}
$$T_2(f,g)(x)=\int_{{\R}^2}  \sigma_2 (x, \xi,\eta)\widehat
f(\xi)\widehat g(\eta)  e^{2\pi ix(\xi+\eta)}d\xi d\eta$$
and
$$T_1(f,g)(x)=\int_{{\R}^2}  \sigma_1 (x, \xi,\eta)\widehat
g(\xi)\widehat f(\eta)  e^{2\pi ix(\xi+\eta)}d\xi d\eta$$
where $ \sigma_2 (x, \xi,\eta)$  and $ \sigma_1(x, \xi,\eta)$  satisfy
$$|\partial^\mu_x\partial_{\xi,\eta}^\alpha \sigma_2 (x, \xi,\eta)|\lesssim (1+|2\xi+\eta|)^{-|\alpha|}$$
and
$$|\partial^\mu_x\partial_{\xi,\eta}^\alpha \sigma_1 (x, \xi,\eta)|\lesssim (1+|\xi+2\eta|)^{-|\alpha|}.$$
The computations done with $T_3$ can now be repeated with $T_2$ and $T_1$. It follows that $\Lambda$ and the $T_j$
have the boundedness properties of Theorem~\ref{main} with $\delta=1$.

Similar examples of forms  can be obtained by starting with a bilinear operator $T_3$
given by a symbol of the form $\sigma_\theta(x,\xi,\eta)=\sigma(x,\eta- \xi \tan \theta)$  for
$\sigma$ in $S^0_{1,0}$ and $\theta \not= -\pi/4,, 0, \pi/2$ (we make the convention $\sigma_{\pi/2}(x,\xi,\eta)=\sigma(x,\xi)$).
In the three forbidden cases when $\theta= -\pi/4,, 0, \pi/2$ the trilinear forms correspond again to a combination of
a pointwise product and a bilinear form.  See \cite{BNT} for more details.\\

\subsection{Plan of the proof.\\}

The rest of this article is structured as follows. In Section \ref{preliminaries},
after some basic reductions, we present some equivalent
formulations of the main theorem. In particular, in analogy to the
classical  $T(1)$ theorem, we give a meaning to the functions
$T_j(1,1)$ for $j=1,2,3$ (see Lemma~\ref{uniformBMO}  below) and show that the
restricted boundedness conditions imply that these functions are in
$\rm BMO$. We also observe that the restricted boundedness conditions
imply a certain weaker  one (\ref{weakbounded}), which together with the conditions
$T_j(1,1) \in$ BMO is all what will be used  to prove the main theorem. Hence 
this set of conditions is also necessary and sufficient to obtain the bounds on the trilinear form.

In Section~\ref{canccond1} we establish some bounds on the action of  trilinear  Calde\'on-Zygmund forms satisfying the alluded weak continuity \eqref{weakbounded} and some special cancellation conditions on bumps functions.
These are almost orthogonality type conditions.
The proof of the Theorem \ref{main} then splits into two steps.
First, one proves the theorem under the special cancellation
condition that $T_j(1,1)=0$ for all $j$.  This step is done in
Sections \ref{canccond} and \ref{sec6}. In the former the
problem is reduced to a time-frequency model form, which
 is  then estimated  in the latter.

The second step in the proof of the theorem is to construct  for each
given $\rm BMO$ function $b$, forms
$\Lambda_j$, $j=1,2,3$, which are associated to Calder\'on-Zygmund kernels, have  a given modulation symmetry, satisfy the bounds of the theorem, and are such that the corresponding
dual operators satisfy
$T_j(1,1)=b$ and 
$T_i(1,1)=0$ for $i \neq j$
By analogy again with the classical T(1) Theorem, we call these special forms modulation invariant  paraproducts. 
Then, Theorem \ref{main} can be always reduced to the case with special cancellation by
subtracting from the original form $\Lambda$ three  paraproducts with the same modulation symmetry. The paraproducts are discussed in Section \ref{sec7}.

\section{Alternative formulations and the role of $\rm BMO$}\label{preliminaries} 

We begin this discussion by a lemma
that implies the strengthening of the classical Theorem~\ref{classicalt1} 
that was mentioned after the statement of the theorem. This will be used in the proof of Lemma \ref{equivalentrestricted}.

From now on, for each box $I$ in $\R^n$ and each $R>0$, $RI$ will denote the box with the same center as $I$ and sidelengths $R$ times larger than those of $I$.

\begin{lemma}\label{compactclassical}
Assume $\Lambda$ is a continuous bilinear form on ${\cal
S}(\R)\times {\cal S}(\R)$ that is associated with a standard
Calder\'on-Zygmund kernel $K$. Assume there is an $N$ such that the
restricted boundedness condition
$$|\Lambda(\phi_I,f)|\le C_0\|f\|_2$$
holds for all bump functions $\phi_I$ of order $N$ adapted to and
supported in $I$ and all Schwartz functions $f$ supported in $I$.
Then, 
$$|\Lambda(\phi_I,f)|\le C\|f\|_2$$
 (with a possibly larger constant $C$) also  holds
for all bump functions $\phi_I$ of order $N$ adapted to and
supported in $I$ and all Schwartz functions $f$ (not necessarily supported in $I$).
\end{lemma}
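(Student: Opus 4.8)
The plan is to split $f$ into a part near $I$ and a part far from $I$, handling the first by the hypothesis applied on a slightly dilated interval and the second by the size bound on the kernel. Fix an interval $I$, a bump $\phi_I$ of order $N$ adapted to and supported in $I$, and a Schwartz function $f$. Fix a smooth function $\eta$ with $0\le \eta\le 1$, $\eta\equiv 1$ on $2I$ and $\supp\eta\subset 3I$, and write $f=g+h$ with $g:=f\eta$ and $h:=f(1-\eta)$. Then $g$ is Schwartz and supported in $3I$, $h$ is Schwartz and vanishes on $2I$, and it suffices to bound $|\Lambda(\phi_I,g)|$ and $|\Lambda(\phi_I,h)|$ separately by a constant times $\|f\|_2$.

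For the near term I would invoke the assumed restricted bound, but with the interval $3I$ in place of $I$: a bump of order $N$ adapted to and supported in $I$ is, after enlarging the adaptation constant while keeping the same order $N$, also adapted to and supported in $3I$, since $|3I|^{-1/2-\alpha}\approx |I|^{-1/2-\alpha}$ for $|\alpha|\le N$ and the weights $\chi_I^N$ and $\chi_{3I}^N$ are each comparable to $1$ on $I$. As $g$ is a Schwartz function supported in $3I$, the hypothesis (applied to $\phi_I$ and $g$ with interval $3I$) gives $|\Lambda(\phi_I,g)|\lesssim\|g\|_2\le\|f\|_2$.

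For the far term, $\supp\phi_I\subset I$ and $\supp h\subset\R\setminus (2I)^{\circ}$ are disjoint, and in fact separated by at least $|I|/2$, so the kernel representation \eqref{SKREP} applies:
$$\Lambda(\phi_I,h)=\int_{\R^2}\phi_I(t)\,h(x)\,\tilde K(x,t)\,dt\,dx ,$$
where the integrand is supported on $\{|x-t|\ge |I|/2\}$. Using only the size estimate $|\tilde K(x,t)|\le C_K|x-t|^{-1}$ together with Cauchy--Schwarz in $x$,
$$\int_{|x-t|\ge |I|/2}|h(x)|\,|x-t|^{-1}\,dx\ \le\ \|f\|_2\Big(\int_{|u|\ge |I|/2}|u|^{-2}\,du\Big)^{1/2}=2\,|I|^{-1/2}\|f\|_2$$
uniformly for $t\in I$; integrating this against $|\phi_I(t)|$ over $I$ and using $\int_I|\phi_I(t)|\,dt\lesssim |I|^{-1/2}\cdot|I|=|I|^{1/2}$ (valid since $\phi_I$ is $L^2$-normalized and supported in $I$) gives $|\Lambda(\phi_I,h)|\lesssim\|f\|_2$. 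Adding the two bounds proves the lemma; all estimates used are scale-invariant, so the constant $C$ does not depend on $I$.

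I do not expect a genuine obstacle here. The two points that need care are: (i) recording that a bump remains adapted, with the same order, to a fixed dilate of its supporting interval, so that the hypothesis can be reused verbatim on $3I$; and (ii) noticing that the far interaction is controlled purely by the size bound \eqref{e.e1Cal1}, with no appeal to the regularity estimate \eqref{e.e1Cal2} — this works precisely because $\phi_I$ has small support, so the diagonal singularity of $\tilde K$ is never approached.
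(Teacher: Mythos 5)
Your proof is correct, but it takes a genuinely different (and more economical) route than the paper's. The paper first reduces to bumps $\phi_I$ with mean zero via a telescoping sum over dyadic dilates $\phi_{2^jI}$ (which in turn forces a preliminary density reduction to compactly supported $f$), and then, for mean-zero bumps, splits $f$ near/far relative to $RI$ with $R$ large depending on $\beta$; the far part is estimated by writing $\tilde K(u,v)-\tilde K(0,v)$ and invoking the regularity estimate \eqref{e.e1Cal2}, because the paper measures both $\phi_I$ and the far piece in $L^2$ and therefore needs the integrable kernel $|u-v|^{-1-\delta}$. You dispense with both the cancellation and the regularity: your far-field term uses only the size bound \eqref{e.e1Cal1}, exploiting the asymmetry between $\|\phi_I\|_{L^1}\lesssim |I|^{1/2}$ and the uniform bound $\bigl(\int_{|x-t|\ge |I|/2}|x-t|^{-2}\,dx\bigr)^{1/2}\approx |I|^{-1/2}$ for $t\in I$, while your near term reuses the hypothesis on $3I$ exactly as the paper reuses it on $2RI$ (same rescaling of the adaptation constant, here by $3^{1/2+N}$, removed by linearity in the first argument). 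What your argument buys is simplicity: no mean-zero reduction, no telescoping, no density step (so $f$ need not have compact support), and no appeal to the H\"older regularity of the kernel; what the paper's argument buys is the cancellation template (mean-zero bump against the smooth part of the kernel) that is reused elsewhere, e.g. in Lemmas \ref{definebmo} and \ref{ctos}. The two points that needed care are handled correctly in your write-up: $\phi_I$ remains an order-$N$ bump adapted to and supported in $3I$ after enlarging the constant, and the kernel representation \eqref{SKREP} is legitimately applied to the pair $(\phi_I,h)$ since their supports are disjoint with separation $|I|/2$, the resulting integral being absolutely convergent by the same Cauchy--Schwarz estimate.
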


\begin{proof}
Without loss of generality, we may assume  that $I$ is centered at the origin. 
Also, by the continuity of $\Lambda$ on 
${\cal S}(\R)\times {\cal S}(\R)$, it is enough to prove the conclusion for 
all $f$ in ${\cal S}$ with compact support.  Consider first  the case when $\phi_I$ has mean
zero. Let $R$ be a  large constant chosen later depending on $\beta$. Decompose now in a smooth way
 $f=f_1+f_2$,  with $f_1$ supported in
$2RI$ and $f_2$ supported outside $RI$.  Since $(2R)^{-(1/2+N)}\phi_I$ is adapted
to and supported in $2RI$, the hypotheses of the Lemma give then
$$|\Lambda(\phi_I,f_1)|\le C_0 (2R)^{(1/2+N)} \|f\|_2.$$
To estimate 
 $\Lambda(\phi_I, f_2)$ we use the kernel representation 
with the change of variables explained in the introduction to get
$$
\Lambda(\phi_I, f_2)=
\int \phi_I(u) f_2(v) \tilde{K}(u,v)\, du\, dv,
$$
where $\tilde K$ satisfies $$|\tilde{K}(u,v)-\tilde{K}(u',v')|\leq
C_{\beta_1, \beta_2}|u-v|^{-1-\delta }\|(u,v)-(u',v')\|^\delta
$$
for $c_\beta \|(u,v)-(u',v')\| <|u-v|$ and $c_\beta\geq 2$. The
domain of integration is given by $|u|<|I|/2$ and $|v|>R|I|/2$,
 which imply that $|u-v|>(R-1)|I|/2>c_\beta|u|$ if $R$ is large enough.
 Thus, using the
mean zero of $\phi_I$ we can write
$$
\Lambda(\phi_I, f_2)=\int \phi_I(u) f_2(v) (\tilde{K}(u,v)-\tilde{K}(0,v))\, du\, dv
$$
and  obtain
$$
|\Lambda(\phi_I,f_2)|\leq C_{\beta_1, \beta_2}\int|\phi_I(u)|
|f_2(v)| |u|^{\delta }|u-v|^{-(1+\delta )}\, du\, dv
$$
$$
\leq C_{\beta_1, \beta_2}|I|^\delta \int_{|u-v|>c|I|} |\phi_I(u)| |f_2(v)|
|u-v|^{-(1+\delta )}\, du\, dv
$$
$$
\leq C_{\beta_1, \beta_2} |I|^\delta \|\phi_I\|_2 \| f_2\|_2
\int_{|x|>c|I|} |x|^{-(1+\delta )}\, dx \leq C_{\beta_1,
\beta_2} \| f\|_2.
$$
This proves 
$$|\Lambda(\phi_I,f)|\le C_1\|f\|_2$$
 for an appropriate constant $C_1$ under the additional assumption that $\phi_I$ has mean zero.

 To treat the general case, define $\phi_{2^jI}(x)=2^{-j/2}\phi_{I}(2^{-j}x)$ for $j>0$ and 
 observe that $2^{-(1/2+N)}(\phi_{2^jI} - 2^{-1/2}\phi_{2^{j+1}I})$ is adapted to and
 supported in $2^{j+1}I$ and has mean 
 zero.  Let now $k\geq 0$ be the smallest integer such that the support of $f$ is contained in $2^{k+1}I$,
 and write 
 $$
 \Lambda(\phi_I, f)= \sum_{j=0}^k  2^{-j/2}\Lambda(\phi_{2^jI} - 2^{-1/2}\phi_{2^{j+1}I},  f) + 
2^{-(k+1)/2}\Lambda(\phi_{2^{k+1}I},  f).
$$
In the first $k+1$ terms the bumps have mean zero, while the last term can be controlled by 
the hypothesis. Thus,
$$
|\Lambda(\phi_I, f)| \leq (C_1 2^{1/2+N} \sum_{j=0}^k  2^{-j/2} + C_0 2^{-(k+1)/2})\| f\|_2
\leq C \| f\|_2,
$$
where $C$ is independent of $k$.
\end{proof}

We continue by studying the relationship between bilinear and trilinear forms.
By the Schwartz kernel theorem, the trilinear form $\Lambda$ can be
represented by a tempered distribution in $\R^3$, which we shall also denote by
$\Lambda$, so that
$$\Lambda(\phi_1,\phi_2,\phi_3)=\Lambda(\phi_1\otimes \phi_2\otimes \phi_3).$$
In this way, $\Lambda (\phi)$ has a meaning for any $\phi\in
\S(\mathbb R^3)$ not necessarily a tensor product.

Moreover, the modulation invariance of $\Lambda$ implies that the distribution
is supported on the orthogonal complement of $\gamma$. Even
stronger, it is given by a two dimensional distribution $\Lambda_*$
applied to the
restriction $ \left. \phi \right|_{\gamma^\perp}:\gamma^\perp\to\C$ of the test function $\phi$ to $\gamma^\perp$. This can be seen as follows.
Consider first a function $f$ in $C^\infty_0(\mathbb R^3)$ and pick another function 
$\varphi$ in $C^\infty_0(\mathbb R^3)$  with $\varphi \equiv 1$ on a neighborhood 
of the support of $f$. By the  linearity, 
continuity, and modulation invariance of $\Lambda$, and writing $\xi=t\gamma+\xi^*$ with 
$\xi^*$ in $\gamma^\perp$, we get 
$$
\Lambda(f)= \Lambda(\varphi f)= \Lambda(\int \varphi(x) e^{2\pi ix\xi} \widehat f(\xi) d\xi)
= \Lambda(\int_{\mathbb R} \int_{\gamma^\perp}  \varphi(x) e^{2\pi ix \gamma t}   e^{2\pi ix \xi^*} \widehat f(t\gamma+\xi^*) dt d\xi^*)
$$
$$
= \int_{\mathbb R} \int_{\gamma^\perp} \Lambda(\varphi(x) e^{2\pi ix \gamma t}e^{2\pi ix \xi^*} )  \widehat f(t\gamma+\xi^*) dt d\xi^*
= \int_{\mathbb R} \int_{\gamma^\perp} \Lambda(\varphi(x) e^{2\pi ix \xi^*} )  \widehat f(t\gamma+\xi^*) dt d\xi^*
$$
$$
= \Lambda( \int_{\mathbb R} \int_{\gamma^\perp} \varphi(x) e^{2\pi ix \xi^*}  \widehat f(t\gamma+\xi^*) dt d\xi^*)
= \Lambda( \int_{\mathbb R^3} \varphi(x) e^{2\pi ix^* \xi}  \widehat f(\xi)  d\xi)
= \Lambda(\varphi f_{*}),
$$
where $f_{*}(x)=f(x^*)$. (Note that $\varphi f_{*}$ is still in  $C^\infty_0(\mathbb R^3)$).  In particular,
if two functions $f_1$ and $f_2$ in $C^\infty_0(\mathbb R^3)$ agree on $\gamma^\perp$,
then $\Lambda (f_1-f_2)=0$.
Also, since $C^\infty_0(\mathbb R^3)$ is dense in $\S(\mathbb R^3)$, a simple limiting argument shows that
$\Lambda (f) =0$ for all $f \in \S(\mathbb R^3)$ with $\supp f \cap \gamma^\perp = \emptyset$ and so
$\supp \Lambda$ is contained in $ \gamma^\perp$. 

Now, let $\tilde \psi$ be in $C^\infty_0(\mathbb R)$,
 $\tilde \psi \equiv 1$ in a neighborhood of zero, and define $\psi(x)= \psi (t\gamma + x^*) =\tilde \psi (t)$. We have already seen that for $f \in C^\infty_0(\mathbb R^3)$, $\Lambda (f) $ depends only on 
$ \left. f \right|_{\gamma^\perp}$ so 
$$\Lambda (f) = \Lambda (\psi f) = \Lambda (\psi f_{*})$$
and a limiting argument gives the same result for $f \in \S(\mathbb R^3)$. This also allows to define a 
distribution in $\S'(\gamma^\perp)$ by 
$$\Lambda_*(g)=\Lambda (\psi \tilde{g})$$
for all $g \in {\cal S}(\gamma^\perp)$, where $\tilde{g}:\R^3\to\C$ satisfies $\tilde{g}(t\gamma + x^*)=g(x^*)$. The definition is clearly independent of the choice of $\psi$ and we have $$\Lambda (f)=\Lambda_*(\left. f \right|_{\gamma^\perp}).$$

Clearly, by a density argument,  the kernel representation of $\Lambda$ continues to hold when the
test function $\phi$ is no longer a tensor product but still  has support disjoint from the span of $(1,1,1)$.

Moreover, 
$$\Lambda(\phi)=\int \phi(x\alpha+t\beta) \,K(x,t) \, dxdt =\int \phi(x+\beta_1t,x+\beta_2t,x+\beta_3t)\, K(x,t) \, dxdt$$
and  the integral
is absolutely convergent  as long as function $\phi$ just vanishes on the span of $(1,1,1)$, but provided  that $\Lambda$ satisfies a weak  boundedness property (\ref{weakbounded}) below, which is implied by the restricted boundedness conditions.  To verify this, the reader  may adapt to the case of trilinear forms the arguments used in, for example,  \cite{To} for bilinear ones.

There are three distinct 
bilinear forms that we can consider now
$$\Lambda_1(\phi_2,\phi_3)=\Lambda_*(\left. (1\otimes \phi_2\otimes \phi_3)\right|_{\gamma^{\perp}})$$
$$\Lambda_2(\phi_1,\phi_3)=\Lambda_*(\left. (\phi_1\otimes 1 \otimes \phi_3)\right|_{\gamma^\perp})$$
$$\Lambda_3(\phi_1,\phi_2)=\Lambda_*(\left. (\phi_1\otimes \phi_2 \otimes 1)\right|_{\gamma^\perp})$$
Observe that the functions on the right hand side are in ${\cal S}(\gamma^\perp)$, since none of the components of $\gamma$ is zero. Moreover in case of disjointly supported functions
$\phi_2$, $ \phi_3$ we obtain a kernel representation for $\Lambda_1$ with the same kernel $K$. In fact, since $\psi (1\otimes \phi_2\otimes \phi_3) \in \S(\mathbb R^3)$ and obviously has the same values
as $\psi (1\otimes \phi_2\otimes \phi_3)_{*} $ on $\gamma^\perp$,
$$\Lambda_1(\phi_2,\phi_3)= \Lambda_*(\left. (1\otimes \phi_2\otimes \phi_3)\right|_{\gamma^{\perp}}) = \Lambda (\psi (1\otimes \phi_2\otimes \phi_3)_{*} )=
\Lambda (\psi (1\otimes \phi_2\otimes \phi_3))$$$$=
\int  \psi(x\alpha+t\beta)\phi_2(x+\beta_2t)\phi_3(x+\beta_3t)K(x,t)\, dxdt=
\int \phi_2(x+\beta_2t)\phi_3(x+\beta_3t)K(x,t)\, dxdt.$$
Similarly for $\Lambda_2$ and $\Lambda_3$.\\

In the sequel we will use the following notation. 

\begin{definition}
For $a>0, v\in \R^d$ the
{\it $L^p$-normalized dilation operator $D_a^p$} and the {\it translation operator $\tau_v$} are defined via
$$D_a^p f(x)=a^{-d/p}f(a^{-1}x),\, \tau_v f(x)=f(x-v),$$
for all functions $f$ defined on $\R^d$. We will also sometimes write 
$$D_a  f(x)= D_a^\infty f(x)= f(a^{-1}x).$$
\end{definition}

\begin{lemma}\label{equivalentrestricted}
The restricted boundedness conditions for $\Lambda$ are equivalent
to the restricted boundedness conditions for the $\Lambda_i$'s, if
one is willing to have a loss in the order of the bump functions and
the constants involved in defining restricted boundedness.
\end{lemma}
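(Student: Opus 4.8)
I would prove the two implications separately; both rest on facts already in hand: that $\Lambda$ is supported on $\gamma^\perp$ and factors as $\Lambda=\Lambda_*\circ(\,\cdot\,|_{\gamma^\perp})$, so the $\Lambda_i$ are bilinear Calder\'on--Zygmund forms with kernel $K$, together with the kernel representation \eqref{interep}, valid as soon as the three supports intersect in the empty set. The device I would use repeatedly is the elementary remark that if $\phi_I,\psi_I$ are supported in $I$ and $h$ in $CI$, then on the support of the integrand of $\int \phi_I(x+\beta_1t)\psi_I(x+\beta_2t)h(x+\beta_3t)K(x,t)\,dx\,dt$ one has $|x|,|t|\lesssim_\beta|I|$, so $x+\beta_3t$ stays in a fixed $\beta$-dependent dilate of $I$; hence, if $h$ is supported away from that dilate the integral vanishes, and likewise with the three slots permuted.

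For ``$\Lambda$ restricted bounded $\Rightarrow$ each $\Lambda_i$ restricted bounded'' it is enough, by Lemma~\ref{compactclassical}, to bound $|\Lambda_1(\phi_I,f)|$ and $|\Lambda_1(f,\phi_I)|$ for $f$ supported in $I$ (and then permute slots for $\Lambda_2,\Lambda_3$). With $I$ centered at $0$, take $\eta_I\equiv1$ on $RI$, $\supp\eta_I\subset 2RI$, $R=R(\beta)$ large. Since $\eta_I\otimes\phi_I\otimes f\in\S(\R^3)$ while $(1-\eta_I)\otimes\phi_I\otimes f$ has support disjoint from the span of $(1,1,1)$ with vanishing kernel integral (by the remark, for $R$ large), one gets $\Lambda_1(\phi_I,f)=\Lambda(\eta_I\otimes\phi_I\otimes f)$. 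Now $(R|I|)^{-1/2}\eta_I$ is $L^2$-adapted to $2RI$ and $R^{-(1/2+N)}\phi_I$ is adapted of order $N$ to $2RI$, so feeding this into \eqref{restrictedbounded} on the interval $2RI$ yields $|\Lambda_1(\phi_I,f)|\lesssim R^{1/2+N}\|f\|_2\lesssim\|f\|_2$; the companion bound $|\Lambda_1(f,\phi_I)|\lesssim\|f\|_2$ follows the same way from the symmetric form of \eqref{restrictedbounded}. This costs only the fixed factor $R^{1/2+N}$ and preserves the order $N$.

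The converse is the substantial direction, essentially a bilinear-to-linear reduction. To prove $|\Lambda(\phi_I,\psi_I,f)|\lesssim|I|^{-1/2}\|f\|_2$ and its two cyclic variants I would first use the remark above to discard the part of $f$ supported outside a fixed dilate $C'I$ (its contribution is $0$), reducing to $f$ Schwartz, $\supp f\subset C'I$, $\|f\|_{L^2(C'I)}\lesssim\|f\|_2$. Then I would split $f=\bar f\,\Theta_I+g$ with $\Theta_I\equiv1$ on $C'I$ and $\bar f=(\int f)/(\int\Theta_I)$, so $g$ has mean zero, $\supp g$ in a dilate of $I$, $\|g\|_2\lesssim\|f\|_2$, and $|\bar f|\lesssim|I|^{-1/2}\|f\|_2$. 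Because $\Theta_I\equiv1$ on the bounded region carrying the integrand of $\Lambda(\phi_I,\psi_I,\cdot)$, the remark gives $\Lambda(\phi_I,\psi_I,\Theta_I)=\Lambda_3(\phi_I,\psi_I)$, which is $\lesssim1$ by the restricted boundedness of $\Lambda_3$; hence $|\bar f\,\Lambda(\phi_I,\psi_I,\Theta_I)|\lesssim|I|^{-1/2}\|f\|_2$, and $\Lambda_1,\Lambda_2$ dispose of the mean parts in the other two cyclic conditions the same way.

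What remains, and what I expect to be the main obstacle, is the weak-boundedness-type bound $|\Lambda(\phi_I,\psi_I,g)|\lesssim|I|^{-1/2}\|g\|_2$ for $g$ of mean zero supported in a dilate of $I$, equivalently control of the oscillation of $T_3(\phi_I,\psi_I)$. I would expand $g$ in a smooth mean-zero wavelet system adapted to the dyadic subintervals $J$ of $C'I$ and estimate each $\Lambda(\phi_I,\psi_I,h_J)$ separately: for $|J|$ much smaller than $|I|$ this is a non-resonant interaction — the wave packet $h_J$ oscillates at frequency $\gg|I|^{-1}$ while $\phi_I,\psi_I$ have frequency $\lesssim|I|^{-1}$, being smooth at scale $|I|$ and supported in $I$ — and is controlled by the cancellation of $h_J$ together with the regularity \eqref{e.e1Cal2} of the kernel; for the $O(1)$ terms with $|J|\sim|I|$ one peels off the slowly-varying parts of $\phi_I,\psi_I$ as in the previous paragraph, reducing to the $L^2$-boundedness of $\Lambda_1,\Lambda_2$ (which the classical $T(1)$ theorem, Theorem~\ref{classicalt1}, yields from their restricted boundedness) together with an almost-orthogonality estimate against $K$ in which all three arguments are mean-zero bumps at scale $|I|$. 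Summing by Cauchy--Schwarz against $\big(\sum_J|\langle g,h_J\rangle|^2\big)^{1/2}=\|g\|_2$ closes the argument; $N$ and all constants may be enlarged freely, which is exactly the latitude the statement allows. The delicate part is arranging these wavelet estimates so that the sum over intermediate scales converges, since there the only handle on $T_3(\phi_I,\psi_I)$ is the indirect information furnished by the restricted boundedness of the $\Lambda_i$, rather than any a priori bound on $\Lambda$ itself.
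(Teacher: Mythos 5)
Your forward implication and your preliminary reduction of the converse to $f$ supported in $CI$ are fine and coincide with the paper's argument (replace the $1$ by a fat bump, which has the same restriction to $\gamma^\perp$, invoke Lemma~\ref{compactclassical}, and apply \eqref{restrictedbounded} on the enlarged interval; discard the part of $f$ outside $CI$ by the support geometry). The gap is in the heart of the converse, the estimate $|\Lambda(\phi_I,\psi_I,g)|\lesssim |I|^{-1/2}\|g\|_2$, and neither mechanism you propose can deliver it. For $|J|\ll|I|$, cancellation of $h_J$ together with \eqref{e.e1Cal2} only controls the part of the pairing that the kernel sees; since $\phi_I,\psi_I,h_J$ have overlapping supports there is a near-diagonal contribution (the part of the distribution $\Lambda$ concentrated near $t=0$) about which \eqref{e.e1Cal1}--\eqref{e.e1Cal2} say nothing, and controlling it is exactly what is to be proved. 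Even for the kernel part the claimed per-wavelet decay in $|J|/|I|$ is false in general: for the modulation-invariant paraproducts of Section~\ref{sec7}, where $T_3(1,1)=b$ is a nonconstant BMO function, $\langle T_3(\phi_I,\psi_I)|h_J\rangle$ is essentially $\phi_I\psi_I(c(J))\,\langle b|h_J\rangle$ up to decaying errors, and individual wavelet coefficients of a BMO function need not decay; per scale one then gets $|I|^{-1/2}\|g\|_2$ with no gain, so the sum over intermediate scales diverges exactly as you set it up --- the point you yourself flag as ``delicate'' is where the argument breaks, and repairing it would require the cross-scale Carleson/paraproduct machinery (the meaning of $T_3(1,1)$, its BMO bound, paraproduct subtraction) that the paper develops only later and does not want inside this soft equivalence. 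Similarly, for $|J|\sim|I|$ there is no ``almost-orthogonality estimate against $K$ in which all three arguments are mean-zero bumps at scale $|I|$'': when the three supports overlap at the same scale, $K$ does not determine the pairing at all (that is precisely the weak boundedness property \eqref{weakbounded}, an assumption rather than a consequence of the kernel bounds), and mean zero does not help, as adding a diagonal term $\int f_1f_2f_3\,b$ with $b\in L^\infty$ shows: it leaves $K$ unchanged yet contributes at size $|I|^{-1/2}$.

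The paper's proof of the converse avoids all of this with one observation: on $\gamma^\perp$ one has $\gamma_1(x+\beta_1t)+\gamma_2(x+\beta_2t)+\gamma_3(x+\beta_3t)=0$, so in the test $\Lambda(\phi_I,\psi_I,f)$ the first argument equals $\phi_I\bigl(-\gamma_1^{-1}(\gamma_2u+\gamma_3v)\bigr)$ with $u=x+\beta_2t$, $v=x+\beta_3t$. Multiplying by a suitable cutoff $g(u,v)$ and expanding $\tilde\phi g$ in a double Fourier series on a fixed box yields $\Lambda(\phi_I,\psi_I,f)=\sum_{m_2,m_3}c_{m_2,m_3}\,\Lambda_1(M_{m_2}\psi_I,M_{m_3}f)$ with $|c_{m_2,m_3}|\lesssim(1+\max(|m_2|,|m_3|))^{-N'}$. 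The restricted boundedness of $\Lambda_1$ is insensitive to the modulation on $f$ (only $\|f\|_2$ enters) and costs only a factor $(1+|m_2|)^{N}$ for the modulated bump $\psi_I$, which the rapidly decaying coefficients absorb; this is exactly where the permitted loss of order ($N'\gg N$) is spent. No wavelet decomposition, no mean/mean-zero splitting, no appeal to Theorem~\ref{classicalt1}, and no summation over scales is needed. I would encourage you to rework the converse along these lines.
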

\begin{proof}
We assume first that $\Lambda$ satisfies the restricted
boundedness conditions, and prove restricted boundedness of
$\Lambda_1$ ($\Lambda_2$ and $\Lambda_3$ can be done analogously).
Let $\Phi:\R\to [0,1]$ be smooth with
\begin{equation}
\label{Phi}
\begin{array}{ll}
\Phi(x)=1 & |x|\le 1\\
\Phi(x)=0 & |x|\ge 2.
\end{array}
\end{equation}
It is easy to check that if $f$ is supported in the interval $I$ (Lemma \ref{compactclassical} allows us to restrict attention to this case) then
$$\left. (1\otimes \phi_I\otimes f)\right|_{\gamma^{\perp}}
= \left. (\tau_{c(I)}D_{C|I|}\Phi\otimes \phi_I\otimes f)\right|_{\gamma^{\perp}},$$
where $C$ is a sufficiently large constant
depending only on $\gamma$.
To obtain restricted boundedness for $\Lambda_1$, we simply apply the restricted boundedness
of $\Lambda$ for the enlarged interval $CI$.

 For the converse, we first show, as claimed in the introduction,
 that it is enough to show the restricted boundedness estimates for
$\Lambda(\phi_{I}\otimes \psi_I\otimes f)$ for every $C^\infty$  function
$f$ supported in $CI$, where $C$ is a universal constant depending on $\gamma$. 
To see this, and without loss of generality, we may assume $I$ centered at the origin. The region 
$|x+\beta_1t| < |I|/2$ and $|x+\beta_2t| < |I|/2$ is a parallelogram centered at the origin in the $(x,t)$ 
while $|x+\beta_3t|> C|I|/2$ is the complement of a strip along the line $x+\beta_3t =0$. 
Recall that the components of $\beta$ are pairwise distinct, so  if $C$
is large enough depending only on $\beta$, then the two regions do not intersect.
Decompose now in a smooth way $f=f_{CI}+ (f-f_{CI})$, where $f_{CI}$ coincides with $f$ on $CI$, is zero outside $2CI$ and satisfies $\|f_{CI}\|_2\le 2\|f_{I}\|_2$
Note that $\phi_I\otimes \psi_I\otimes (f-f_{CI})(x\alpha+t\beta)\equiv 0$, so we must  have 
$\Lambda(\phi_I\otimes \psi_I\otimes (f-f_{CI}))=0$, and the claim follows.

 Assume now that all the $\Lambda_i$ satisfy restricted boundedness
conditions of order $N$.
For simplicity of notation also assume that $I$ is
of length $1/C$ and centered at the origin and that $\phi_I,\psi_I$ are bump functions adapted of order $N'\gg N$ and supported in $I$. Let $f$ be a smooth function supported in $CI$. Let $g$ be adapted of order $N'$ and supported on $\{|u|<1/(2C)<1/2,\;|v|<1/2\}$ and so that $g(u,v)\psi_I(u)f(v)=\psi_I(u)f(v)$ on 
the same region. Write $\tilde \phi(u,v) = \phi_I(-\gamma_1^{-1}(\gamma_2 u+\gamma_3 v))$ and note that $\tilde \phi g$ is adapted of order $N'$ to $[-1/2,1/2] \times [-1/2,1/2]$. Expanding
$\tilde \phi g$ in Fourier series on $[-1/2,1/2] \times [-1/2,1/2]$ we get 
$$ \phi_I(x+\beta_1t)\psi_I(x+\beta_2t)f(x+\beta_3t)$$$$ = \tilde \phi(x+\beta_2t, x+\beta_3t) g(x+\beta_2t, x+\beta_3t)\psi_I(x+\beta_2t)f(x+\beta_3t)$$
$$=\sum_{m_2,m_3}c_{m_2,m_3}
\psi_I(x+\beta_2t)e^{2\pi i m_2 (x+\beta_2t)} f(x+\beta_3t)e^{2\pi i m_3 (x+\beta_3 t)},$$
with 
$$|c_{m_2,m_3}|\lesssim(1+\max(|m_2|,|m_3|))^{-N'}.$$
Now, the function
$$(1+|m_2|)^{-N}\psi_I(x)e^{2\pi i m_2 x}$$
is a bump function of order $N$ adapted to the interval $I$. The factor
$(1+|m_2|)^{-N}$ is needed to offset the loss of powers of $m_2$ when taking
derivatives of the exponential factor. Applying the restricted boundedness of $\Lambda_1$ proves that
$$\Lambda(\phi_{I}\otimes \psi_I\otimes f)=\sum_{m_2,m_3}c_{m_2,m_3}(1+|m_2|)^{N}\Lambda_1((1+|m_2|)^{-N}M_{m_2}\psi_I,M_{m_3}f)\lesssim \|f\|_2.$$
\end{proof}

Assuming Theorem \ref{main} is true, we have thereby seen the following
corollary of Theorems \ref{classicalt1} and \ref{main}.

\begin{theorem}
Let $\Lambda$ be a trilinear Calder\'on-Zygmund form with modulation symmetry in
direction $\gamma$. Then $\Lambda$ extends to bounded trilinear form
with exponents as in Theorem \ref{main} if and only if $\Lambda_i$
for $i=1,2,3$ are bounded in $L^2\times L^2.$
\end{theorem}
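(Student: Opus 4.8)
The plan is to obtain this corollary simply by concatenating three facts that are (or will be) established in the paper: Theorem~\ref{main}, Lemma~\ref{equivalentrestricted}, and the linear $T(1)$ Theorem~\ref{classicalt1}. Since $\Lambda$ is a trilinear Calder\'on-Zygmund form with modulation symmetry in the direction $\gamma$ and all components of $\gamma$ are nonzero, the two-dimensional distribution $\Lambda_*$ and hence the three bilinear forms $\Lambda_1,\Lambda_2,\Lambda_3$ are well defined; we have also already seen that each $\Lambda_i$ admits, on disjointly supported test functions, a kernel representation with the same kernel $K$, so that after the change of variables from Section~\ref{sec2.1} each $\Lambda_i$ is associated with a standard Calder\'on-Zygmund kernel $\tilde K$ obeying the size and smoothness bounds recorded there.

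The one preliminary point I would check carefully is that each $\Lambda_i$ is a \emph{bilinear} Calder\'on-Zygmund form in the sense of our definition, i.e.\ that it is continuous on $\S(\R)\times\S(\R)$. This is routine: the assignment $(\phi_2,\phi_3)\mapsto \left.(1\otimes\phi_2\otimes\phi_3)\right|_{\gamma^\perp}$ is continuous from $\S(\R)\times\S(\R)$ into $\S(\gamma^\perp)$ --- it is a tensor product composed with the linear isomorphism of $\gamma^\perp$ onto a coordinate plane, which is legitimate precisely because no component of $\gamma$ vanishes --- and $\Lambda_*$ is continuous on $\S(\gamma^\perp)$ because $\Lambda$ is continuous on $\S(\R^3)$; the arguments for $\Lambda_2$ and $\Lambda_3$ are identical.

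With that in hand, the proof is the following chain of equivalences. By Theorem~\ref{main}, $\Lambda$ extends to a bounded trilinear form with the exponents listed there if and only if the three trilinear restricted boundedness conditions \eqref{restrictedbounded} hold for some order $N$. By Lemma~\ref{equivalentrestricted}, this is equivalent to all three forms $\Lambda_1,\Lambda_2,\Lambda_3$ satisfying the bilinear restricted boundedness conditions of Theorem~\ref{classicalt1}; the loss in the order of the bumps and in the implicit constants produced by that lemma is harmless, since, as remarked after Theorem~\ref{main}, once restricted boundedness holds for some $N$ it holds for every larger $N$. Finally, applying the linear $T(1)$ Theorem~\ref{classicalt1} to each bilinear Calder\'on-Zygmund form $\Lambda_i$ individually, the restricted boundedness conditions for $\Lambda_i$ are equivalent to the boundedness of $\Lambda_i$ on $L^2(\R)\times L^2(\R)$. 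Stringing these equivalences together gives the statement; as a by-product one also gets from Theorem~\ref{classicalt1} that each $\Lambda_i$ is then bounded on $L^p(\R)\times L^{p'}(\R)$ for $1<p<\infty$.

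There is no genuine difficulty here beyond this bookkeeping: all the hard analysis is packed into Theorem~\ref{main} (the core of the paper) and into Lemma~\ref{equivalentrestricted}. The only place where a little attention is needed is in ensuring that the $\Lambda_i$ really meet the hypotheses of Theorem~\ref{classicalt1} --- continuity on Schwartz space together with the standard kernel estimates --- and that is exactly what the reduction to $\gamma^\perp$ and the kernel representation of the $\Lambda_i$ established earlier in this section provide.
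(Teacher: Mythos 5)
Your proposal is correct and follows essentially the same route as the paper, which obtains this statement as an immediate corollary by chaining Theorem~\ref{main} (boundedness $\Leftrightarrow$ trilinear restricted boundedness), Lemma~\ref{equivalentrestricted} (trilinear $\Leftrightarrow$ bilinear restricted boundedness for the $\Lambda_i$), and the linear $T(1)$ Theorem~\ref{classicalt1} applied to each $\Lambda_i$. Your extra verification that each $\Lambda_i$ is a genuine bilinear Calder\'on--Zygmund form (continuity on $\S(\R)\times\S(\R)$ plus the kernel representation) is exactly the content the paper establishes in the discussion of $\Lambda_*$ earlier in that section, so nothing is missing.
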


The lemma below will allow us to state the restricted boundedness
property in a slightly more general way. We need to consider bumps which may no longer be compactly supported but are still concentrated around appropriate intervals.

\begin{lemma}\label{ctos}

Assume $\Lambda$ is a  Calder\'on-Zygmund trilinear form with  modulation symmetry in  the direction $\gamma$ and 
that satisfies the restricted boundedness conditions of Theorem \ref{main} for some $N$.
Then for all intervals $I$, all $L^2$- normalized bump
functions $\phi_{I\times I}$ $L^2$- adapted to $I\times I$ of order $N'>>N$  and all functions 
$f\in {\cal S(\R)}$ we have the estimate
$$|\Lambda(\phi_{I\times I} \otimes f)|\lesssim |I|^{-1/2}\|f\|_2$$
and the symmetric inequalities. \end{lemma}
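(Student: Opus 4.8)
The statement of Lemma~\ref{ctos} says that the restricted boundedness conditions of Theorem~\ref{main}, which as stated require the bump $\phi_{I\times I}$ to be a \emph{compactly supported} tensor product $\phi_I\otimes\psi_I$ supported in $I$, can be upgraded to allow any single (non-tensor) bump $\phi_{I\times I}$ that is merely $L^2$-adapted to $I\times I$ of some higher order $N'$, without requiring compact support. The natural approach is a two-step decomposition: first handle the loss of the tensor structure by a Fourier series expansion (exactly as in the proof of Lemma~\ref{equivalentrestricted}), and then handle the loss of compact support by a dyadic annular decomposition (exactly as in the proof of Lemma~\ref{compactclassical}). I would organize the argument so that the second reduction feeds into the first.

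\textbf{Step 1: reduce to compactly supported bumps on a dilate of $I$.} By translation and dilation invariance of the hypotheses we may assume $I=[-1/2,1/2]$. Write $\phi_{I\times I}=\sum_{j\ge 0}\phi^{(j)}$ where $\phi^{(0)}$ is a smooth cutoff of $\phi_{I\times I}$ to $2(I\times I)$ and $\phi^{(j)}$, for $j\ge 1$, is the smooth piece living in the dyadic annulus $2^{j+1}(I\times I)\setminus 2^{j-1}(I\times I)$; because $\phi_{I\times I}$ is $L^2$-adapted of order $N'$, one has $\|\phi^{(j)}\|_2\lesssim 2^{-jN'}$ (taking $N'$ larger than needed) and $2^{j(1/2+N)}\phi^{(j)}$ is $L^2$-normalized and $C$-adapted of order $N$ to the box $2^{j+1}(I\times I)$. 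I would then want to apply Step~2 to each dyadic piece and sum the resulting geometric series in $j$, since the gain $2^{-jN'}$ beats the polynomial-in-$2^j$ losses that Step~2 will introduce. The one subtlety is that the hypotheses of Theorem~\ref{main} are stated for $\phi_{I}\otimes\psi_I$ with both bumps supported in the \emph{same} cube $I$, so I also need to observe — as in Lemma~\ref{equivalentrestricted} — that testing $\Lambda(\phi_I\otimes\psi_I\otimes f)$ against a smooth $f$ supported in $C I$ is already controlled, because outside a fixed dilate of $I$ the integrand $\phi_I(x+\beta_1t)\psi_I(x+\beta_2t)f(x+\beta_3t)$ vanishes identically (the components of $\beta$ are pairwise distinct), so $f$ may be freely replaced by its truncation to $2CI$ at the cost of a harmless constant.

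\textbf{Step 2: remove the tensor structure on a fixed cube.} For a compactly supported bump $\phi^{(j)}$ adapted to $Q:=2^{j+1}(I\times I)$, rescale to the unit cube and expand $\phi^{(j)}$ in a Fourier series on (a fixed dilate of) $Q$. This writes $\phi^{(j)}(u,v)=\sum_{m_2,m_3}c^{(j)}_{m_2,m_3}\,e^{2\pi i m_2 u/|2^{j+1}I|}\,e^{2\pi i m_3 v/|2^{j+1}I|}$ against a smooth cutoff, with $|c^{(j)}_{m_2,m_3}|\lesssim (1+\max(|m_2|,|m_3|))^{-N'}$ by integration by parts using adaptation of order $N'$. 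Substituting into $\Lambda(\phi^{(j)}\otimes f)$ and using the modulation symmetry \eqref{modulationinv} to absorb the exponential factors (distributed according to $\gamma$) turns each term into $\Lambda$ applied to an honest tensor product $(M_{a}\eta_Q)\otimes(M_{b}\zeta_Q)\otimes(M_{c}f)$ of compactly supported bumps, to which the restricted boundedness hypothesis of Theorem~\ref{main} applies, yielding $\lesssim |2^{j+1}I|^{-1/2}\|f\|_2$ per term after accounting for the $(1+|m|)^{N}$ loss incurred when differentiating the exponentials (offset by the fast decay of $c^{(j)}_{m_2,m_3}$ since $N'\gg N$). Summing the absolutely convergent double series in $(m_2,m_3)$ and then the geometric series in $j$ gives $|\Lambda(\phi_{I\times I}\otimes f)|\lesssim |I|^{-1/2}\|f\|_2$. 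The symmetric inequalities follow by permuting the roles of the three indices. \emph{The main obstacle} is bookkeeping the two competing losses — the polynomial factors $2^{jc}$ and $(1+|m|)^{N}$ from differentiating exponentials and rescaling, versus the decay $2^{-jN'}$ and $(1+|m|)^{-N'}$ from adaptation — and checking that choosing $N'$ sufficiently large relative to $N$ and to the dimension makes every sum converge; this is routine but must be arranged carefully, and it is the reason the statement explicitly permits a loss in the order of the bumps.
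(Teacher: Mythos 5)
Your argument is essentially the paper's proof: a lacunary decomposition of $\phi_{I\times I}$ into dyadic annular pieces adapted to $2^{k}(I\times I)$, followed by a windowed Fourier series on each dilated cube and an application of the restricted boundedness hypothesis at scale $2^{k}I$ to the modulated compactly supported bumps, with the order-$N'$ adaptation decay beating the $2^{O(k)}$ and $(1+|m|)^{N}$ losses. One small remark: the appeal to the modulation symmetry \eqref{modulationinv} in your Step 2 is neither needed nor really available (two independent frequencies $m_2,m_3$ cannot be distributed along the one-parameter direction $\gamma$); the correct mechanism is the one you also state, namely that $(1+|m_i|)^{-N}$ times the modulated bump is still an $L^2$-adapted bump supported in the dilated interval, so the hypothesis of Theorem \ref{main} applies to each term directly.
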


\begin{proof}
For simplicity of notation we shall assume $I$ is centered at the origin
and of length $1$. 
We take a lacunary decomposition related to $I$:
let $\Phi$ be as in (\ref{Phi}) and define
$$\phi_0= (\Phi\otimes \Phi)\phi_{I\times I}$$
$$\phi_k= (\Phi_k \otimes \Phi_k-\Phi_{k-1}\otimes \Phi_{k-1})\phi_{I\times I}$$
for $k>0$, where $\Phi_k=D^\infty_{2^{k}}\Phi$.
Note that the functions $\phi_k$ add up to $\phi_{I\times I}$, so by
using the continuity of $\Lambda $ in ${\cal S}(\R^3)$ we obtain
\begin{equation}
\label{e.e1}
\Lambda(\phi_{I\times I}\otimes f)=\sum_{k \ge 0}\Lambda(\phi_k\otimes f).
\end{equation}

Note also that  $2^{5k}\phi_k $
is $L^2$-adapted to and supported in $2^{k+2}(I\times I)$. We write
$$\phi_k(x_1,x_2)=\widetilde{\phi_k}(x_1,x_2) \Phi_k(x_1) \Phi_k(x_2)$$
where $\widetilde{\phi_k}$ means the periodization of $\phi_k$ from the square $2^{k+2}(I\times I)$.
By performing a windowed Fourier series, we obtain
$$\phi_k(x_1,x_2)=\sum_{m_1,m_2\in \mathbb Z} c_{k,m_1,m_2}\Phi_k(x_1)\Phi_k(x_2)e^{2\pi i 2^{-(k+2)}(m_1x_1+m_2x_2)}$$
where the coefficients $c_{k,m_1,m_2}$ are rapidly decaying in the sense that
\begin{equation}
\label{e.e2}
|c_{k,m_1,m_2}|\lesssim 2^{-5k} (1+\max(|m_1|,|m_2|))^{-2N-5}
\end{equation}
Denoting by
$\Phi_{k,m}(x)=\Phi_k(x)e^{2\pi i 2^{-(k+2)}mx}$
we have that the functions $|m|^{-N}2^{-k/2}\Phi_{k,m}$ are $L^2$- normalized, adapted of order $N$ and supported in $2^{k+2}I$, uniformly in $m$ and $k$.

By \eqref{e.e1} we have
\begin{align*}
|\Lambda(\phi\otimes & f)|\le \sum_{k\geq 0}\sum_{m_1,m_2\in \mathbb Z}
|c_{k,m_1,m_2}| |\Lambda (\Phi_{k,m_1} ,\Phi_{k,m_2},f)|\\&\le \sum_{k\geq 0}\sum_{m_1,m_2\in \mathbb Z}|m_1|^{N}|m_2|^N2^k|c_{k,m_1,m_2}\Lambda (|m_1|^{-N}2^{-k/2}\Phi_{k,m_1} ,|m_2|^{-N}2^{-k/2}\Phi_{k,m_2},f)|
\end{align*}
Now the estimate of the lemma follows by applying \eqref{e.e2} and the restricted boundedness condition to each summand on the right hand side
with interval $2^{k+2}I$.
\end{proof}

We will now give a rigorous definition of $T_i(1,1)$ as a distribution modulo constants
and prove that the restricted boundedness property implies that they are
elements of BMO. The approach is similar 
to the linear case and we will follow some of the arguments in \cite{ST}. See also \cite{To} and \cite{GT}
for similar linear and multilinear definitions.
We start with the following lemma. 

\begin{lemma}\label{definebmo}
Assume that $\Lambda$ is a trilinear Calder\'on-Zygmund form with modulation
symmetry in the direction $\gamma$. Let $\Phi$ be as in \eqref{Phi}. For every $C_0^\infty$ function $f$ 
 with mean zero the limit
\begin{equation}
L(f)=
\lim_{k\to \infty} \Lambda ( D^\infty_{2^k}(\Phi \otimes \Phi) \otimes f) =
\lim_{k\to \infty}  \langle T_3(D^\infty_{2^k}\Phi, D^\infty_{2^k}\Phi)|f\rangle \label{L}\end{equation}
exists. Moreover, if $\operatorname{supp} f  \subset (-2^{k_0},2^{k_0})$, then for sufficiently large $k$ (depending on $k_0$ and $\gamma$) we have the error bound
\begin{equation}
\label{deferrorb12}
|L(f)- \Lambda(D^\infty_{2^k}(\Phi \otimes \Phi)\otimes f)|\lesssim 2^{-\delta (k-k_0)} \|f\|_{L^1}
\end{equation}
where $\delta$ is the parameter in the Calder\'on-Zygmund property of
the kernel $K$ and the implicit constant is independent of $k_0, k$ and $f$.

\end{lemma}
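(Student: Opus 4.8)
The plan is to show that the sequence $\Lambda(D^\infty_{2^k}(\Phi\otimes\Phi)\otimes f)$ is Cauchy by estimating the difference of consecutive terms and summing a geometric series; the error bound \eqref{deferrorb12} then falls out of the same estimate. Fix a $C_0^\infty$ function $f$ with mean zero supported in $(-2^{k_0},2^{k_0})$, and for $k>k_0$ set $\Psi_k=D^\infty_{2^k}(\Phi\otimes\Phi)$, so that $\Psi_k\equiv 1$ on $[-2^k,2^k]^2$ and is supported in $[-2^{k+1},2^{k+1}]^2$. First I would write
$$\Lambda(\Psi_{k+1}\otimes f)-\Lambda(\Psi_k\otimes f)=\Lambda\big((\Psi_{k+1}-\Psi_k)\otimes f\big),$$
and observe that the bump $g_k:=\Psi_{k+1}-\Psi_k$ vanishes on $[-2^k,2^k]^2$, so in particular it vanishes on a large neighbourhood of $0\in\R^2$, hence the tensor $g_k\otimes f$ is a Schwartz function on $\R^3$ whose support is disjoint from the span of $(1,1,1)$: indeed, if $x\alpha+t\beta$ lies in the support, then $x+\beta_1 t\in(-2^k,2^k)$ forces (since the $\beta_j$ are distinct and $f$ is supported in a ball of radius $2^{k_0}\ll 2^k$) that both $x+\beta_2 t$ and $x+\beta_3 t$ are of size $O(2^{k_0})$, contradicting that one of the first two coordinates must lie outside $[-2^k,2^k]$. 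Therefore the kernel representation \eqref{interep} applies (which is legitimate here because restricted boundedness implies the weak boundedness property (\ref{weakbounded}), as noted in the text), giving
$$\Lambda(g_k\otimes f)=\int_{\R^2} g_k(x+\beta_1 t,x+\beta_2 t)\,f(x+\beta_3 t)\,K(x,t)\,dx\,dt.$$

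Next I would exploit the mean-zero hypothesis on $f$ together with the Hölder regularity \eqref{e.e1Cal2} of $K$. Changing variables so that $f$ is integrated in one coordinate (equivalently, using the $\tilde K$ representation \eqref{SKREP} applied to the third pairing slot), one writes the integral as a pairing of $f$ against a kernel in its own variable, subtracts the value of that kernel at the center of $\supp f$ (allowed since $\int f=0$), and uses that on the region of integration the relevant points are separated by a distance $\gtrsim 2^k$ while $f$ lives at scale $2^{k_0}$. Concretely this produces a bound of the form
$$|\Lambda(g_k\otimes f)|\lesssim \|f\|_{L^1}\,2^{\delta k_0}\!\!\int_{|y|\gtrsim 2^k}\!\! |y|^{-1-\delta}\,dy\;\lesssim\; 2^{-\delta(k-k_0)}\|f\|_{L^1},$$
where the factor $2^{\delta k_0}$ comes from the diameter of $\supp f$ and the remaining integral over the far region of the kernel contributes $2^{-\delta k}$; here one also uses \eqref{e.e1Cal1} to control the part of $g_k$ near its inner boundary and the fact that $\|g_k\|_\infty\lesssim 1$ with $g_k$ supported in an annulus of size $2^k$. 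Summing over $k\ge k_0$ gives that the partial sums form a Cauchy sequence, so $L(f)$ exists, and summing the tail from a fixed $k$ onwards yields precisely \eqref{deferrorb12}.

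The main obstacle, and the step requiring the most care, is the geometric claim that $\supp(g_k\otimes f)$ stays away from the line $\R(1,1,1)$ and, more importantly, that in the resulting kernel integral the variable of $f$ is genuinely separated from the support of $g_k$ by a distance proportional to $2^k$ — this is what licenses both the use of the kernel formula and the decay estimate. Getting the constants right (how large $k$ must be relative to $k_0$, depending on $\gamma$ and on the spread of the $\beta_j$) is the delicate bookkeeping; it amounts to checking that the parallelogram $\{|x+\beta_1 t|<2^{k+1},\ |x+\beta_3 t|<2^{k_0}\}$ has all its points with $|x+\beta_2 t|$ comparable to $2^k$, which is a routine but essential linear-algebra computation with the matrix having rows $\alpha$ and $\beta$. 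Once that separation is established, the rest is the standard Calderón–Zygmund telescoping argument, and the independence of the implicit constant from $k_0,k,f$ is automatic from the homogeneity of the bounds.
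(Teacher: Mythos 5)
Your proposal follows essentially the same route as the paper's proof: telescope over dyadic scales, check that the support of $g_k\otimes f$ avoids the span of $(1,1,1)$ so the kernel representation applies, change variables to the two-variable kernel $\tilde K$, subtract the integrand at the center of $\supp f$ using $\int f=0$, and sum the resulting $2^{-\delta(k-k_0)}$ bounds. The one point to tighten is that this subtraction produces two difference terms, and your displayed estimate covers only the H\"older variation of $\tilde K$; the variation of $g_k$ in the $f$-variable (entering through the linear relation $\sum_i \gamma_i(x+\beta_i t)=0$) should be handled via the gradient bound $\|\nabla g_k\|_\infty\lesssim 2^{-k}$ rather than via \eqref{e.e1Cal1}, yielding an extra contribution $\lesssim 2^{k_0-k}\le 2^{-\delta(k-k_0)}$ (since $\delta\le 1$), which is exactly how the paper treats it.
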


\begin{proof}
Assume  $\operatorname{supp} f \subset (-2^{k_0},2^{k_0})$.
For all $j>k\gg  k_0$ write 
$$D^\infty_{2^j}(\Phi \otimes \Phi)-D^\infty_{2^k}(\Phi \otimes \Phi)=
 \sum_{l=1}^{j-k} D^\infty_{2^{l+k}}(\Phi \otimes \Phi)-D^\infty_{2^{l+k-1}}(\Phi\otimes \Phi)$$
and let $\psi_l= D^\infty_{2^{l+k}}(\Phi\otimes\Phi)-D^\infty_{2^{l+k-1}}(\Phi\otimes\Phi)$. We will estimate
$|\Lambda(\psi_{l}\otimes f)|$ to prove that the sequence 
$\Lambda ( D^\infty_{2^k}(\Phi \otimes \Phi)\otimes f)$ is Cauchy, as well as to estimate the error bound.
Since the support of $\psi_l\otimes f$ is disjoint from the span of $(1,1,1)$, we can use the kernel
representation of $\Lambda$
$$
\Lambda(\psi_l\otimes f)
=\int \psi_l(x+\beta_1t,x+\beta_2t)f(x+\beta_3 t) K(x,t)\, dxdt.
$$
Similarly to what we did in Section \ref{sec2.1} for bilinear forms, a simple change of variables allows us 
to write the above integral in the form
$$
\int \psi_l (u,v)f(w) \tilde{K}(u,w)\, dudw
$$
where $v=-\gamma_2^{-1}(\gamma_1 u+\gamma_3w)$ and $\tilde K$ satisfies the classical Calder\'on-Zygmund
estimates
\begin{equation}
\label{CCZZ1}
|\tilde{K}(u,w)|\lesssim |u-w|^{-1},
\end{equation}
\begin{equation}
\label{CCZZ2}
|\tilde{K}(u,w)-\tilde{K}(u',w')|\lesssim |u-w|^{-(1+\delta )}\|(u,w)-(u',w')\|^{\delta }
\end{equation}
for $c_\beta \|(u,w)-(u',w')\| <|u-w|$.

On the support of $\psi_l$, 
$$2^{l+k-1}\leq \|(u,v)\| \leq 2^{l+k+1}$$
and for $w \in \supp f$, $|w|<2^{k_0}$. Note that  if  $|u|<  (1+ |\gamma_2^{-1}\gamma_1|)^{-1} 2^{k+l-2}$ and $k\gg k_0$, then
$$\|(u,v)\| \leq |u| (1+ |\gamma_2^{-1}\gamma_1| )+ |\gamma_2^{-1}\gamma_3|2^{k_0} < 2^{k+l-1}.$$
Thus, in the integral representation
 of  $\Lambda(\psi_l\otimes f)$  we may assume 
$$2^{k+l} \lesssim |u| \leq 2^{k+l+1}.$$

Using the mean zero property of $f$ we obtain
$$
\Lambda(\psi_l\otimes f) = \int [\psi_l(u, v) \tilde{K}(u,w)-\psi_l(u,-\gamma_2^{-1}\gamma_1u) \tilde{K}(u,0)]f(w) \, dwdu.
$$
We now write the term in square
brackets as
$$
(\psi_l(u,v)-\psi_l(u,v_0))\tilde{K}(u,w)
+ \psi_l(u,v_0) (\tilde{K}(u,w)-\tilde{K}(u,0))
$$
and estimate each term by its supremum norm on the domain of
integration. Clearly, we have $|\psi_l|\le 2$.
As the derivative of $\psi_l$ is $O(2^{-k-l})$, we have
$$
|\psi_l(u,v)-\psi_l(u,v_0)| \lesssim 2^{-k-l}|w|\lesssim 2^{-k-l}2^{k_0}.
$$
For $\tilde{K}$ we have from the Calder\'on-Zygmund estimates 
$$|\tilde{K}(u,w)|\lesssim |u-w|^{-1}\lesssim (2^{k+l} -2^{k_0})^{-1}\lesssim 2^{-k-l}$$
 and
$$
|\tilde{K}(u,w)-\tilde{K}(u,0)|\lesssim |u|^{-(1+\delta )}|w|^{\delta }\lesssim 2^{(-k-l)(1+\delta)}2^{k_0\delta},
$$
Hence we can estimate
$$
|\Lambda(\psi_l\otimes f)|
\lesssim (2^{-2(k+l)} 2^{k_0}+ 2^{-(k+l)(1+\delta)}2^{k_0\delta})\int_{|u|<2^{k+l+1}}\int |f(w)| \, dudw$$$$
\lesssim  2^{-(k+l-k_0)\delta} \|f\|_{L^1}.
$$
Summing in $l$ finishes the proof of the lemma.
\end{proof}

\begin{lemma}\label{uniformBMO} Assume that $\Lambda$ satisfies the restricted boundedness conditions. Then the linear functional given by 
(\ref{L})  can be extended to all the Hardy space $H^1$ and
defines an element of BMO that we will denote by $T_3(1,1)$.
\end{lemma}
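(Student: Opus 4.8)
The plan is to show that the functional $L$ defined on mean-zero $C_0^\infty$ functions by \eqref{L} extends to a bounded functional on $H^1(\R)$, and then invoke the duality $(H^1)^* = \BMO$. First I would recall that $H^1$ has an atomic decomposition: every $f \in H^1$ can be written as $f = \sum_k \lambda_k a_k$ with $\sum_k |\lambda_k| \approx \|f\|_{H^1}$, where each $a_k$ is an $L^\infty$-atom supported in an interval $I_k$ with mean zero and $\|a_k\|_\infty \le |I_k|^{-1}$. Since $L$ is already defined on the dense subspace of mean-zero $C_0^\infty$ functions, it suffices to establish the \emph{uniform atomic bound}
$$|L(a)| \lesssim 1$$
for every such atom $a$, with implicit constant independent of the atom; the extension to all of $H^1$ then follows by the standard density/summation argument, and the resulting functional lies in $\BMO$ by the $H^1$--$\BMO$ duality.

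To prove the atomic bound, fix an atom $a$ supported in an interval $I$, which after translation we may take centered at the origin, with $|I| = 2^{k_0}$ up to a harmless constant; normalize so $\|a\|_\infty \le |I|^{-1}$, hence $\|a\|_{L^1} \le 1$ and $\|a\|_{L^2} \le |I|^{-1/2}$. Choose $k$ slightly larger than $k_0$ (say $k = k_0 + C_\gamma$ for a fixed constant) so that Lemma~\ref{definebmo} applies: its error bound \eqref{deferrorb12} gives
$$|L(a) - \Lambda(D^\infty_{2^k}(\Phi\otimes\Phi)\otimes a)| \lesssim 2^{-\delta(k-k_0)}\|a\|_{L^1} \lesssim 1.$$
It then remains to bound the single term $|\Lambda(D^\infty_{2^k}(\Phi\otimes\Phi)\otimes a)|$. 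Here $D^\infty_{2^k}(\Phi\otimes\Phi)$ is (a constant multiple of) an $L^\infty$-normalized bump on the square $2^k I \times 2^k I \approx I \times I$ (up to the fixed dilation factor), and $a$ is supported in $I \subset C(2^k I)$. Writing $D^\infty_{2^k}\Phi = 2^{k/2} \cdot (2^{-k/2} D^\infty_{2^k}\Phi)$ and noting that $2^{-k/2}D^\infty_{2^k}\Phi$ is $L^2$-adapted to $2^k I$, we apply Lemma~\ref{ctos} (or directly the restricted boundedness condition \eqref{restrictedbounded}) with interval $2^k I$ to get
$$|\Lambda(D^\infty_{2^k}(\Phi\otimes\Phi)\otimes a)| \lesssim 2^{k} \cdot |2^k I|^{-1/2}\|a\|_{L^2} \lesssim 2^{k}\cdot 2^{-k/2}\cdot 2^{-k_0/2} \lesssim 2^{(k-k_0)/2} \lesssim 1,$$
using again that $k - k_0$ is a fixed constant. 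Combining with the error estimate yields $|L(a)| \lesssim 1$.

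The main obstacle is bookkeeping the normalizations consistently: the functions $D^\infty_{2^k}\Phi$ are $L^\infty$-normalized, whereas the restricted boundedness hypothesis is phrased for $L^2$-normalized bumps, so one must carefully track the factor $2^{k/2}$ per bump against the $|2^kI|^{-1/2}$ on the right-hand side of \eqref{restrictedbounded}, and against the $\|a\|_{L^2} \le |I|^{-1/2}$ estimate for the atom; the point is that these powers of $2^k$ cancel up to the fixed exponent in $2^{(k-k_0)/2}$, which is $O(1)$ because $k-k_0$ was chosen to be a constant. A secondary point, once the atomic bound is in hand, is that the functional $L$ a priori depends only on the mean-zero part of a test function (adding a constant to $f$ changes nothing since $\Lambda(D^\infty_{2^k}(\Phi\otimes\Phi)\otimes 1)$ makes no sense in the definition), so $T_3(1,1)$ is naturally an element of $\BMO$ only modulo constants --- which is exactly the correct statement, matching the classical linear $T(1)$ theorem.
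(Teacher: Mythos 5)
Your computation on a single atom is essentially correct, and it uses the same two ingredients as the paper (the error bound \eqref{deferrorb12} of Lemma \ref{definebmo} at a scale $k$ comparable to that of the atom, plus Lemma \ref{ctos}/restricted boundedness for the one remaining term). The genuine gap is the sentence claiming that the uniform atomic bound plus a ``standard density/summation argument'' yields the extension to $H^1$. A linear functional that is uniformly bounded on atoms need \emph{not} extend to a bounded functional on $H^1$ (this is a known phenomenon, going back to an example of Y.~Meyer), so something more must be said. Concretely, what has to be proved is $|L(f)|\lesssim \|f\|_{H^1}$ for $f$ in the dense class of mean-zero $C_0^\infty$ functions, where $L$ is actually defined by \eqref{L}. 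Your atom bound does not give this directly: such an $f$ is a single atom only after dividing by $|I|\,\|f\|_\infty$, which is not comparable to $\|f\|_{H^1}$; and if instead you expand $f=\sum_j\lambda_j a_j$ into an infinite atomic decomposition, you must justify $L\bigl(\sum_j\lambda_j a_j\bigr)=\sum_j\lambda_j L(a_j)$, i.e.\ interchange the defining limit $k\to\infty$ in \eqref{L} with a sum that converges only in $H^1$ (or $L^2$). That interchange requires control of $\Lambda(\phi_k\otimes a_j)$, with $\phi_k=D^\infty_{2^k}(\Phi\otimes\Phi)$, which is uniform in $k$, not only at the single scale $k\approx k_j$ that your argument selects. (A lesser point: $L(a)$ is not literally defined for an atom, since Lemma \ref{definebmo} is stated for $C_0^\infty$ mean-zero functions; this is a routine smoothing matter, unlike the uniformity issue.)

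This missing uniformity is exactly what the paper's proof supplies: it shows $\|T_3(\phi_k)\|_{{\rm BMO}}\lesssim 1$ uniformly in $k$ (for each interval $I$ the oscillation of $T_3(\phi_k)$ is estimated by splitting $\phi_k$ into a piece adapted to $I\times I$, handled by Lemma \ref{ctos}, and annular pieces handled through the kernel representation and the Calder\'on--Zygmund estimates), then extracts a weak-$*$ limit in ${\rm BMO}=(H^1)^*$ and identifies it with $L$ on the dense class via Lemma \ref{definebmo}. Your route can be repaired in the same spirit without stating the full BMO bound: prove $\sup_k\sup_a|\Lambda(\phi_k\otimes a)|\lesssim 1$ over all atoms $a$ and \emph{all} $k$ (for $2^k\lesssim|I_a|$ use Lemma \ref{ctos} directly, which gives $2^{k/2}\|a\|_2\lesssim 1$; for $2^k\gg|I_a|$ combine your computation with the telescoping estimate from the proof of Lemma \ref{definebmo}); then, for fixed $k$, use the $L^2$-boundedness of $f\mapsto\Lambda(\phi_k\otimes f)$ (again Lemma \ref{ctos}) to sum an atomic decomposition converging in $L^2\cap H^1$, obtaining $|\Lambda(\phi_k\otimes f)|\lesssim\|f\|_{H^1}$ uniformly in $k$; finally let $k\to\infty$. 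Only after this uniform-in-$k$ estimate do density and $H^1$--${\rm BMO}$ duality legitimately produce the element $T_3(1,1)$ (defined, as you correctly note, modulo constants).
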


\begin{proof}
We will show first  that the functions $T_3(D^\infty_{2^k}\Phi, D^\infty_{2^k}\Phi)$ are uniformly bounded in BMO. 
To simplify the notation, let $\phi_k=D^\infty_{2^{k}}(\Phi\otimes\Phi)$, and 
let $c_\gamma$ be a large constant (depending only on $\gamma$) whose value will become clear later. We will show that the mean oscillation of $T_3(\phi_k)$ on some arbitrary interval $I$ is $O(1)$.

If $I$ is an interval  with $c_\gamma|I| \geq 2^{k}$, then by Lemma \ref{ctos}
$$\|T_3(\phi_k)\|_{L^2(I)}\leq \|T_3(\phi_k)\|_{L^2}\lesssim  2^{k/2}\lesssim |I|^{1/2},$$
which implies that the mean oscillation of $T_3(\phi_k)$ on $I$ is $O(1)$. 

Assume next that $I$ is an interval centered at the point $c(I)$ and such that $c_\gamma |I| < 2^{k}$. Let 
$M$ be the smallest integer so that  $[-2^k,2^k] \subseteq
 c_\gamma 2^M I$  and write 
$$
\phi_k= \sum_{l=0}^{M-1} \psi_{k,I,l},
$$
where 
$$\psi_{k,I,0}= (\tau_{(c(I),c(I))}D^\infty_{c_\gamma|I|}(\Phi \otimes \Phi ))\,
\phi_k$$
and 
$$\psi_{k,I,l}=( \tau_{(c(I),c(I))}D^\infty_{c_\gamma2^{l+1}|I|}(\Phi \otimes \Phi ) - 
\tau_{(c(I),c(I))}D^\infty_{c_\gamma2^{l}|I|}(\Phi \otimes \Phi))
\,
\phi_k.$$

By Lemma \ref{ctos} , for any function in $f$ in $L^2$ we can write (with a small abuse of notation)
$$
\langle T_3(\phi_k) | f\rangle = \Lambda(\phi_k \otimes f) =  \sum_{l=0}^{M-1} \Lambda (\psi_{k,I,l} \otimes f) =  \sum_{l=0}^{M-1} \langle T_3 (\psi_{k,I,l}) | f \rangle$$

Since $c_\gamma|I| \le 2^{k}$, 
 $|I|^{-1} \psi_{k,I,0}$ is $L^2$- adapted to $I\times I$
and we can estimate again using Lemma \ref{ctos},
$$\|T_3(\psi_{k,I,0})\|_{L^2(I)} \lesssim |I|^{1/2},$$ which implies that the mean oscillation of $T_3(\psi_{k,I,0})$ on $I$ is $O(1)$.
In the other terms $T_3 (\psi_{k,I,l})$ can be represented  on $I$ by absolutely convergent integrals
which, after the usual change of coordinates,  take the form
$$
T_3(\psi_{k,I,l})(w)= \int  \psi_{k,I,l}(u,v)\tilde{K}(u,w) \, du,
$$
with $v=-\gamma_2^{-1}(\gamma_1 u+\gamma_3w)$. 
Observing that $v-c(I) = -\gamma_2^{-1}(\gamma_1 ( u-c(I))+\gamma_3(w-c(I)))$,  we may proceed as in the previous lemma to verify that the above integral may be restricted to the region where 
$$c'_\gamma c_\gamma 2^l |I| \leq |u-c(I)| \leq c_\gamma 2^{l+1} |I|$$
for some small constant $c'_\gamma$ depending only on $\gamma$.
We take now $c_\gamma$ large enough so that 
$$|u-c(I)|\ge \max\{10\times 2^l,c_{\beta}\}|I|,$$
where $c_{\beta}$ is associated with $\tilde{K}$ as in Lemma \ref{definebmo}.

Define $v_0=-\gamma_2^{-1} (\gamma_1 u+\gamma_3 c(I))$ and the constant
$$C_{k,I,l}= \int \psi_{k,I,l}(u,v_0)\tilde{K}(u,c(I)) \, du$$
 and  write 
$$
|T_3(\psi_{k,I,l})(w) - C_{k,I,l} | \leq $$$$ \int  |\psi_{k,I,l}(u,v) - \psi_{k,I,l}(u,v_0)| \, |\tilde{K}(u,w)| \, du +
\int  |\psi_{k,I,l}(u,v_0)| \,| \tilde{K}(u,w) - \tilde{K}(u,c(I)|\, du. 
$$

Take now some $w\in I$. 
We can apply \eqref{CCZZ2} to obtain that the second integral above is bounded (up to a multiplicative constant) by
$$
\int_{|u-c(I)|> 10\times 2^l |I|} |I|^\delta |u-c(I)|^{-(1+\delta)}\,du \lesssim 2^{-l\delta}.$$ 

To estimate the first integral note that 
$$\|\frac{\partial\psi_{k,I,l}}{\partial v}\|_{\infty}\lesssim \frac{1}{\min(c_\gamma2^l |I|, 2^k)},\;\;|v-v_0|\lesssim|I|,\;\;|\tilde K(u,w)| \lesssim (2^l|I|)^{-1},$$
and  the region of integration has length $O(\min(c_\gamma2^l |I|, 2^k))$.
By putting these things together, the first integral above is easily seen to be $O(2^{-l})$. This proves that the mean oscillation of $T_3(\psi_{k,I,l})$ over the interval $I$ is $O(2^{-l\delta})$. Finally, by the triangle inequality we conclude that $\|T_3(\phi_k)\|_{BMO}\lesssim 1$.

To conclude the proof we can now use the $H^1-BMO$ duality. In fact, since the unit ball of the dual of 
a Banach space is weak$^*$-compact, we can extract a subsequence of $T_3(\phi_{k_j})$ so that
$\lim_{j \to \infty} \langle T_3(\phi_{k_j}), f \rangle = \langle T_3(1,1)|f \rangle$ 
for some $T_3(1,1)\in BMO$ and all $f\in H^1$. However, the previous lemma shows that the  sequence on the left converges to $L(f)$  for $C^\infty$ functions $f$ with compact support and mean zero. Since such functions are dense in $H^1$, it follows that the continuous functional induced by $T_3(1,1)$ on $H^1(\R)$ extends $L$ to all $H^1$  and that $T_3(1,1)$ is the unique limit (in BMO) of the sequence $T_3(\phi_{k})$.
\end{proof}

The following result shows that the definition of  $T_3(1,1)$ in Lemma \ref{definebmo} 
is independent of the choice of function $\Phi$ in a very general sense. 

\begin{lemma}
\label{altdefT1neededlater}
Let $f$ be a Schwartz function supported on $I = [-1/2,1/2]$  and with mean zero, and $k>0$.
Then, for every $L^\infty$-normalized bump function $\phi$ adapted to
the square
$[-2^k,2^k]\times [-2^k,2^k]$
with   $\phi \equiv 1$ in a neighborhood of $(0,0)$  
we have the estimate
$$|\langle T_3(1,1)|f\rangle-\Lambda(\phi,f)|\le C_{\gamma,\delta'}2^{-\delta' k} \|f\|_{L^1},$$
where we can take $\delta' = \delta$ if $\delta<1$ and $\delta' = 1^-$ if $\delta=1$.

Moreover, if $\Lambda$ satisfies the restricted boundedness conditions then the estimate above still holds with $\|f\|_1$ replaced by  $\|f\|_2$  (note $|I|^{1/2}= 1$), if the $L^\infty$-normalized bump function $\phi$ adapted to
the square
$[-2^k,2^k]\times [-2^k,2^k]$ satisfies only $\phi(0,0)=1$.  
\end{lemma}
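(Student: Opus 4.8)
\emph{Strategy.} The plan is to compare $\Lambda(\phi\otimes f)$ (which is what is meant by $\Lambda(\phi,f)$) with $\Lambda(\phi_k\otimes f)$ for the model bump $\phi_k:=D^\infty_{2^k}(\Phi\otimes\Phi)$, and then invoke Lemma~\ref{definebmo}. Since $\supp f\subset I=[-1/2,1/2]\subset(-1,1)$ we may take $k_0=0$ in \eqref{deferrorb12}, which yields $|L(f)-\Lambda(\phi_k\otimes f)|\lesssim 2^{-\delta k}\|f\|_{L^1}$; under the restricted boundedness conditions Lemma~\ref{uniformBMO} identifies $L(f)$ with $\langle T_3(1,1)|f\rangle$. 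Hence everything reduces to the estimate
$$|\Lambda((\phi-\phi_k)\otimes f)|\lesssim 2^{-\delta' k}\|f\|_{L^1},$$
where in the moreover part $\|f\|_{L^1}$ is replaced by $\|f\|_{L^2}$ (legitimate since $\|f\|_{L^1}\le |I|^{1/2}\|f\|_{L^2}=\|f\|_{L^2}$).

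\emph{The computational heart.} Put $h:=\phi-\phi_k$. Writing $\phi=D^\infty_{2^k}\phi_0$ with $\phi_0$ an $L^\infty$-adapted bump on the unit square, $\phi_0(0,0)=1$, one has $h=D^\infty_{2^k}(\phi_0-\Phi\otimes\Phi)$, so $|h|\lesssim 1$, $|\partial_v h|\lesssim 2^{-k}$, $h$ decays like $\chi^N$ adapted to $[-2^k,2^k]^2$, and $h$ vanishes on a neighborhood of $(0,0)$ whenever $\phi\equiv 1$ near $(0,0)$. In that case, since $\supp f\subset[-1/2,1/2]$, the function $h\otimes f$ is supported off the span of $(1,1,1)$, so the kernel representation of $\Lambda$ applies; after the change of variables used in the proof of Lemma~\ref{definebmo},
$$\Lambda(h\otimes f)=\int h(u,v)\,f(w)\,\tilde K(u,w)\,du\,dw,\qquad v=-\gamma_2^{-1}(\gamma_1 u+\gamma_3 w),$$
and, as there, the integrand is supported in $|w|<1/2$, $|u|\gtrsim 2^k$. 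Using $\int f=0$ we subtract $h(u,v_0)\tilde K(u,0)$ with $v_0:=-\gamma_2^{-1}\gamma_1 u$ and split the term in brackets as $(h(u,v)-h(u,v_0))\tilde K(u,w)+h(u,v_0)(\tilde K(u,w)-\tilde K(u,0))$. Since $|v-v_0|\lesssim|w|\lesssim 1$, $|\partial_v h(u,v)|\lesssim 2^{-k}(1+|u|/2^k)^{-N}$ and $|\tilde K(u,w)|\lesssim|u|^{-1}$ on the domain, the first part contributes $\lesssim 2^{-k}\int_{|u|\gtrsim 2^k}(1+|u|/2^k)^{-N}|u|^{-1}\,du\,\|f\|_{L^1}\lesssim 2^{-k}\|f\|_{L^1}$, while \eqref{CCZZ2} gives $|\tilde K(u,w)-\tilde K(u,0)|\lesssim|u|^{-1-\delta}|w|^{\delta}$, so the second part contributes $\lesssim\int_{|u|\gtrsim 2^k}(1+|u|/2^k)^{-N}|u|^{-1-\delta}\,du\int|w|^{\delta}|f(w)|\,dw\lesssim 2^{-\delta k}\|f\|_{L^1}$. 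This gives the first statement, with $\delta'=\delta$ for $\delta<1$ and any $\delta'<1$ for $\delta=1$.

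\emph{The moreover part.} Now only $\phi(0,0)=1$ is assumed, so $h$ need not vanish near the origin; but on $[-2,2]^2$ (where $\phi_k\equiv 1$) we still have $|h|=|\phi-1|\lesssim 2^{-k}$ together with $|\partial h|\lesssim 2^{-k}$. Fix a bump $\zeta$ equal to $1$ on $[-1,1]^2$ and supported in $[-2,2]^2$, and write $h=(1-\zeta)h+\zeta h$. The piece $(1-\zeta)h$ vanishes on $[-1,1]^2\supset\{(s,s):|s|\le 1/2\}$, so $(1-\zeta)h\otimes f$ is supported off the span of $(1,1,1)$ and is controlled by the computation above. The piece $\zeta h$ equals $2^{-k}$ times a fixed multiple of an $L^2$-normalized bump $L^2$-adapted to $[-2,2]^2$, so Lemma~\ref{ctos} (the weak boundedness consequence of the restricted boundedness conditions) gives $|\Lambda(\zeta h\otimes f)|\lesssim 2^{-k}\|f\|_{L^2}$. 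Adding the two pieces and recalling $\|f\|_{L^1}\le\|f\|_{L^2}$ completes the proof.

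\emph{The main obstacle.} The only delicate point is the behavior of $\phi\otimes f$ near the span of $(1,1,1)$, and this is exactly what separates the two halves of the statement: when $\phi\equiv 1$ near the origin, $\phi-\phi_k$ vanishes there and the kernel representation is unambiguously valid, reducing matters to the standard $T(1)$-type cancellation against the Calder\'on--Zygmund kernel $\tilde K$ displayed above; when merely $\phi(0,0)=1$, the residual $O(2^{-k})$ mass near the diagonal must be absorbed, which is precisely the role of the weak (hence restricted) boundedness hypothesis in the moreover part. The remaining ingredients — the change of variables, the cutoff $\zeta$, and the Calder\'on--Zygmund estimates — are routine.
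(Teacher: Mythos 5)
Your overall architecture (compare $\phi$ with $\phi_k=D^\infty_{2^k}(\Phi\otimes\Phi)$, invoke Lemma~\ref{definebmo}, use the kernel representation plus the mean zero of $f$, and absorb the near-diagonal piece by restricted boundedness) is the same as the paper's, but the central estimate has a genuine gap: the claim that the integrand of $\int h(u,v)f(w)\tilde K(u,w)\,du\,dw$ is ``supported in $|u|\gtrsim 2^k$'' is unjustified. The hypothesis is only that $\phi\equiv 1$ in \emph{a} neighborhood of $(0,0)$ (in the moreover part, only $\phi(0,0)=1$), so $h=\phi-\phi_k$ vanishes only on that neighborhood; on the huge intermediate region $1\lesssim\|(u,v)\|\lesssim 2^k$ one merely has $|h(u,v)|\lesssim 2^{-k}\|(u,v)\|$, not $h=0$. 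With the support only constrained to $|u|\gtrsim 1$, your second term becomes $\int_{|u|\gtrsim 1}|h(u,v_0)|\,|u|^{-1-\delta}\,du\lesssim\int_1^\infty r^{-1-\delta}dr=O(1)$, i.e.\ no decay in $k$ at all, so the bound $2^{-\delta' k}\|f\|_{L^1}$ does not follow from what you wrote. The same defect is inherited verbatim by the moreover part: your piece $(1-\zeta)h$ vanishes only on $[-1,1]^2$, so it is emphatically not supported where $|u|\gtrsim 2^k$, and ``controlled by the computation above'' fails for exactly the reason just given. (Your claim would be legitimate only if $\phi\equiv1$ on a square of side comparable to $2^k$; that is a strictly stronger hypothesis than the lemma's, and it would also make the stated loss $\delta'=1^-$ at $\delta=1$ superfluous, which signals that this is not the intended reading.)

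The missing ingredient is precisely the linear vanishing of $h$ at the origin: from $\phi(0,0)=1$ and $\|\nabla h\|_\infty\lesssim 2^{-k}$ one gets $|h(u,v)|\lesssim\min\bigl(1,2^{-k}\|(u,v)\|\bigr)$ times the decay factor, and this must be fed into the kernel estimate scale by scale on $1\lesssim\|(u,v)\|\lesssim 2^k$. This is what the paper does through the annular decomposition $\Psi_{k\pm l}=\Phi_{k\pm l}\,\psi_k$: on the annulus of radius $2^{k-l}$ one has $|\Psi_{k-l}|\lesssim 2^{-l}$ and $|\nabla\Psi_{k-l}|\lesssim 2^{-k}$, producing contributions $2^{-k}$ and $2^{-k\delta}2^{-l(1-\delta)}$ whose sum over the $\sim k$ intermediate scales yields $2^{-k\delta}$ for $\delta<1$ and $k2^{-k}$ for $\delta=1$ --- this is exactly where the $\delta'=1^-$ loss comes from. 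Your treatment of the innermost piece $\zeta h$ via Lemma~\ref{ctos} is fine and matches the paper's handling of $\tilde\Psi_{k_0}$, and your reduction through Lemma~\ref{definebmo} with $k_0=0$ is fine for $k$ large; but as written, the first part and the $(1-\zeta)h$ piece of the moreover part need to be redone using $|h|\lesssim 2^{-k}\|(u,v)\|$ (e.g.\ via the paper's dyadic annuli, or by inserting $\min(1,2^{-k}|u|)$ into your single integral) before the stated bounds follow.
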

\begin{proof}
Let $\Phi$ be as before.
Considering $\psi_k=D^\infty_{2^k} (\Phi\otimes\Phi) -\phi$, we need to prove
$$|\Lambda(\psi_k\otimes f)|\lesssim 2^{-\delta k}.$$
Let 
$$\Phi_{k\pm l}= D^\infty_{2^{k \pm  l +1}}(\Phi\otimes \Phi)-D^\infty_{2^{k \pm l}}(\Phi\otimes\Phi).
$$
For a large $k_0$ and  $k\gg k_0$, write
$$
\psi_k =  \sum_{l=0}^{\infty} \Psi_{k+l} + \sum_{l=1}^{k-k_0}  \Psi_{k-l} + \tilde \Psi_{k_0},
$$
where
$$\Psi_{k \pm l}= \Phi_{k\pm l}\psi_k =
(D^\infty_{2^{k \pm  l +1}}(\Phi\otimes \Phi)-D^\infty_{2^{k \pm l}}(\Phi\otimes\Phi)) (D^\infty_{2^k} (\Phi\otimes\Phi) -\phi)$$
and 
$$
\tilde \Psi_{k_0}=D^\infty_{2^{k_0}}(\Phi\otimes\Phi)\psi_k.$$

We can apply the reasoning of  Lemma~\ref{definebmo} to $\Psi_{k\pm l} $ to
write $\Lambda(\Psi_{k \pm l}\otimes f)$ as the sum of two integrals, one involving 
$(\nabla \Psi_{k \pm l})\tilde K$ and the other involving 
$\Psi_{k\pm l}(\nabla \tilde K)$, and where the integration in $u$ takes place for $|u|\approx 2^{k\pm l}$.
We then need to estimate $|\nabla \Psi_{k\pm l}|$ and $|\Psi_{k\pm l}|$ and combine them with the Calder\'on-Zygmund estimates on $\tilde K$ (which can be applied if $k_0$ is chosen large enough depending only on $\gamma$). 

For $\Psi_{k+l}$, we can use in the first integral that $\psi_k$ is adapted to a square of side length $2^k$
to get 
$$
|\nabla \Psi_{k+ l}| \leq |\nabla \Phi_{k+l}| |\psi_k| + |\Phi_{k+l}| |\nabla \psi_k|$$$$
\lesssim (2^{-(k+l)} \| \psi_k \|_{L^\infty} + \|\Phi_{k+l} \|_{L^\infty} 2^{-k} (1+ \frac{2^{k+l}}{2^k})^{-N}
\lesssim 2^{-(k+l)}
$$
and in the second integral we simply use that that $\Psi_{k+l}$ is bounded. Together with the Calder\'on-Zygmund estimates this leads to estimates on the integrals
of order $2^{-(k+l)}$ and $2^{-(k+l)\delta}$, respectively, which sum in $l$ to the correct bound.
On the other hand, since
$\phi(0,0)=1$, we can use  for $\Psi_{k-l}$ 
 in the first integral 
 $$
|\nabla \Psi_{k- l}| \leq |\nabla \Phi_{k-l}| |\psi_k| + |\Phi_{k-l}| |\nabla \psi_k|$$$$
\lesssim 2^{-(k-l)}  2^{-k} 2^{(k-l)}+ \|\Phi_{k-l} \|_{L^\infty} 2^{-k}
\lesssim 2^{-k}
$$
and
$$
|\Psi_{k-l}|\lesssim 2^{-k} 2^{(k-l)}\lesssim  2^{-l}$$ in the second integral, which leads to 
estimates of the order of  $2^{-k}$ and $2^{-k \delta} 2^{-l(1-\delta)}$, respectively. Noting that there are less than $k$ terms in  this case, we get to an estimate of the form $2^{-k\delta'}$ with $\delta'$ as in the statement.

Finally, note that the  last term,
$\Lambda(\tilde \Psi_{k_0}\otimes f)$ is zero if  $\phi \equiv 1$ in a neighborhood of $(0,0)$ 
or otherwise it can be controlled by the restricted boundedness property since 
$\tilde \Psi_{k_0}$ is adapted to a cube of side length $2^{k_0}$ with a constant of the order of 
$2^{-k}$.
\end{proof}

\begin{remark}\label{definet11}
For any fixed $y$ and $\Phi$ as in \eqref{Phi},
the function $\tau_{(y ,y)}D^\infty_{2^k} (\Phi \otimes \Phi)$ is an $L^{\infty }$-normalized bump function adapted to
$[-2^k, 2^k]\times [-2^k, 2^k]$ and equal to 1 in a neighborhood of the origin, for sufficiently large $k$. Hence we may also write
$$\Lambda (1,1,f)= \langle T_3(1,1)| f\rangle =\lim_{k\to \infty} \Lambda(\tau_{(y,y)} D^\infty_{2^{k}} (\Phi \otimes \Phi),f).$$
Moreover, an examination of the proof of Lemma~\ref{definebmo}, shows that if  the test function $f$ 
with mean zero satisfies $\supp f \subset I$, then 
$$| \Lambda (1,1,f) - \Lambda(\tau_{(c(I),c(I))} D^\infty_{2^{k}|I|} (\Phi \otimes \Phi),f)| \lesssim 2^{-k\delta} \|f\|_{L^1},$$
for large $k$ depending only on $\gamma$.\\
\end{remark}

We now introduce a weaker continuity property on $\Lambda$ alluded to before.

\begin{definition}
We say that a trilinear form $\Lambda$ satisfies the weak boundedness  property if for any interval $I$
and any $\phi$ that is $L^2$-normalized and adapted to $I\times I\times I$  of order $N$,
\begin{equation}\label{weakbounded}
|\Lambda(\phi)|\le C|I|^{-1/2}.
\end{equation}
\end{definition}

The restricted boundedness conditions with order $N'\gg N$ imply (\ref{weakbounded}) of order $N$. This is immediate for $\phi$ given by a tensor product, while arguments similar to the ones used
in Lemma~\ref{ctos} give the general case. 
We will prove Theorem~\ref{main} assuming only the weak boundedness  property and that the 
distributions  $T_j(1,1)$  given by  \eqref{L} (and its symmetric versions) are in BMO for $j=1,2,3$. Since, as we already saw, all these conditions are implied by the
restricted boundedness, it will follow then yet another formulation of the main theorem.

\begin{theorem}
Let $\Lambda$ be a Calder\'on-Zygmund trilinear form with modulation symmetry in the direction of $\gamma$.
Then $\Lambda$ is bounded in the range
of exponents of Theorem \ref{main} if and only if
$T_j(1,1)\in {\rm BMO}$
for each $j=1,2,3$, and $\Lambda$ satisfies the weak boundedness
property \eqref{weakbounded}.
\end{theorem}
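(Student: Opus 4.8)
The plan is to derive this reformulation directly from Theorem~\ref{main} by comparing two sets of conditions on $\Lambda$: the trilinear restricted boundedness conditions \eqref{restrictedbounded}, and the pair consisting of ``$T_j(1,1)\in\BMO$ for $j=1,2,3$'' together with the weak boundedness property \eqref{weakbounded}. Since Theorem~\ref{main} already gives the equivalence of boundedness of $\Lambda$ (in the stated range of exponents) with the restricted boundedness conditions, it suffices to show that the two sets of conditions are equivalent. Thus the theorem is not really a new statement but a repackaging, and the work is in verifying both implications.

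First I would note that one direction is already supplied in the excerpt: the remark immediately preceding the theorem observes that the restricted boundedness conditions of order $N'\gg N$ imply the weak boundedness property \eqref{weakbounded} of order $N$ (immediate for tensor-product $\phi$, and the general case by the windowed-Fourier-series argument of Lemma~\ref{ctos}), and Lemma~\ref{uniformBMO} together with its symmetric versions shows that restricted boundedness implies $T_j(1,1)\in\BMO$ for $j=1,2,3$. So $\Lambda$ bounded $\Longleftrightarrow$ restricted boundedness $\Longrightarrow$ [weak boundedness $+$ all $T_j(1,1)\in\BMO$], and this last also implies $\Lambda$ bounded once we know the converse.

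The converse — that weak boundedness plus $T_j(1,1)\in\BMO$ implies $\Lambda$ is bounded — is the substantive content, and it is exactly what the rest of the paper proves: as the road map in the introduction explains, one first handles the special-cancellation case $T_j(1,1)=0$ for all $j$ (Sections~\ref{canccond}--\ref{sec6}, via reduction to a time-frequency model form and an estimate on that model), using only the weak boundedness property and the cancellation conditions (Section~\ref{canccond1} records the relevant almost-orthogonality bounds that follow from \eqref{weakbounded}); then for a general $\Lambda$ one subtracts three modulation-invariant paraproducts $\Lambda_j$ (constructed in Section~\ref{sec7}) that are Calder\'on-Zygmund trilinear forms with the same modulation symmetry, satisfy the bounds of Theorem~\ref{main}, and have $T_j(1,1)=b_j$ and $T_i(1,1)=0$ for $i\ne j$, with $b_j$ the prescribed $\BMO$ function $T_j(1,1)$. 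The difference $\Lambda-\sum_j\Lambda_j$ then satisfies the special cancellation hypothesis and is still weakly bounded (the paraproducts being bounded), so it is bounded; adding back the bounded paraproducts gives boundedness of $\Lambda$. Therefore the present theorem follows once those sections are in place.

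The main obstacle, then, is not in this theorem's proof at all — the logical skeleton above is short — but in the two ingredients it quotes: constructing the modulation-invariant paraproducts with the exact prescribed $T_j(1,1)$ data (so that the subtraction is legitimate and stays within the class of forms to which the main theorem applies), and proving the model-form estimate in the special-cancellation case, which is the genuinely hard time-frequency analysis. For the purposes of \emph{this} statement one simply records that those are carried out later: given Theorem~\ref{main} and the material of Sections~\ref{canccond1}--\ref{sec7}, the equivalence of boundedness with ``weak boundedness $+$ $T_j(1,1)\in\BMO$ for all $j$'' is immediate, since the restricted boundedness conditions are sandwiched between the two and each side is shown (in the excerpt and in those sections, respectively) to imply the other.
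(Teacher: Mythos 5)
Your proposal is correct and follows essentially the same route as the paper: boundedness gives the restricted boundedness conditions, which imply the weak boundedness property and $T_j(1,1)\in{\rm BMO}$ (Lemma~\ref{ctos}-type arguments and Lemma~\ref{uniformBMO}), while the converse is exactly the observation that the proof of Theorem~\ref{main} in Sections~\ref{canccond1}--\ref{sec7} (special cancellation case plus subtraction of modulation invariant paraproducts) uses only the weak boundedness property and the BMO conditions. This matches the paper's own justification of the statement.
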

While the Calder\'on-Zygmund condition does not distinguish between
Calder\'on-Zygmund kernels associated with bilinear or trilinear forms, the ${\rm BMO}$
conditions constitute a real difference between distributions
which are bounded bilinear forms and those which are bounded trilinear forms.
For trilinear forms, there are three {\rm BMO} conditions, while
for bilinear forms there are only two. In the section on paraproducts
we will see that the three ${\rm BMO}$ conditions are indeed independent,
and we can adjust the three functions $T_j(1,1)$ independently for each $j$.
In particular, we can take two of these functions  in ${\rm BMO}$ and a
third one not in ${\rm BMO}$ and construct a distribution that provides
a bounded bilinear form but not a bounded trilinear one.

We also observe the following.
\begin{remark}\label{definet11123}
Let $\Lambda$ be a trilinear Calder\'on-Zygmund form  with modulation symmetry in the direction $\gamma$  and satisfying  \eqref{weakbounded}. Then, the forms $\Lambda_j$, $j=1,2,3$, satisfy the  bilinear weak boundedness property of Remark~\ref{equivTcondbmo} and the identities 
$$\Lambda_1(1,\cdot) = \Lambda (1,1,\cdot) = \Lambda_2 (1,\cdot),$$
$$\Lambda_2(\cdot, 1) = \Lambda (\cdot , 1, 1) = \Lambda_3 (\cdot , 1),$$
$$\Lambda_3 (1,\cdot) = \Lambda (1,\cdot , 1) = \Lambda_1(\cdot, 1),$$
hold when the terms are interpreted as tempered distributions modulo constants.
\end{remark}

It is trivial to check the bilinear weak boundedness conditions for the $\Lambda_j$ given the one for $\Lambda$. The above identities are also clear at a formal level. To verify them in a rigorous way, let $f$ be  a test function with mean zero supported in some interval $I$ centered at the origin. For $C$ large enough depending just on $\gamma$ and for $k$ large enough compared to $\log |I|$ we have
$$\Lambda_1(D_{2^k}\Phi,f)=\Lambda(D_{2^k}D_C\Phi,D_{2^k}\Phi,f),$$
because $1\otimes D_{2^k}\Phi \otimes f = D_{2^k}D_C\Phi \otimes D_{2^k}\Phi \otimes f$
on $\gamma^\perp$.
As $k\to\infty$ the left hand side tends to $\Lambda_1(1,f)$ while, by Lemma \ref{altdefT1neededlater}, the right hand side approaches $\Lambda(1,1,f)$ since $D_C\Phi \otimes \Phi \equiv 1$ in a neighborhood of $(0,0)$. A symmetric reasoning gives the other identities.

\section{Bump functions estimates}\label{canccond1}

In this section we prove some estimates on bump functions 
under the additional assumption of $T_j(1,1)=0$ (in the BMO sense) for $j=1,2,3$.
Recall that $T_j(1,1)$ has been defined in Lemma~\ref{uniformBMO}.
We will study the action of $\Lambda$ on triples of bump functions
and obtain good estimates in terms of the localization  in space and
frequency of the bump functions. The estimates will be obtained from
a reduction to the bilinear form case, which we will discuss first.
Lemma~\ref{twobumplemma} below is a slight generalization of some almost orthogonality 
estimates found in the literature.

In what follows, we  will write ${\rm diam}(I_1,I_2)$ for the diameter ${\rm diam}(I_1\cup I_2)$ of the union of the two intervals. We will also use the fact that if $|I_1|>|I_2|$ then
$$1+\frac{{\rm diam}(I_1, I_2)}{|I_1|}\approx 1+\frac{|c(I_1)-c(I_2)|}{|I_1|}.$$

\begin{lemma}\label{molecules}
Let $\phi_J$ and $c_I$ be bump functions $L^2$- adapted of order $N$ to the intervals $J$ and $I$. We have
\begin{equation}\label{01}
  \int |\phi_J (x)|\,|c_I (x)|\, dx  \lesssim  \left(\min(\frac{|I|}{|J|},\frac{|J|}{|I|})\right)^{1/2}(1+(\max(|I|,|J|)^{-1}{\rm diam}(I,J))^{-N}
  \end{equation}
  while
\begin{equation}\label{02}
\left |\int \phi_J (x) c_I (x)\,  dx \right | \lesssim  \left( \frac{|I|}{|J|}\right)^{3/2}(1+|J|^{-1}{\rm diam}(I,J))^{-N},
\end{equation}
as long as
 $|J|\geq |I|$ and $\int c_I(x)  \,dx=0$.
\end{lemma}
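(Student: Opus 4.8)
The inequality \eqref{01} is a standard Schur-type estimate and I would dispatch it first. By symmetry assume $|J|\ge|I|$. Split the integral over the region where $|x-c(I)|\le \tfrac12\,\mathrm{diam}(I,J)$ and its complement is not quite the right cut; instead I would simply use the pointwise bounds $|\phi_J(x)|\lesssim |J|^{-1/2}\chi_J^N(x)$ and $|c_I(x)|\lesssim |I|^{-1/2}\chi_I^N(x)$, so the integral is bounded by $|I|^{-1/2}|J|^{-1/2}\int \chi_J^N(x)\chi_I^N(x)\,dx$. On the support-bulk of $I$ one has $\chi_J^N(x)\lesssim (1+\mathrm{diam}(I,J)/|J|)^{-N}$ (using $|J|\ge |I|$ and the remark preceding the lemma relating $\mathrm{diam}$ to the distance between centers), and $\int \chi_I^N \lesssim |I|$; away from $I$ one pays extra decay from $\chi_I^N$ which absorbs the tail. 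This yields $|I|^{-1/2}|J|^{-1/2}\cdot |I|\cdot(1+\mathrm{diam}(I,J)/|J|)^{-N} = (|I|/|J|)^{1/2}(1+\mathrm{diam}(I,J)/\max(|I|,|J|))^{-N}$, which is \eqref{01}.

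For \eqref{02} the extra gain of $(|I|/|J|)$ beyond the trivial $(|I|/|J|)^{1/2}$ comes from the mean-zero hypothesis on $c_I$, so the plan is a one-term Taylor expansion of $\phi_J$. Write $\int \phi_J c_I = \int (\phi_J(x)-\phi_J(c(I)))\,c_I(x)\,dx$, using $\int c_I=0$. By the mean value theorem and the adaptedness bound on $\phi_J'$, namely $|\phi_J'(y)|\lesssim |J|^{-1/2-1}\chi_J^N(y)$, we have $|\phi_J(x)-\phi_J(c(I))|\lesssim |J|^{-3/2}|x-c(I)|\,\sup_{y\in[c(I),x]}\chi_J^N(y)$. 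On the bulk of $I$, $|x-c(I)|\lesssim |I|$ and $\chi_J^N$ along the segment is comparable to its value near $c(I)$, i.e.\ $\lesssim (1+\mathrm{diam}(I,J)/|J|)^{-N}$; combined with $|c_I(x)|\lesssim |I|^{-1/2}\chi_I^N(x)$ and $\int\chi_I^N\lesssim |I|$ this contributes $|J|^{-3/2}\cdot |I|\cdot |I|^{-1/2}\cdot |I|\cdot(1+\mathrm{diam}(I,J)/|J|)^{-N} = (|I|/|J|)^{3/2}(1+\mathrm{diam}(I,J)/|J|)^{-N}$, as desired.

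The only genuine subtlety — and the step I expect to be the main obstacle — is the tail region $|x-c(I)|\gg |I|$, where the factor $|x-c(I)|$ in the Taylor remainder grows and one cannot naively bound it by $|I|$. Here one must exploit the fast decay of $\chi_I^N$ in $|c_I|$, and, when $x$ is also far from $J$, the decay of $\chi_J^N$ as well; the bookkeeping is to show that for $|x-c(I)| = t\,|I|$ with $t\ge 1$ the integrand is $\lesssim t^{-N+1}$ times the bulk contribution, which is summable once $N$ is taken large enough (recall the paper's convention that the order $N$ may always be increased). One should also be slightly careful that $\sup_{y\in[c(I),x]}\chi_J^N(y)$ is genuinely controlled — since $\chi_J$ is unimodal about $c(J)$ and $c(J)$ may lie on the segment $[c(I),x]$, the supremum is just $\le 1$ in the worst case, but that already suffices because then automatically $\mathrm{diam}(I,J)\lesssim |x-c(I)|$ and the $\chi_I^N$ decay in $|c_I|$ pays for both the growth of $|x-c(I)|$ and the factor $(1+\mathrm{diam}(I,J)/|J|)^{-N}$; I would organize the tail estimate around exactly this dichotomy (segment avoids $c(J)$, so $\chi_J^N$ decays / segment contains $c(J)$, so $\mathrm{diam}(I,J)$ is itself small compared to the large quantity $|x-c(I)|$). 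Summing the dyadic tail pieces then closes the argument.
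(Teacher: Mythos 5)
Your argument is correct and is essentially the standard one: the paper does not prove Lemma \ref{molecules} at all but defers to the references \cite{M} and \cite{FJ}, and what you propose (size bounds plus the remark $1+\mathrm{diam}(I,J)/|J|\approx 1+|c(I)-c(J)|/|J|$ for \eqref{01}; mean zero of $c_I$ plus a first-order Taylor/mean-value bound on $\phi_J$ for \eqref{02}, with the dichotomy on whether the segment $[c(I),x]$ approaches $c(J)$) is exactly the argument found there. The only bookkeeping caveat is quantitative: running your tail estimate with order-$N$ adaptedness yields decay of order roughly $N-2$ rather than $N$ (you spend two powers of $\chi_I$ on the factor $|x-c(I)|$ and on the shell measure), so to get the exponent $-N$ as stated one should start from adaptedness of slightly higher order --- which is harmless here, since the paper's bumps are adapted of every order and its convention is that $N$ may always be increased, a point you yourself flag.
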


Note that the estimate in (\ref{01}) only needs the size estimates on $\phi_J$ and $c_I$ but not the ones on their derivatives. 
Similar estimates are obtained when the bumps $\phi_J$ are replaced by the
ones obtained by the action of Calder\'on-Zygmund operators with certain cancellation.
More precisely one has.

\begin{lemma}\label{waveletstomolecules}
Let $\phi_J$ and $c_I$ be as in the previous lemma with $N\ge 2$.
Assume that $\Lambda$ is a bilinear Calder\'on-Zygmund form associated
with a kernel $K$ with regularity parameter $\delta$. Suppose that $\Lambda$ satisfies the restricted boundedness conditions
\eqref{linearrestricted} (or equivalently that is bounded on $L^2 \times L^2$) and that the linear dual operator\footnote{Recall that this is $\Lambda(f_1,f_2)=\int T(f_1)f_2$} $T$ has the special cancellation condition
$T (1)=0$. 
Then for each $0<\delta'<\delta$,
\begin{equation}\label{03}
\int |T(\phi_J) (x)| \, | c_I (x)|\,dx  \lesssim   \left(\min(\frac{|I|}{|J|},\frac{|J|}{|I|})\right)^{1/2} (1+(\max(|I|,|J|)^{-1}{\rm diam}(I,J))^{-(1+\delta')},
\end{equation} 
provided that $\int \phi_J(x)\, dx=0$, while
\begin{equation}\label{04}
\left |  \int T(\phi_J) (x) c_I (x)\, dx \right | \lesssim \left( \frac{|I|}{|J|}\right)^{1/2+\delta'}(1+|J|^{-1}{\rm diam}(I,J))^{-(1+\delta')}
\end{equation}
if $|J|\geq |I|$, $\int \phi_J(x)\,dx=0$, and $\int c_I(x)\,dx=0$.
\end{lemma}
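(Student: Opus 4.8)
The plan is to reduce everything to Lemma~\ref{molecules} by splitting $T(\phi_J)$ into a ``local'' piece near $J$ and a ``tail'' piece, and to use the cancellation hypothesis $T(1)=0$ together with the kernel regularity to gain the extra exponent $\delta'$. Throughout I will write $\tilde K$ for the Calder\'on--Zygmund kernel of $T$ in the classical representation \eqref{SKREP}, and I will freely use the $L^2$-boundedness of $\Lambda$ guaranteed by the restricted boundedness conditions (Theorem~\ref{classicalt1}), as well as the standard fact that on a region well-separated from $J$ the function $T(\phi_J)$ is represented by the absolutely convergent integral $\int \phi_J(y)\tilde K(x,y)\,dy$. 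First I would fix a dilate $2J$ (or $cJ$ with $c$ depending only on $\delta$) and write $T(\phi_J)=\chi_{2J}T(\phi_J)+\chi_{(2J)^c}T(\phi_J)$. For the local piece, $L^2$-boundedness gives $\|\chi_{2J}T(\phi_J)\|_2\lesssim 1$ and one checks it is (up to an $L^2$-normalizing dilation factor) adapted to $2J$; then \eqref{01} of Lemma~\ref{molecules} — which, as the remark after that lemma notes, needs only size bounds — yields the desired decay against $c_I$ for this part.

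For the tail piece the point is to exploit $\int\phi_J=0$. On $(2J)^c$ one has
$$
T(\phi_J)(x)=\int \phi_J(y)\bigl(\tilde K(x,y)-\tilde K(x,c(J))\bigr)\,dy,
$$
and \eqref{e.e1Cal2} (in the $\tilde K$ form recorded in Section~\ref{sec2.1}) gives $|T(\phi_J)(x)|\lesssim |J|^{1/2+\delta}\,\dist(x,J)^{-1-\delta}$ after integrating the $L^2$-normalized bump $\phi_J$; more precisely $|T(\phi_J)(x)|\lesssim |J|^{\delta}(1+|J|^{-1}|x-c(J)|)^{-1-\delta}\chi_{2J}(x)^{\,?}$ type decay, so $T(\phi_J)$ behaves like an $L^2$-molecule associated with $J$ with decay exponent slightly below $1+\delta$. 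One then pairs this pointwise bound against $|c_I|$ and carries out the elementary integral $\int (1+|J|^{-1}|x-c(J)|)^{-1-\delta}|c_I(x)|\,dx$, distinguishing the cases $|I|\le|J|$ and $|I|>|J|$ and the cases $\dist(I,J)\lesssim \max(|I|,|J|)$ versus $\gg$; in each case the worst term is controlled by the right-hand side of \eqref{03} with any $\delta'<\delta$ (the slight loss from $\delta$ to $\delta'$ absorbs the polynomial factors in the $\chi_I^N$ weights). This proves \eqref{03}.

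For \eqref{04}, which additionally assumes $|J|\ge|I|$ and $\int c_I=0$, I would again split at $2J$. On the local part, having $\int c_I=0$ lets me subtract the value of $T(\phi_J)$ at $c(I)$ and use the smoothness of $T(\phi_J)$ inside $2J$ — here one needs $N\ge 2$ and the estimate $\|(T(\phi_J))'\|_{L^2(2J)}\lesssim |J|^{-1}$, which follows from the $L^2$-bound applied to $\partial_x$ of the kernel representation, i.e. from the Calder\'on--Zygmund derivative bound — to gain a factor $(|I|/|J|)$; combined with the $(|I|/|J|)^{1/2}$ from size-and-support considerations this gives the $(|I|/|J|)^{3/2}$-type gain of \eqref{02}, which is stronger than the claimed $(|I|/|J|)^{1/2+\delta'}$. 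On the tail part one uses \emph{both} cancellations: first $\int\phi_J=0$ to get the molecular pointwise bound on $T(\phi_J)$ as above, then $\int c_I=0$ to subtract $T(\phi_J)(c(I))$ and exploit the H\"older continuity of the molecule, picking up an extra $|I|^{\delta'}(\max(|I|,|J|))^{-\delta'}=(|I|/|J|)^{\delta'}$; together with the size gain this produces $(|I|/|J|)^{1/2+\delta'}$ and the stated spatial decay. The main obstacle, and the only genuinely delicate point, is bookkeeping the interaction between the polynomial weights $\chi_I^N$, $\chi_J^N$ in the definition of adaptedness and the kernel decay exponents: one must verify that in every geometric configuration the weights never eat more than an $\varepsilon$ of the gain, which is exactly why the statement is phrased with $\delta'<\delta$ rather than $\delta$ itself. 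Everything else is a routine, if somewhat lengthy, case analysis modeled on the proof of Lemma~\ref{molecules} and on \cite{ST}.
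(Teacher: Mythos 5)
The paper itself does not prove this lemma: it refers to the references \cite{M} and \cite{FJ} for these standard almost-orthogonality/molecule estimates. Your sketch breaks down precisely at the points those references work hardest. For \eqref{03}, your treatment of the local piece asserts that $\chi_{2J}T(\phi_J)$ is (after renormalization) adapted to $2J$ and then invokes \eqref{01}. But $L^2$-boundedness only gives $\|T(\phi_J)\|_2\lesssim 1$; it gives no pointwise bound of the form $|T(\phi_J)(x)|\lesssim |J|^{-1/2}\chi_{2J}^N(x)$, so ``adapted'' is unjustified, and Cauchy--Schwarz alone yields only $\int_{2J}|T(\phi_J)||c_I|\lesssim 1$, which misses the factor $(|I|/|J|)^{1/2}$ exactly in the regime $|I|\ll |J|$ with $I$ inside (a dilate of) $J$ --- the regime that is actually used in the proof of Lemma~\ref{twobumplemma} (the sum $\sum_B$). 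Controlling $T(\phi_J)$ near $J$ in local averages is the heart of the lemma, and it is where the hypothesis $T(1)=0$ must enter; note that your argument for \eqref{03} never uses $T(1)=0$ at all. Without some such input, the available tools ($\|T(\phi_J)\|_2\lesssim1$ together with $T:L^\infty\to \BMO$, hence $\|T(\phi_J)\|_{\BMO}\lesssim|J|^{-1/2}$) only give the local average bound $|J|^{-1/2}\left(1+\log(|J|/|I|)\right)$, i.e.\ a logarithmic loss that \eqref{03} does not allow.

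For \eqref{04} the key step you propose is also not available: you claim $\|(T(\phi_J))'\|_{L^2(2J)}\lesssim |J|^{-1}$ ``from the Calder\'on--Zygmund derivative bound,'' but the kernel is only $\delta$-H\"older (not differentiable), and a non-convolution Calder\'on--Zygmund operator does not smooth locally, so $T(\phi_J)$ has no pointwise or derivative control inside $2J$; in particular the $(|I|/|J|)^{3/2}$ gain you claim for the local piece cannot hold in general --- the genuine limitation is the $(|I|/|J|)^{1/2+\delta'}$ of the statement, and it comes from the near region, not the tail. The standard route (as in \cite{M}, \cite{FJ}) is instead to transpose, $\int T(\phi_J)\,c_I=\langle \phi_J | T^*(c_I)\rangle$, use the molecular decay of $T^*(c_I)$ away from $I$ (mean zero of $c_I$ plus kernel regularity in the second variable), subtract the constant $\phi_J(c(I))$ from $\phi_J$ using its smoothness at scale $|J|$, and kill the subtracted term via $\langle T(1)| c_I\rangle=0$ --- this is exactly where $T(1)=0$ and $\int c_I=0$ are used, and where the restriction $\delta'<\delta$ (a logarithm when $\delta=1$) appears. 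A further, fixable, omission: since $\phi_J$ is only adapted, not compactly supported, the kernel representation of $T(\phi_J)(x)$ off $2J$ is not literally valid and requires first decomposing $\phi_J$ into compactly supported pieces on dyadic dilates of $J$, with the attendant bookkeeping of the (then only approximate) mean-zero property of each piece.
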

For a proof of Lemma~\ref{molecules} and Lemma~\ref{waveletstomolecules} see, e.g.,  \cite{M} and \cite{FJ}.  We have now the following generalization.

\begin{lemma}\label{twobumplemma}
Assume again that $\Lambda$ is a bilinear form  associated
with a Calder\'on-Zygmund kernel $K$ with regularity parameter $\delta$. Suppose that $\Lambda$ is bounded  and that the dual linear operator $T$ has the special cancellation condition
$T (1)=0$. 

Let $|I_1|\ge |I_2|$ be intervals and let $\psi\in {\cal S}(\R^2)$ be a bump adapted to
$I_1\times I_2$ of order at least $4$. Assume $\psi$ has mean zero in the second variable, meaning
$$\int\psi(x_1,x_2)\, dx_2=0$$
for all $x_1$. Then for each $0<\delta'<\delta$,
\begin{equation}\label{twobump}
|\Lambda(\psi)|\lesssim \left(\frac{|I_2|}{|I_1|}\right)^{1/2+\delta'}
\left( 1+|I_1|^{-1}{\rm diam}(I_1,I_2) \right)^{-(1+\delta')}.
\end{equation}

\end{lemma}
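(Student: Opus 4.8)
\textbf{Proof plan for Lemma~\ref{twobumplemma}.}

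The plan is to reduce the estimate for the bilinear form $\Lambda$ acting on a two-variable bump $\psi$ to the one-variable molecular estimates of Lemma~\ref{molecules} and Lemma~\ref{waveletstomolecules}, by expanding $\psi$ in a suitable wave-packet (windowed Fourier) series in the first variable. First I would normalize: assume $I_1$ is centered at the origin and, after a harmless rescaling, that $|I_1|=1$, so $|I_2|\le 1$ and the target bound becomes $|\Lambda(\psi)|\lesssim |I_2|^{1/2+\delta'}(1+\operatorname{diam}(I_1,I_2))^{-(1+\delta')}$. The key structural observation is that $\psi(x_1,x_2)$, viewed as a function of $x_1$, is adapted to $I_1$; expanding it on a (slightly enlarged) interval $C I_1$ in a Fourier series in $x_1$ with a fixed smooth window $\Phi$, one writes
$$
\psi(x_1,x_2)=\sum_{m\in\Z} c_m(x_2)\,\Phi(x_1)e^{2\pi i m x_1},
$$
where each coefficient $c_m(x_2)$ is (up to the usual polynomial loss in $m$) a bump $L^2$-adapted to $I_2$, still with mean zero in $x_2$, and where $\sup_{x_2}|c_m(x_2)|\lesssim (1+|m|)^{-N}$ together with the analogous bound after differentiation. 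Thus $\Lambda(\psi)=\sum_m \Lambda(\Phi\, e^{2\pi i m\cdot}\otimes c_m)$, and it suffices to bound each term by $(1+|m|)^{N}$ times the desired right-hand side and sum the geometric-type series in $m$ (choosing the order of adaptation $N$ of $\psi$ large enough — at least $4$ plus whatever is lost — so the sum converges).

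For a single term, write $g_m:=\tau_{0}(\Phi) M_m = \Phi(x_1)e^{2\pi i m x_1}$, a bump adapted to $I_1$ but \emph{without} mean zero, and $h:=c_m$, a mean-zero bump adapted to $I_2$. Now I would split $g_m = g_m^0 + (g_m - g_m^0)$ where $g_m^0$ is a fixed multiple of a mean-zero bump adapted to $I_1$ handling the $m\ne 0$ oscillation, plus, for $m=0$, I use the hypothesis $T(1)=0$ exactly as in Lemma~\ref{waveletstomolecules}: since $T(1)=0$, $\Lambda(\Phi\otimes h) = \Lambda((\Phi-1)\otimes h) + \Lambda(1\otimes h)$, and $\Lambda(1,h)=\langle T^*(h),1\rangle$-type term vanishes (or is harmless) because $h$ has mean zero and $T(1)=0$; more precisely one replaces $\Phi$ by $\Phi-1$ at the cost of a kernel term that is controlled, away from the diagonal, by the Calder\'on-Zygmund size and regularity bounds on $\tilde K$ together with the mean-zero property of $h$. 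After this reduction each term is of the form "bilinear CZ form applied to a mean-zero bump adapted to $I_1$ tensor a mean-zero bump adapted to $I_2$", which is precisely the setting where \eqref{03} (with the roles chosen so $|I_1|\ge|I_2|$, i.e. $T(\phi_{I_1})$ tested against $c_{I_2}$) or \eqref{04} applies and yields the bound $(|I_2|/|I_1|)^{1/2+\delta'}(1+|I_1|^{-1}\operatorname{diam}(I_1,I_2))^{-(1+\delta')}$.

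The main obstacle I anticipate is bookkeeping the interaction between the two separate features of $\psi$: the mean-zero condition is only assumed in the \emph{second} variable, whereas the molecular lemmas that produce the crucial extra decay $\delta'$ want cancellation on the function to which $T$ is applied — and here $T$ acts in the first variable, where $\psi$ need not have mean zero. Resolving this is exactly the point of invoking $T(1)=0$: it lets one manufacture the missing first-variable cancellation by subtracting the constant $1$ from the window, transferring the burden onto a kernel error term that is tamed by the $|u-v|^{-1-\delta}$ regularity estimate on $\tilde K$ and the second-variable cancellation of $c_m$. Getting the two cancellations to combine so as to produce the full gain $(|I_2|/|I_1|)^{1/2+\delta'}$ rather than just $(|I_2|/|I_1|)^{1/2}$, uniformly in the separation $\operatorname{diam}(I_1,I_2)$ and with summable constants in $m$, is the delicate part; everything else is the routine windowed-Fourier-series expansion plus Lemma~\ref{molecules}/\ref{waveletstomolecules}.
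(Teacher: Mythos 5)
Your plan is correct in substance, but it takes a genuinely different route from the paper. The paper expands $\psi$ in the \emph{first} variable in a wavelet basis: the building blocks $\phi_J$ have mean zero at every dyadic scale $J$, the coefficients $c^J_{I_2}(x_2)=\langle\psi(\cdot,x_2),\phi_J\rangle$ inherit the second-variable cancellation and are estimated via \eqref{01}--\eqref{02}, and then each pairing $\langle T\phi_J\,|\,c^J_{I_2}\rangle$ is bounded by the quoted molecular estimates \eqref{03}--\eqref{04}; the extra gain $\delta'$ comes from summing the three scale regimes $|J|<|I_2|$, $|I_2|\le|J|\le|I_1|$, $|J|>|I_1|$. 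You instead stay at the single scale $|I_1|$ with a windowed Fourier expansion and manufacture the missing first-variable cancellation directly from $T(1)=0$ by subtracting a constant from the window. This works: for the non-cancellative piece one writes $g=g(c(I_2))\cdot 1+b$, kills $\langle T(1)\,|\,h\rangle$ by $T(1)=0$ against the mean-zero $h$, bounds the part of $b$ living at scale $|I_2|$ by $L^2$-boundedness (contributing $(|I_2|/|I_1|)^{3/2}$, up to polynomial losses in $m$), and bounds the far part by the kernel regularity of $\tilde K$ together with the mean zero of $h$ (contributing $(|I_2|/|I_1|)^{1/2+\delta}$ and the $\mathrm{diam}$ decay); the genuinely cancellative parts of $g_m$ are covered by \eqref{04}. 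What the paper's route buys is that \eqref{03}--\eqref{04} can be cited as black boxes, at the price of the three-range bookkeeping; what your route buys is a single-scale argument, at the price of proving a variant of \eqref{04} in which the coarse bump has no mean zero and $T(1)=0$ substitutes for it --- true and standard, but not literally one of the quoted estimates, so it must be written out. Two technical points in your write-up would need repair: $\psi$ is only \emph{adapted} to $I_1\times I_2$, not supported there, so a single Fourier series on $CI_1$ does not represent it and you must first perform a smooth truncation/lacunary decomposition in $x_1$ (as in Lemma~\ref{ctos}); and the objects $\Lambda((\Phi-1)\otimes h)$ and $\langle T(1)\,|\,h\rangle$ must be given rigorous meaning via limits of truncations, in the spirit of Lemma~\ref{definebmo}. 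Both are routine, and with them your plan yields the stated bound, including summability in $m$ under the order-$4$ adaptation.
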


\begin{proof}

Let $\psi(x_1,x_2)$ be adapted to $I_1 \times I_2$ with $|I_1|\ge |I_2|$ and having mean zero in the second variable. Using wavelets, we can expand it
in the first variable into a a family of bumps (wavelets) $\{\phi_J\} $ which are $L^2$- adapted to the dyadic intervals $J$  and have mean zero.  Furthermore, using the linearity of $\Lambda$  and the fact that $\Lambda$ extends to a continuous functional on $L^2(\R^2)$ (this being a consequence of the hypothesis and of Theorem \ref{classicalt1}), we obtain
$$|\Lambda (\psi)| \leq \sum_J|\Lambda (\langle \psi(\cdot , x_2),\phi_J\rangle\,\phi_J(x_1))| =
 \sum_J|\Lambda (\phi_J(x_1)\,c^J_{I_2}(x_2))| = \sum_J|\langle T\phi_J | c^J_{I_2}\rangle|,$$
where the $c^J_{I_2}$ has mean zero and is adapted to the interval $I_2$ but with implicit bounds
depending on the relative sizes of $I_2$ and $J$ that can be estimated using (\ref{01}) and (\ref{02}). We will use (\ref{03}) and 
(\ref{04}) to estimate $|\langle T\phi_J | c^J_{I_2}\rangle|$. We proceed as follows.

We split  the sum as
$$\sum_J = \sum_{J:\,  |J|<|I_2|\le|I_1|} + \sum_{J:\,  |I_2|\le|J|\le|I_1|} + \sum_{J:\, |I_2|\le |I_1|<|J|} =
\sum_A + \sum_B + \sum_C.$$
Using (\ref{02}) and (\ref{03}) we estimate
$$\sum_A \lesssim  \sum_{J:\, |J|<|I_2|\le|I_1|} \left( \frac{|J|}{|I_2|}\right)^{1/2}
\left( \frac{|J|}{|I_1|}\right)^{3/2} (1+\frac{{\rm diam}(J,I_2)}{|I_2|})^{-1-\delta'}
(1+\frac{{\rm diam}(J,I_1)}{|I_1|})^{-1-\delta'}$$
$$ \lesssim  \sum_{\nu:\, 2^{-\nu}<|I_2|\le|I_1|}
 \left( \frac{2^{-\nu}}{|I_2|}\right)^{1/2}
\left( \frac{2^{-\nu}}{|I_1|}\right)^{3/2}
|I_2|^{1/2}|I_1|^{1/2} 2^{\nu}  \left( \frac{|I_2|}{|I_1|}\right)^{1/2}
 (1+\frac{{\rm diam}(I_1,I_2)}{|I_1|})^{-1-\delta'},
 $$
where we have used a discrete version of estimate (\ref{01}) to sum on all intervals
$J$ of a fixed scale. That creates the normalization factors in the last line. Simplifying, we get
$$\sum_A \lesssim    \frac{|I_2|^{1/2}}{|I_1|^{3/2}}
 (1+\frac{{\rm diam}(I_1,I_2)}{|I_1|})^{-1-\delta'} \sum_{2^{-\nu}<|I_2|} 2^{-\nu}
 \lesssim   \frac{|I_2|^{3/2}}{|I_1|^{3/2}}
 (1+\frac{{\rm diam}(I_1,I_2)}{|I_1|})^{-1-\delta'}
 ,$$
which gives a better estimate than the desired one.

Similarly for $\sum_B$, we first use (\ref{01}) and  (\ref{03}) and then sum at each scale to get
the estimate in \eqref{twobump}.

To estimate $\sum_C$, we use  (\ref{01}) and  (\ref{04}) and  $\delta'<\delta''<\delta$ to compute

$$\sum_C \lesssim  \sum_{J:\, |I_2|\le|I_1|<|J|}
 \left( \frac{|I_2|}{|J|}\right)^{1/2+\delta''}
\left( \frac{|I_1|}{|J|}\right)^{1/2}
(1+\frac{{\rm diam}(J,I_2)}{|J|})^{-1-\delta''}
(1+\frac{{\rm diam}(J,I_1)}{|J|})^{-1-\delta''}
$$
$$
\lesssim  \sum_{\nu:\, |I_2|\le|I_1|<2^{-\nu}}
 \left( \frac{|I_2|}{2^{-\nu}}\right)^{1/2+\delta''}
\left( \frac{|I_1|}{2^{-\nu}}\right)^{1/2}
 (1+\frac{{\rm diam}(I_1,I_2)}{2^{-\nu}})^{-1-\delta''}.
$$
By further splitting the above summation according to the relative size of $2^{-\nu}$ with respect to ${\rm diam}(I_1,I_2)$, we get again the desired bound.
\end{proof}

Next, we formulate the corresponding estimates for trilinear forms. We will use the following notation.

\begin{definition}
For an interval $\omega=(a,b)$ and a constant $c>0$, let  $\widetilde{c\omega} = (ca,cb)$ (that is the dilation of $\omega$ from the origin, not from its center which we denote by $c\omega$).
\end{definition}

\begin{lemma}\label{threebumplemma}
Let
$\Lambda$ be a trilinear Calder\'on-Zygmund form associated with a  kernel $K$ with parameter $\delta$, with modulation symmetry in the direction of $\gamma $
with $\gamma_i\neq 0$,  and which satisfies the weak boundedness  condition \eqref{weakbounded}. Assume also that $\Lambda(1,1, \cdot)= \Lambda( 1, \cdot , 1) = \Lambda(1, 1, \cdot) = 0$.
Let $k_1=k_2\ge k_3$ be three integers. For each $1\le i\le 3$, let $\omega_i'$ be an interval of
length $2^{-k_i}$, and assume that  $\omega_3' \cap \widetilde{\frac{\gamma_3}{\gamma_1}\omega_1'}=\emptyset$ and $\omega_3' \cap \widetilde{\frac{\gamma_3}{\gamma_2}\omega_2'}=\emptyset$.

Let $I_i$ be an interval of length $2^{k_i}$ and assume
$\phi  \in {\cal S}(\R^3)$ is a bump function of order $2N$
adapted to $I_1\times I_2\times I_3$ and with Fourier transform supported in the box
$\omega':=\omega_1'\times \omega_2'\times \omega_3'$.
Then, for each $0<\delta' <\delta $ we have
\begin{equation}\label{threebump}
|\Lambda(\phi)|\lesssim {|I|^{-1/2}}
\left(\frac{|I_3|}{|I|}\right)^{\delta'+1/2}
\left( 1+\frac{{\rm diam}(I_1,I_2,I_3)}{|I|} \right)^{-1-\delta'}
\left( 1+\frac{\big| \sum_{i=1}^3  \gamma_i c(I_i)\big|}{|I|}  \right)^{-N},
\end{equation}
where we have written $\diam (I_1,I_2,I_3)$ for $\diam(\bigcup_{i=1}^{3}I_i)$
and $|I|=|I_1|=|I_2|$.

Similar statements and estimates holds by symmetry for any permutation of the indices $1,2,3$. 
\end{lemma}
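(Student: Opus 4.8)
The plan is to reduce the trilinear estimate to the bilinear estimate of Lemma~\ref{twobumplemma}, applied to one of the dual operators. Since $\phi$ has Fourier support in $\omega_1'\times\omega_2'\times\omega_3'$ with $\omega_3'$ separated from $\widetilde{(\gamma_3/\gamma_1)\omega_1'}$ and $\widetilde{(\gamma_3/\gamma_2)\omega_2'}$, the condition $\sum_i\gamma_i\xi_i=0$ defining $\gamma^\perp$ is never satisfied on the support of $\widehat\phi$ (up to the usual harmless adjustments). Hence, using the modulation invariance and the representation $\Lambda(f)=\Lambda_*(f|_{\gamma^\perp})$ developed in Section~\ref{preliminaries}, I would write $\Lambda(\phi)$ via the third dual operator: decompose $\phi$ in the $x_1$-variable (the ``biggest'' variable, since $|I_1|=|I_2|\ge|I_3|$, but we need to feed the operator the frequency-separated variable) into a wavelet family $\{\phi_J\}$ adapted to dyadic $J$ with mean zero. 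Writing each coefficient as a function $\psi_J(x_2,x_3)$ adapted to $I_2\times I_3$ with mean zero in $x_3$ (this is where the hypothesis $\Lambda(1,1,\cdot)=0$, $\Lambda(\cdot,1,1)=0$, etc.\ is used, to guarantee the paraproduct-free cancellation), we reduce to estimating $\sum_J|\Lambda_3(\phi_J\otimes c^J)|$ or, more precisely, to a sum of bilinear-form evaluations of the type controlled by Lemma~\ref{twobumplemma}.

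The key steps, in order, are: (1) use the Fourier-support separation to pass from $\Lambda$ acting on the tensor $\phi$ to one of the bilinear forms $\Lambda_j$ acting on a suitable $2$-variable restriction, verifying the kernel representation remains valid because the support avoids $\gamma^\perp$; (2) perform a wavelet (Littlewood--Paley) decomposition in the variable carrying the separated frequency, so each piece has mean zero and the operator's $T(1)=0$ condition (here coming from the $\Lambda(1,1,\cdot)$-type hypotheses via Remark~\ref{definet11123}) applies; (3) for each piece apply the bilinear two-bump estimate \eqref{twobump}, which yields a factor $(|I_3|/|I|)^{1/2+\delta'}$ times a spatial-decay factor; (4) sum over the wavelet scales $J$ using discrete versions of \eqref{01}, exactly as in the three-sum split $\sum_A+\sum_B+\sum_C$ in the proof of Lemma~\ref{twobumplemma}, to obtain the diameter-decay factor $(1+\diam(I_1,I_2,I_3)/|I|)^{-1-\delta'}$; and (5) track the modulation/center term: the extra factor $(1+|\sum_i\gamma_i c(I_i)|/|I|)^{-N}$ comes from the rapid decay of $\widehat\phi$ away from $\gamma^\perp$ combined with the fact that translating $\phi$ by $v$ in the direction $\alpha=(1,1,1)$ does not move this quantity, while a modulation in direction $\gamma$ shifts frequencies—so the constraint $\operatorname{supp}\widehat\phi\subset\omega'$ forces $|\sum_i\gamma_i c(I_i)|\lesssim |I|$ unless one pays the Schwartz tails. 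One must also normalize $\beta$ to unit length perpendicular to $\alpha$, as allowed in Section~\ref{sec2.1}, so that the change of variables producing $\tilde K$ is uniform.

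I expect the main obstacle to be step (2)--(3): organizing the wavelet decomposition so that the resulting bilinear pieces genuinely satisfy the hypotheses of Lemma~\ref{twobumplemma}—in particular that the ``inner'' function $c^J$ has mean zero \emph{and} is honestly adapted to $I_3$ (with controlled, $J$-dependent constants), and that the relevant dual operator $T$ satisfies $T(1)=0$. Verifying the last point requires combining the vanishing hypotheses $\Lambda(1,1,\cdot)=\Lambda(\cdot,1,1)=\Lambda(1,\cdot,1)=0$ with the identities of Remark~\ref{definet11123} to conclude the corresponding bilinear $T_j(1)$ vanishes; this is a little delicate because it involves the definition of $T_j(1)$ modulo constants and the weak boundedness property. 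The bookkeeping of the decay factors—especially extracting the sharp power $\delta'+1/2$ rather than something weaker, and correctly handling the regime $|J|<|I_3|$ versus $|I_3|\le|J|\le|I|$ versus $|J|>|I|$—is routine but must mirror the $\sum_A,\sum_B,\sum_C$ analysis carefully so that the worst of the three sums still gives \eqref{threebump}. The final center-decay factor, step (5), is conceptually the cleanest: it is simply inherited from the Fourier localization hypothesis via repeated integration by parts, and does not interact with the wavelet sum.
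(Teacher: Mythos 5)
Your overall architecture---restrict to $\gamma^\perp$ so that $\Lambda(\phi)$ becomes a bilinear evaluation, get mean zero in the third variable, verify the bilinear $T(1)=0$ via Remark~\ref{definet11123}, and invoke Lemma~\ref{twobumplemma}---is the paper's route; the paper simply applies Lemma~\ref{twobumplemma} as a black box to $\psi(x+\beta_2t,x+\beta_3t)=\phi(x+\beta_1t,x+\beta_2t,x+\beta_3t)$, so the wavelet sum and the $\sum_A+\sum_B+\sum_C$ analysis you plan to redo already live inside that lemma. The genuine gap is your step (5): the factor $\bigl(1+|I|^{-1}\bigl|\sum_i\gamma_i c(I_i)\bigr|\bigr)^{-N}$ does not come from the Fourier localization of $\phi$, and there is no integration by parts to perform ($\widehat\phi$ is compactly supported in $\omega'$, which constrains the spatial centers $c(I_i)$ not at all, and $\widehat\Lambda$ is not a function with decay one could exploit). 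The correct mechanism is purely spatial: $\Lambda$ only sees $\phi|_{\gamma^\perp}$, and the distance from the center $(c(I_1),c(I_2),c(I_3))$ of the box of concentration to the plane $\gamma^\perp$ equals $\bigl|\sum_i\gamma_i c(I_i)\bigr|$; hence on the portion of $\gamma^\perp$ where $(x+\beta_2t,x+\beta_3t)$ is within $O(M|I|)$ of $I_2\times I_3$, the eliminated coordinate $x+\beta_1t$ is at distance $\gtrsim M|I|$ from $c(I_1)$, and the order-$2N$ spatial adaptation of $\phi$ in that variable makes the restriction $\psi$ adapted to $I_2\times I_3$ with constant $|I|^{-1/2}M^{-N}$. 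This is also where the prefactor $|I|^{-1/2}$ in \eqref{threebump} comes from, which your plan never accounts for; without this observation neither the center factor nor the normalization can be recovered.

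Two further corrections. The mean zero of the restriction in its second variable is a consequence of the frequency hypotheses $\omega_3'\cap\widetilde{\frac{\gamma_3}{\gamma_i}\omega_i'}=\emptyset$ (for elementary tensors the Fourier transform in $x_3$, for fixed $x_2$, is supported in $\omega_3'-\widetilde{\frac{\gamma_3}{\gamma_1}\omega_1'}$, which misses the origin; the general case follows by windowed Fourier series), not of the conditions $\Lambda(1,1,\cdot)=\Lambda(\cdot,1,1)=\Lambda(1,\cdot,1)=0$; those are used only, via Remark~\ref{definet11123}, to ensure that $\Lambda_1$ is a bounded bilinear Calder\'on--Zygmund form with $T(1)=0$, so that Lemma~\ref{twobumplemma} applies. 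Finally, always eliminating $x_1$, as you propose, yields decay only in $\diam(I_2,I_3)$, which is not comparable to $\diam(I_1,I_2,I_3)$ when $I_1$ is far from $I_2\cup I_3$ but $\sum_i\gamma_i c(I_i)$ is small; since $|I_1|=|I_2|$, one must first assume, after possibly exchanging the roles of $I_1$ and $I_2$ (i.e.\ using $\Lambda_2$ instead of $\Lambda_1$), that $\diam(I_2,I_3)\approx\diam(I_1,I_2,I_3)$.
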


\begin{proof}
We assume without loss of generality that ${\rm diam}(I_1,I_2,I_3)$
is comparable to ${\rm diam}(I_2,I_3)$. This can be achieved by
switching the otherwise symmetric roles of $I_1$ and $I_2$ if necessary. We write
$$\psi(x+\beta_2t,x+\beta_3t)=\phi(x+\beta_1t,x+\beta_2t,x+\beta_3t), $$
and note that $\Lambda_1(\psi)=\Lambda(\phi)$.  We plan to apply Lemma \ref{twobumplemma} to $\Lambda_1$ and $\psi$.

First observe that by hypothesis and Remark \ref{definet11123}, $\Lambda_1$ is bounded and satisfies the requirements of Lemma \ref{twobumplemma}. Second, we claim the function $\psi$ is adapted to $I_2\times I_3$ with constant
$|I|^{-1/2}M^{-N}$, where
$$5M= 1+|I|^{-1} \left | \sum \gamma_i c(I_i)\right |.$$
Clearly, it is enough to see this when  $|I|^{-1} \left | \sum \gamma_i c(I_i)\right |  \gg  1$.  We truncate
$$\psi=\psi_1+\psi_2$$
where
$$\psi_1(x_1,x_2)= \psi(x_1,x_2) \Phi_{MI_2}(x_1)\Phi_{MI_3}(x_2)$$
and the functions $\Phi_{MI}$ are the usual $L^\infty$-normalized functions adapted to $MI$.

If $(x_1,x_2):=(x+\beta_2t,x+\beta_3t)$ is in the support of $\psi_1$, then
$|x+\beta_1t-c(I_1)|/|I|\gtrsim M$ because
$$|\gamma_1(x+\beta_1t-c(I_1))|
\geq \left |\sum_{j=1}^3 \gamma_j(x+\beta_jt-c(I_j)) \right | -  4M|I|
=\left |\sum_{j=1}^3 \gamma_j c(I_j) \right | - 4M|I|.$$
By computing derivatives, the claim for $\psi_1$ follows easily. Let us only indicate the bound for
the function itself:
$$
|\psi_1(x_1,x_2)|
=|\phi(x+\beta_1t,x+\beta_2t,x+\beta_3t)|\Phi_{MI_2}(x_2)\Phi_{MI_3}(x_3)
$$
$$
\lesssim |I|^{-1/2}|I_2|^{-1/2}|I_3|^{-1/2}M^{-N}\times
$$
$$
(1+|I|^{-1}|x+\beta_2t-c(I_2)|)^{-N}(1+|I|^{-1}|x+\beta_3t-c(I_3)|)^{-N}.
$$
If $(x+\beta_2t,x+\beta_3t)$ is in the support of $\psi_2$, then
either $|x+\beta_2t- c(I_2)|/|I|$ is of order at least $M$ or
$|x+\beta_3t- c(I_3)|/|I|$ is of order at least $M$,
and the claim for $\psi_2$ is again easy to see.

Finally, it remains to show that $\psi$ has mean zero in the second variable.
Consider first the particular case when  $$\widehat \phi(\eta_1, \eta_2, \eta_3) = \widehat \phi_1(\eta_1)\widehat \phi_2(\eta_2)\widehat \phi_3(\eta_3).$$
A simple change of coordinates 
gives then
$$\psi(u,v)=\phi_1(-\frac{\gamma_2}{\gamma_1}u-\frac{\gamma_3}{\gamma_1}v)\phi_2(u)\phi_3(v).$$
For each $u$, the Fourier transform of $\psi$ in the second variable is supported in 
$\omega_3'-\widetilde{\frac{\gamma_3}{\gamma_1}\omega_1'}$. Since this interval does not contain the origin, the claim follows. 

To obtain the general case expand $\widehat \phi$ as a series of products
of the above, via windowed Fourier series.
\end{proof}

\begin{remark}
\label{equivcond11}
Let $\langle \gamma\rangle$ be the line spanned by $\gamma$, and $\langle\gamma^{(i)}\rangle$ be the projection of the line $\langle\gamma\rangle$ onto the plane $e_i^{\perp}$.
We observe that the conditions on $\omega'$ in the previous lemma are actually equivalent to
\begin{equation}
\label{altpropr1}
\omega'_1 \times \omega'_3 \cap\langle \gamma^{(2)}\rangle=\emptyset,
\end{equation}
\begin{equation}
\label{altpropr2}
\omega'_2 \times \omega'_3 \cap\langle \gamma^{(1)}\rangle=\emptyset,
\end{equation}

In all applications of Lemma \ref{threebumplemma}, the box $\omega'$ will emerge in the following way. We will initially have a box $\omega:=(\omega_1,\omega_2,\omega_3)$ which satisfies 
\eqref{altpropr1} and \eqref{altpropr2}.
We will then choose some appropriate vector $\xi\gamma\in \langle \gamma\rangle$ and define $\omega':=\omega-\xi\gamma$. Regardless of the choice of $\xi$, \eqref{altpropr1} and \eqref{altpropr2} imply for $i\in\{1,2\}$ that $\omega_3'\cap \widetilde{\frac{\gamma_3}{\gamma_i}\omega_i'}=\emptyset$, since $\widetilde{\frac{\gamma_3}{\gamma_i}\omega_i'}=\widetilde{\frac{\gamma_3}{\gamma_i}\omega_i}-\xi\gamma_3$ and $\omega_3'=\omega_3-\xi\gamma_3$.
\end{remark}

\section{Decomposition of $\Lambda$ under the special cancellation conditions.}\label{canccond}

In this section we will express the form $\Lambda$ as a superposition of well localized model operators. Before achieving this goal we first 
 recall how the decomposition was performed in the particular case of the bilinear Hilbert transform, see \cite{LT1}. More generally, assume $K(t)$ is an $x$ independent Calder\'on-Zygmund kernel with enough decay on the derivatives of $\widehat{K}$, and consider a $\Lambda$ which for each $f_1,f_2,f_3\in\S(\R)$ has the representation
$$\Lambda(f_1,f_2,f_3)={\rm p.v}\int f_1(x+\beta_1t)f_2(x+\beta_2t)f_3(x+\beta_3t)K(t)dx\,dt.$$
One first decomposes  $K$ in pieces $K_k$ localized at frequency $\approx 2^{-k}$. Then, the modulation symmetry of $\Lambda$ recommends a wave packet decomposition of the three functions, adapted to the scale of the multiplier. 
The geometry of the Fourier plane in conjunction with the decay in the derivatives of $\widehat{K}$ allows then one to reduce the boundedness of  $|\Lambda(f_1,f_2,f_3)|$ to that of model operators of the form 
$$\sum_{ p \in \P} \, \, a_p \, |I_p|^{-1/2}|\langle f_1,\phi_{p_1}\rangle\langle f_2,\phi_{p_2}\rangle\langle f_3,\phi_{p_3}\rangle|,$$
where  $\P$ is the collection of all multi-tiles in phase space 
$p=p_1\times p_2\times p_3,$ \, $p_i\,=\, I_{p_i}\times\omega_{p_i}$ with $\,|I_{p}|:=|I_{p_1}|=|I_{p_2}|=|I_{p_3}| $ and  $\omega_{p_i}$ pairwise disjoint for each fixed $p$.  The  $\phi_{p_i}$ are wave packets $L^2$- adapted to the time-frequency tile $p_i$  and  $\{a_p\} \in \ell^{\infty}(\P)$.

A fundamental feature of  $\P$ is that it is a one parameter family of multi-tiles, in that each $\omega_{p_i}$ determines uniquely  the other two $\omega_{p_j}$. Another important aspect about $\P$ is the so called `quartile property'; namely the fact that if for two multi-tiles $p$ and $p'$ we have $\omega_{p_i}\cap\omega_{p_i'}\not=0$ then we are guaranteed that $\omega_{p_j}\cap\omega_{p_j'}=0$ for each $j\not=i$. Both of these properties follow as a consequence of the fact that the cubes $\omega_{p_1}\times\omega_{p_2}\times\omega_{p_3}$ are located at some uniform distance from the line $\langle\gamma\rangle$ and they touch the plane $(1,1,1)^{\perp}.$ 

In the case $K$ depends on both $x$ and $t$ we proceed differently.   Consider again a Calder\'on-Zygmund trilinear form $\Lambda$ which is modulation invariant in the direction $\gamma$. We saw that 
we can write 
$$\Lambda (f_1,f_2,f_3) = \langle K(x,t) | f_1 \otimes f_2 \otimes f_3 (x \alpha  + t \beta)\rangle$$
where we still denote by $K$ a distribution in ${\cal S}' (\R^2)$, which agrees with the given kernel
when $t\neq 0$. Note that the Calder\'on-Zygmund conditions on the kernel do not say anything 
about the distribution $K$ for $t=0$. As in the linear case, the weak boundedness property and the conditions $T_j(1,1) \in {\rm BMO}$ are needed to complete, in a certain sense, the control on the distribution $K$. 

To study $\Lambda$ we will perform a particular Whitney decomposition on the frequency domain.
We want to give a heuristic motivation for it.  Based on the experience with multipliers (both linear and multilinear ones),  one  expects that a Mihlin-type of behavior for the frequency representation 
of  $\Lambda$ should play an important role.  That is, the form should be given in the frequency side  by a distribution whose derivatives behave like  the reciprocal of the distance to a particular singular or bad set. Typically one then performs a Whitney decomposition with respect to that set.  

Applying the Fourier inversion formula to each $f_j$ we see that, in the sense of distributions, we can represent 
$\Lambda$ on the frequency domain as
 \begin{equation} \Lambda ( f_1,  f_2, f_3) = \langle \widehat K(-\alpha\cdot \xi, -\beta \cdot \xi) | \widehat f_1 \otimes \widehat  f_2 \otimes \widehat f_3 (\xi)\rangle,
\label{frequencyside0}
\end{equation}
where $\xi=(\xi_1,\xi_2,\xi_3)$. The conditions on the kernel are too weak to conclude any pointwise
kind of behavior for $\widehat K$. This will substantially complicate our analysis but, intuitively,
$\beta^\perp$ should be a bad set for $\widehat \Lambda$.  To further motivate the analysis to be performed,
consider a very particular case of $K$ with compact support, smooth in $x$, and satisfying for $t\neq 0$ the stronger conditions
 $$|\partial^{\alpha} K(x,t)| \lesssim |t|^{-(1+|\alpha|)},$$
for all $|\alpha|\geq 0$. These conditions (together with the weak boundedness property) imply 
 \begin{equation}\label{mihlin}
 |\partial_v^m\widehat K(u,v)|\lesssim |v|^{-m},
 \end{equation}
 for $v\neq 0$ and $m\geq 1$.
The  estimates  say that  the derivatives $\partial_v^m \widehat K$ are only singular at the origin 
(though they still do not say anything about $\widehat K$ itself). This and the representation \eqref{frequencyside0} suggest that $\widehat \Lambda$ may have some singularities on $\beta^\perp$. 

 The representation \eqref{frequencyside0} 
 is not unique in the sense that it depends on $\beta$.  In fact, for duality purposes and to exploit the conditions
 on the operators $T_j$, we can also write by simple changes of variables
 $$\Lambda (f_1,f_2,f_3) = \langle K_3(x,t) | f_1 \otimes f_2 \otimes f_3 (x \alpha  + t \beta^3)\rangle
 =\langle T_3(f_1,f_2)|f_3\rangle$$
 $$\Lambda (f_1,f_2,f_3) = \langle K_2(x,t) | f_1 \otimes f_2 \otimes f_3 (x \alpha  + t \beta^2)\rangle
 =\langle T_2(f_1,f_3)|f_2\rangle$$
  $$\Lambda (f_1,f_2,f_3) = \langle K_1(x,t) | f_1 \otimes f_2 \otimes f_3 (x \alpha  + t \beta^1)\rangle
 =\langle T_1(f_2,f_3)|f_1\rangle,$$
 where  the vectors $\beta^j$ are still perpendicular to $\gamma$ and satisfy that  the component $\beta^j_j$ of them is zero. The $K_j$ are also related to $K$ by a change of variable and still satisfy the same conditions assumed on $K$.
 In other words,
 $$T_3(f_1,f_2)(x)=\int f_1(x+\beta^3_1t) f_2(x+\beta^3_2t) K_3(x,t)\,dt$$
 and similarly for $T_1$ and $T_2$.
 
 We obtain then the three frequency representations
  \begin{equation} \Lambda ( f_1,  f_2, f_3) = \langle \widehat K_j(-\alpha\cdot \xi, -\beta^j \cdot \xi) | \widehat f_1 \otimes \widehat  f_2 \otimes \widehat f_3 (\xi)\rangle.
\label{frequencyside}
\end{equation}
In each of them, away from a plane through $\gamma$  defined by
$$P_j:=(\beta^j)^\perp =  {\rm span}(e_j,\gamma),$$ 
the form $\Lambda$ is given by  a symbol whose  derivatives blow up according
to \eqref{mihlin} when $\xi$ approaches $P_j$.  
 We use a Whitney decomposition that simultaneously resolves all of  the three singular sets $P_j$ independently of which $T_j$ we are using. 
 In a way, the conditions $T_j(1,1) \in {\rm BMO}$ 
 are needed to control  the behavior on the bad set  
 $$S:=\bigcup_{i=1}^3 P_j$$
 and eliminate the potential singularities of $\widehat \Lambda$ on it. 
 
 We see from
 \eqref{frequencyside} that, at least formally,  $T_j(1,1)=0$ translates into
 $$
 \langle \widehat K_j(-\xi_j, 0) | \widehat f_j (\xi_j)\rangle = 0,
 $$
and hence $\widehat K_j(u, 0) =0$, so  $\widehat \Lambda$ vanishes in some sense on $S$.  We will show rigorously that we can perform our analysis in $\R^3\setminus S$ in Lemma~\ref{whitney} below. 
 
 In the case of the bilinear Hilbert transform (or a kernel that is $x$-independent),
 the formula in \eqref{frequencyside0} takes the simpler form
   \begin{equation} \Lambda ( f_1, f_2,  f_3) = \langle  \delta(\alpha\cdot \xi)\widehat K(-\beta \cdot \xi) | \widehat f_1 \otimes \widehat  f_2 \otimes \widehat f_3 (\xi)\rangle,
\end{equation}
which clearly vanishes if $\widehat f_1 \otimes \widehat  f_2 \otimes \widehat f_3$ is supported away
from $\alpha^\perp$ (which also contains $\langle \gamma \rangle$). We see then that $\widehat \Lambda$
is  supported on $\alpha^\perp$ and hence is possibly  singular only  on the line $\langle \gamma \rangle$ independently of the representation used. A one-parameter family of boxes,
i.e. cubes, is then used in this case as mentioned above. In the $x$-dependent case, however,  a two-parameter family of boxes
in $\R^3$ will be used to decompose the complement of the bad set as we will now describe.

We  first produce a Whitney decomposition of the frequency domain $\R^3\setminus S$ into tubes as defined below. This is achieved by combining two-dimensional Whitney decompositions of each $e_j^{\perp}\setminus\langle\gamma^{(j)}\rangle$. 
In a second stage we perform wave packet decompositions of the three functions adapted to such tubes. Finally,  information on the decay of the coefficients associated with various localized pieces will  be provided via the almost orthogonality estimates of the previous section, combining the kernel representation with the weak boundedness condition and the cancellations $T_j(1,1)=0$. 

\begin{definition}
A dyadic box is a parallelepiped $\omega_1\times \omega_2\times \omega_3$
such that all intervals $\omega_i$ -called the sides of the box- are dyadic intervals.  A tube
is a dyadic box where the minimal side-length is attained by
two of the sides. We also allow the maximal side to be all of $\R$. The orientation of a dyadic tube is the direction corresponding to its longest side (cubes have no orientation).
\end{definition}
Observe that a nonempty intersection of two dyadic tubes of possibly different orientations is again a dyadic tube.

Let $C_1\gg  1$ be a sufficiently large constant depending on $\gamma$, whose value will not be specified. Constraints on how large $C_1$ should be will become apparent throughout the paper. The other important constants\footnote{From now all occurrences of $C_1$, $C_2$ and $C_3$ will refer only to this constants} that will appear throughout this work are $C_2:=C_1^{1/2}$ and $C_3:=C_1^2$. 

For each
$j\in \{1,2,3\}$, we decompose the complement of the plane $P_j$ into the collection $\Omega_j$ of
minimal dyadic tubes $\omega$ such that $C_1\omega $  intersects the plane $P_j$. An important property that will be used repeatedly is that for each such tube we have $\frac{C_1}{3}\omega \cap \langle \gamma\rangle=\emptyset$.
These tubes partition the complement of $P_j$, they are infinitely long in direction $e_j$ and
their projection onto
the orthogonal plane to $e_j$ defines a standard Whitney decomposition into squares of $e_j^{\perp}\setminus \langle\gamma^{(j)}\rangle$.

Now let $\Omega $ be the collection of all tubes which are nonempty intersections of three tubes as above, one in each of $\Omega_1$, $\Omega_2$, and $\Omega_3$.
Then $\Omega$ partitions the complement of the bad set. Each of the
tubes $\omega$ in this partition, initially defined as the intersection
of three tubes, is actually determined by the intersection of two of the three
tubes, one defining the two shorter sides of $\omega$ and the other one defining the long side of $\omega$.

For each Whitney tube $\omega \in \Omega $ we consider the box
$3\omega$, which is the box $\omega$ dilated about its center by
a factor $3$.  By a standard argument for Whitney decompositions (e.g.
applied to the Whitney decompositions of each of the three planes)
these boxes have bounded overlap and two overlapping boxes have
comparable side-length in each dimension. Therefore we can find a
partition of unity of the complement of $S$
$$1_{S^c}=\sum_{\omega \in \Omega }\widehat{\phi}_\omega$$
where each $\widehat{\phi}_\omega$ is supported in $3\omega$ and $L^\infty$-adapted to $\omega$.

\begin{definition}
The width of a tube is the length of each of the
shorter sides, its length is the length of its longest side, and its eccentricity is the ratio between its width and length.
\end{definition}

\begin{lemma}\label{whitney}

Assume $\Lambda$ is a trilinear Calderón-Zygmund form with modulation
symmetry in direction $\gamma$ that satisfies the weak boundedness property \eqref{weakbounded} and the special cancellation
conditions $T_i(1,1)=0$ for $1\le i\le 3$.

Let $\Omega^{(k)}$ be the set of all tubes in $\Omega$ with
width at least $2^{-k}$.
Then we have
\begin{equation}\label{whitneysum}
\Lambda(\psi)
=\lim_{k\to \infty} \sum_{ \omega\in \Omega^{(k)}}\Lambda(\psi*\phi_\omega)
\end{equation}
for each Schwartz function $\psi$ with compactly supported Fourier transform.
\end{lemma}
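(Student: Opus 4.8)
My goal is to show that the Whitney sum on the right-hand side of \eqref{whitneysum} converges and equals $\Lambda(\psi)$ for every Schwartz function $\psi$ with compactly supported Fourier transform. The starting point is the partition of unity $1_{S^c}=\sum_{\omega\in\Omega}\widehat{\phi}_\omega$ on the complement of the bad set $S=\bigcup_j P_j$. Since $\widehat{\psi}$ is compactly supported, only finitely many scales of tubes genuinely contribute at each ``distance from $S$'', but tubes can be long (infinite in one direction) and their widths accumulate toward $0$ near $S$, so the sum is genuinely infinite and its convergence must be controlled. The natural strategy is: (i) rewrite $\Lambda(\psi)=\Lambda(\psi\cdot 1)$, formally insert the partition of unity on the frequency side to get $\psi=\sum_\omega \psi*\phi_\omega$ (in the sense that $\widehat{\psi}=\sum_\omega \widehat{\psi}\,\widehat{\phi}_\omega$), with the caveat that $\widehat{\psi}$ need not vanish on $S$; (ii) show that the ``missing'' part supported near $S$ contributes nothing in the limit, using precisely the hypotheses $T_j(1,1)=0$ together with the weak boundedness property; and (iii) show that the tail $\sum_{\omega\notin\Omega^{(k)}}\Lambda(\psi*\phi_\omega)\to 0$ as $k\to\infty$.

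For step (iii), which is the technical heart, I would estimate each term $\Lambda(\psi*\phi_\omega)$ using Lemma~\ref{threebumplemma}. The function $\psi*\phi_\omega$ has Fourier transform supported in $3\omega$, a box of the shape required by that lemma (two short sides of common length, one long side), and after subtracting an appropriate element $\xi\gamma\in\langle\gamma\rangle$ from the box — as in Remark~\ref{equivcond11} — the disjointness conditions $\omega_3'\cap\widetilde{\tfrac{\gamma_3}{\gamma_i}\omega_i'}=\emptyset$ hold, because by construction $\tfrac{C_1}{3}\omega\cap\langle\gamma\rangle=\emptyset$ forces the sides of $3\omega$ to satisfy \eqref{altpropr1}--\eqref{altpropr2}. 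Modulation invariance \eqref{modulationinv} lets us replace $\psi*\phi_\omega$ by its modulate so the Fourier support is the shifted box $\omega'$, and then \eqref{threebump} gives a bound of the form $|I|^{-1/2}(|I_3|/|I|)^{\delta'+1/2}(1+\dots)^{-1-\delta'}(1+\dots)^{-N}$ in terms of the space-localization intervals dual to $\omega$. The decaying factor $(|I_3|/|I|)^{\delta'+1/2}$ — equivalently the eccentricity of the tube raised to a positive power — is what makes the sum over all $\omega$ with width $<2^{-k}$ converge and tend to $0$; the spatial decay factors handle the summation over the translates at each fixed shape, and the compact support of $\widehat{\psi}$ restricts the relevant tubes to those meeting a fixed ball, limiting the number of ``long directions'' and scales one must sum over. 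One must also pair $\psi*\phi_\omega$ against the localized pieces of $\psi$ itself, i.e. decompose $\psi$ spatially into bumps, so that Lemma~\ref{threebumplemma} applies to an actual compactly-adapted bump; this is routine given the Schwartz decay of $\psi$.

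For step (ii), the point is that $\widehat\psi$ restricted to a shrinking neighborhood of $S$ is being discarded; one writes $\psi-\sum_{\omega\in\Omega^{(k)}}\psi*\phi_\omega$ and shows its contribution to $\Lambda$ vanishes as $k\to\infty$. Here the relevant observation is the heuristic made explicit in the section: $T_j(1,1)=0$ means ``$\widehat K_j(u,0)=0$'', i.e. $\widehat\Lambda$ carries no mass on $P_j$, and rigorously this is extracted from the definition of $T_j(1,1)$ in Lemma~\ref{uniformBMO} together with Lemma~\ref{altdefT1neededlater} and Remark~\ref{definet11123} (which ties $\Lambda(1,1,\cdot)$ to $T_3(1,1)$ and its cyclic analogues). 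The contribution near $S$ is built from pieces whose Fourier support is an $O(2^{-k})$-neighborhood of one of the planes $P_j$; testing $\Lambda$ against such pieces, one integrates by parts / uses the mean-zero-in-the-appropriate-variable structure exactly as in Lemma~\ref{definebmo}, and the vanishing of $T_j(1,1)$ kills the boundary term, leaving a quantity that is $O(2^{-\delta' k})$.

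**Main obstacle.** The hardest part is making step (ii) fully rigorous: the partition of unity lives on $S^c$, so one never literally has $\psi=\sum_\omega\psi*\phi_\omega$ unless $\widehat\psi$ vanishes on $S$, and one must argue that the discrepancy — a distribution supported near $S$ — pairs to zero against $\Lambda$ using only the three scalar conditions $T_j(1,1)=0$ plus weak boundedness. Equivalently, one must show $\Lambda$ has no singular part on $S$. I expect this to require a careful limiting argument reducing to the already-established Lemmas \ref{definebmo}, \ref{altdefT1neededlater} and \ref{threebumplemma}, combined with the eccentricity gain to absorb the sum over the many tubes hugging $S$. The convergence of the tail in step (iii) is, by comparison, a relatively mechanical geometric-series estimate once Lemma~\ref{threebumplemma} is in hand.
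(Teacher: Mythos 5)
Your outline follows the same architecture as the paper's proof. The identity $\Lambda(\psi)-\sum_{\omega\in\Omega^{(k)}}\Lambda(\psi*\phi_\omega)=\Lambda\bigl(\psi-\sum_{\omega\in\Omega^{(k)}}\psi*\phi_\omega\bigr)$ is exact and involves only finitely many terms (only finitely many $\omega\in\Omega^{(k)}$ meet $\supp\widehat\psi$), and the paper realizes your ``missing part near $S$'' concretely by completing $\Omega^{(k)}$ to a partition of all of $\R^3$ with an auxiliary family $\tilde{\Omega}^{(k)}$ of tubes of width exactly $2^{-k}$, so the error term becomes the finite sum $\sum_{\omega\in\tilde{\Omega}^{(k)}}\Lambda(\psi*\phi_\omega)$ as in \eqref{exctruncation}. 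This device also makes your step (iii) superfluous: one never has to interchange $\Lambda$ with the infinite Whitney sum, nor prove convergence of a tail over $\Omega\setminus\Omega^{(k)}$. The eccentric tubes in $\tilde{\Omega}^{(k)}$ are then treated exactly as you propose: modulate by $\xi\gamma$ to bring the Fourier box near the origin (Remark \ref{equivcond11}), apply Lemma \ref{threebumplemma} --- this is where the cancellation conditions $T_i(1,1)=0$ enter --- and sum using the eccentricity gain $2^{-\kappa\delta'}$ together with the count $O(2^k)$ of tubes per eccentricity. (Your proposed spatial decomposition of $\psi$ into compactly supported bumps is unnecessary: adaptation only requires decay, and $\psi*\phi_\omega$, suitably modulated and normalized, is already adapted to the dual box.)

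The genuine gap is your treatment of the pieces whose Fourier support hugs the line $\langle\gamma\rangle$, i.e.\ the cubes of side $2^{-k}$ in the auxiliary family. For these the separation hypotheses of Lemma \ref{threebumplemma} (equivalently \eqref{altpropr1}--\eqref{altpropr2}) cannot be arranged by any modulation, there is no variable in which the piece has mean zero, and there is no eccentricity gain to absorb the roughly $2^k$ such cubes along $\langle\gamma\rangle$ inside $\supp\widehat\psi$; so the mechanism you describe (``mean-zero structure as in Lemma \ref{definebmo} plus eccentricity gain'') fails precisely there. This is the one place where the weak boundedness property \eqref{weakbounded} is indispensable: after modulating along $\gamma$, the function $M_{\gamma\xi}(\psi*\phi_\omega)$ is $L^1$-adapted to a cube of side $2^k$, hence equals $2^{-3k/2}$ times an $L^2$-adapted bump, and \eqref{weakbounded} gives $|\Lambda(\psi*\phi_\omega)|\lesssim 2^{-3k/2}\cdot 2^{-k/2}=2^{-2k}$; the $O(2^k)$ cubes then contribute $O(2^{-k})$ in total. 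You cite weak boundedness among the hypotheses to be used but never say where it acts, and without this separate case the estimate of the error term near $S$ is incomplete.
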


\begin{proof}
We will assume without loss of generality that $\widehat{\psi}$ is supported into the cube $[-1,1]^3$.
For each $k\in \Z$
we construct a set $\tilde{\Omega}^{(k)}$ of tubes of
width $2^{-k}$ such that
the tubes in  $\Omega^{(k)}\cup \tilde{\Omega}^{(k)}$ form a partition of $\R^3$,
their dilates by a factor of 3 have bounded overlap, and which have the property that if $3\omega\cap 3\omega'\not=0$ for some $\omega,\omega'\in\Omega^{(k)}\cup \tilde{\Omega}^{(k)}$ then $\omega$ and $\omega'$ have comparable side-length in every dimension. To achieve this, let $\Omega_{j,k}$ be the subset of $\Omega_j$ consisting of all the tubes of width at least $2^{-k}$. Define also $\tilde{\Omega}_{j,k}$ to be the (uniquely determined) collection of tubes infinitely long in the direction $e_j$ with width $2^{-k}$ such that $\Omega_{j,k}\cup \tilde{\Omega}_{j,k}$ forms a partition of $\R^3$. 

Define now $\tilde{\Omega}^{(k)}$ as the collection of all tubes that arise by intersecting 3 tubes, one in each of $\Omega_{1,k}\cup\tilde{\Omega}_{1,k}$, $\Omega_{2,k}\cup\tilde{\Omega}_{2,k}$ and $\Omega_{3,k}\cup\tilde{\Omega}_{3,k}$, with at least  one of the three tubes in some $\tilde{\Omega}_{i,k}$. Note that ${\Omega}^{(k)}$ consists of all tubes that arise by intersecting 3 tubes, one in each of $\Omega_{1,k}$, $\Omega_{2,k}$ and $\Omega_{3,k}$. It is now an easy exercise to prove that $\Omega^{(k)}\cup \tilde{\Omega}^{(k)}$ is a partition of $\R^3$ that has all the desired properties.  

Using these properties and the standard process of partition of unity, we may define functions $\widehat{\phi}_\omega$
for $\omega\in \tilde{\Omega}^{(k)}$ that are $L^\infty$-adapted to $\omega$ and supported in $3\omega$  so that
$$1=\sum_{\omega \in \Omega^{(k)}} \widehat{\phi}_\omega+
\sum_{\omega\in \tilde{\Omega}^{(k)}}\widehat{\phi}_\omega.$$
Recall that the functions $\phi_\omega$ with $\omega \in \Omega^{(k)}$
have been defined earlier.

It is not hard to observe (see also the rank properties in section \ref{sec6.1}) that 
\begin{equation}
\label{e.e7int}
C_3\omega\cap \langle\gamma\rangle\not=\emptyset
\end{equation} 
for each $\omega\in\Omega^{(k)}\cup\tilde{\Omega}^{(k)}$.
There are only finitely many tubes in
$$\Omega^{(k)}\cup \tilde{\Omega}^{(k)}$$ which intersect the compact support of $\widehat{\psi}$, hence clearly
\begin{equation}\label{exctruncation}
\Lambda(\psi)-
\sum_{\omega\in \Omega^{(k)}}\Lambda(\psi*\phi_\omega)
=\sum_{\omega\in \tilde{\Omega}^{(k)}}\Lambda(\psi*\phi_\omega)
\end{equation}
It then suffices to show that the right hand side tends to $0$ as $k$ tends to $\infty$.  For the rest of the proof it suffices to restrict attention to tubes that intersect the support of $\widehat{\psi}$  and to sufficiently large values of $k$. (In particular, we can assume $k\ge 0$)

We first estimate the contribution coming from the collection $\tilde{\Omega}^{(k,1)}$ of tubes in $\tilde{\Omega}^{(k)}$ whose
eccentricity is $1$ (the cubes). Due to \eqref{e.e7int} there are $O(2^{k})$ tubes in $\tilde{\Omega}^{(k,1)}$.
To estimate the contribution of a tube in $\tilde{\Omega}^{(k,1)}$ to
\eqref{exctruncation}
we use the modulation symmetry of $\Lambda$ to get that $\Lambda(\psi*\phi_\omega)=\Lambda(M_{\gamma \xi}(\psi*\phi_\omega))$, where $\xi\in\R$ is chosen in such a way that the support of the Fourier transform of $M_{\gamma \xi}(\psi*\phi_\omega)$ lies inside the cube centered at the origin with side-length $100C_32^{-k}$. This is possible due to \eqref{e.e7int}. It follows now easily that the Fourier transform of $M_{\gamma \xi}\phi_\omega$ is $L^{\infty}$- adapted to the cube of sidelength $2^{-k}$ centered at the origin. Since $k\ge 0$, the same can be said about the Fourier transform of $M_{\gamma \xi}(\psi*\phi_\omega)$. This easily implies now that $M_{\gamma \xi}(\psi*\phi_\omega)$ is $L^{1}$- adapted to the cube of sidelength $2^{k}$ centered at the origin.
Applying inequality  \eqref{weakbounded} we obtain that
$$|\Lambda(M_{\gamma \xi}(\psi*\phi_\omega))|\lesssim 2^{-2k}.$$
Thus, the contribution of $\tilde{\Omega}^{(k,1)}$ to \eqref{exctruncation} is $O(2^{-k})$.

It remains to consider the set $\tilde{\Omega}^{(k,2)}$ of tubes $\omega\in\tilde{\Omega}^{(k)}$ whose
eccentricity $2^{-\kappa}$ is smaller than 1. Without loss of generality we can assume that the sides of $\omega$ have lengths $2^{-k}$, $2^{-k}$ and $2^{-k+\kappa}$ in this order. It follows that $\omega$ is determined by the intersection of 3 special tubes $\omega':=\R\times\omega_{1,1}\times\omega_{1,2}\in \Omega_{1,k}\subset \Omega_1$, $\omega'':=\omega_{2,1}\times \R\times\omega_{2,2}\in \Omega_{2,k}\subset \Omega_2$ and $\omega''':=\omega_{3,1}\times\omega_{3,2}\times \R\in \tilde{\Omega}_{3,k}$.

First observe that if such a tube $\omega$ produces a nonzero contribution to our sum then its sides have lengths smaller than 1. This is immediate for the smaller sides whose length is $2^{-k}$. Let us now see that the same thing is true for the longer side. We observe that $\omega$ must intersect the cube $[-1,1]^3$, which implies $(\omega_{2,1}\times\omega_{2,2})\cap [-1,1]^2\not=\emptyset$. But since $\omega''\in \subset \Omega_2$, we know that $0\notin 10(\omega_{2,1}\times\omega_{2,2})$. This proves $|\omega_{2,2}|\le 1$. As a consequence, we deduce that the Fourier transform of $\psi*\phi_\omega$ is $L^{\infty}$- adapted to the tube $\omega$.  

Choose now $\xi\in\R$ such that $0\in C_3(\omega-\xi\gamma )$ (this is possible due to \eqref{e.e7int}). Using again the modulation invariance of $\Lambda$ and \eqref{e.e7int}, we get as before that 
$$|\Lambda(\psi*\phi_\omega)|=|\Lambda(M_{\xi\gamma}(\psi*\phi_\omega))|,$$
where $M_{\gamma \xi}(\psi*\phi_\omega)$ is $L^1$- adapted to the box centered at the origin with side-lengths comparable to $(2^k, 2^k, 2^{-\kappa}2^k)$. Moreover, the Fourier transform of the function $M_{\xi\gamma}(\psi*\phi_\omega)$ will be supported in the box $\omega-\gamma \xi$ which is easily seen to satisfy the requirements of Lemma \ref{threebumplemma}, once we prove that $\omega$ satifies the requirements \eqref{altpropr1} and \eqref{altpropr2} in Remark \ref{equivcond11}. But this is immediate since $(\omega_{2,1}\times\omega_{2,2})\cap \langle\gamma^{(2)}\rangle=\emptyset$ and $(\omega_{3,1}\times\omega_{3,2})\cap \langle\gamma^{(3)}\rangle=\emptyset$.

Lemma \ref{threebumplemma} now gives 
$$|\Lambda(M_{\xi\gamma}(\psi*\phi_\omega))|\lesssim 2^{-k/2} 2^{-\kappa(1/2+\delta')} 2^{-3k/2+\kappa/2}$$
where the  factor $2^{-3k/2+\kappa/2}$ adjusts the $L^2$ normalization of $\psi*\phi_\omega$. Finally note that for each $\kappa\ge 0$ there are $O(2^k)$ tubes $\omega$ as above. To see this, we can further assume without loss of generality that $\omega$ is determined by $\omega''$ and $\omega'''$, that is $\omega=(\omega_{3,1},\omega_{3,2},\omega_{2,2})$. Note first that there are $O(2^k)$ tubes $\omega'''=\omega_{3,1}\times\omega_{3,2}\times \R$ of width $2^{-k}$ which intersect $[-1,1]^2$,
since $C_1(\omega_{3,1}\times\omega_{3,2})\cap \langle\gamma^{(3)}\rangle\not=\emptyset$. Given $\omega_{3,1}$, we know that $\omega_{2,1}$ is determined uniquely, and then $\omega_{2,2}$ is determined within finitely many choices by $\omega_{2,1}$, since $C_1(\omega_{2,1}\times\omega_{2,2})\cap \langle\gamma^{(2)}\rangle\not=\emptyset$.
Hence we can estimate
$$\sum_{\omega\in \tilde{\Omega}^{(k,2)}} |\Lambda(\psi*\phi_\omega)|\lesssim
\sum_{\kappa\ge 0}  2^{-k}2^{-\kappa\delta'},$$
which is again an acceptable contribution. 

\end{proof}

As we proceed with the second stage of the decomposition of $\Lambda$, we return to viewing $\Lambda$ as a trilinear form on the triple
product of Schwartz spaces, rather than a distribution on $\R^3$.
Thus we assume $\psi$ in Lemma \ref{whitneysum} is an elementary
tensor $\psi=f_1 \otimes f_2 \otimes f_3$ of three compactly supported
smooth functions. To turn $\phi_\omega$
into a convergent sum of elementary tensors, we invoke Fourier
series.

For each tube $\omega\in \Omega$ we choose functions
 $\widehat{\phi}_{\omega_i}$ for $i=1,2,3$,
$L^\infty$- adapted to $\omega_i$, constant equal to $1$ on $3\omega_i$ and supported on
$5\omega_i$. The dilated tubes $5\omega$ are still disjoint from the bad set $S$
since $C_1\gg  1$. Then
$$\widehat{\phi}_\omega(\xi )=\widehat{\phi}_\omega (\xi )\prod_{i=1}^3
\widehat{\phi}_{\omega_i}(\xi_i)$$
Applying Fourier series on $5\omega $ gives
$$\widehat{\phi}_\omega(\xi )=\sum_{n_1,n_2,n_3\in \Z} c_{\omega,n_1,n_2,n_3}
\prod_{i=1}^3 \widehat{\phi}_{\omega_i}(\xi_i) e^{2\pi i n_i \xi_i /(5|\omega_i|)},$$
and note that for each $M$ the coefficients decay as
$$|c_{\omega,n_1,n_2,n_3}|\lesssim_{M} (1+\max(|n_1|,|n_2|,|n_3|))^{-M}.$$
We also note that since $\widehat{\phi }_{\omega_i}$ is $L^{\infty}$- adapted to $\omega_i$ of any order $M$ and supported in $5\omega_i$, so is the function $M_{n_i/5|\omega_i|}\widehat{\phi}_{\omega_i}$, with a constant that is $O(n_i^M)$. These observations imply that
$$
\Lambda(\psi*\phi_\omega)=\sum_{n_1,n_2,n_3\in \Z} c_{\omega,n_1,n_2,n_3}
\Lambda (f_1*\tau_{n_1/5|\omega_1|}\phi_{\omega_1},f_2*\tau_{n_2/5|\omega_2|}\phi_{\omega_2},
f_3*\tau_{n_3/5|\omega_3|}\phi_{\omega_3}).
$$
Another immediate implication is that it suffices to bound
$$
\sum_{\omega \in \Omega}
|\Lambda(
f_1* \phi_{\omega_1},
f_2* \phi_{\omega_2},
f_3* \phi_{\omega_3}
)|
$$
uniformly over all functions $\phi_{\omega_i}$ such that $\widehat{\phi}_{\omega_i}$ is $L^\infty$- adapted of order -say- $2N$ and supported in  $5\omega_i$.

By Shannon's sampling theorem, we can write for each such function $\widehat{\phi}_{\omega_i}$ and each $f$
$$f*\phi_{\omega_i}= \sum_{I_i} \<f,\phi_{I_i,\omega_i}\> \phi_{I_i,\omega_i}$$
where $I_i$ runs through all dyadic intervals of length $(16|\omega_i|)^{-1}$
and $M_{-c(\omega_i)}\phi_{I_i,\omega_i}$ is an $L^2$- normalized bump function adapted to $I_i$ of order $2N$ such that $\widehat{\phi}_{I_i,\omega_i}$ is supported in $8\omega_i$.

We then estimate $\Lambda(f_1,f_2,f_3)$ by
\begin{equation}\label{modelform}
\sum_{\omega \in \Omega}
\sum_{I_1,I_2,I_3}
|\Lambda( \phi_{I_1,\omega_1},\phi_{I_2,\omega_2},\phi_{I_3,\omega_3})
|\prod_{i=1}^3 |\<f_i,\phi_{I_i,\omega_i}\>|
\end{equation}

Next, observe that for each $\omega \in \Omega$ with width $2^{-k}$ and eccentricity $2^{-\kappa }$ and each $\delta'<\delta$
$$|\Lambda(\phi_{I_1,\omega_1},\phi_{I_2,\omega_2},\phi_{I_3,\omega_3})|$$
\begin{equation}\label{concloflemma}
\lesssim  2^{-k/2}2^{-\kappa(\delta'+1/2)}(1+2^{-k}{\rm diam}(I_1,I_2,I_3))^{-1-\delta'}
(1+2^{-k}|\sum \gamma_i c(I_i)|)^{-N}.
\end{equation}

This will follow from Lemma \ref{threebumplemma}. Indeed, reasoning as before, we can find $\xi\in\R$ such that the tube $\omega-\xi\gamma$ is contained in some tube centered at the origin with width $100C_32^{-k}$ and eccentricity $2^{-\kappa}$. Since $|\xi\gamma_i-c(\omega_i)|=O(|\omega_i|)$ for each $i$, it follows that $M_{-\xi\gamma}(\phi_{I_1,\omega_1}\otimes\phi_{I_2,\omega_2}\otimes\phi_{I_3,\omega_3})$ is $L^2$- adapted to the box $I_1\times I_2\times I_3$. The fact that $\omega$ satisfies the requirements \eqref{altpropr1} and \eqref{altpropr2} (and actually all the other symmetric 4 identities) in Remark \ref{equivcond11} is immediate (see the proof of Lemma \ref{whitney}).

Using these estimates we shall restructure the sum in (\ref{modelform})
and extract the main terms.

First we shall use symmetry to reduce to the case where the sum runs  over
all tubes 
\begin{equation}
\label{trenumberff5slo}
\omega=(\omega_1,\omega_2,\omega_3):=(\omega_{1,1}\times\omega_{1,2}\times\omega_{2,2})
\end{equation}
 such that $\omega_{2,2}$ is the longest side and $\omega$ is determined by intersection of the tubes $\omega'=\omega_{1,1}\times\omega_{1,2}\times \R\in\Omega_3$, $\omega''=\R\times\omega_{2,1}\times\omega_{2,2}\in\Omega_1$ and $\omega'''=\omega_{3,1}\times\R\times\omega_{3,2}\in\Omega_2$.

At the expense of replacing the exponent $\delta$ in
(\ref{concloflemma}) by a slightly smaller $\delta'$ it
suffices to consider only those tubes $\omega$ with a fixed
eccentricity $2^{-\kappa}$ and those triples of intervals for which
\begin{equation}
\label{e.e7diaint1}
2^{m-1}\le \diam(I_1,I_2,I_3)/|I|\le 2^{m}
\end{equation}
\begin{equation}
\label{e.e7diaint2}
2^{m-2}\le \diam(I_2,I_3)/|I|
\end{equation}
for some fixed $m$ and prove summable bounds in $\kappa$ and $m$. Here we use again the notation $|I|:=|I_1|=|I_2|.$

Next, we shall use the rapid decay in the last factor in
(\ref{concloflemma}) to argue similarly to above that one only
needs to consider those terms for which this factor is large.
For fixed $I_3$ and $I_2$ we choose an
interval $\tilde{I}_1$ satisfying \eqref{e.e7diaint1} and \eqref{e.e7diaint2}, for which $\sum_j\gamma_jc(I_j)$ is minimal. For any other interval $I_1$ satisfying \eqref{e.e7diaint1} and \eqref{e.e7diaint2},
we note that the function $M_{-c(\omega_1)}\phi_{I_1,\omega_1}$
is adapted to $\tilde{I_1}$ of order $N$. The constant of adaption increases
like $(1+\dist(c(I_1),c(\tilde{I}_1))/|I|)^{N}$, which is offset by the
last factor in (\ref{concloflemma}) since
$$
|\gamma_1||c(I_1)-c(\tilde{I}_1)|\le\left| \sum_j\gamma_jc(I_j) \right | + O(I).
$$
Thus it suffices to consider only $\tilde{I_1}$. We shall write again $I_1$ for $\tilde{I}_1$ and we shall maintain from the above discussion that we are summing over
a two parameter family of intervals $I_1,I_2,I_3$ such that $I_1$ is determined
by $I_2$ and $I_3$. Likewise we may assume $I_2$ is determined by $I_3$
and $I_1$. Note that we cannot do the same for $I_3$, as $I_3$ is potentially much smaller than $I_1$ and $I_2$ and there may be many intervals $I_3$
(about $2^\kappa$) which maximize the last factor in
(\ref{concloflemma}), for given $I_1$ and $I_2$.

To summarize the above reductions, let $\Omega_{\kappa}^{*}$ denote all tubes as in \eqref{trenumberff5slo},  which have eccentricity $2^{-\kappa}$. Let $\I_m(\omega)$ denote the set of all triples $(I_1,I_2,I_3)$ associated with $\omega$ as above, satisfying  \eqref{e.e7diaint1} and such that both $I_1$ and $I_2$ are uniquely determined by the other two intervals. Then it suffices to get uniform bounds in $m$ and $\kappa$ over all functions $\phi_{I_i,\omega_i}$ such that $M_{-c(\omega_i)}\phi_{I_i,\omega_i}$ is $L^2$- adapted $I_i$ of order $N$, for the following sum
\begin{equation}\label{reducedmodel}
\sum_{\omega\in \Omega_\kappa^{*}}
\sum_{(I_1,I_2,I_3)\in \I_m(\omega)} |I_1|^{-1/2} 2^{-\kappa(1/2+\delta')}2^{-m(1+\delta')}
\prod_{i=1}^3 |\<f_i,\phi_{I_i,\omega_i}\>|,
\end{equation}
where $\delta'<\delta$ will be chosen conveniently (see Theorem \ref{mainmodels}).

It will be convenient to associate with $\omega$ and $(I_1,I_2,I_3)$ as above
\begin{enumerate}
\item For each $i=1,2,3$ a dyadic interval $\omega_{p_i}$ of length $2^{7}|\omega_i|$ which contains
the support $8\omega_i$ of the function $\widehat{\phi_{I_i,\omega_i}}$.
\item For each $i=1,2,3$ a dyadic interval $\omega_{R_i}$ of length $2^{7}|\omega_3|$ which contains
$\omega_{p_i}$. Note that $\omega_{R_3}=\omega_{p_3}$.
\item For each $i=1,2,3$ a dyadic interval $I_{p_i}$ of length $|\omega_{p_i}|^{-1}$ which is contained in $I_i$.
\item A dyadic interval $I_R$ of length $2^7\times2^m|I_1|$ which contains
$I_1$,$I_2$,$I_3$.
\end{enumerate}
It is clear that such intervals exist. There is no deep reason we choose to  modify $I_i$ to $I_{p_i}$ in (3), we only do that so that we have  $|I_{p_i}||\omega_{p_i}|=1$.

These intervals in general might not be standard dyadic intervals.
However, we can choose them to be generalized dyadic intervals. We briefly describe this construction and refer the reader to \cite{DTT} for more details.
Let $q$ be a large prime  and define a generalized
dyadic interval to be one of the form
$[2^n k/q,2^n(k/q+1))$
for integers $k$ and $n$. If $k$ is restricted to a fixed residue class modulo $q$, the collection above forms a grid, in the sense that all intervals of fixed scale form a partition of the real line and any two intervals in the grid
are either disjoint or one contains the other one.

For every interval $I$ , there are $q$ generalized dyadic intervals,
which we call covers, of length strictly between $q|I|$ and $2q|I|$
(thus the length is a uniquely determined power of two) which contain $I$.
More precisely, each grid except for possibly one of the $q$ grids contains
such a cover for $I$.
Therefore, for every collection of less than $q$ intervals,
there is one grid which contains a cover for each interval in the collection.

Using this method, for $q$ large enough we can choose for each $\omega$ and $I_1,I_2,I_3$ the intervals listed above to belong to one of the $q$ grids. We will work with $q=11$. Since there are only 11  grids, it suffices to consider the
sum (\ref{reducedmodel}) over each grid separately. For simplicity of notation we shall
only discuss the standard dyadic grid, which is one of the $11$ grids.
Since we only use the grid properties of the dyadic intervals,
our proof will easily transfer to the case of the other grids.

The above intervals determine rectangles in the phase plane:
$R_i=I_R\times \omega_{R_i}$ and $p_i=I_{p_i}\times \omega_{p_i}$.
We will denote by $R$ the 3-tuple of rectangles $(R_1,R_2,R_3)$ and will call it a multi-rectangle. Similarly, $p$ will denote the 3-tuple of tiles $(p_1,p_2,p_3)$, which will be referred to as a  multi-tile.

We discuss the important properties of the triples $R$ and $p$
other than the obvious containment properties visible in the
figure below.
First, by a further splitting into finitely many collections it suffices to assume that  each of ${R_1}$, ${R_2}$, ${R_3}$ determines the other two.
As all three rectangles have the same spatial interval $I_R$, it suffices to
show this for the frequency intervals. This in turn follows from the fact that the frequency intervals have equal side-length and from the easy\footnote{For similar results, see the rank properties in Section \ref{sec6.1}.} observation that\begin{equation}
\label{e.edilR11}
C_3(\omega_{R_1}\times \omega_{R_2}\times \omega_{R_3})\cap\langle \gamma\rangle\not=0.
\end{equation}

\hskip 150pt
\begin{picture}(100,200)
\put(10,10){\line(1,0){80}}
\put(10,20){\line(1,0){80}}
\put(10,30){\line(1,0){80}}
\put(10,40){\line(1,0){80}}
\put(10,50){\line(1,0){80}}
\put(10,70){\line(1,0){80}}
\put(10,80){\line(1,0){80}}
\put(10,90){\line(1,0){80}}
\put(10,100){\line(1,0){80}}
\put(10,110){\line(1,0){80}}
\put(10,130){\line(1,0){80}}
\put(10,170){\line(1,0){80}}
\put(10,10){\line(0,1){40}}
\put(50,10){\line(0,1){40}}
\put(90,10){\line(0,1){40}}
\put(10,70){\line(0,1){40}}
\put(50,70){\line(0,1){40}}
\put(90,70){\line(0,1){40}}
\put(10,130){\line(0,1){40}}
\put(20,130){\line(0,1){40}}
\put(30,130){\line(0,1){40}}
\put(40,130){\line(0,1){40}}
\put(50,130){\line(0,1){40}}
\put(60,130){\line(0,1){40}}
\put(70,130){\line(0,1){40}}
\put(80,130){\line(0,1){40}}
\put(90,130){\line(0,1){40}}

\put(-15,145){$R_3$}
\put(-15,85){ $R_2$}
\put(-15,25){ $R_1$}
\put(51,152){\tiny $p_3$}
\put(65,84){\tiny $p_2$}
\put(25,12){\tiny $p_1$}
\end{picture}

Similarly, we can assume that  $\omega_{p_i}$ determines both $\omega_{p_j}$ and $\omega_{p_3}$, $i\not=j\in\{1,2\}$. The fact that $\omega_{p_i}$ determines  $\omega_{p_j}$ within finitely many choices follows from the fact that
$$C_1(\omega_{p_1}\times\omega_{p_2})\cap  \langle\gamma^{(3)}\rangle\not=0,$$
a consequence of the special representation $\omega=\omega'\cap\omega''$. Note also that $\omega_{p_j}$ determines $\omega_{R_j}$, which we have proved to determine $\omega_{R_3}=\omega_{p_3}$. We must point out however that $\omega_{p_3}$ only determines $\omega_{p_j}$ within $O(2^{\kappa})$ choices.

On the spatial side, we observe that the intervals $I_{p_3}$ and $I_{p_i}$ determine $I_{p_j}$, this property being reminiscent of the similar property shared by the intervals $I_i$. This implies that in the triple $(p_1,p_2,p_3)$ we can assume that $p_1,p_3$ vary freely and determine $p_2$ and also that $p_2,p_3$ vary freely and determine $p_1$. We recall for comparison the fact that in the case of $x$ independent kernels $K$, each $p_l,\,l\in\{1,2,3\}$ determined uniquely the other two (See the discussion in the beginning of this section).  

The area of any rectangle $R_i$ is
$A=O(2^{\kappa +m})$. If we incorporate the latest reductions, we  denote by $\p$ the family of all multi-tiles $p$ as above.
We will also denote by ${\bf R}$ the family of all multi-rectangles\footnote{From now, we will stop indexing the dependence of various collections like $\p$ and ${\bf R}$ on $\kappa$ and $m$.} $R$ (of fixed area $A$) associated with triples $p\in \p$.
For any fixed triple $R$, we denote by $\p(R)$ the set of all multi-tiles $p\in\p$ that are contained in $R$.

Finally we notice that for any given $R\in {\bf R}$ there are at most $A^2$ multi-tiles $p\in \p(R)$. On the other hand, any given $p\in \p$ determines
a unique $R\in {\bf R}$ such that $p\in \p(R)$. Let us see this latter point. Given $p=(p_1,p_2,p_3)$ we know that $p_3$ determines
$\omega_{R_3}$ which determines both $\omega_{R_1}$ and $\omega_{R_2}$. On the other hand,  both the length and the position of $I_R$ are determined by $I_{p_1}$, since $\kappa$ and $m$ are known a priori.

With these notations we may rewrite (\ref{reducedmodel}) as
\begin{equation}\label{rpmodel}
A^{-\delta'}\sum_{R\in {\bf R}} \sum_{p\in \p(R)}
|I_{R}|^{-1}|I_{p_3}|^{1/2}
\prod_{i=1}^3 |\<f_i,\phi_{p_i}\>|,
\end{equation}
where we can take  $0<\delta'<\delta$ as close to $\delta$ as we want. We will prove bounds for this sum that are then summable  in $\kappa$ and $m$.\\ 

For each measurable subset $E\subset \mathbb R$ with finite measure we define
$$
X(E)=\{ f : |f|\leq 1_{E} \hskip 5pt a.e.\},
$$
$$
X_2(E)=\{ f : |f|\leq |E|^{-1/2}1_{E} \hskip 5pt a.e.\}.
$$
A major subset of a set $E$ is a subset $E_0\subset E$ such that $|E_0|\ge |E|/2.$ 

\begin{definition} Let $\alpha$ be an $3$-tuple of real numbers and assume $\alpha_j \leq 1$ for all $j\in\{1,2,3\}$. A $3$-sublinear form is called of type $\alpha $ if there is
a constant $C$ such that for each finite measure tuple $E = (E_1,E_2,E_3)$, there is an index $j_0$ and a major subset $\tilde{E}_{j_0}$ of $E_{j_0}$ such that for all tuples $f = (f_1,f_2,f_3)$ with $f_j\in X(E_j)$ for all $j\neq j_0$ and $f_{j_0}\in X(\tilde{E}_{j_0})$ we have
$$
|\Lambda (f_1,f_2,f_3)|\lesssim \prod_{j=1}^{3}|E_j|^{\alpha_{j}}.
$$
\end{definition}
We will work with 
$\Lambda (f_1,f_2,f_3)=\sum_{R\in {\bf R}} \sum_{p\in \p(R)}
|I_{R}|^{-1}|I_{p_3}|^{1/2}
\prod_{i=1}^3 |\<f_i,\phi_{p_i}\>|.$ In the next section we will prove the following theorem.
\begin{theorem}
\label{mainmodels}
Let $j_0\in\{1,2,3\}$.
For each $\alpha$ in the triangular region defined by 
$$\alpha_1+\alpha_2+\alpha_3=1$$ 
$$1/2<\alpha_{j_1},\alpha_{j_2}<1,\hbox{ for }j_1,j_2\in\{1,2,3\}\setminus j_{0}$$
$$\max(-\delta,-1/2)<\alpha_{j_0}<0,$$ 
we have that $\Lambda (f_1,f_2,f_3)$ is of type $\alpha$ with bound $C=O(A^{\delta"} )$, for some $\delta" <\delta$ depending only on $\alpha$.
\end{theorem}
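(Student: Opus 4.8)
The proof carries out the standard time--frequency (Lacey--Thiele) scheme, adapted to the two-level combinatorics at hand: the one-parameter family ${\bf R}$ of multi-rectangles, which enjoys a quartile-type property because of \eqref{e.edilR11}, and, inside each $R\in{\bf R}$, the family $\P(R)$ of at most $A^2$ multi-tiles. Since $\sum_j\alpha_j=1$, the estimate to be proved is invariant under a simultaneous dilation of $E_1,E_2,E_3$, so we may normalize $|E_{j_0}|=1$; a few further standard reductions place $|E_{j_1}|$ and $|E_{j_2}|$ in a convenient range. The first real step is the exceptional set: let $\Omega$ be a union of level sets of the Hardy--Littlewood maximal functions of the indicators $1_{E_j}$, with thresholds depending on $|E_j|$ and on $\alpha$ and chosen small enough that $|\Omega|<|E_{j_0}|/2$, so that $\tilde E_{j_0}:=E_{j_0}\setminus\Omega$ is a major subset. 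For each $i$ and each $R$ define a density $\mathrm{dens}_i(R)$ as a supremum of averages $|I|^{-1}\int 1_{E_i}\chi_I^M$ over intervals $I\supseteq I_R$ not contained in a fixed dilate of $\Omega$; then $\mathrm{dens}_i(R)\lesssim|E_i|$ for $i\in\{j_1,j_2\}$, while $\mathrm{dens}_{j_0}(R)\lesssim 1$ on every $R$ that interacts with $\tilde E_{j_0}$, since $\tilde E_{j_0}\cap\Omega=\emptyset$. For $i\in\{j_1,j_2\}$ one also introduces energies $\mathrm{en}_i$ measuring $\ell^2$-normalized tree sums of the coefficients $\langle f_i,\phi_{p_i}\rangle$; Bessel's inequality yields $\mathrm{en}_i\lesssim|E_i|^{1/2}$.

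Next I would perform a \emph{within-rectangle} reduction. For fixed $R\in{\bf R}$, estimate $\sum_{p\in\P(R)}|I_{p_3}|^{1/2}\prod_i|\langle f_i,\phi_{p_i}\rangle|$ by Cauchy--Schwarz in the two free tile parameters (recall $p_1,p_3$ determine $p_2$ and $p_2,p_3$ determine $p_1$) together with the fact that, for fixed $R$, the $\phi_{p_i}$ with $p\in\P(R)$ are wave packets living at a finer scale inside $R_i$, so that $\sum_{p\in\P(R)}|\langle f_i,\phi_{p_i}\rangle|^2\lesssim\int|f_i|^2\chi_{I_R}^{2M}\,dx$. This collapses the double sum to a single-parameter sum over ${\bf R}$ of bilinear-Hilbert-transform type, at the cost of a controlled power of $A$ coming from the $O(A^2)$ cardinality of $\P(R)$ and from the $O(2^\kappa)$-to-one relation between $\omega_{p_3}$ and the pair $(\omega_{p_1},\omega_{p_2})$. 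I would then prove the two standard pillars for ${\bf R}$. \emph{Single-tree estimate}: the contribution of a tree $\mathcal T$ with top spatial interval $I_{\mathcal T}$ is $\lesssim A^{\epsilon}|I_{\mathcal T}|$ times the product of the relevant local quantities of the three components (energies for the indices in $\{j_1,j_2\}$, the $L^\infty$-density $\mathrm{dens}_{j_0}$ obtained from $|f_{j_0}|\le 1_{\tilde E_{j_0}}$ for the index $j_0$, a Carleson-type bound for the overlapping component), combined by H\"older; the off-diagonal factor $2^{-m(1+\delta')}$ from \eqref{concloflemma} is what couples this step to the kernel regularity $\delta$. \emph{Size lemma}: the subfamily of ${\bf R}$ on which the $i$-size exceeds $2\sigma$ can be covered by trees whose tops satisfy $\sum_{\mathcal T}|I_{\mathcal T}|\lesssim\sigma^{-2}\|f_i\|_2^2$, via greedy selection of maximal trees and an orthogonality argument, the quartile property of ${\bf R}$ ensuring that distinct tops are disjoint in the relevant frequency variable.

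Finally, I would organize the sum over ${\bf R}$: sort the multi-rectangles by the dyadic values of their sizes and densities, peel off trees level by level, bound each level by the single-tree estimate summed against the tree count supplied by the size lemma, and sum the resulting geometric series in the two parameters. The region $\alpha_1+\alpha_2+\alpha_3=1$, $\alpha_{j_1},\alpha_{j_2}\in(1/2,1)$, $\alpha_{j_0}\in(\max(-\delta,-1/2),0)$ is exactly what makes this series converge: interpolating the energy bounds $\mathrm{en}_i\lesssim|E_i|^{1/2}$ against the density bounds $\mathrm{dens}_i\lesssim|E_i|$ ($i\in\{j_1,j_2\}$) and $\mathrm{dens}_{j_0}\lesssim 1$ in the tree count reproduces the exponents $\prod_j|E_j|^{\alpha_j}$, with $\alpha_{j_0}>-1/2$ needed so that the energy side can be used at all, and $\alpha_{j_0}>-\delta$ needed so that the redistributed off-diagonal decay $2^{-m(1+\delta')}$ survives. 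I expect the genuine obstacle to be the accounting of the powers of $A$: each of the within-rectangle Cauchy--Schwarz, the tree counting, and the geometric summation costs some power of $A$, and one must verify that the total stays strictly below $\delta$, so that the claimed bound $C=O(A^{\delta''})$ holds with a $\delta''<\delta$ depending only on $\alpha$ --- the time--frequency estimates themselves being the familiar ones, but keeping the $A$-loss under control while retaining the full product $\prod_j|E_j|^{\alpha_j}$ is what requires the care.
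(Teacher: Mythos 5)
Your overall scheme is the same as the paper's (normalization $|E_{j_0}|=1$, exceptional set built from maximal functions, major subset $\tilde E_{j_0}$, a within-rectangle Cauchy--Schwarz, tree selection with an orthogonality/counting lemma, and a final geometric summation against interpolated size/density bounds), and the within-rectangle reduction you describe is essentially the paper's single-tree estimate, which is lossless -- no power of $A$ is paid there, contrary to what you suggest. The genuine gap is exactly at the point that makes the statement nontrivial: you never establish, or even correctly locate, the mechanism producing the bound $O(A^{\delta''})$ with $\delta''<\delta$. You attribute the role of $\delta$ to the off-diagonal factor $2^{-m(1+\delta')}$, but that factor is not part of the model form of Theorem \ref{mainmodels}: it has already been spent in passing from \eqref{concloflemma} to the prefactor $A^{-\delta'}$ in \eqref{rpmodel}. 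Inside this theorem, $\delta$ enters only through the hypothesis $\alpha_{j_0}>-\delta$, and the whole content is that the accumulated power of $A$ equals (up to $\epsilon$) $-\alpha_{j_0}$, hence is strictly below $\delta$.

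In the paper this is achieved by two devices that your plan does not supply. First, all combinatorial and Bessel-type losses (the $TT^*$ arguments for the block operators $T^j_{R}$, the tree counting, the strong disjointness bookkeeping) are made $A^{O(1/N)}$, i.e. negligible, by taking the adaptation order $N$ large and pre-sparsifying ${\bf R}$ to be $A^{2/(N-2)}$-separated in scales as in \eqref{sep00fscales} and Lemmas \ref{almostortho}, \ref{almostortho3}, \ref{pealinglemma}; if one actually paid the losses you list (the $O(A^2)$ cardinality of $\p(R)$, the $O(2^{\kappa})$-to-one frequency relation, losses in tree counting and in the geometric sum), the total would exceed $A^{\delta}$ and the theorem would fail. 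Second, the unavoidable loss is isolated in the size estimates: the rank properties here are weaker than the quartile property you invoke -- overlap in the third component forces only $C_3A$-lacunarity in the other two (Lemma \ref{lem.sumar322w}) -- which costs a factor $A^{1/2}$ in the $L^2$ tree-size bound for the indices $j_1,j_2$; interpolating against the trivial density bound (Corollary \ref{treft690ksahdjhsa}, used as in \eqref{eq:11qwlfgt1}) converts this into $A^{\beta_{j_1}+\epsilon}$ and $A^{\beta_{j_2}+\epsilon}$, and the final exponent is $\gamma_{j_1}+\gamma_{j_2}+\epsilon a_{j_0}=-\alpha_{j_0}+\epsilon a_{j_0}<\delta$. (A further technical point you omit, but which is needed because the multi-rectangles with $I_R\subset\Omega$ cannot simply be discarded, is the second case of the paper's argument, with the extra gain $2^{-l}|I|$ summed over the Whitney-type localization inside $\Omega$; and the two-parameter structure also forces the introduction of the $0$-trees, absent from the classical bilinear Hilbert transform argument.) Without these elements your outline reproduces the familiar time--frequency skeleton but not the quantitative claim $C=O(A^{\delta''})$, $\delta''<\delta$, which is the actual assertion of the theorem.
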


By choosing then in \eqref{rpmodel} $\delta'>\delta"$ we obtain that the original form $\Lambda$
is also of type $\alpha$ in the same regions.
Since the convex hull of the three triangular regions in the above theorem is the  region characterized by the restrictions
$$\alpha_1+\alpha_2+\alpha_3=1$$ 
$$\max(-\delta,-1/2)<\alpha_{j}<\min(\delta+1/2,1),\hbox{ for all }j,$$
Theorem \ref{main} follows then by invoking multilinear interpolation as in \cite{MTT1}, \cite{thielelectures}.

\vskip .1in

We  remark that the case $A=1$ in Theorem \ref{mainmodels} corresponds to the situation when the form $\Lambda$ is associated with an $x$ independent kernel $K(t)$ (see also the discussion at the beginning of this section). 

\medskip 

Finally, to prove Theorem \ref{mainmodels} we will assume that 
\begin{equation}
\label{sep00fscales}
R,R'\in{\bf R} \hbox{ and } |I_R|<|I_{R'}|\hbox{ implies } C_3A|I_R|\le |I_{R'}|,
\end{equation}
at the expense of considering $\log (C_3A)$ families in scales.

\section{The boundedness of the model sums}
\label{sec6}
\subsection{Rank, trees and sizes}
\label{sec6.1}

We will prove that if $C_1$ is sufficiently large we have the following:
\\
\\
{\bf Rank properties of $\p$}
\\
Let $p\in\p(R)$ be a multi-tile that is associated with a tube $\omega$ as in \eqref{trenumberff5slo}, according to the procedure described earlier. Let $(\xi^1,\xi^2,\xi^3)\in\langle \gamma \rangle$.

\begin{enumerate}
\item (3-ovelapping implies $j$-lacunary; tile version)
If  $\xi^3\in 2\omega_{p_3}$ then for each $j\in\{1,2\}$ we have $\xi^j\notin C_2\omega_{p_j}$ and $\xi^j\in C_3A\omega_{p_j}$.
\begin{proof}We will consider\footnote{For $j=1$ one has to use the fact that $\omega\subset \omega''':=(\omega_{3,1},\R,\omega_{3,2})$.} the case $j=2$. 
Recall that $\omega\subseteq \omega''=\R\times\omega_{2,1}\times\omega_{2,2}$. If $\xi^2\in C_2\omega_{p_2}$, then this together with $\xi^3\in 2\omega_{p_3}$ would force $\frac{C_1}{4}(\omega_{2,1}\times\omega_{2,2})\cap \langle\gamma^{(1)}\rangle\not=\emptyset$, contradicting the construction of $\omega''$. On the other hand, by construction it follows that there is some $(\tilde{\xi}^2,\tilde{\xi}^3)\in \langle\gamma^{(1)}\rangle\cap C_1(\omega_{2,1}\times\omega_{2,2})$. Note that $|\tilde{\xi}^3-\xi^3|\le  C_1|\omega_{p_3}|$. This implies $|\tilde{\xi}^2-\xi^2|\le  \frac{1}{100}C_3|\omega_{p_3}|$. This together with the fact that $\tilde{\xi}^2\in C_1\omega_{2,1}$, implies $\xi^2\in C_3\omega_{2,1}$. Finally, this together with the fact that $\omega_{2,1}\subset A\omega_{p_2}$ implies $\xi^2\in C_3A\omega_{p_2}$. 
\end{proof}
\item ($j$-ovelapping implies $3$-lacunary)
If $\xi^j\in2\omega_{p_j}$ for some $j\in\{1,2\}$ then we have $\xi^3\notin C_2\omega_{p_3}$ and $\xi^3\in C_3\omega_{p_3}$.
\begin{proof}
A similar argument as for (1) applies here.
\end{proof}
\item ($j$-ovelapping implies $i$-lacunary)
If $\xi^j\in 2\omega_{p_j}$ for some $j\in\{1,2\}$ then for $i=3-j$ we have $\xi^i\notin C_2\omega_{p_i}$ and $\xi^i\in C_3\omega_{p_i}$.
\begin{proof}
If $\xi^i\in C_2\omega_{p_i}$ then this together with $\xi^j\in 2\omega_{p_j}$ forces $(\xi^1,\xi^2)\in \langle\gamma^{(3)}\rangle\cap C_2(\omega_{p_1}\times\omega_{p_2})$, contradicting the construction of $\omega'$. On the other hand, from the construction of $\omega'$ we know that there is some $(\tilde{\xi}^1,\tilde{\xi}^2)\in \langle\gamma^{(3)}\rangle\cap C_1(\omega_{p_1}\times\omega_{p_2})$. Since $|\tilde{\xi}^j-\xi^j|\le C_1|\omega_{p_j}|$ and $\tilde{\xi}^i\in C_1\omega_{p_i}$, it follows that $\xi^i\in C_3\omega_{p_i}$. 
\end{proof}

\item (3-ovelapping implies $j$-lacunary; rectangle version)
If $\xi^3\in 2\omega_{R_3}$ then for each $j\in\{1,2\}$ we have $\xi^j\notin C_2\omega_{R_j}$ and $\xi^j\in C_3\omega_{R_j}$.
\begin{proof}
Similar arguments apply here and for the rest of the properties.
\end{proof}

\item ($j$-overlapping implies 3-lacunary; rectangle version) If $j\in\{1,2\}$ and $\xi^j\in 2\omega_{R_j}$, then $\xi^3\notin C_2\omega_{R_3}$ and $\xi^3\in C_3\omega_{R_3}$.

\item  ($j$-overlapping implies $i$-$C_3$ overlapping; rectangle version)
If $i,j\in\{1,2,3\}$ and $\xi^j\in 2\omega_{R_j}$ then $\xi^i\in C_3\omega_{R_i}$.
\end{enumerate}

\begin{definition}
Let $i\in\{1,2,3\}$.
An $i$-tree $(\T,\p_\T)$ with top $(I_\T,\xi_\T^i)$, where $I_\T$ is a dyadic interval, is a collection $\T\subset{\bf R}$ of multi-rectangles $R$ such that $I_R\subseteq I_\T$, together with a collection $\p_\T=\bigcup_{R\in\T}\p(R,\T)$ of multi-tiles satisfying $\p(R,\T)\subseteq\p(R)$ and  $\xi_\T^i\in 2\omega_{p_i}$ for each $R\in\T$ and each $p\in\p(R,\T)$.

A $0$-tree $(\T,\p_\T)$ with top $(I_\T,\xi_\T^0)$, where $I_\T$ is a dyadic interval, is a collection $\T\subset{\bf R}$ of multi-rectangles $R$ such that $I_R\subseteq I_\T$, together with a collection $\p_\T=\bigcup_{R\in\T}\p(R,\T)$ of multi-tiles satisfying $\p(R,\T)\subseteq\p(R)$ and either

(1) $\xi_\T^0\in 2\omega_{R_1}$ for each $R\in\T$ and  $\xi_\T^0\notin 2\omega_{p_1}$, $\xi_\T^2\notin 2\omega_{p_2}$ for each $p\in\p(R,\T)$, where $(\xi_\T^0,\xi_\T^2)\in\langle\gamma^{(3)}\rangle$ (this will be referred to as $0^{1}$-tree)

or 

(2) $\xi_\T^0\in 2\omega_{R_2}$ for each $R\in\T$ and  $\xi_\T^1\notin 2\omega_{p_1}$, $\xi_\T^0\notin 2\omega_{p_2}$ for each $p\in\p(R,\T)$, where $(\xi_\T^1,\xi_\T^0)\in\langle\gamma^{(3)}\rangle$ (this will be referred to as $0^2$-tree).
\end{definition}

Given a dyadic interval $I$ and a point $\vec{\xi}=(\xi^1,\xi^2,\xi^3)\in\langle\gamma\rangle$, define the saturation $\S(I,\vec{\xi})$ of the pair $(I,\vec{\xi})$ to the set of all multi-tiles which lie in the union of the maximal 1-tree with top $(I,\xi^1)$, the maximal 2-tree with top $(I,\xi^2)$, the maximal 3-tree with top $(I,\xi^3)$, the maximal $0^1$-tree with top $(I,\xi^1)$ and the maximal $0^2$-tree with top $(I,\xi^2)$. Note that actually
\begin{equation}
\label{theahamoment}
\S(I,\vec{\xi})=\bigcup_{i=1}^{3}\bigcup_{R:\xi^i\in2\omega_{R_i}\atop{I_R\subseteq I}}\p(R).
\end{equation}

We state three easy lemmas for future reference.

\begin{lemma}
\label{lljjggrr190gr}
If $\xi^l\in 2\omega_{R_l}$ for some $l\in\{1,2,3\}$ and $I_R\subseteq I$ then $\p(R)\subset \S(I,\vec{\xi})$.
\end{lemma}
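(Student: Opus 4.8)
The plan is to recognize that the claim is the easy half of the identity \eqref{theahamoment}. Indeed, since $R$ satisfies $\xi^l\in 2\omega_{R_l}$ and $I_R\subseteq I$, the set $\p(R)$ appears verbatim as one of the sets in the union on the right-hand side of \eqref{theahamoment}, so $\p(R)\subseteq\S(I,\vec\xi)$ follows at once. Thus if one takes \eqref{theahamoment} as given there is nothing more to prove. I will nonetheless carry out the direct verification straight from the definition of $\S(I,\vec\xi)$ as the union of the five maximal trees, since it makes transparent which of those trees absorbs each multi-tile of $\p(R)$.

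Fix an arbitrary $p=(p_1,p_2,p_3)\in\p(R)$. Because $I_R\subseteq I$, the spatial condition $I_R\subseteq I_\T$ is met by every candidate tree with top of the form $(I,\cdot)$, so it only remains to exhibit one such maximal tree whose multi-tile collection contains $p$, and I do this by a short case analysis. If $l=3$, then $\omega_{R_3}=\omega_{p_3}$, so the hypothesis says $\xi^3\in 2\omega_{p_3}$ and $p$ lies in the maximal $3$-tree with top $(I,\xi^3)$. If $l\in\{1,2\}$ and $\xi^i\in 2\omega_{p_i}$ for some $i\in\{1,2,3\}$, then $p$ lies in the maximal $i$-tree with top $(I,\xi^i)$, the only defining requirement there being $\xi^i\in 2\omega_{p_i}$ together with $I_R\subseteq I$. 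Finally, if $l\in\{1,2\}$ and $\xi^i\notin 2\omega_{p_i}$ for every $i\in\{1,2,3\}$, I use that $(\xi^1,\xi^2)$ is the projection of $\vec\xi\in\langle\gamma\rangle$ and hence lies on $\langle\gamma^{(3)}\rangle$, so $(I,\xi^l)$ is an admissible top for a $0^l$-tree; the conditions defining the maximal $0^l$-tree with top $(I,\xi^l)$ are $\xi^l\in 2\omega_{R_l}$ for the rectangle, which is our hypothesis, and $\xi^1\notin 2\omega_{p_1}$, $\xi^2\notin 2\omega_{p_2}$ for the multi-tile, which hold by the case assumption, so $p$ lies in that $0^l$-tree. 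In every case $p\in\S(I,\vec\xi)$, and since $p\in\p(R)$ was arbitrary, $\p(R)\subseteq\S(I,\vec\xi)$.

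I do not anticipate a real obstacle: once the trees and the saturation are unwound, this is pure definition-chasing. The one place meriting a moment's care is the last case, where one must observe that the extra constraint $\xi^l\in 2\omega_{R_l}$ attached to a $0^l$-tree is a condition on the rectangle $R$ --- handed to us by the hypothesis --- and not on the frequency interval of $p$, and that the lattice condition $(\xi^1,\xi^2)\in\langle\gamma^{(3)}\rangle$ is automatic from $\vec\xi\in\langle\gamma\rangle$ (using $\gamma_i\neq 0$). For completeness I note that proving the full identity \eqref{theahamoment} would also require the reverse inclusion, which uses in addition that $\xi^i\in 2\omega_{p_i}$ forces $\xi^i\in 2\omega_{R_i}$, a consequence of $\omega_{p_i}\subseteq\omega_{R_i}$ and dyadic nesting; but that is not needed here.
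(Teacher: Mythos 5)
Your proof is correct and is essentially the paper's own justification: the paper states this lemma without proof, treating it as immediate from the identity \eqref{theahammoment}-style observation that $\S(I,\vec\xi)=\bigcup_{i}\bigcup_{R:\,\xi^i\in 2\omega_{R_i},\,I_R\subseteq I}\p(R)$, of which the claim is the trivial inclusion. Your direct case analysis (using $\omega_{R_3}=\omega_{p_3}$ for $l=3$, an $i$-tree when some $\xi^i\in 2\omega_{p_i}$, and the $0^l$-tree otherwise) is a valid and complete unwinding of that same observation.
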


\begin{lemma}
\label{lljjggrr190gr1}
If $\xi^i\in 2\omega_{R_i}$, $I_{R'}\subsetneq I$ and  $\omega_{R_j}\subsetneq \omega_{R_j'} $ for some $i,j\in\{1,2,3\}$ then $\p(R')\subset \S(I,\vec{\xi})$.
\end{lemma}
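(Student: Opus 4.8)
The plan is to reduce the whole statement to Lemma~\ref{lljjggrr190gr} applied to the multi-rectangle $R'$: since $I_{R'}\subsetneq I$ gives in particular $I_{R'}\subseteq I$, it suffices to exhibit an index $l\in\{1,2,3\}$ for which $\xi^l\in 2\omega_{R'_l}$, and then Lemma~\ref{lljjggrr190gr} (equivalently, the description \eqref{theahamoment} of the saturation, together with the fact that distinct $\p(R)$ partition $\p$) forces $\p(R')\subset\S(I,\vec\xi)$. I will produce $l=j$.

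First I would move from the overlap at $R$ to an approximate overlap at $R$ in the $j$-th coordinate. Since $\vec\xi\in\langle\gamma\rangle$ and $\xi^i\in 2\omega_{R_i}$, rank property $(6)$ for multi-rectangles (``$j$-overlapping implies $i$-$C_3$ overlapping'', used with the roles of $i$ and $j$ interchanged) yields $\xi^j\in C_3\omega_{R_j}$; this is valid, and trivial, even when $i=j$. It then remains to upgrade $C_3\omega_{R_j}$ to $2\omega_{R'_j}$, which is where the scale separation \eqref{sep00fscales} enters. Because every rectangle has area $A$, and $\omega_{R_j}$, $\omega_{R'_j}$ are the frequency sides of $R$, $R'$ over the spatial intervals $I_R$, $I_{R'}$, the proper inclusion $\omega_{R_j}\subsetneq\omega_{R'_j}$ forces $|I_{R'}|<|I_R|$, hence $C_3A|I_{R'}|\le|I_R|$ by \eqref{sep00fscales}, hence $|\omega_{R'_j}|=A/|I_{R'}|\ge C_3A|\omega_{R_j}|\ge C_3|\omega_{R_j}|$. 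Combined with $\omega_{R_j}\subseteq\omega_{R'_j}$, an elementary one-dimensional computation gives $C_3\omega_{R_j}\subseteq 2\omega_{R'_j}$: the center of $\omega_{R_j}$ lies in $\omega_{R'_j}$, so the dilate $C_3\omega_{R_j}$ overhangs $\omega_{R'_j}$ by at most $C_3|\omega_{R_j}|/2\le|\omega_{R'_j}|/2$ on each side, which is exactly the room gained in passing to $2\omega_{R'_j}$. Thus $\xi^j\in C_3\omega_{R_j}\subseteq 2\omega_{R'_j}$, and Lemma~\ref{lljjggrr190gr} applied to $R'$ with overlap index $l=j$ finishes the proof.

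I do not expect any real obstacle. The two points deserving care are that rank property $(6)$ be invoked with the correct pair of indices, and that \eqref{sep00fscales} is genuinely needed here (a bare inclusion of generalized dyadic intervals only gives $|\omega_{R'_j}|\ge 2|\omega_{R_j}|$, far too weak to absorb the constant $C_3$), so that the argument takes place inside one of the $\log(C_3A)$ scale-separated subfamilies into which all of Section~\ref{sec6} has been decomposed.
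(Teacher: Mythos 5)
Your proposal is correct and follows the paper's own argument: rank property (6) (with the indices swapped) gives $\xi^j\in C_3\omega_{R_j}$, the scale separation \eqref{sep00fscales} upgrades this to $\xi^j\in 2\omega_{R_j'}$ via the proper inclusion $\omega_{R_j}\subsetneq\omega_{R_j'}$, and Lemma~\ref{lljjggrr190gr} applied to $R'$ concludes. The only difference is that you spell out the equal-area computation behind the step from \eqref{sep00fscales} to $C_3\omega_{R_j}\subseteq 2\omega_{R_j'}$, which the paper leaves implicit.
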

\begin{proof}
We know from  rank property (6) that $\xi^j\in C_3\omega_{R_j}$, and from \eqref{sep00fscales} we deduce that $\xi^j\in 2\omega_{R_j'}$. The conclusion now follows from the previous lemma. 
\end{proof}

The following is an immediate consequence of the rank properties.

\begin{lemma}
\label{lem.sumar322w}
If $i\in\{0,1,2,3\}\setminus \{j\}$, $(\T,\p_\T)$ is an $i$-tree with top $(I_\T,\xi_\T^i)$ and $p\in\p_\T$ then\footnote{Here $\xi_\T^j$ is the $j^{th}$ coordinate of the vector $\vec{\xi}_\T\in\langle\gamma\rangle$ which is uniquely determined by the coordinate $\xi_\T^i$. If $i\not=0$, then $\xi_\T^i$ is the $i^{th}$ coordinate of this vector, while if $i=0$, it is either the first or the second, depending on whether $(\T,\p_\T)$ is a $0^1$-tree or a $0^2$-tree.} $\xi_\T^j\notin 2\omega_{p_j}$ and $\xi_\T^j\in C_3A\omega_{p_j}$.
\end{lemma}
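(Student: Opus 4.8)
The plan is to strip the statement down to a single multi-rectangle $R\in\T$, a single multi-tile $p\in\p(R,\T)$, and a single index $j\in\{1,2,3\}\setminus\{i\}$, and then to obtain $\xi_\T^j\notin2\omega_{p_j}$ and $\xi_\T^j\in C_3A\omega_{p_j}$ by quoting the rank properties, splitting according to the type $i$ of the tree. Here $\vec\xi_\T\in\langle\gamma\rangle$ denotes the vector attached to the top, as explained in the footnote. Before the case analysis I would record the features of the construction of multi-tiles and multi-rectangles to be used repeatedly: $\omega_{p_l}\subseteq\omega_{R_l}$ for all $l$, the identity $\omega_{R_3}=\omega_{p_3}$, and $|\omega_{R_l}|=2^\kappa|\omega_{p_l}|\lesssim A|\omega_{p_l}|$ for $l\in\{1,2\}$. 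Together with $C_2\gg2$ and $A\ge1$, and $C_1$ (hence $C_2,C_3$) large in terms of $\gamma$, these let me pass from conclusions of the form ``$\xi\notin C_2\omega_{p_l}$'' or ``$\xi\notin C_2\omega_{R_l}$'' to ``$\xi\notin2\omega_{p_l}$'', and from ``$\xi\in C_3\omega_{p_l}$'' or ``$\xi\in C_3\omega_{R_l}$'' to ``$\xi\in C_3A\omega_{p_l}$''.

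For $i\in\{1,2,3\}$ the defining property of an $i$-tree supplies $\xi_\T^i\in2\omega_{p_i}$, and then the conclusion is a direct reading of the tile-version rank properties: rank property~(1) if $i=3$ (which already carries the $A$-dilation and handles both $j\in\{1,2\}$); rank property~(2) if $i\in\{1,2\}$ and $j=3$; and rank property~(3) if $i\in\{1,2\}$ and $j$ is the remaining index of $\{1,2\}$. The reductions of the first paragraph then give the stated form.

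For $i=0$ I would first invoke the symmetry that swaps the indices $1$ and $2$ to reduce to a $0^1$-tree, so that $\xi_\T^0=\xi_\T^1\in2\omega_{R_1}$ for each $R\in\T$ while $\xi_\T^1\notin2\omega_{p_1}$, $\xi_\T^2\notin2\omega_{p_2}$ for each $p$, with $(\xi_\T^1,\xi_\T^2)\in\langle\gamma^{(3)}\rangle$. For $j\in\{1,2\}$ the non-membership $\xi_\T^j\notin2\omega_{p_j}$ is part of the definition; for $j=3$ it follows from the rectangle-version rank property~(5) applied to $\xi_\T^1\in2\omega_{R_1}$, using $\omega_{R_3}=\omega_{p_3}$. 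The containments $\xi_\T^j\in C_3A\omega_{p_j}$ would come, for $j=1$, from $\xi_\T^1\in2\omega_{R_1}$ and the size comparison $\omega_{p_1}\subseteq\omega_{R_1}$, $|\omega_{R_1}|\lesssim A|\omega_{p_1}|$; and for $j=2$ either from rank property~(6) at the rectangle level ($\xi_\T^1\in2\omega_{R_1}\Rightarrow\xi_\T^2\in C_3\omega_{R_2}$) plus the same comparison for $\omega_{R_2}$, or from the fact that the tube of $p$ sits inside some $\omega'\in\Omega_3$, so that $(\xi_\T^1,\xi_\T^2)$ and a point of $C_1(\omega_{1,1}\times\omega_{1,2})$ both lie on the line $\langle\gamma^{(3)}\rangle$ and $\xi_\T^2$ is controlled linearly by $\xi_\T^1$.

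The main obstacle, modest as it is, is the constant bookkeeping in the $i=0$ case: one must check that the $A$-dilation is genuinely needed only for the two short frequency sides $\omega_{p_1},\omega_{p_2}$ (shorter than $\omega_{R_1},\omega_{R_2}$ by the eccentricity factor $2^\kappa\lesssim A$), that the bare $C_3$ suffices for the long side $\omega_{p_3}=\omega_{R_3}$, and that each absolute enlargement constant appearing stays below $C_3=C_1^2$ once $C_1$ is chosen large in terms of $\gamma$. Beyond this the lemma is, as stated, an immediate transcription of the rank properties.
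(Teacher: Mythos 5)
Your proposal is correct and follows exactly the route the paper intends: the paper dismisses this lemma with the single remark that it is "an immediate consequence of the rank properties," and your case check (tile-version properties (1)–(3) for $i\in\{1,2,3\}$, the definition of a $0$-tree plus the rectangle-version properties (5)–(6) together with $\omega_{R_3}=\omega_{p_3}$, $\omega_{p_l}\subseteq\omega_{R_l}$ and $|\omega_{R_l}|=2^{\kappa}|\omega_{p_l}|\lesssim A|\omega_{p_l}|$ for $l\in\{1,2\}$ in the $i=0$ case) is precisely the transcription the authors leave to the reader. The constant bookkeeping you flag is harmless since $C_2\ge 2$, $A\gtrsim 2^{\kappa}$ and $C_3=C_1^2$ absorb the absolute factors involved.
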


\begin{remark}
\label{r.rem.oveimplacuna}
Lemma  \ref{lem.sumar322w} and \eqref{sep00fscales} imply that for each $i\in\{0,1,2,3\}\setminus \{j\}$, each $i$-tree $(\T,\p_\T)$, each $R,R'\in \T$ with $|I_R|<|I_{R'}|$ and each $p\in\p(R,\T)$, $p'\in\p(R',\T)$ we have $\omega_{p_j}\cap \omega_{p_j'}=\emptyset$. 
\end{remark}
For each subcollection $\p^{*}(R)\subseteq \p(R)$ and each $l\in\{1,2,3\}$ we will denote $\p^{*}_l(R):=\{p_l:\,p\in\p^{*}(R)\}$.
For the simplicity of notation we will sometimes write $\T$ instead of $(\T,\p_\T)$.

\medskip

\begin{definition}
For $l\in \{1,2,3\}$, the tile size $\size_{l,l}$ of a collection $\p^{*}\subset\p$ of multi-tiles with respect to a function $f_l$ is defined as 
$$\size_{l,l}(\p^{*}):=\sup_{R\in{\bf R}}\left(\frac{\sum_{p_l\in \p^{*}_l(R)}|\<f_l,\phi_{p_l}\>|^2}{|I_{R}|}\right)^{1/2},$$
where $\p^{*}(R):=\p(R)\cap\p^{*}$.\footnote{While a given tile $p_l$ may correspond to more multi-tiles $p$ in $\p(R)$ or in $\p^{*}$, it will be counted only once in each summation.}
\end{definition}

\medskip

\begin{definition}
For $i\not=j\in \{1,2,3\}$ and for $j\in \{1,2\}$ and $i=0$, the tree size $\size_{j,i}$ of a collection $\p^{*}\subset\p$ of multi-tiles with respect to a function\footnote{The size will not be indexed by $f_j$, the function with respect to which the size is measured will always be clear from the context.} $f_j$ is defined as 
$$\size_{j,i}(\p^{*}):=\sup_{\T}\left(\frac{\sum_{R\in\T}\sum_{p_j\in \p_j(R,\T)}|\<f_j,\phi_{p_j}\>|^2}{|I_{\T}|}\right)^{1/2},$$
where the supremum is taken over all $i$-trees $(\T,\p_\T)$ with $\p_\T\subseteq \p^{*}$.
\end{definition}

We will estimate the model operator associated with each tree $(\T,\p_\T)$  by first summing in the third variable and then applying the Cauchy-Schwartz inequality in the first two variables. In doing so we recall that $p_i$ and $p_3$ determine $p_j$ uniquely 
\begin{align*}
&\Lambda_{\T}(f_1,f_2,f_3):=\sum_{R\in {\T}} |I_{R}|^{-1}\sum_{p\in \p(R,\T)}|I_{p_3}|^{1/2}
\prod_{i=1}^3 |\<f_i,\phi_{p_i}\>|\\&\le \sum_{R\in {\T}} |I_{R}|^{-1}(\sum_{p_1\in \p_1(R,\T)}|\<f_1,\phi_{p_1}\>|^2)^{1/2}(\sum_{p_2\in \p_2(R,\T)}|\<f_2,\phi_{p_2}\>|^2)^{1/2}\sum_{p_3\in \p_3(R,\T)}|I_{p_3}|^{1/2}|\<f_3,\phi_{p_3}\>|\\&\le \sum_{R\in {\T}} |I_{R}|^{-1/2}(\sum_{p_1\in \p_1(R,\T)}|\<f_1,\phi_{p_1}\>|^2)^{1/2}(\sum_{p_2\in \p_2(R,\T)}|\<f_2,\phi_{p_2}\>|^2)^{1/2}(\sum_{p_3\in \p_3(R,\T)}|\<f_3,\phi_{p_3}\>|^2)^{1/2}
\end{align*}
This estimate is refined as follows, depending on the type of tree we are dealing with.

For a 3-tree $(\T,\p_\T)$ we write
$$
\Lambda_{\T}(f_1,f_2,f_3)\le |I_\T|\left(\frac{\sum_{R\in\T}\sum_{p_1\in \p_1(R,\T)}|\<f_1,\phi_{p_1}\>|^2}{|I_{\T}|}\right)^{1/2}\left(\frac{\sum_{R\in\T}\sum_{p_2\in \p_2(R,\T)}|\<f_2,\phi_{p_2}\>|^2}{|I_{\T}|}\right)^{1/2}$$
$$
\times\left(\sup_{R\in\T}\frac{\sum_{p_3\in \p_3(R,\T)}|\<f_3,\phi_{p_3}\>|^2}{|I_{R}|}\right)^{1/2}$$ $$\le|I_\T|\size_{1,3}(\p_\T)\size_{2,3}(\p_\T)\size_{3,3}(\p_\T).$$
An identical estimate shows that if $(\T,\p_\T)$ is a $0$-tree then 
$$
\Lambda_{\T}(f_1,f_2,f_3)\le|I_\T|\size_{1,0}(\p_\T)\size_{2,0}(\p_\T)\size_{3,3}(\p_\T).$$

For an $i$-tree with $i\in\{1,2\}$ we write
$$\Lambda_{\T}(f_1,f_2,f_3)\le |I_\T|\left(\frac{\sum_{R\in\T}\sum_{p_j\in \p_j(R,\T)}|\<f_j,\phi_{p_j}\>|^2}{|I_{\T}|}\right)^{1/2}\sup_{R\in\T}\left(\frac{\sum_{p_i\in \p_i(R,\T)}|\<f_i,\phi_{p_i}\>|^2}{|I_{R}|}\right)^{1/2}$$
$$\times\left(\frac{\sum_{R\in\T}\sum_{p_3\in \p_3(R,\T)}|\<f_3,\phi_{p_3}\>|^2}{|I_{\T}|}\right)^{1/2}$$ $$\le|I_\T|\size_{j,i}(\p_\T)\size_{i,i}(\p_\T)\size_{3,i}(\p_\T).$$

We thus see that if a collection $\p^{*}$ of multi-tiles is organized as a disjoint union $\F$ of trees, $\p^{*}=\bigcup_{(\T,\p_\T)\in\F}\p_\T$, then
\begin{equation}
\label{orgtreesizes}
|\Lambda_{\p^{*}}(f_1,f_2,f_3)|\le (\sum_{i=0}^{3}\prod_{j=1}^{3}\size_{j,i}(\p^{*}))(\sum_{\T\in\F}|I_\T|),
\end{equation}
where for the purpose of keeping the notation symmetric we denote $\size_{3,0}:=\size_{3,3}$.

This inequality sets up the strategy for the following sections, where we will split $\p^{*}$ into collections which can be organized into trees, with good control over both their sizes and over the $L^1$ norm of the counting function of the tops of the trees.

\subsection{Bessel type inequalities}
\label{sec6.2}

For each $j\in\{1,2,3\}$, each $R\in{\bf R}$ and each $\p^{*}(R)\subseteq \p(R)$ we will use the notation\footnote{When no confusion can arise, we will suppress the  dependence on $\p^{*}(R)$ of $T_R^j(f)$ and $S_R^j(f)$.} 
$$
T_{R,\p^{*}(R)}^j(f)=\sum_{p_j\in \p^{*}_j(R)}\langle f,\phi_{p_{j}}\rangle \phi_{p_{j}}
$$
$$
S_{R,\p^{*}(R)}^j(f)=(\sum_{p_j\in \p^{*}_j(R)}|\langle f,\phi_{p_{j}}\rangle|^2)^{1/2}.
$$
If $R$ belongs to a tree $(\T,\p_{\T})$ and $\p^{*}(R)=\p(R,\T)$, then the notation $T_{R,\T}^j(f)$, $S_{R,\T}^j(f)$ will be preferred.

An immediate consequence of Lemma \ref{molecules}  is the fact that
\begin{equation}
\label{whs73219872}
\|T_{R,\p^{*}(R)}^j(f)\|_2\approx S_{R,\p^{*}(R)}^j(f) \lesssim \|f\|_2
\end{equation}
for each $f\in L^2$, each $R$ and each $\p^{*}(R)\subseteq \p(R)$. Similarly, due to Remark \ref{r.rem.oveimplacuna}, for each $i\in\{0,1,2,3\}$, each $j\in\{1,2,3\}\setminus\{i\}$ and each $i$-tree $(\T,\p_{\T})$
\begin{equation}
\label{whs73219872hhhyt}
\sum_{R\in\T}\|T_{R,\T}^j(f)\|_2\approx \sum_{R\in\T}S_{R,\T}^j(f) \lesssim \|f\|_2.
\end{equation}

\begin{definition}(M-separated rectangles)
We say that a family ${\bf R^{*}}$ of rectangles $R=I_R\times \omega_R$ is $M$-separated if for any $R,R'\in {\bf R^{*}} $, $R\neq R'$, we have that
$(MI_R\times \omega_{R})\cap (MI_{R'}\times \omega_{R'})=\emptyset$.
\end{definition}

\begin{lemma}\label{gettingAseparatedness}
Let ${\bf R^{*}}$ be a finite family of pairwise disjoint rectangles.
Then, for each $M\ge 1$ we can find a subfamily $\tilde{{\bf R}}\subset {\bf R^{*}}$ that is $M$-separated  and satisfies
$$
\sum_{R\in {\bf R^{*}}}|I_R|\leq 3M\sum_{R\in \tilde{{\bf R}}}|I_{R}|.
$$
\end{lemma}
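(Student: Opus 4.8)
The plan is to run a Vitali-type greedy selection, scanning the rectangles from the longest time interval to the shortest, and then to pay for the discarded rectangles by a bounded-overlap argument. We will use that the members of ${\bf R}^{*}$ are pairwise disjoint together with the two structural features of the rectangles in ${\bf R}$: they share the common area $A$, and their frequency intervals belong to a fixed (generalized) dyadic grid, hence are pairwise nested or disjoint.

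First I would enumerate ${\bf R}^{*}=\{R^{(1)},\dots,R^{(n)}\}$ so that $|I_{R^{(1)}}|\ge\dots\ge|I_{R^{(n)}}|$ and build $\tilde{{\bf R}}$ greedily: for $j=1,\dots,n$, add $R^{(j)}$ to $\tilde{{\bf R}}$ exactly when $M I_{R^{(j)}}\times\omega_{R^{(j)}}$ is disjoint from $M I_{R'}\times\omega_{R'}$ for every $R'$ already selected. By construction $\tilde{{\bf R}}\subset{\bf R}^{*}$ is $M$-separated, so it only remains to prove the $L^{1}$ estimate. Every $R\in{\bf R}^{*}\setminus\tilde{{\bf R}}$ was rejected because of some earlier-selected rectangle, which therefore satisfies $|I_{R'}|\ge|I_R|$, $M I_R\cap M I_{R'}\neq\emptyset$ and $\omega_R\cap\omega_{R'}\neq\emptyset$; fix one such $R'=a(R)$ for each rejected $R$, and put $a(R')=R'$ for $R'\in\tilde{{\bf R}}$. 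This defines a map $a:{\bf R}^{*}\to\tilde{{\bf R}}$ whose fibers $G(R')=a^{-1}(R')$ partition ${\bf R}^{*}$, with $R'\in G(R')$.

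The two facts to isolate are: (i) if $|I_R|\le|I_{R'}|$ and $M I_R\cap M I_{R'}\neq\emptyset$ then, since $M\ge 1$, $I_R\subseteq 3M I_{R'}$; indeed $c(I_R)$ and $c(I_{R'})$ lie within $\frac{M}{2}(|I_R|+|I_{R'}|)\le M|I_{R'}|$ of each other, and adding $\frac12|I_R|\le\frac12|I_{R'}|$ keeps $I_R$ inside the interval of half-length $\frac{3M}{2}|I_{R'}|$ about $c(I_{R'})$; and (ii) since all rectangles in ${\bf R}$ have area $A$, $|I_R|\le|I_{R'}|$ forces $|\omega_R|\ge|\omega_{R'}|$, and together with $\omega_R\cap\omega_{R'}\neq\emptyset$ and the grid structure this gives $\omega_{R'}\subseteq\omega_R$. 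Now fix $R'\in\tilde{{\bf R}}$. For every $R\in G(R')$ one has $\omega_R\supseteq\omega_{R'}$ by (ii) (trivially for $R=R'$), so any two distinct $R_1,R_2\in G(R')$ satisfy $\omega_{R_1}\cap\omega_{R_2}\supseteq\omega_{R'}\neq\emptyset$; as the members of ${\bf R}^{*}$ are pairwise disjoint rectangles, this forces $I_{R_1}\cap I_{R_2}=\emptyset$. Since also $I_R\subseteq 3M I_{R'}$ for each $R\in G(R')$ by (i) (trivially for $R=R'$, because $M\ge 1$), the intervals $\{I_R:\,R\in G(R')\}$ are pairwise disjoint and contained in $3M I_{R'}$, whence $\sum_{R\in G(R')}|I_R|\le 3M|I_{R'}|$. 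Summing over $R'\in\tilde{{\bf R}}$ and using that the $G(R')$ partition ${\bf R}^{*}$ yields $\sum_{R\in{\bf R}^{*}}|I_R|\le 3M\sum_{R'\in\tilde{{\bf R}}}|I_{R'}|$, as claimed.

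The computations above are routine; the one place where the hypotheses are genuinely needed — and the main point to get right — is the disjointness of the time intervals inside each charging class $G(R')$, which rests on the pairwise disjointness of ${\bf R}^{*}$ and on the frequency intervals sharing the nested-or-disjoint property while the rectangles share a common area. Without those features one could only hope for a weaker separation conclusion.
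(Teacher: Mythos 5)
Your proof is correct and is essentially the paper's own argument: a greedy selection of rectangles with maximal $|I_R|$, followed by charging each rejected rectangle to the selected one responsible, using $I_R\subseteq 3MI_{R'}$ and the fact that within a charging class the frequency intervals all meet, so pairwise disjointness of the rectangles forces the time intervals to be disjoint inside $3MI_{R'}$. The only (harmless) difference is that you make explicit the nestedness of the frequency intervals via the common area and the grid structure, a step the paper's proof uses implicitly when it writes $\omega_{R^i}\subset\omega_R\cap\omega_{R'}$.
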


\begin{proof}
Fix $M\ge 1$  and  define $R_{M}=MI_R\times \omega_{R}$.
We select recursively rectangles $R\in {\bf R^{*}}$
with maximal $|I_R|$ and with the property that
$R_{M}\cap R'_{M} =\emptyset $
for all previously selected rectangles $R'$.
When this procedure ends, we get a family of rectangles $\tilde{{\bf R}}=\{ R^1\ldots ,R^n\} \subset {\bf R^{*}}$. We now define
$$
{\bf R}^{i}
=\{ R\in {\bf R^{*}} :R_{M} \cap R^{i}_{M }\neq \emptyset \hskip 3pt ,\hskip 3pt
R_{M} \cap R^{k}_{M} =\emptyset \hskip 5pt {\rm for}\hskip 5pt k<i\}.
$$
Then it is clear that $\tilde{\bf R}$ is $M$-separated and that
$$
{\bf R^{*}}=\bigcup_{i=1}^{n}{\bf R}^{i},
$$
thus
$$
\sum_{R\in {\bf R^{*}}}|I_R|= \sum_{i=1}^n\sum_{R\in {\bf R}^{i}}|I_R|
$$

Moreover, $|I_{R}|\leq |I_{R^i}|$
for any $R\in {\bf R}^{i}$. Otherwise, if
$|I_{R}|>|I_{R^i}|$, by the maximality condition of length and the fact that
$R_{M} \cap R^{k}_{M }=\emptyset $ for all $k<i$,
$R$ should have been chosen instead $R^i$.
This together with the observation that  $MI_R\cap MI_{R^i}\neq \emptyset$ for each  $R\in {\bf R}^{i}$ implies that
$I_R\subset 3MI_{R^i}$. Thus,
\begin{equation}
\label{e.eincl1267}
\bigcup_{R\in {\bf R}^i}I_R\subset 3MI_{R^i}.
\end{equation}

On the other hand for each $R,R'\in{\bf R}^i$ we know that $\omega_{R^i}\subset \omega_{R}\cap \omega_{{R'}}\not=\emptyset$, which together with $R\cap R'=\emptyset$ implies that $I_{R}\cap I_{R'}=\emptyset.$ This together with \eqref{e.eincl1267} implies that
$$
\sum_{R\in {\bf R^{*}}}|I_R|\leq 3M\sum_{i=1}^n|I_{R^i}|
=3M\sum_{R\in \tilde{\bf R}}|I_{R}|.
$$

\end{proof}

\begin{definition}
Let $j\in\{1,2,3\}$.
\begin{enumerate}
\item
 We say that two trees $(\T,\p_{\T})$ and $(\T',\p_{\T'})$ are disjoint if $\T\cap\T'=\emptyset$. 
\item
We say that two trees $(\T,\p_{\T})$ and $(\T',\p_{\T'})$ with tops $(I_\T,\xi_\T)$ and $(I_{\T'},\xi_{\T'})$ are $j$-strongly disjoint if they are disjoint and satisfy the following: 
\\
if $p\in\p(R,\T)$, $p'\in\p(R',\T')$, $R\in \T$, $R'\in \T'$, $\omega_{p_j}\subsetneq \omega_{p_j'}$, then $I_{R'}\cap I_{\T}=\emptyset$
\end{enumerate} 
A family of trees is said to consist of $j$-strongly disjoint trees if any two trees in the family are $j$-strongly disjoint.
\end{definition}

The following lemma will be the main tool in dealing with the tile sizes. 

\begin{lemma}
\label{almostortho}
Let $j\in \{1,2,3\}$, $f\in L^2$ and $\lambda>0$. Let ${\bf R^{*}}\subset {\bf R}$ be a family of multi-rectangles, each of which is associated with a collection of multi-tiles $\p^{*}(R)\subseteq \p(R)$. Assume that the rectangles $(R_j)_{R\in {\bf R^{*}}}$ are pairwise disjoint.

Assume  also that for  each  $R\in {\bf R^{*}}$ we have
\begin{equation}
\label{lowboundfornorm}
S_{R,\p^{*}(R)}^j(f)\geq \lambda |I_R|^{1/2}.
\end{equation}
Then
$$
\sum_{R\in {\bf R^{*}}}|I_R|\lesssim A^{\frac{2}{N-2}}\lambda^{-2}\| f\|_2^2.
$$
\end{lemma}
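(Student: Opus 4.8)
The plan is to establish this Bessel-type inequality by a standard duality/almost-orthogonality argument, but one has to be careful because the rectangles $R_j$ (as opposed to the tiles $p_j$) may overlap heavily in frequency, and it is exactly the overlap that forces the factor $A^{2/(N-2)}$. First I would observe that by \eqref{whs73219872} we already have the single-rectangle bound $S^j_{R,\p^*(R)}(f)\lesssim\|f\|_2$, and more importantly that $\sum_{p_j\in\p^*_j(R)}|\langle f,\phi_{p_j}\rangle|^2=\|T^j_{R,\p^*(R)}(f)\|_2^2$. By the lower bound \eqref{lowboundfornorm} we may replace each $|I_R|$ by $\lambda^{-2}\|T^j_{R,\p^*(R)}(f)\|_2^2$, so it suffices to prove
$$
\sum_{R\in{\bf R}^*}\|T^j_{R,\p^*(R)}(f)\|_2^2\lesssim A^{2/(N-2)}\|f\|_2^2.
$$
The left side equals $\langle f,\sum_R T^j_{R,\p^*(R)}(T^j_{R,\p^*(R)} f)\rangle$ (using that $T^j_R$ is essentially self-adjoint), so by Cauchy--Schwarz it is enough to control the $L^2$ operator norm, or rather to expand $\|\sum_R T^j_R T^j_R f\|_2$ and see that the cross terms are small. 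Concretely I would bound
$$
\sum_{R\in{\bf R}^*}\|T^j_{R,\p^*(R)}(f)\|_2^2=\sum_{R}\sum_{p_j\in\p^*_j(R)}|\langle f,\phi_{p_j}\rangle|^2\le\Big\langle f,\ \sum_{R}\sum_{p_j\in\p^*_j(R)}\langle f,\phi_{p_j}\rangle\phi_{p_j}\Big\rangle\le\|f\|_2\,\Big\|\sum_{R}\sum_{p_j\in\p^*_j(R)}\langle f,\phi_{p_j}\rangle\phi_{p_j}\Big\|_2,
$$
and then estimate the last $L^2$ norm by expanding the square and grouping pairs $(p_j,p'_j)$ according to whether their frequency intervals $\omega_{p_j},\omega_{p'_j}$ are equal, nested, or disjoint.

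For the pairing estimate I would use the spatial--frequency localization of the wave packets $\phi_{p_j}$. When $\omega_{p_j}=\omega_{p'_j}$ the time intervals $I_{p_j}$ partition $I_R$ (within one rectangle), and across different rectangles $R,R'$ sharing this frequency interval, the disjointness hypothesis on $(R_j)_{R\in{\bf R}^*}$ forces $I_R\cap I_{R'}=\emptyset$ (since $\omega_{R_j}\cap\omega_{R'_j}\neq\emptyset$ whenever they share a tile-frequency interval, as in the last step of Lemma~\ref{gettingAseparatedness}); hence the $\phi_{p_j}$ with a common $\omega_{p_j}$ are essentially orthogonal up to Schwartz tails, contributing $\lesssim\|f\|_2^2$. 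When $\omega_{p_j}\subsetneq\omega_{p'_j}$, the key point is the $N$-fold decay of $\chi_{I}$: $\phi_{p_j}$ and $\phi_{p'_j}$ are concentrated at different frequency scales, so $|\langle\phi_{p_j},\phi_{p'_j}\rangle|$ decays like a power of the ratio of the two frequency-interval lengths times the spatial tail; summing these geometric-type series is routine \emph{except} that a tile $p_j$ of a given scale can lie inside at most $O(A)$ rectangles $R'\in{\bf R}^*$ of a coarser scale whose tiles contain it — this is where $A$ enters, because $\omega_{p_3}$ determines $\omega_{p_j}$ only within $O(A)$ choices (this is exactly rank property (1): if $\xi^3\in 2\omega_{p_3}$ then $\xi^j\in C_3A\omega_{p_j}$, and the analogous statement for rectangles). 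One then optimizes: either one uses the trivial $O(A)$ overlap bound and pays $A$, or one uses $N$ derivatives of decay and pays a power $\ge1$ of the scale ratio; balancing the two gives the exponent $A^{2/(N-2)}$ after summing, exactly the quoted loss. (An alternative, cleaner bookkeeping: apply Lemma~\ref{gettingAseparatedness} with $M\approx A^{1/(N-2)}$ to extract an $M$-separated subfamily at a cost of $M$, on which the overlap is fully controlled; iterating or summing the resulting geometric series produces the same power.)

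The main obstacle, then, is not any single estimate but the careful organization of the off-diagonal sum: one must simultaneously exploit (i) the pairwise disjointness of the $R_j$'s to kill the same-scale interactions, (ii) the rank properties to bound the multiplicity with which a fine tile sits inside coarse rectangles by $O(A)$, and (iii) the $N$-th order adaptedness to get summable decay in the scale difference. The delicate point is getting the \emph{sharp} power $A^{2/(N-2)}$ rather than, say, $A$ or $A^2$: this requires interpolating between the $L^2$-orthogonality bound (no $A$, no decay) and the crude counting bound (full $A$, full decay) at the right place, which is precisely the role of choosing the separation parameter $M\approx A^{1/(N-2)}$ in Lemma~\ref{gettingAseparatedness} and noting that with $N$ derivatives of decay the tails beyond $M$ dilates contribute $\lesssim M^{-(N-2)}\cdot(\text{multiplicity }A)\lesssim 1$ per scale, while the separated part costs $M$; taking the product over the (geometrically many) relevant scales and squaring gives $M^2\approx A^{2/(N-2)}$.
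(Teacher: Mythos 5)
Your opening reduction is where the argument breaks. You discard the hypothesis \eqref{lowboundfornorm} after one use and claim it suffices to prove the pure Bessel inequality $\sum_{R\in{\bf R}^*}\|T^j_{R,\p^{*}(R)}(f)\|_2^2\lesssim A^{2/(N-2)}\|f\|_2^2$. That statement is essentially false for families of pairwise disjoint tiles, and disjointness of the $(R_j)$ together with the rank properties and $N$-fold decay (which is all you invoke) cannot give it with a constant independent of the number of scales. The standard obstruction: take $f$ an $L^2$-normalized bump at unit scale near the origin with $\widehat f$ concentrated on $[0,1]$, and at each scale $k=1,\dots,K$ take wave packets with spatial interval $[2^k,2^{k+1})$ and frequency intervals partitioning $[0,1]$ into pieces of length $2^{-k}$. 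All these tiles are pairwise disjoint (same scale: disjoint frequencies; different scales: disjoint spatial intervals), yet each scale contributes $\gtrsim c_N\|f\|_2^2$ to the Bessel sum, because the spatial separation is only one scale-length, so the tail factor is a constant; the total is $\gtrsim c_N K$, unbounded in $K$ with no $A$ in sight. This is exactly the phenomenon that forces ``strong disjointness'' in time-frequency energy lemmas, and nothing in your sketch of the ``nested frequency'' case (geometric series plus $O(A)$ multiplicity) rules it out: the nested cross terms accumulate over scales. Note also that Lemma~\ref{almostortho} itself is not contradicted by this configuration, precisely because the lower bound \eqref{lowboundfornorm} forces $\lambda$ to be tiny for the large rectangles; the lemma is a bound on $\sum|I_R|$, not a Bessel inequality, and the two are not interchangeable in the way your reduction assumes.

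The paper's proof keeps the quantity $\sum_R|I_R|$ in play throughout. After using Lemma~\ref{gettingAseparatedness} to pass (at linear cost $3M$, $M=A^{2/(N-2)}$ — not $A^{1/(N-2)}$ as you suggest at one point) to an $M$-separated subfamily, it pigeonholes so that $S_R(f)\approx\lambda|I_R|^{1/2}$ from \emph{both} sides, a step your reduction makes unavailable, and then runs a $TT^*$ argument on $\bigl(\sum_R S_R(f)^2\bigr)^2$. The cross terms are bounded by $\|T_{R'}f\|_2\,\|T_{R'}^*T_R\|_{2\to2}\,\|T_Rf\|_2\approx\lambda^2|I_R|^{1/2}|I_{R'}|^{1/2}\|T_{R'}^*T_R\|_{2\to2}$; the $A^2$ coming from the tile count per rectangle in \eqref{e.eultyu765} is cancelled by $N-2$ powers of the spatial separation (this is the only place $A^{2/(N-2)}$ is used), leaving two powers of decay as in \eqref{operatornormbound}; and since the frequency-overlapping smaller rectangles $R'$ have pairwise disjoint spatial intervals $I_{R'}$, the sum over them is $\lesssim|I_R|$. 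This closes the self-improving inequality $\bigl(\sum_R|I_R|\bigr)^2\lesssim\lambda^{-2}\sum_R|I_R|$, which is what proves the lemma. In short: the balancing intuition at the end of your proposal is in the right spirit, but the actual estimate must be run with the weights $\lambda|I_R|^{1/2}$ attached to each rectangle (lower \emph{and} upper bounds), not as an a priori Bessel bound for $f$.
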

\begin{proof} 

To simplify notation we will drop the $j$ dependence of the various operators and will index them  only by $R$. Since the rectangles $R_j$ are pairwise disjoint, by Lemma \ref{gettingAseparatedness} it suffices to assume that they are $A^{\epsilon}$-separated with $\epsilon=\frac{2}{N-2}$, and to prove that
$$
\sum_{R\in{\bf R}^{*}}|I_R|\lesssim \lambda^{-2}\| f\|_2^2.
$$
We assume that $\|f\|_2=1$. We may also assume that for each $R\in{\bf R}^{*}$
\begin{equation}\label{scaleslowboundfornorm}
\lambda |I_R|^{1/2}\leq S_{R}(f)\leq 2\lambda |I_R|^{1/2}.
\end{equation}
To see this latter assumption, one can split ${\bf R}^{*}$ into subcollections ${\bf R}^{*}=\bigcup_{k\ge 0}{\bf R}^{*}_k$ such that for each $R\in {\bf R}^{*}_k$ we have $2^{k}\lambda |I_R|^{1/2}\leq S_{R}(f)\leq 2^{k+1}\lambda |I_R|^{1/2}.$ In other words, each subcollection satisfies \eqref{scaleslowboundfornorm} with $\lambda$ replaced by $2^k\lambda$, and applying the estimate to each subcollection and summing a geometric series will prove the general form of the lemma. The proof that follows is a classical instance of the $TT^{*}$ argument.

We have
\begin{align*}
\sum_{R\in{\bf R}^{*}}\| T_{R}(f) \|_2^2&=\sum_{R\in{\bf R}^{*}}\langle T_{R}^*(f),T_{R}^*(f)\rangle
\\&=\sum_{R\in{\bf R}^{*}}\langle T_{R}T_{R}^*(f),f\rangle
\\&\leq \Big\| \sum_{R\in{\bf R}^{*}}T_{R}T_{R}^*(f)\Big\|_2
\end{align*}
which due to \eqref{whs73219872} implies  
\begin{align*}
\Big(\sum_{R\in{\bf R}^{*}}S_{R}(f)^2\Big)^2&\lesssim \sum_{R,R' \in{\bf R}^{*}}\langle T_{R'}T_{R'}^*(f),T_{R}T_{R}^*(f)\rangle
\\&=\sum_{R,R'\in{\bf R}^{*}}\langle T_{R'}^{*}(f),T_{R'}^*T_{R}T_{R}^*(f)\rangle
\\&\leq \sum_{R,R'\in{\bf R}^{*}}\|T_{R'}(f)\|_2\| T_{R'}^*T_{R}\|_{2\to 2} \| T_{R}(f)\|_2.
\end{align*}
Using this, hypothesis (\ref{scaleslowboundfornorm}) and again \eqref{whs73219872} we get
$$
\Big(\sum_{R\in{\bf R}^{*}}\lambda^2|I_R|\Big)^2
\lesssim \lambda^2\sum_{R,R'\in{\bf R}^{*}}|I_R|^{1/2}|I_{R'}|^{1/2}\| T_{R'}^*T_{R}\|_{2\to 2}.
$$
By symmetry and the fact that $T_{R'}^*T_{R}\equiv 0$ if $\omega_{R_j}\cap \omega_{R_j'}=\emptyset$ we have
$$\Big(\sum_{R\in{\bf R}^{*}}|I_R|\Big)^2\lesssim$$
\begin{equation}
\label{diagonalandoffdiagonal}
\lambda^{-2}\Big(\sum_{R\in{\bf R}^{*}}
|I_R|\| T_{R}^*T_{R}\|_{2\to 2}
+\sum_{R\in{\bf R}^{*}}\sum_{R'\in \F_{\rm freq}(R)}
|I_R|^{1/2}|I_{R'}|^{1/2}\| T_{R'}^*T_{R}\|_{2\to 2}\Big)
\end{equation}
where $\F_{\rm freq}(R)=\{R'\in{\bf R}^{*}: R'\neq R, |I_{R'}|\leq|I_R|,
\omega_{R_j}\cap \omega_{R_j'}\neq \emptyset\}$. It suffices to prove now that the term in \eqref{diagonalandoffdiagonal} is $O(\lambda^{-2}\sum_{R\in{\bf R}^{*}}|I_R|)$. To achieve this, we will first estimate the operator norms $\| T_{R'}^*T_{R}\|_{2\to 2}$.

The diagonal term is immediately seen to be $O(\lambda^{-2}\sum_{R\in{\bf R}^{*}}|I_R|)$, due to \eqref{whs73219872}, since
$\| T_{R}^*T_{R}\|_{2\to 2}\leq\| T_{R}\|_{2\to 2}^2\lesssim 1$.
To estimate the off diagonal,  we first note that
\begin{align}
\nonumber
|\langle T_{R'}^*T_{R}(f),g\rangle |
&=|\langle T_{R}(f),T_{R'}(g)\rangle |
\\&\nonumber\leq \sum_{p_j\in \p_j^{*}(R)}\sum_{p_j'\in\p_j^{*}(R') }
| \langle f,\phi_{p_j}\rangle | | \langle g,\phi_{p_j'}\rangle | | \langle \phi_{p_j},\phi_{p_j'}\rangle|
\label{e.eultyu765}
\\&\leq A^2\hspace{-.3cm}\max_{\tiny
\begin{array}{l}
p_j\in\p_j^{*}(R)\\
p_j'\in \p_j^{*}(R')
\end{array}}
\hspace{-.2cm}|\langle \phi_{p_j},\phi_{p_j'}\rangle |\hskip 5pt \| f\|_2\| g\|_2.
\end{align}
From Lemma \ref{molecules} we know the bound
\begin{equation}
\label{hgsdgyet11}
|\langle \phi_{p_j},\phi_{p_j'}\rangle |
\lesssim \Big( \frac{|I_{p_j'}|}{|I_{p_j}|}\Big)^{1/2}(1+|I_{p_j}|^{-1}|c(I_{p_j})-c(I_{p_j'})|)^{-N}.
\end{equation}
If $R'\in \F_{\rm freq}(R)$, it follows that $A^{\epsilon}I_R\cap A^{\epsilon}I_{R'}=\emptyset$ since $R, R'$ are $A^{\epsilon}$ separated. This implies that
\begin{equation}
\label{puntolino}
1+|I_{p_j}|^{-1}|c(I_{p_j})-c(I_{p_j'})|\ge \max\{1+|I_R|^{-1}|c(I_R)-c(I_{R'})|,A^{\epsilon}/2\}.
\end{equation}
We use this and \eqref{hgsdgyet11} to argue that 
$$
|\langle \phi_{p_j},\phi_{p_j'}\rangle|\lesssim \Big( \frac{|I_{R'}|}{|I_{R}|}\Big)^{1/2}(1+|I_{R}|^{-1}|c(I_{R})-c(I_{R'})|)^{-2}A^{\epsilon(2-N)}
$$
Thus
\begin{equation}\label{operatornormbound}
\| T_{R'}^*T_{R}\|_{2\to 2}
\lesssim A^{2-\epsilon(N-2)}\Big( \frac{|I_{R'}|}{|I_R|}\Big)^{1/2}(1+|I_R|^{-1}|c(I_R)-c(I_{R'})|)^{-2}
\end{equation}
We use this inequality to bound the sum corresponding to the off diagonal term in  (\ref{diagonalandoffdiagonal}) by
\begin{equation}\label{freq}
CA^{2-\epsilon(N-2)}\sum_{R\in {\bf R}^{*}}\sum_{R'\in \F_{\rm freq}(R)}
|I_{R'}|(1+|I_R|^{-1}|c(I_R)-c(I_{R'})|)^{-2}.
\end{equation}

Now we fix $R\in {\bf R}^{*}$. We note that if $R'\not=R''$ and $R',R''\in \F_{\rm freq}(R)$ then $I_{R'}\cap I_{R''}=\emptyset.$ With these observations we may write

\begin{align*}
\sum_{R'\in \F_{\rm freq}(R)}
|I_{R'}|(1+|I_R|^{-1}|c(I_R)-c(I_{R'})|)^{-2}&
\lesssim \sum_{R'\in \F_{\rm freq}(R)}
\int_{I_{R'}}\chi_{I_R}(x)^{2}dx\\&\lesssim\int_{I_{R}^{c}}\chi_{I_R}(x)^{2}dx,
\\&\lesssim|I_R|.
\end{align*}

Finally, we get
$$
\Big(\sum_{R\in {\bf R}^{*}}|I_R|\Big)^2
\lesssim\lambda^{-2}\Big(\sum_{R\in {\bf R}^{*}}|I_R|+A^{2+\epsilon(2-N)}\sum_{R\in {\bf R}^{*}}|I_R|\Big)\lesssim \lambda^{-2}\sum_{R\in {\bf R}^{*}}|I_R|.$$
\end{proof}

\begin{definition}(M-separated tiles in a tree)
We say that a tree $(\T,\p_{\T})$ with top $(I_\T,\xi_\T)$ is $M$-separated if $MI_R\subseteq I_\T$ for each $R\in\T$.
\end{definition}

\begin{lemma}
\label{A-separated}
Let $i\in \{0,1,2,3\}\setminus\{j\}$, $M\ge 1$, $f\in L^2$ and $\lambda>0$. Let $\F$ be a collection of $j$-strongly disjoint\footnote{The lemma can be formulated without involving strongly disjointness in either the hypothesis or the conclusion; we choose this formulation since this is how the lemma will be applied.} $i$-trees $(\T,\p_\T)$  satisfying for each $\T\in\F$ and each $R\in\T$
\begin{equation}
\label{upboundfornorm2}
S_{R,\T}^j(f)\le 10^{-2}M^{-1/2}\lambda |I_R|^{1/2} 
\end{equation}
\begin{equation}\label{givenbysize2}
\lambda^2 |I_{\T}|/4\le \sum_{R\in \T}S_{R,\T}^j(f)^{2}\le \lambda^2 |I_{\T}|.
\end{equation}
Assume also that for each subtree $\T'\subseteq\T\in\F$ with top $(I_{\T'},\xi_\T)$ we have 
\begin{equation}
\label{scalesgivenbysize2}
\sum_{R\in {\T'}}S_{R,\T}^j(f)^{2}\le  \lambda^2 |I_{{\T'}}|.
\end{equation}
Then for each $\T\in\F$ we can find a  collection  $\tilde{\T}\subseteq \T$ such that the trees $\{(\tilde{\T}, \p_{\tilde{\T}}:=\bigcup_{R\in\tilde{\T}}\p(R,\tilde{\T})):\,{\T\in\F}\}$ (understood as having the same tops as the original trees) with $\p(R,\tilde{\T})=\p(R,{\T})$ for each $R\in\tilde{\T}$, are $M$-separated,  $j$-strongly disjoint and they satisfy
$$
\frac{1}{20}\lambda^2 |I_{\tilde{\T}}|\le \sum_{R\in \tilde{\T}}S_{R,\T}^j(f)^{2}\le \lambda^2 |I_{\tilde{\T}}|.$$
\end{lemma}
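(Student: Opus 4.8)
\textbf{Proof strategy for Lemma \ref{A-separated}.}
The plan is to extract from each tree $\T\in\F$ a subtree $\tilde\T$ that is $M$-separated while retaining a definite fraction of the $\ell^2$-mass $\sum_{R\in\T}S^j_{R,\T}(f)^2$. First I would set up a greedy/stopping-time selection within each $\T$. Order the multi-rectangles $R\in\T$ so that we process them from the top $I_\T$ downwards (largest spatial intervals first). A rectangle $R$ is declared ``bad'' if there is a previously examined rectangle $R'$ with $MI_R$ not contained in $I_\T\setminus (\text{the part already used})$; more precisely, following the usual device, one selects a maximal subcollection of rectangles whose $M$-dilates $MI_R$ are pairwise ``nested correctly'' with respect to $I_\T$, i.e.\ $MI_R\subseteq I_\T$ and the selected ones do not crowd each other. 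The cleanest implementation: call $R$ \emph{discarded} if $MI_R\not\subseteq I_\T$; then further thin using a Vitali-type argument so that the surviving $MI_R$ have bounded overlap. The key quantitative point is that the discarded rectangles can only account for a small portion of the total mass.

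The main obstacle — and the heart of the argument — is precisely this mass-control step: showing $\sum_{R\text{ discarded}}S^j_{R,\T}(f)^2\le \tfrac{1}{2}\sum_{R\in\T}S^j_{R,\T}(f)^2$ (or some fixed fraction), so that by \eqref{givenbysize2} the survivors still satisfy a lower bound of the form $c\lambda^2|I_{\tilde\T}|$. Here is where hypotheses \eqref{upboundfornorm2} and \eqref{scalesgivenbysize2} do the work. The rectangles $R$ with $MI_R\not\subseteq I_\T$ are exactly those within distance $\sim M|I_R|$ of the endpoints of $I_\T$; for each dyadic scale $2^{-\ell}|I_\T|$ of $|I_R|$ there are $O(M)$ such rectangles, and by \eqref{upboundfornorm2} each contributes at most $10^{-4}M^{-1}\lambda^2|I_R| = 10^{-4}M^{-1}\lambda^2 2^{-\ell}|I_\T|$, so the sum over this scale is $O(10^{-4}\lambda^2 2^{-\ell}|I_\T|)$ and summing the geometric series in $\ell$ gives $O(10^{-4}\lambda^2|I_\T|)$, which is $\le \tfrac{1}{100}\lambda^2|I_\T|\le\tfrac{1}{25}\sum_{R\in\T}S^j_{R,\T}(f)^2$ by the lower bound in \eqref{givenbysize2}. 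This uses only that the rectangles near a fixed endpoint at a fixed scale are essentially $O(M)$ in number, which follows from the disjointness of the $R_j$ inside the tree (Remark \ref{r.rem.oveimplacuna}) together with the $M$-dilation constraint; alternatively one invokes \eqref{scalesgivenbysize2} applied to the subtree consisting of these discarded rectangles. I would use whichever of the two hypotheses makes the bookkeeping shortest — \eqref{scalesgivenbysize2} applied to the subtree of rectangles meeting a small neighborhood of an endpoint directly caps their mass, and then \eqref{upboundfornorm2} is only needed to ensure no single $R$ is too heavy.

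After the thinning, set $\p(R,\tilde\T)=\p(R,\T)$ for surviving $R$ and keep the same top $(I_{\tilde\T},\xi_\T):=(I_\T,\xi_\T)$. The upper bound $\sum_{R\in\tilde\T}S^j_{R,\T}(f)^2\le\lambda^2|I_{\tilde\T}|$ is inherited for free from \eqref{scalesgivenbysize2} (with $\T'=\tilde\T$). The lower bound $\tfrac1{20}\lambda^2|I_{\tilde\T}|\le\sum_{R\in\tilde\T}S^j_{R,\T}(f)^2$ follows from the mass-loss estimate of the previous paragraph combined with the lower bound in \eqref{givenbysize2}: the survivors retain at least $\tfrac14\lambda^2|I_\T| - \tfrac1{100}\lambda^2|I_\T|\ge\tfrac1{20}\lambda^2|I_\T|=\tfrac1{20}\lambda^2|I_{\tilde\T}|$. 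Finally, $M$-separatedness is immediate from the selection ($MI_R\subseteq I_\T$ for each surviving $R$), and $j$-strong disjointness of the family $\{\tilde\T\}$ is inherited from that of $\{\T\}$ because passing to subcollections $\tilde\T\subseteq\T$, $\p(R,\tilde\T)\subseteq\p(R,\T)$ with unchanged tops cannot create new violations of the strong disjointness condition. This completes the proof; the only genuinely delicate point, as noted, is the endpoint mass count, which I would streamline by citing \eqref{scalesgivenbysize2} on the endpoint subtrees.
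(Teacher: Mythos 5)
Your proposal matches the paper's proof in all essentials: the paper likewise discards exactly the rectangles violating $MI_R\subseteq I_\T$ (implemented there as those lying outside the middle portion $\frac{9}{10}I_\T$ or having $|I_R|>(100M)^{-1}|I_\T|$), controls the discarded mass by applying \eqref{scalesgivenbysize2} to the two endpoint subtrees and \eqref{upboundfornorm2} to the large-scale rectangles, and then inherits the upper bound, the lower bound $\frac{1}{20}\lambda^2|I_\T|$, and the strong disjointness exactly as you describe. One minor remark: since an $M$-separated tree only requires $MI_R\subseteq I_\T$ for each $R$, the extra Vitali-type thinning for bounded overlap that you float early on is unnecessary (and would demand additional mass accounting), but you correctly discard it in your concluding paragraph.
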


\begin{proof}To get the new trees we proceed by removing those tiles in a tree that are either too close to the edges of the tree top-tile or too close in scale to it. More precisely, for any $i$-tree $\T\in\F$ 
we let $I(\T)=\frac{9}{10}I_\T$ and we define the subcollections
$$
\T_1=\{ R\in \T : I_R<I(\T)\}
$$
$$
\T_2=\{ R\in \T : I_R>I(\T)\}
$$
$$
\T_3=\{ R\in \T : |I_R|>(100M)^{-1}|I_\T|\}
$$
where $I_R<I(\T)$ means $\sup\{x:x\in I_R\}\le \inf\{x:x\in I(\T)\}$. Then, we define
$$
\tilde{\T}=\T\setminus(\T_1\cup \T_2\cup \T_3).
$$
With these definitions we have by (\ref{scalesgivenbysize2})
$$
\sum_{R\in \T_1}S_R^{j}(f)^{2}\leq \lambda^2|I_{\T_1}|\le \frac{1}{20}\lambda^2|I_\T|
$$
and the same for $\T_2$,
while by (\ref{upboundfornorm2})
$$
\sum_{R\in \T_3}S^j_R(f)^{2}\leq 10^{-4}M^{-1}\lambda^2\sum_{R\in T_3}|I_R|\le \frac{1}{10}\lambda^2|I_\T|. 
$$
It easily follows that each tree $\tilde{\T}$ is $M$-separated. The fact that the new collection of trees is $j$-strongly disjoint is inherited from the initial collection. Finally, 
$$
\sum_{R\in \tilde{\T}}S^j_R(f)^{2}
\geq \sum_{R\in \T}S^j_R(f)^{2}-\sum_{i=1,2,3}\sum_{R\in \T_i}S^j_R(f)^{2}
$$
$$
\geq \frac14\lambda^2|I_\T|-\frac{2}{10}\lambda^2|I_\T|\ge\frac1{20}\lambda^2|I_\T|.
$$

\end{proof}

\begin{definition}(Rectangles M-separated in scales)
We say that a collection  ${\bf R^{*}}$ of multi-rectangles is $M$-separated in scales if 
$R\not=R'\in {\bf R^{*}}$ and $|I_{R}|=|I_{R'}|$ imply that $\dist(c(I_R),c(I_{R'}))\ge M|I_R|.$
\end{definition}

We are now ready to prove the analog of Lemma \ref{almostortho} for the case when the trees may consist of more than just one multi-rectangle. This lemma will be the main tool in dealing with the tree sizes.
\begin{lemma}\label{almostortho3}
Let $i\in \{0,1,2,3\}\setminus\{j\}$, $M\ge 1$, $f\in L^2$ and $\lambda>0$. Let ${\bf R^{*}}$ be a collection of multi-rectangles which is $M$-separated in scales. Assume we also have a collection $\F$ of $j$-strongly disjoint $i$-trees $(\T,\p_\T)$ with $\bigcup_{\T\in \F}\T={\bf R^{*}}$ satisfying for each $\T\in\F$ and each $R\in\T$
\begin{equation}
\label{upboundfornorm22}
S_{R,\T}^j(f)\le 10^{-2}M^{-1/2}\lambda |I_R|^{1/2} 
\end{equation}
\begin{equation}\label{givenbysize22}
\lambda^2 |I_{\T}|/4\le \sum_{R\in \T}S_{R,\T}^j(f)^{2}\le \lambda^2 |I_{\T}|.
\end{equation}
Assume also that for each subtree $\T'\subseteq\T\in\F$ with top $(I_{\T'},\xi_\T)$ we have 
\begin{equation}
\label{scalesgivenbysize22}
\sum_{R\in {\T'}}S_{R,\T}^j(f)^{2}\le  \lambda^2 |I_{{\T'}}|.
\end{equation}
Then we have
$$
\sum_{\T\in \F}|I_\T|\lesssim (1+A^2M^{3-N})\lambda^{-2}\| f\|_2^2.
$$
\end{lemma}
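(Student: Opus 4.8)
The plan is to run the $TT^{*}$ argument from the proof of Lemma~\ref{almostortho}, but with trees playing the role of single multi-rectangles.

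\textbf{Reduction and set-up.} First I would apply Lemma~\ref{A-separated} (whose hypotheses are exactly ours) to replace each $\T\in\F$ by a subtree $\tilde\T\subseteq\T$ with $\p(R,\tilde\T)=\p(R,\T)$, so that the new trees are $M$-separated, remain $j$-strongly disjoint, still have $\bigcup_\T\tilde\T\subseteq{\bf R^{*}}$ $M$-separated in scales, and satisfy $\tfrac1{20}\lambda^2|I_{\tilde\T}|\le\sum_{R\in\tilde\T}S^j_{R,\tilde\T}(f)^2\le\lambda^2|I_{\tilde\T}|$ with $|I_{\tilde\T}|=|I_\T|$. Since $\tilde\T$ is again an $i$-tree and subtrees of $\tilde\T$ inherit \eqref{scalesgivenbysize22} from $\T$, it suffices to prove the lemma under the extra assumption that the trees of $\F$ are $M$-separated, at the cost of weakening the lower bound in \eqref{givenbysize22} from $\lambda^2|I_\T|/4$ to $\lambda^2|I_\T|/20$, which is irrelevant. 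Normalize $\|f\|_2=1$. For $\T\in\F$ introduce the analysis map $V_\T\colon L^2\to\ell^2$, $V_\T g=(\langle g,\phi_{p_j}\rangle)_{R\in\T,\,p_j\in\p_j(R,\T)}$; then $\|V_\T f\|_{\ell^2}^2=\sum_{R\in\T}S^j_{R,\T}(f)^2$, so $\tfrac1{20}\lambda^2|I_\T|\le\|V_\T f\|_{\ell^2}^2\le\lambda^2|I_\T|$, and $\|V_\T\|_{L^2\to\ell^2}\lesssim1$ by \eqref{whs73219872hhhyt} (since $\|V_\T g\|_{\ell^2}\le\sum_{R\in\T}S^j_{R,\T}(g)\lesssim\|g\|_2$).

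\textbf{The $TT^{*}$ chain.} Writing $T_\T=V_\T^{*}V_\T$, we have
$$\tfrac1{20}\lambda^2\sum_{\T\in\F}|I_\T|\le\sum_{\T\in\F}\langle T_\T f,f\rangle=\Big\langle\sum_{\T\in\F}T_\T f,\,f\Big\rangle\le\Big\|\sum_{\T\in\F}T_\T f\Big\|_2,$$
and expanding the square, $\langle T_\T f,T_{\T'}f\rangle=\langle V_\T f,\,V_\T V_{\T'}^{*}V_{\T'}f\rangle_{\ell^2}\le\|V_\T f\|_{\ell^2}\|V_{\T'}f\|_{\ell^2}\|V_\T V_{\T'}^{*}\|_{\ell^2\to\ell^2}\le\lambda^2|I_\T|^{1/2}|I_{\T'}|^{1/2}\|V_\T V_{\T'}^{*}\|_{\ell^2\to\ell^2}$. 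Hence it remains to prove
$$\sum_{\T,\T'\in\F}|I_\T|^{1/2}|I_{\T'}|^{1/2}\,\|V_\T V_{\T'}^{*}\|_{\ell^2\to\ell^2}\lesssim(1+A^2M^{3-N})\sum_{\T\in\F}|I_\T|.$$
The diagonal $\T=\T'$ contributes $\sum_\T|I_\T|\,\|V_\T V_\T^{*}\|_{\ell^2\to\ell^2}\le\sum_\T|I_\T|\,\|V_\T\|^2\lesssim\sum_\T|I_\T|$, which is the ``$1$''.

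\textbf{The off-diagonal term.} For $\T\neq\T'$ I would bound $\|V_\T V_{\T'}^{*}\|_{\ell^2\to\ell^2}$ by Schur's test applied to its matrix $(\langle\phi_{p_j},\phi_{p_j'}\rangle)$, $p\in\p(R,\T)$, $p'\in\p(R',\T')$. Such an entry vanishes unless $\omega_{p_j}$ and $\omega_{p_j'}$ are nested or equal, and in either case the interacting tiles are spatially separated: if $\omega_{p_j}=\omega_{p_j'}$ then $|I_{p_j}|=|I_{p_j'}|$, the two underlying Whitney tubes have equal width, hence $|I_R|=|I_{R'}|$, and the $M$-separation in scales of ${\bf R^{*}}$ forces $|c(I_{p_j})-c(I_{p_j'})|\gtrsim M|I_{p_j}|$; if, say, $\omega_{p_j}\subsetneq\omega_{p_j'}$, then $j$-strong disjointness gives $I_{R'}\cap I_\T=\emptyset$, while $M$-separatedness of $\T$ gives $MI_R\subseteq I_\T$, so again $|c(I_{p_j})-c(I_{p_j'})|\gtrsim M|I_R|\ge M|I_{p_j}|$ (the case $\omega_{p_j'}\subsetneq\omega_{p_j}$ being symmetric). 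Inserting this into the kernel estimate \eqref{hgsdgyet11}, peeling off a factor $M^{3-N}$ from the decay $(1+|I_{p_j}|^{-1}|c(I_{p_j})-c(I_{p_j'})|)^{-N}$, and summing over $p'$ for fixed $p$ --- there are $\lesssim A$ tiles $p_j'$ in each $R'$, finitely many admissible $\omega_{p_j'}$ once $\omega_{p_j}$ is fixed, and the remaining sum over scales and positions of $I_{p_j'}$ is geometrically convergent thanks to \eqref{sep00fscales} --- one obtains, exactly as in the passage \eqref{e.eultyu765} through \eqref{operatornormbound}, a bound $\|V_\T V_{\T'}^{*}\|_{\ell^2\to\ell^2}\lesssim A^2M^{3-N}\big(1+|J|^{-1}\dist(I_\T,I_{\T'})\big)^{-2}$, where $J$ is the larger of $I_\T,I_{\T'}$. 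Summing this over $\T'$ for fixed $\T$, exactly as the estimate of \eqref{freq} was concluded from the spatial disjointness of the relevant intervals, produces $O(A^2M^{3-N}|I_\T|)$, which completes the double sum. Combining with the previous paragraph, $\lambda^2\big(\sum_\T|I_\T|\big)^2\lesssim(1+A^2M^{3-N})\sum_\T|I_\T|$, hence $\sum_\T|I_\T|\lesssim(1+A^2M^{3-N})\lambda^{-2}$, and undoing the normalization restores the factor $\|f\|_2^2$.

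\textbf{Main obstacle.} The delicate point is the off-diagonal estimate: extracting $\|V_\T V_{\T'}^{*}\|_{\ell^2\to\ell^2}\lesssim A^2M^{3-N}(\cdots)$ uniformly, with the correct powers of $A$ and $M$ and a decay in $(\T,\T')$ genuinely summable over $\T'$. This forces one to track carefully the two distinct mechanisms producing spatial separation of interacting tiles ($j$-strong disjointness combined with $M$-separatedness of the trees in the strictly-nested frequency case, versus $M$-separation in scales of ${\bf R^{*}}$ in the equal-frequency case) and to carry out the $A$-bookkeeping precisely as in \eqref{e.eultyu765}, so that no spurious extra power of $A$ or $M$ creeps in.
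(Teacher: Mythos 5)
Your overall strategy (normalize, invoke Lemma \ref{A-separated} to pass to $M$-separated trees, then run a $TT^{*}$ argument) is the paper's, but you carry out the $TT^{*}$ at the level of whole trees, compressing each tree into one operator $T_\T=V_\T^{*}V_\T$, whereas the paper expands at the level of individual multi-rectangles $R\in{\bf R}^{*}$ and splits the pairs $(R,R')$ into equal and different spatial scales. This difference is not cosmetic: your off-diagonal summation step has a genuine gap. You bound each pair interaction by $\|V_\T V_{\T'}^{*}\|\lesssim A^2M^{3-N}\bigl(1+|J|^{-1}\dist(I_\T,I_{\T'})\bigr)^{-2}$ and then claim that summing over $\T'$ for fixed $\T$ ``exactly as in \eqref{freq}'' gives $O(A^2M^{3-N}|I_\T|)$. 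In \eqref{freq} that summation worked because the intervals $I_{R'}$, $R'\in\F_{\rm freq}(R)$, are pairwise disjoint. Here the corresponding objects are the tree tops $I_{\T'}$, and $j$-strong disjointness says nothing about the tops: many trees may have identical or nested top intervals (indeed the multiplicity of the tops is essentially the quantity $\sum_\T|I_\T|$ that the lemma is trying to control, so it cannot be assumed bounded). If $K$ trees share the top $I_\T$, your estimate yields a contribution $\gtrsim K^2A^2M^{3-N}|I_\T|$ against a target of $K(1+A^2M^{3-N})|I_\T|$, and nothing in your argument caps $K$. The per-pair operator norm is simply too lossy once tops overlap: the actual smallness in that situation comes from the fact that, for a fixed $p\in\p_\T$, the tiles $p'$ from \emph{all} other trees that interact with it in frequency occupy pairwise disjoint spatial intervals lying outside $I_\T$ (the paper's two observations about ${\bf R}^{*}(R)$, proved from the grid structure, Remark \ref{r.rem.oveimplacuna} and strong disjointness), and this disjointness across trees is destroyed the moment you take operator norms tree by tree.

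Two further points of comparison with the paper's proof. First, the paper's equal-scale (same $|I_R|$) off-diagonal is handled by the $M$-separation in scales of ${\bf R}^{*}$, giving the factor $A^2M^{-N}$, while the different-scale pairs are where strong disjointness and the $M$-separatedness of the trees enter; your Schur-test discussion conflates these two mechanisms into a single tree-level bound. Second, the paper obtains the exponent $M^{3-N}$ as $M^{4-N}$ from the kernel decay times a factor $M^{-1}$ coming from the pointwise hypothesis \eqref{upboundfornorm22} applied to $\|T_R(f)\|_2$ and $\|T_{R'}(f)\|_2$; you never use \eqref{upboundfornorm22} in the expansion (only the upper half of \eqref{givenbysize22}), and instead try to extract all of $M^{3-N}$ from the kernel, which is only an arithmetic difference but signals that the per-rectangle information the paper exploits has been discarded. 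To repair your argument you would need to aggregate the interaction of a fixed rectangle $R\in\T$ with all other trees simultaneously, i.e. return to the rectangle-level expansion of the paper.
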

\begin{proof}
We will drop the $j$ and $\T$ dependence of various operators.
By normalizing we can assume that $\|f\|_2=1$.

Moreover, by Lemma \ref{A-separated} we can assume the trees in $\F$ are $M$-separated, with some loss in the constants from \eqref{givenbysize22} and \eqref{scalesgivenbysize22}. Indeed, the new trees fabricated by the procedure in Lemma \ref{A-separated} have the same tops as the old ones so that the quantity $\sum_{\T\in \F}|I_\T|$ does not change. 

The proof is another $TT^{*}$ argument, that follows along the lines of the one in Lemma \ref{almostortho}. Due to \eqref{givenbysize22} we get as before 
\begin{align}
\label{e.hh.1kl1}
\Big(\lambda^2\sum_{\T\in \F}|I_\T|\Big)^2
&\lesssim 
\sum_{R,R'\in {\bf R}^{*}\atop{|I_{R'}|=|I_R|}}
\|T_{R'}(f)\|_2\| T_{R'}^*T_{R}\|_{2\to 2}\| T_{R}(f)\|_2
\\&\label{e.hh.1kl2}+\sum_{R,R'\in {\bf R}^{*}\atop{|I_{R'}|<|I_R|}}
\|T_{R'}(f)\|_2\| T_{R'}^*T_{R}\|_{2\to 2} \| T_{R}(f)\|_2.
\end{align}

We estimate each term separately.
By Cauchy-Schwartz, the first sum can be bounded by
\begin{equation}
\label{e.ee.ee.1285t}
\frac{1}2\sum_{R,R'\in {\bf R}^{*}\atop{|I_{R'}|=|I_R|\atop{\omega_{R_j}=\omega_{R_j'}}}}(\|T_{R}(f)\|_2^2+\|T_{R'}(f)\|_2^2)
\| T_{R}^*T_{R'}\|_{2\to 2}.
\end{equation}
By using \eqref{e.eultyu765} and the fact that whenever $p\in\p(R)$, $p'\in\p(R')$ we have
$$|I_{p_j}|^{-1}|c(I_{p_j})-c(I_{p_j'})|\ge |I_R|^{-1}|c(I_R)-c(I_{R'})|.$$
we can derive similar to \eqref{operatornormbound} 
$$
\|T_{R'}^*T_{R}\|_{2\to 2}\lesssim A^2(1+|I_R|^{-1}|c(I_R)-c(I_{R'})|)^{-N},
$$
whenever $|I_{R'}|=|I_R|$. By using this inequality in the case $R'\not= R$ and the fact that $\|T_{R}^*T_{R}\|_{2\to 2}\lesssim 1$, the term in \eqref{e.ee.ee.1285t} can further be bounded by 
\begin{equation}
\label{e.ee.ee.1285t1}
\sum_{R\in {\bf R}^{*}}\|T_{R}(f)\|_2^2(1+\sum_{R'\in {\bf R}^{*}\setminus R\atop{|I_{R'}|=|I_R|\atop{\omega_{R}=\omega_{R'}}}}A^2(1+|I_R|^{-1}|c(I_R)-c(I_{R'})|)^{-N}).
\end{equation}

Finally, using the $M$- separatedness in scales, the term in \eqref{e.ee.ee.1285t1} (and thus the term in \eqref{e.hh.1kl1}) is bounded by
$$\sum_{R\in {\bf R}^{*}}\|T_{R}(f)\|_2^2(1+A^2M^{-N})\lesssim\lambda^2(1+A^2M^{-N})\sum_{\T\in\F}|I_\T|.
$$

We will next concentrate on proving similar bounds for the term in \eqref{e.hh.1kl2}. We start with a few observations. Fix $R$ and $\T$ such that $R\in\T$ and denote 
$${\bf R}^{*}(R)=\{R'\in {\bf R}^{*}:|I_{R'}|<|I_R|,\;T_{R'}^{*}T_{R}\not\equiv 0\}.$$  

The first point we make is that if $R'\in {\bf R}^{*}(R)$ then $R'\notin\T$.
This is an immediate consequence of Remark \ref{r.rem.oveimplacuna}. Using this and the $j$-strongly disjointness we see that  $R'\in {\bf R}^{*}(R)$ implies $I_{R'}\cap I_{\T}=\emptyset.$ 

Secondly, we observe that $R'\not=R''\in {\bf R}^{*}(R)$ implies that $I_{R'}\cap I_{R''}=\emptyset.$ To prove this, we first note that the definition of ${\bf R}^{*}(R)$ guarantees the existence of $\T',\T''\in\F$, $p^{(1)},p^{(2)}\in \p(R,\T)$ and  $p^{(3)}\in \p(R',\T')$, $p^{(4)}\in \p(R'',\T'')$ such that $\omega_{p_j^{(1)}}\subsetneq \omega_{p_j^{(3)}}$ and $\omega_{p_j^{(2)}}\subsetneq \omega_{p_j^{(4)}}$. Due to  \eqref{sep00fscales} and due to the grid structure it follows that $\omega_{R_j}\subseteq \omega_{p_j^{(3)}}\cap\omega_{p_j^{(4)}}.$ In particular, $\omega_{p_j^{(3)}}\cap\omega_{p_j^{(4)}}\not=\emptyset.$
We distinguish two cases.
If $|\omega_{p_j^{(3)}}|=|\omega_{p_j^{(4)}}|$ then clearly $I_{R'}\cap I_{R''}=\emptyset$, since otherwise we would get $R'=R''$. If $|\omega_{p_j^{(3)}}|<|\omega_{p_j^{(4)}}|$ then we first argue as above using Remark \ref{r.rem.oveimplacuna} that $\T'\not=\T''$, and then using the $j$-strongly disjointness that $I_{R''}\cap I_{\T'}=\emptyset$. We conclude again that $I_{R'}\cap I_{R''}=\emptyset$.    

We next estimate $\|T_{R'}^*T_{R}\|_{2\to 2}$ for $R'\in{\bf R}^{*}(R)$. Note that due to the first observation above and the $M$-separatedness of the trees we have $MI_{R}\cap I_{R'}=\emptyset$. By estimating like in \eqref{e.eultyu765} and \eqref{puntolino} we then get 
\begin{align*}
\|T_{R'}^*T_{R}\|_{2\to 2}&\lesssim A^2\Big(\frac{|I_{R'}|}{|I_R|}\Big)^{1/2}(1+|I_R|^{-1}|c(I_R)-c(I_{R'})|)^{-N}\\&\lesssim A^2M^{4-N}\Big(\frac{|I_{R'}|}{|I_R|}\Big)^{1/2}(1+|I_R|^{-1}|c(I_R)-c(I_{R'})|^{-4}.
\end{align*}

We use \eqref{upboundfornorm22} to estimate 
$$\|T_{R'}(f)\|_2\lesssim \|S_{R'}(f)\|_2\lesssim M^{-1/2}\lambda|I_{R'}|^{1/2}$$
and similarly for $R$. If we take into account all these observations, we can bound the term in \eqref{e.hh.1kl2} by
\begin{align*}
A^2M^{3-N}\lambda^2&\sum_{\T\in\F}\sum_{R\in\T}\sum_{R'\in {\bf R}^{*}(R)}|I_{R'}|(1+|I_R|^{-1}|c(I_R)-c(I_{R'})|)^{-4}\\&\lesssim A^2M^{3-N}\lambda^2\sum_{\T\in\F}\sum_{R\in\T}\sum_{R'\in {\bf R}^{*}(R)}\int_{I_{R'}}(1+|I_{R}|^{-1}|c(I_R)-x|)^{-4}dx\\&\lesssim A^2M^{3-N}\lambda^2\sum_{\T\in\F}\sum_{R\in\T}\int_{I_{\T}^c}(1+|I_{R}|^{-1}|c(I_R)-x|)^{-4}dx\\&\lesssim A^2M^{3-N}\lambda^2\sum_{\T\in\F}\sum_{R\in\T}|I_R|(1+|I_{R}|^{-1}\dist(c(I_R),I_\T^c))^{-2}\\&\lesssim A^2M^{3-N}\lambda^2\sum_{\T\in\F}|I_\T|.
\end{align*}
\end{proof}

\subsection{The Peeling Lemma}
\label{sec6.3}

The following lemma will provide a decomposition of the multi-tiles in collections with good control over various sizes.

\begin{lemma}\label{pealinglemma}Let $i\in \{0,1,2,3\}$, $j\in \{1,2,3\}$ and $f\in L^2$. Let $\p^{*}\subset\p$ be a finite collection of multi-tiles and let ${\bf R}^{*}\subset{\bf R} $ be a finite collection of multi-rectangles such that each $p\in\p^{*}$ belongs to some $R\in {\bf R}^{*}$. We assume that the collection ${\bf R}^{*}$ of multi-rectangles
is $M$-separated into scales with $M=A^{\frac{2}{N-2}}$. 
We also assume that 
$
\size_{j,i}(\p^{*})\leq \lambda
$.
Then there is a collection $\F$ of trees $(\T,\p_\T)$ with $\p_{\T}\subset\p^{*}$ such that
$$
\sum_{\T\in \F}|I_\T|\lesssim A^{\frac{4}{N-2}}\lambda^{-2}\| f\|_2^2
$$
$$\size_{j,i}\Big(\p^{*}-\bigcup_{\T\in\F}\p_{\T}\Big)\leq \lambda/2.$$
\end{lemma}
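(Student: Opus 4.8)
The plan is to run the standard ``peeling'' or ``selection'' argument from time-frequency analysis, adapting it to the present setting where each multi-rectangle $R$ carries a collection of multi-tiles and where the tree sizes $\size_{j,i}$ are measured over $i$-trees. First I would set up the iteration: if $\size_{j,i}(\p^{*})\le\lambda/2$ there is nothing to do, so assume $\size_{j,i}(\p^{*})>\lambda/2$. By definition of $\size_{j,i}$ there exists an $i$-tree $(\T,\p_\T)$ with $\p_\T\subseteq\p^{*}$ and
$$\sum_{R\in\T}\sum_{p_j\in\p_j(R,\T)}|\<f,\phi_{p_j}\>|^2\ge\frac{\lambda^2}{4}|I_\T|.$$
Among all such trees I would select one whose top interval $I_\T$ is \emph{extremal} in the usual sense (e.g.\ with $\xi_\T^j$ maximal, or minimal, depending on the value of $i$, among trees achieving mass $\ge\lambda^2|I_\T|/4$), remove $\p_\T$ from $\p^{*}$, add $(\T,\p_\T)$ to the collection $\F$, and repeat. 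Since $\p^{*}$ is finite the process terminates, and by construction the leftover collection has $\size_{j,i}\le\lambda/2$, which gives the second conclusion.

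The crux is the first conclusion, the bound $\sum_{\T\in\F}|I_\T|\lesssim A^{4/(N-2)}\lambda^{-2}\|f\|_2^2$. For this I would first arrange, by the extremality in the selection, that the selected trees are pairwise $j$-strongly disjoint in the sense of the definition preceding Lemma~\ref{almostortho}; this is exactly the point where the standard time-frequency selection produces strong disjointness, and it uses the rank properties of Section~\ref{sec6.1} together with \eqref{sep00fscales} to guarantee that once $\xi_\T^j\in 2\omega_{p_j}$ is ``used up'' at scale $|I_\T|$, no later tree can reuse the same frequency region at a finer spatial scale inside $I_\T$. Next, I would normalize by $\|f\|_2=1$ and split $\F$ dyadically according to the tree mass, i.e.\ into subcollections $\F_k$ on which $2^{2k}\lambda^2|I_\T|/4\le \sum_{R\in\T}S_{R,\T}^j(f)^2\le 2^{2k}\lambda^2|I_\T|$ for $k\ge 0$, together with the extra structural hypotheses \eqref{upboundfornorm22} and \eqref{scalesgivenbysize22} which (after the peeling, by maximality of the chosen tops) hold automatically with $M$ of order $A^{2/(N-2)}$. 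The $M$-separation in scales is assumed in the statement. Then I would apply Lemma~\ref{almostortho3} to each $\F_k$ with $\lambda$ replaced by $2^k\lambda$ and $M=A^{2/(N-2)}$, obtaining
$$\sum_{\T\in\F_k}|I_\T|\lesssim(1+A^2 M^{3-N})2^{-2k}\lambda^{-2}\lesssim 2^{-2k}\lambda^{-2},$$
since $A^2M^{3-N}=A^2 A^{2(3-N)/(N-2)}=A^{(2N-4+6-2N)/(N-2)}=A^{2/(N-2)}\le A^{4/(N-2)}$ is bounded by the claimed power (and in fact $1+A^2M^{3-N}\lesssim A^{4/(N-2)}$ for $N$ large). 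Summing the geometric series in $k$ gives $\sum_{\T\in\F}|I_\T|\lesssim A^{4/(N-2)}\lambda^{-2}$, which after undoing the normalization is the desired estimate.

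The main obstacle I anticipate is verifying, after the greedy selection, that the hypotheses of Lemma~\ref{almostortho3} genuinely hold for the selected families --- in particular that the chosen trees can be taken $j$-strongly disjoint and that the subtree bound \eqref{scalesgivenbysize22} is inherited. The first requires a careful argument that the extremal choice of top, combined with the rank properties (overlapping in one coordinate forces lacunarity in another) and the scale-separation \eqref{sep00fscales}, prevents a finer-scale multi-tile of a later tree from lying inside an earlier top-interval while overlapping in the $j$-frequency; the second is the familiar fact that a subtree of a selected tree was itself a candidate during the selection, so its mass could not have exceeded $\lambda^2$ times its top length, else it (or an extension of it) would have been selected earlier. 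These are routine in spirit but must be spelled out against the specific definitions of $i$-tree and strong disjointness given above; everything else is a $TT^{*}$ estimate already packaged in Lemmas~\ref{almostortho} and \ref{almostortho3}.
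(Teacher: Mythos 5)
Your overall skeleton (greedy tree selection with an extremal frequency choice, leftover size halved by the stopping rule, counting function controlled by the $TT^*$ Bessel inequalities of Lemmas~\ref{almostortho} and \ref{almostortho3}) is the right one, but two steps that you treat as automatic are in fact where the work lies, and as stated they fail. First, the hypothesis \eqref{upboundfornorm22} of Lemma~\ref{almostortho3} does \emph{not} ``hold automatically'': the assumption $\size_{j,i}(\p^{*})\le\lambda$ only yields $S_{R,\T}^j(f)\le\lambda|I_R|^{1/2}$ for individual multi-rectangles, which misses the required bound $10^{-2}M^{-1/2}\lambda|I_R|^{1/2}$ by the factor $M^{-1/2}=A^{-1/(N-2)}$. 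The paper therefore runs a separate first stage before any tree selection: all multi-rectangles carrying a singleton-tree contribution larger than $10^{-2}M^{-1/2}\lambda|I_R|^{1/2}$ are peeled off (together with their saturations $\S(I_R,\vec{\xi}_R)$, organized into at most five trees per top), and their tops are counted by Lemma~\ref{almostortho} applied at the lowered threshold $10^{-2}M^{-1/2}\lambda$; this costs an extra factor $M$, which is precisely why the lemma's conclusion carries $A^{4/(N-2)}$ rather than the $A^{2/(N-2)}$ your accounting would give. Your proposal omits this stage entirely, so Lemma~\ref{almostortho3} cannot be invoked on the families you produce. (Your hypothesis \eqref{scalesgivenbysize22}, by contrast, is indeed inherited from $\size_{j,i}(\p^{*})\le\lambda$, and your dyadic splitting in the tree mass is unnecessary since the mass of every candidate tree is already trapped between $\lambda^2|I_\T|/4$ and $\lambda^2|I_\T|$.)

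Second, removing only the selected tree $\p_\T$ at each step does not produce $j$-strongly disjoint trees, and no amount of care in exploiting the extremal choice of $\xi_\T^j$ repairs this: the algorithm itself must be changed. In the paper, each selection removes the entire saturation $\S(I_\T,\vec{\xi}_\T)$, i.e.\ the maximal trees of \emph{all} types with the same top, and the verification of strong disjointness hinges on exactly this: if $p\in\p_\T$, $p'\in\p_{\T'}$ with $\omega_{p_j}\subsetneq\omega_{p_j'}$ and $I_{R'}\subseteq I_\T$, the extremality of $\xi_\T^j$ together with Lemma~\ref{lem.sumar322w} and \eqref{sep00fscales} shows $\T$ was selected first, and then Lemma~\ref{lljjggrr190gr1} (with $I=I_\T$) places $p'$ inside $\S(I_\T,\vec{\xi}_\T)$, so $p'$ would already have been removed --- a contradiction that is simply unavailable if only $\p_\T$ was discarded, since $p'$ may lie in a different-type tree with the same top and survive to a later selection. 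Relatedly, the selection must be run in two passes (trees with $\xi_\T^j$ below all $\omega_{p_j}$, chosen with $\xi_\T^j$ maximal, then trees with $\xi_\T^j$ above, chosen with $\xi_\T^j$ minimal), since $\xi_\T^j\notin 2\omega_{p_j}$ can occur on either side and a single extremality criterion does not give the monotonicity your ``finer-scale tile inside an earlier top'' argument needs. You correctly flagged strong disjointness as the main obstacle, but the resolution is a modified algorithm (saturation removal plus the two-sided selection), not a finer analysis of the one you wrote down; and the $i=j$ (tile size) case should be noted as the degenerate instance handled by the first stage alone.
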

\begin{proof}
We first describe the proof in the case we deal with tree sizes ($i\not=j$), which practically also contains the proof for the much simpler case of tile sizes. This will be briefly mentioned in the end of the proof.

First we eliminate all the multi-rectangles which have big $\size_{j,i}$. We do that in order to set the stage for an application of Lemma \ref{almostortho3}. Consider the collection $\F_0$ of all singleton $i$-trees $(R,\p^{*}(R,\xi))$ with top $(I_R,\xi)$, for some $R\in {\bf R}^{*}$, $\xi\in\R$ and some $\p^{*}(R,\xi)\subseteq\p^{*}(R)$, such that 
$$S^j_{R,\p^{*}(R,\xi)}(f)>10^{-2}M^{-1/2}\lambda|I_R|^{1/2}.$$
Note that a given $R$ may appear in more than just one pair $(R,\p^{*}(R,\xi))$. Define $\F_{00}$ to be the collection of all $R$ that contribute to $\F_0$, that is $(R,\p^{*}(R,\xi_R^i))\in \F_0$ for some $\xi_R^i$.

Start with $\F_1:=\emptyset$ and perform the following algorithm. Select some $R\in\F_{00}$, with the additional property that $I_R$ is maximal with respect to inclusion,  among all such $R$. It does not matter which one is selected, if there is more than one $R$ that qualifies to be selected. Set $\F_1:=\F_1\cup \{R\}$. 

Define the vector $\vec{\xi}_R\in\langle\gamma\rangle$ as the one uniquely determined by the coordinate $\xi_R^i$: if $i\not=0$, then $\xi_R^i$ is the $i^{th}$ coordinate of this vector, while if $i=0$, it is either the first or the second coordinate, depending on whether $(R,\p^{*}(R,\xi))$ is a $0^1$-tree or a $0^2$-tree.

Define $\F_R:=\{R'\in\F_{00}\;:R_j'\cap R_j\not=\emptyset\}$. Set $\F_{00}:=\F_{00}\setminus \F_R$ and restart the algorithm.

Due to the maximality of $I_R$ it follows that $I_{R'}\subseteq I_R$ for each $R\in \F_R$. Let us observe next that
$$\bigcup_{R'\in \F_R}\p^{*}(R')\subseteq \S(I_R,\vec{\xi_R}).$$
This follows from Lemma \ref{lljjggrr190gr} if $R'=R$, and from Lemma \ref{lljjggrr190gr1} if $R'\not=R$. From \eqref{theahamoment} we deduce that $\bigcup_{R'\in \F_R}\p^{*}(R')$ can be split into the union of five trees, each of which with top interval $I_R$.

It suffices to prove that
$$
\sum_{R\in \F_{00}}|I_R|\lesssim A^{\frac{2}{N-2}}M\lambda^{-2}\| f\|_2^2.
$$
But this follows immediately from Lemma \ref{almostortho}, since the rectangles $(R_j)_{R\in \F_{00}}$ are pairwise disjoint. This ends the first stage of the construction.

In the second stage of the construction we perform the following algorithm
\\
Step 0: Initialize $\p_{0}:=\p^{*}\setminus \bigcup_{R\in\F_1}\S(I_R,\vec{\xi_R})$, $\F_i^{(1)}=\emptyset$, $\S_{{\rm aux}}^{(1)}=\emptyset$, $\F_i^{(2)}=\emptyset$, $\S_{{\rm aux}}^{(2)}=\emptyset$.
\\
Step 1: Select an $i$-tree $(\T,\p_\T)$ with top $(I_\T,\xi_\T^i)$, $\T\subseteq {\bf R}^{*}$ and $\p_\T\subseteq\p_{0}$ such that the following requirements are satisfied

(i) $(\T,\p_\T)$ is the maximal tree with the given top $(I_\T,\xi_\T^i)$ that can be constructed out of the multi-tiles that are available.

(ii) $\sum_{R\in \T}S_{R,\T}^j(f)^{2}\ge \frac{1}{8}\lambda^2 |I_{\T}|$

(iii) $\xi_\T^{j}<\omega_{p_j}$ for each\footnote{Recall that $\vec{\xi}_\T\in\langle\gamma\rangle$ is the vector uniquely determined by the coordinate $\xi_\T^{i}$, as in Stage 1.}  $p\in \p_\T$

(iv) $\xi_\T^{j}$ is maximal over all the trees that satisfy (i), (ii) and (iii) above.
\\
If no such tree can be found then go to Step 5. 
\\
Step 2: Put the tree $(\T,\p_\T)$ in the collection $\F_i^{(1)}$ and the multi-tiles $\S(I_\T,\vec{\xi}_\T)$ in $\S_{{\rm aux}}^{(1)}$
\\
Step 3: Upgrade $\p_{0}:=\p_{0}\setminus \S(I_\T,\vec{\xi}_\T)$
\\
Step 4: Go to Step 1
\\
Step 5: Select an $i$-tree $(\T,\p_\T)$ with top $(I_\T,\xi_\T^i)$, $\T\subseteq {\bf R}^{*}$ and $\p_\T\subseteq\p_{0}$ such that the following requirements are satisfied

(i) $(\T,\p_\T)$ is the maximal tree with the given top $(I_\T,\xi_\T^i)$ that can be constructed out of the multi-tiles that are available.

(ii) $\sum_{R\in \T}S_{R,\T}^j(f)^{2}\ge \frac{1}{8}\lambda^2 |I_{\T}|$

(iii) $\xi_\T^{j}>\omega_{p_j}$ for each $p\in \p_\T$

(iv) $\xi_\T^{j}$ is minimal over all the trees that satisfy (i), (ii) and (iii) above.
\\
If no such tree can be found then go to Step 9. 
\\
Step 6:  Put the tree $(\T,\p_\T)$ in the collection $\F_i^{(2)}$ and the multi-tiles $\S(I_\T,\vec{\xi}_\T)$ in $\S_{{\rm aux}}^{(2)}$
\\
Step 7: Upgrade $\p_{0}:=\p_{0}\setminus \S_{{\rm aux}}^{(2)}$
\\
Step 8: Go to Step 5
\\
Step 9: Stop. The algorithm is over.

The first part of Lemma \ref{lem.sumar322w} easily implies that if $\p_0$ denotes the value after the algorithm above ends, then $\size_{j,i}(\p_0)\le \lambda/2$. It suffices now to prove that 
$$
\sum_{\T\in \F^{(1)}_i\cup\F^{(2)}_i}|I_\T|\lesssim A^{2}M^{3-N}\lambda^{-2}\| f\|_2^2.
$$
This will follow from Lemma \ref{almostortho3} once we prove that both $\F^{(1)}_i$ and $\F^{(2)}_i$ consist of $j$-strongly disjoint trees. It suffices to prove this for $\F^{(1)}_i$.

We verify the second requirement of $j$-strongly disjointness.
Assume for contradiction that there are two distinct trees $(\T, \p_\T),(\T',\p_{\T'})\in \F^{(1)}_i$ and $R\in \T$, $R'\in \T'$, $p\in\p(R,\T)$, $p'\in\p(R',\T')$ with $\omega_{p_j}\subsetneq \omega_{p_j'}$ and $I_{R'}\subseteq I_\T$.  By using Lemma \ref{lem.sumar322w}, \eqref{sep00fscales} and (iii) in Step 1 of the construction of both $(\T, \p_\T)$ and $(\T',\p_{\T'})$, we get that $\xi_{\T}^j>\xi_{\T'}^{j}$. Thus, by (iv) in Step 1 we know $(\T, \p_\T)$ was selected before $(\T',\p_{\T'})$.

On the other hand, by the grid properties  we know that $\omega_{R_j}\subsetneq \omega_{R_j'}$. We can then invoke Lemma \ref{lljjggrr190gr1} with $I:=I_\T$ to conclude that $p'\in\S(I_\T,\vec{\xi}_\T)$. But then it is clear that $p'$ was eliminated before the selection of the tree $\T'$, giving rise to a contradiction.

We now verify the first requirement in the definition of $j$-strongly disjointness. Assume by contradiction that $R\in \T\cap \T'$ for some $\T,\T'\in\F^{(1)}_i$, and assume without loss of generality that $\T$ was selected before $\T'$. But then, by Lemma \ref{lljjggrr190gr} with $I=I_\T$ it follows that $\p(R)\subseteq\S(I_\T,\vec{\xi}_\T)$. This means the whole $\p(R)$ was eliminated before the selection of the tree $\T'$, giving rise to a contradiction.

Note that the first stage of this proof is essentially what needs to be done when dealing with tile sizes ($i=j$). Precisely, at each step of the selection algorithm we  search for singleton $j$-trees $(R,\p^{*}(R,\xi_R^i))$ with top $(I_R,\xi_R^i)$ such that 
$$S^j_{R,\p^{*}(R,\xi_R^i)}\ge \lambda|I_R|^{1/2},$$
and such that $I_R$ is maximal with respect to inclusion. 
We eliminate the multi-tiles $\S(I_R,\vec{\xi_R})$ from $\p^{*}$. Let $\F_{00}$ be the collection of the selected multi-rectangles $R$. The fact that
$$
\sum_{R\in \F_{00}}|I_R|\lesssim \lambda^{-2}\| f\|_2^2
$$
is an immediate consequence of Lemma \ref{almostortho} and of the fact that the rectangles $R_i$ with $R\in\F_{00}$ are pairwise disjoint.

As a final observation, we note that -due to \eqref{theahamoment}- at each stage in the Peeling Lemma we eliminate with each multi-tile $p\in\p(R)$ all the multi-tiles in $\p(R)$.
\end{proof}

\subsection{Size estimates}
\label{sec6.4}

In this section we will see how to estimate various sizes. Before we do so, we recall two lemmata that will be used in the sequel with the words `quartiles' or `tri-tiles' replaced by `multi-tiles'. \\

{\bf Lemma A } (Lemma 4.2 in  \cite{MTT4} pg. 410)

{\it Let $\P$ be a finite collection of quartiles, $j=1, 2, 3$ and let $\{a_{P_j}\}_{P\in \P}$ be a sequence of complex numbers. Then 

$$ \sup_{T \in \P} ( \frac{1}{|I_T|} \sum_{P \in T} |a_{P_j}|^2 )^{1/2} \sim  \rm{sup}_{T \in \P} \, \frac{1}{|I_T|}  \Vert   (\sum_{P\in T} |a_{P_j}|^2  \frac{1_{I_P}}{|I_P|}\, )^{1/2} \Vert_{L^{1, \infty}(I_T)} $$
where $T$ ranges over all trees in $\P$ which are $i$-trees for some $i \ne j$. }\\

{\bf Lemma B } (Lemma 6.8 in \cite{MTT3} pg. 443)

{\it Let $j=1, 2, 3$, $E_j$ be a set of finite measure, $f_j$ be a function in $X(E_j)$, and let $\P$ be a finite collection of tri-tiles. Then we have 
$$ \sup_{T \in \P} ( \frac{1}{|I_T|} \sum_{P \in T} |\langle f_j , \phi_{P_j} \rangle |^2 )^{1/2} \, \lesssim \,  \rm{sup}_{P \in \P} \, \frac{\int_{E_j} \chi_{I_P}^{M}}{ |I_P|} $$ for all $M$ with implicit constant depending on $M$. }\\

We can now state our lemmas.
\begin{lemma}
Let $f\in X(E)$ and $1<p<\infty$.
For each $R\in {\bf R}$,  $\xi\in \R$ and $l$ we have
$$\left({|I_{R}|^{-1}}{\sum_{p_l\in \p_l(R)\atop{\xi\in\omega_{p_l}}}|\<f,\phi_{p_l}\>|^2}\right)^{1/2}\lesssim \frac{1}{|I_R|^{1/p}}\|1_E\chi_{I_R}^{N-1}\|_p.$$
For each $i\in\{0,1,2,3\}\setminus\{j\}$ and each $i$-tree $(\T,\p_\T)$ with top $(I_\T,\xi_\T^i)$ we have 
\begin{equation}
\label{eq:diffbound}
\left(\frac{\sum_{R\in\T}\sum_{p_j\in \p_j(R,\T)}|\<f,\phi_{p_j}\>|^2}{|I_{\T}|}\right)^{1/2}\le C_A\sup_{I\in\I_\T}\frac{1}{|I|^{1/p}}\|1_E\chi_{I}^{N-1}\|_p,
\end{equation}
where 
$$\I_\T=\{I\;\operatorname{dyadic: }I_R\subseteq I\subseteq I_{\T}\hbox{ for some }R\in\T\},$$
$C_A=O(A^{1/2})$ if $i\in\{0,3\}$ and $j\in\{1,2\}$, and $C_A=O(1)$ if $i\in\{1,2\}$.
\end{lemma}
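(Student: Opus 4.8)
The plan is to reduce both estimates to the two lemmas quoted just above (Lemma A and Lemma B), translating them into the language of multi-rectangles and multi-tiles.

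\medskip

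\textbf{First estimate (the single-rectangle bound).} For a fixed $R\in{\bf R}$, a fixed frequency $\xi$, and a fixed index $l$, the tiles $p_l$ with $\xi\in\omega_{p_l}$ and $p\in\p(R)$ all have $I_{p_l}$ contained in $I_R$ with $|I_{p_l}|\approx A^{-1}|I_R|$ (since the area of $R_l$ is $A$), and their frequency intervals $\omega_{p_l}$ are all comparable and contain $\xi$; but a given tile $p_l$ may be counted for more than one multi-tile $p\in\p(R)$. The point is that, up to a constant, the number of multi-tiles $p\in\p(R)$ giving the same $p_l$ is $O(A)$ (from the earlier discussion, $\omega_{p_3}$ determines $\omega_{p_1},\omega_{p_2}$ only within $O(2^\kappa)$ choices, and similarly on the spatial side), but each $p_l$ is counted once. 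So after discarding repetitions we have a collection of tiles at a single scale $\approx A^{-1}|I_R|$, all living inside $I_R$, and the sum $\sum |\langle f,\phi_{p_l}\rangle|^2$ is, by the (disjoint-support in frequency once $\xi$ is fixed, or rather the Bessel/almost-orthogonality bound \eqref{whs73219872}) essentially controlled by $\int_{I_R}|f|^2$ weighted by $\chi_{I_R}^N$ tails. I would argue directly: expand $\langle f,\phi_{p_l}\rangle$, use that $|\phi_{p_l}(x)|\lesssim |I_{p_l}|^{-1/2}\chi_{I_{p_l}}^N(x)$, apply Cauchy--Schwarz in each inner product, sum the $\ell^2$ norms of $\phi_{p_l}$ restricted to $E$ using that the $I_{p_l}$ tile $I_R$ at one scale, and bound $\sum_{p_l}\chi_{I_{p_l}}^{N}\lesssim \chi_{I_R}^{N-1}$ (losing one power in the tail because there are infinitely many translates at a fixed scale, which is the standard way such a loss appears). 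Since $|f|\le 1_E$ and $|I_{p_l}|\approx A^{-1}|I_R|$, H\"older with exponent $p$ converts $\|1_E\chi_{I_R}^{N-1}\|_{L^2(\text{one scale})}$ into the stated $|I_R|^{-1/p}\|1_E\chi_{I_R}^{N-1}\|_p$ bound, with the $A$-powers absorbed into the implicit constant (this is a single-rectangle estimate, so an $A$-dependent constant is harmless here).

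\medskip

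\textbf{Second estimate (the tree bound \eqref{eq:diffbound}).} Here I would invoke Lemma A and Lemma B directly. By Lemma A, applied with $a_{p_j}=\langle f,\phi_{p_j}\rangle$, the quantity on the left of \eqref{eq:diffbound} is comparable to
$$\sup_{\T'}\frac{1}{|I_{\T'}|}\Big\|\Big(\sum_{p\in\p_\T}|\langle f,\phi_{p_j}\rangle|^2\frac{1_{I_{p_j}}}{|I_{p_j}|}\Big)^{1/2}\Big\|_{L^{1,\infty}(I_{\T'})},$$
where $\T'$ runs over $i$-subtrees; the $A$-loss $C_A=O(A^{1/2})$ for $i\in\{0,3\}$ comes from the multiplicity with which a tile $p_j$ is shared among multi-tiles $p\in\p(R,\T)$ (which is $O(A)$ when $j\in\{1,2\}$ and $i\in\{0,3\}$ because $\omega_{p_3}$ determines $\omega_{p_j}$ only within $O(2^\kappa)$ choices and $A=O(2^{\kappa+m})$; when $i\in\{1,2\}$ the overlapping index pins down the tile and there is no loss). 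Then Lemma B, applied to $f\in X(E)$ with $E_j=E$, shows the right side above is $\lesssim \sup_{p\in\p_\T}|I_{p_j}|^{-1}\int_E\chi_{I_{p_j}}^M$. Finally I would compare $\sup_p |I_{p_j}|^{-1}\int_E \chi_{I_{p_j}}^M$ with $\sup_{I\in\I_\T}|I|^{-1/p}\|1_E\chi_I^{N-1}\|_p$: since every $I_{p_j}$ with $p\in\p(R,\T)$ satisfies $I_R\subseteq I_{p_j}^{*}\subseteq I_\T$ for a dilate (or rather $I_{p_j}\supseteq$ a fixed dilate of $I_R$ and $I_{p_j}\subseteq$ a fixed dilate of $I_\T$), and H\"older gives $|I|^{-1}\int_E\chi_I^{N-1}\le |I|^{-1/p}\|1_E\chi_I^{N-1}\|_p$ (using $1\le p$ and that the complementary integral $\int (\chi_I^{N-1})^{p'}$ is $O(|I|)$ for $N-1$ large), the claimed bound follows after adjusting $M$ versus $N-1$ and enlarging $\I_\T$ to include all dyadic $I$ with $I_R\subseteq I\subseteq I_\T$ for some $R\in\T$.

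\medskip

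\textbf{Main obstacle.} The routine but delicate point is bookkeeping the multiplicity factor $C_A$: one must check precisely how many multi-tiles $p\in\p(R,\T)$ share a given tile $p_j$, and verify this is $O(A)$ exactly when $i\in\{0,3\}$, $j\in\{1,2\}$ (giving $C_A=O(A^{1/2})$ after the square root inherent in the size) and $O(1)$ otherwise, using the determination properties of the frequency intervals established earlier (namely that $\omega_{p_i}$ for the overlapping index $i$ determines the others, while $\omega_{p_3}$ determines the others only within $O(2^\kappa)$ choices). A secondary technical nuisance is the loss of one power in the tail ($\chi^{N-1}$ instead of $\chi^N$), which one absorbs by starting from bump functions adapted of a sufficiently high order $2N$; I would simply track this carefully rather than optimize it.
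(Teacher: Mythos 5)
Your reduction of the tree bound to Lemmas A and B breaks down at the last step. Even granting that those lemmas could be quoted verbatim for this tile configuration, Lemma B produces the bound $\sup_{p\in\p_\T}|I_{p_j}|^{-1}\int_E\chi_{I_{p_j}}^M$, and you then convert this into $\sup_{I\in\I_\T}|I|^{-1/p}\|1_E\chi_I^{N-1}\|_p$ using the claim that $I_{p_j}$ contains a fixed dilate of $I_R$ (or vice versa with bounded dilates). That geometric premise is false: by construction $I_{p_j}\subset I_j\subset I_R$ with $|I_{p_j}|=|\omega_{p_j}|^{-1}\approx 2^{-m}|I_R|$, since each rectangle has area $A\approx 2^{\kappa+m}$ while each tile has area one. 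Hence every interval of $\I_\T$ contains some $I_R$ and is longer than the tile intervals by a factor of order $2^m$, and the conversion fails in general: if $E$ is concentrated inside a single $I_{p_j}$, the tile-level average exceeds every $\I_\T$-level average by about $2^m$. This is not a removable technicality: the reason the statement is formulated with $\I_\T$ (and the reason the paper re-proves the estimate instead of citing Lemma B) is that in case (2) of the proof of Theorem \ref{mainmodels} one needs averages over intervals at least as long as $I_R$, so as to exploit the gain $2^{-l}$ coming from the distance of $I$ to $\R\setminus\Omega$; a bound in terms of tile-interval averages is strictly weaker and insufficient there.

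The paper's proof is structurally different precisely at this point. Because in a $3$-tree or a $0$-tree a single rectangle $R$ can carry up to $O(A)$ distinct tiles $p_j$ (about $2^{\kappa}$ admissible frequency intervals $\omega_{p_j}\subset\omega_{R_j}$ by rank property (1), times about $2^{m}$ spatial positions inside $I_R$), the rank-one tree structure assumed in Lemmas A and B is absent at the tile level, so the argument is run at the rectangle level: set $a_{R_j}=S^j_{R,\T}(f)$, use $a_{R_j}^2\le A\sup_{p}|\<f,\phi_{p_j}\>|^2\lesssim A|I_R|\inf_{x\in I_R}M_1^2(f*\check 1_{\omega_{R_j}})(x)$, apply Lemma A to the coefficients $a_{R_j}$ to reduce to an $L^p$ bound for the square function $\bigl(\sum_{R\in\T'}a_{R_j}^2\chi_{I_R}/|I_R|\bigr)^{1/2}$ on subtrees, split $f=f1_{2I_\T}+f1_{(2I_\T)^c}$, and treat the main term by Fefferman--Stein plus Littlewood--Paley theory, which is legitimate because the intervals $\omega_{R_j}$, $R\in\T$, are lacunary about $\xi_\T^j$ by rank property (4) (for $i=0$, because each interval $\omega(p)=[\epsilon 2^k,\epsilon 2^{k+3}]$ occurs for at most one scale). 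This is what simultaneously produces the factor $A^{1/2}$ and the $\I_\T$-type averages; neither ingredient appears in your sketch, and your attribution of the $A^{1/2}$ to a tile being shared by several multi-tiles misidentifies the loss (shared tiles are counted once; the loss comes from the many distinct tiles per rectangle). Finally, your remark that an $A$-dependent constant is harmless in the single-rectangle estimate is also wrong in spirit: uncontrolled powers of $A$ would destroy the summation in $\kappa$ and $m$ behind Theorem \ref{mainmodels}; fortunately the one-scale almost-orthogonality argument you outline does give that estimate with constants independent of $A$, so the first half of your proposal is essentially sound.
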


\begin{proof}
This is a version of Lemma B above. Here we prove only the cases that are a bit different from the case in Lemma B. Namely  we will only prove the second part of our lemma, and only in the case when $i=3$ and $(i=0,j\in\{1,2\})$. These are the ``worst case scenarios'' because of the presence of $A$ in the corresponding type of lacunarity.

Define $a_{R_j}=(\sum_{p_j\in \p_j(R,\T)}|\<f,\phi_{p_j}\>|^2)^{1/2}$. We first focus on the case $i=3$. We note that
\begin{align*}
a_{R_j}^2&\le A\sup_{p\in\p(R,\T)} |\<f,\phi_{p_j}\>|^2\\&=A\sup_{p\in\p(R,\T)} |\langle f*\check{1}_{\omega_{R_j}},\phi_{p_j}\rangle|^2\\&\lesssim A|I_R|\inf_{x\in I_R}M_1^2(f*\check{1}_{\omega_{R_j}})(x).
\end{align*}
By Lemma A above, it suffices\footnote{This reduction is made possible by and explains the presence of the collection $\I_\T$ in \eqref{eq:diffbound}.} to show that for each subtree $\T'$ of $T$
\begin{equation}
\label{pagh77ntz1}
\|\left(\sum_{R\in\T'}a_{R_j}^2\frac{\chi_{I_R}}{|I_R|}\right)^{1/2}\|_{p}\lesssim A^{1/2}\|1_E\chi_{I_{\T'}}^{N-1}\|_p.
\end{equation}
To simplify notation, we will continue to write $\T$ rather than $\T'$.
and by writing $f=f_1+f_2$ with $f_1=f1_{2I_\T}$, it further suffices to prove \eqref{pagh77ntz1} for both $f_1$ and $f_2$. 

In the case of $f_2$ we use the decay of $\phi_{p_j}$ to write 
$$a_{R_j}\lesssim A^{1/2}\left(\frac{|I_R|}{|I_\T|}\right)^{N-\frac12}|I_\T|^{-1/2}\int_E\chi_{I_\T}^{N},$$
which by  summation and H\"older's inequality proves \eqref{pagh77ntz1}.

To deal with $f_1$ we apply the Fefferman-Stein inequality first and then invoke the Littlewood-Paley theory and the rank property (4) to get
\begin{align*}
\|\left(\sum_{R\in\T}a_{R_j}^2\frac{\chi_{I_R}}{|I_R|}\right)^{1/2}\|_{p}&\lesssim A^{1/2}\|(\sum_{R\in\T}M_1^2(f_1*\check{1}_{\omega_{R_j}})(x))^{1/2}\|_p\\&\lesssim A^{1/2}\|(\sum_{R\in\T}(f_1*\check{1}_{\omega_{R_j}})^2(x))^{1/2}\|_p\\&\lesssim A^{1/2}\|1_E\|_p,
\end{align*}
where $M_1(f)$ denotes the Hardy-Littlewood maximal function of $f$. 

Let us now briefly see the case $i=0$. The argument is very similar to above. Denote by $\xi_\T^j$ the $j^{th}$ component of the vector $\vec{\xi}_\T$ associated with $\xi_\T^i$ as before. By modulation symmetry it suffices to assume that $\xi_\T^j=0$. For each $R\in\T$ and each $p\in\p(R,\T)$ let $\omega(p)$ be an interval of the form $[\epsilon 2^k,\epsilon 2^{k+3}]$ ($\epsilon\in\{-1,1\}$, $k\in\Z$) such that $\omega_{p_j}\subset \omega(p)$. This is possible due to Lemma \ref{lem.sumar322w}. We will estimate as before the term corresponding to $f_2$, and then write for $f_1$ 
$$a_{R_j}^2\lesssim A \sup_{p\in\p(R)}|I_R|\inf_{x\in I_R}M_1^2(f_1*\check{1}_{\omega(p)})(x).$$
By Lemma \ref{lem.sumar322w} and \eqref{sep00fscales} we know that for each interval $\omega:=[\epsilon2^k,\epsilon2^{k+3}]$ there is at most one scale $|I_R|$ such that $\omega(p)=\omega$. The proof follows as in the previous case, by applying the Fefferman-Stein inequality and the Littlewood-Paley theory.
\end{proof}

\begin{lemma}
For each $R\in {\bf R}$, $\xi\in \R$ and $l$  we have
$$\left({|I_{R}|^{-1}}{\sum_{p_l\in \p_l(R)\atop{\xi\in\omega_{p_l}}}|\<f,\phi_{p_l}\>|^2}\right)^{1/2}\lesssim \frac{1}{|I_R|^{1/2}}\|f\chi_{I_R}^{N-2}\|_2.$$
For each $i$-tree $(\T,\p_\T)$ with $i\in\{0,1,2,3\}\setminus\{j\}$  we have 
$$\left(\frac{\sum_{R\in\T}\sum_{p_j\in \p_j(R,\T)}|\<f,\phi_{p_j}\>|^2}{|I_{\T}|}\right)^{1/2}\lesssim \frac{1}{|I_\T|^{1/2}}\|f\chi_{I_\T}^{N-2}\|_2.$$
\end{lemma}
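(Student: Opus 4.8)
The plan is to read both inequalities as the $L^{2}$ endpoint ($p=2$) of the preceding lemma and to obtain them from the Bessel-type bounds \eqref{whs73219872} and \eqref{whs73219872hhhyt} together with the standard spatial localization device already used above (e.g.\ in Lemma~\ref{ctos} and in the proof of the previous lemma). At $p=2$ the Littlewood--Paley square function and the Fefferman--Stein maximal inequality that appear in the $L^{p}$ argument are not needed: orthogonality replaces them, and only the local/tail splitting of $f$ survives.

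For the first inequality, fix $R\in{\bf R}$, $\xi\in\R$ and $l$, and set $\p^{*}(R):=\{p\in\p(R):\xi\in\omega_{p_l}\}\subseteq\p(R)$. By \eqref{whs73219872} (applied with index $j:=l$ to this admissible subcollection) we have $\sum_{p_l\in\p^{*}_l(R)}|\<g,\phi_{p_l}\>|^{2}=S^{l}_{R,\p^{*}(R)}(g)^{2}\lesssim\|g\|_{2}^{2}$ for every $g\in L^{2}$. Write $f=f1_{2I_R}+f1_{(2I_R)^{c}}=:f'+f''$. Since $\chi_{I_R}\gtrsim 1$ on $2I_R$, the $f'$--part is already $\lesssim\|f\chi_{I_R}^{N-2}\|_{2}^{2}$. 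For $f''$ one uses that all the packets $\phi_{p_l}$, $p\in\p^{*}(R)$, share one common length for their localization intervals $I_{p_l}$ (this length being determined by $R$ once $\kappa$ and $m$ are fixed) and, since they all contain $\xi$ in their common-length frequency interval, in fact the $\omega_{p_l}$ coincide and the $I_{p_l}$ are pairwise disjoint subintervals of $I_R$; the pointwise bound $|\phi_{p_l}(x)|\lesssim|I_{p_l}|^{-1/2}\chi_{I_{p_l}}^{N}(x)$, Cauchy--Schwarz, and a summation over the disjoint $I_{p_l}$ then give $\sum_{p_l\in\p^{*}_l(R)}|\<f'',\phi_{p_l}\>|^{2}\lesssim\|f\chi_{I_R}^{N-2}\|_{2}^{2}$ (after relabelling $N$, a loss that is immaterial since $N$ may be taken arbitrarily large).

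For the second inequality, fix the $i$-tree $(\T,\p_\T)$ with $i\in\{0,1,2,3\}\setminus\{j\}$. Remark~\ref{r.rem.oveimplacuna} guarantees that the frequency intervals $\omega_{p_j}$ attached to tiles sitting over rectangles of different scales of $\T$ are pairwise disjoint; this is the lacunarity behind \eqref{whs73219872hhhyt}, which yields $\sum_{R\in\T}\sum_{p_j\in\p_j(R,\T)}|\<g,\phi_{p_j}\>|^{2}=\sum_{R\in\T}S^{j}_{R,\T}(g)^{2}\le\big(\sum_{R\in\T}S^{j}_{R,\T}(g)\big)^{2}\lesssim\|g\|_{2}^{2}$ for every $g\in L^{2}$. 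Splitting $f=f1_{2I_\T}+f1_{(2I_\T)^{c}}=:f'+f''$, the $f'$--part is $\lesssim\|f\chi_{I_\T}^{N-2}\|_{2}^{2}$ as above. For $f''$ one notes that every $I_{p_j}$ with $p\in\p_\T$ is contained in $I_\T$ and that at each fixed scale the relevant spatial intervals are disjoint up to bounded overlap; the same pointwise-decay computation as in the first inequality, with the geometric series now run over the scales present in $\T$, gives $\lesssim\|f\chi_{I_\T}^{N-2}\|_{2}^{2}$. Unlike the $L^{p}$ statement, the cases $i\in\{0,3\}$ and $i\in\{1,2\}$ require no separate treatment here, since the factor $A$ that complicated the corresponding lacunarity does not enter the $L^{2}$ bounds \eqref{whs73219872}, \eqref{whs73219872hhhyt}.

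The only point requiring care -- and the place where the (routine) work actually sits -- is the passage from $\|g\|_{2}$ on the right of \eqref{whs73219872}, \eqref{whs73219872hhhyt} to the weighted norm $\|f\chi_{I}^{N-2}\|_{2}$, i.e.\ the tail estimate for $f''$. This rests entirely on the containments $I_{p_l}\subseteq I_R$ and $I_{p_j}\subseteq I_R\subseteq I_\T$, which anchor every relevant wave packet inside the single interval $I$ (with $I=I_R$, resp.\ $I=I_\T$), so that the mass of $f$ outside $2I$ is seen only through the rapid decay of the packets, and on the bounded multiplicity of spatial scales, which makes the tail contributions sum geometrically. No genuinely new difficulty arises beyond what was already handled in Lemma~\ref{ctos} and in the preceding lemma.
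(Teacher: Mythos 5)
Your treatment of the first inequality is fine: with $\xi$ fixed, all the intervals $\omega_{p_l}$, $p\in\p^{*}(R)$, are grid intervals of one common length containing $\xi$, hence coincide, so the $I_{p_l}$ are genuinely pairwise disjoint and the crude tail estimate for $f1_{(2I_R)^{c}}$ closes. The gap is in the second (tree) inequality, exactly at the sentence you dismiss: ``at each fixed scale the relevant spatial intervals are disjoint up to bounded overlap'' is false when $i\in\{0,3\}$ and $j\in\{1,2\}$. For a fixed $R\in\T$ and a fixed spatial interval $I_{p_j}$ there can be up to $\sim 2^{\kappa}\le A$ tiles $p\in\p(R,\T)$ sharing that $I_{p_j}$, with pairwise disjoint frequency intervals $\omega_{p_j}\subset\omega_{R_j}$: the tree condition constrains $\omega_{p_i}$ (hence $\omega_{R_j}$), but by Lemma \ref{lem.sumar322w} it only pins $\omega_{p_j}$ down to within $O(A)$ choices ($|\omega_{R_j}|/|\omega_{p_j}|\approx 2^{\kappa}$). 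Your tail argument bounds each $|\<f1_{(2I_\T)^{c}},\phi_{p_j}\>|$ by the same absolute-value estimate and sums, so it pays this multiplicity in full: the square sum picks up a factor $\sim 2^{\kappa}$, i.e.\ $A^{1/2}$ at the level of the size, and no choice of $N$ removes it since the gain $(|I_{p_j}|/|I_\T|)^{2N-1}$ is independent of $\kappa$. This loss is not harmless: the whole point of this $L^2$ lemma is that it carries \emph{no} $A$-factor even in the bad case $i\in\{0,3\}$, $j\in\{1,2\}$, so that the interpolation in Corollary \ref{treft690ksahdjhsa} yields $A^{\epsilon+\frac1p-\frac12}$ and the bookkeeping in the proof of Theorem \ref{mainmodels} survives (an extra $A^{1/2}$ here would, at best, force $\delta>1/2$ and shrink the claimed range).

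The missing ingredient is to keep the frequency orthogonality in the tail term rather than discarding it, and that is precisely how the paper proves the lemma, without any local/far splitting at all: write $\<f,\phi_{p_j}\>=\<f\chi_{I_\T}^{N-2},\,\phi_{p_j}\chi_{I_\T}^{-(N-2)}\>$, observe that $(M_{-c(\omega_{p_j})}\phi_{p_j})\chi_{I_\T}^{-(N-2)}$ is still $L^2$-adapted (of order $2$) to $I_{p_j}$ with unchanged Fourier support, and then apply \eqref{whs73219872} and \eqref{whs73219872hhhyt} verbatim to the function $f\chi_{I_\T}^{N-2}$ and these modified packets. If you want to keep your decomposition, you would have to replace the pointwise tail bound by a weighted Bessel estimate at each fixed $I_{p_j}$ over the disjoint $\omega_{p_j}$'s --- which is the same weight-transfer observation --- so you may as well run the paper's one-line argument from the start.
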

\begin{proof}
Use  \eqref{whs73219872} and  \eqref{whs73219872hhhyt} with $f:=f\chi_{I_\T}^{N-2}$ and $\phi_{p_j}:=\phi_{p_j}\chi_{I_\T}^{-N+2}$, and note that  $(M_{-c(\omega_{p_j})}\phi_{p_j})\chi_{I_\T}^{-N+2}$ is $L^2$ adapted of order 2 to $I_{p_j}$ and has the same frequency support as  $M_{-c(\omega_{p_j})}\phi_{p_j}$. 
\end{proof}

By interpolating between the previous two lemmas we get 
\begin{corollary}
\label{treft690ksahdjhsa}
Let $f\in X(E)$ and $1<p\le 2$ and $\epsilon>0$.
For each $R\in {\bf R}$,  $\xi\in \R$  we have
\begin{align*}
\left({{|I_{R}|^{-1}}\sum_{p_l\in \p_l(R)\atop{\xi\in\omega_{p_l}}}|\<f,\phi_{p_l}\>|^2}\right)^{1/2}&\lesssim \frac{1}{|I_R|^{1/p}}\|1_E\chi_{I_R}^{N-2}\|_p\\&\lesssim (\sup_{x\in E}\chi_{I_R}(x))^{\frac{N-4}{p}}(\inf_{x\in I_R}M_1(1_E)(x))^{1/p}.
\end{align*}
For each $i$-tree $(\T,\p_\T)$ with $i\in\{0,1,2,3\}\setminus\{j\}$  we have 
\begin{align*}
\left(\frac{\sum_{R\in\T}\sum_{p_j\in \p_j(R,\T)}|\<f,\phi_{p_j}\>|^2}{|I_{\T}|}\right)^{1/2}&\lesssim C\sup_{I\in\I_\T}\frac{1}{|I|^{1/p}}\|1_E\chi_{I}^{N-2}\|_p\\&\lesssim C(\sup_{I\in\I_\T\atop{x\in E}}\chi_{I}(x))^{\frac{N-4}{p}}(\sup_{I\in\I_\T}\inf_{x\in I}M_1(1_E)(x))^{1/p},\end{align*}
where $C=A^{\epsilon+\frac1p-\frac12}$ if $i\in\{0,3\}$ and $j\in\{1,2\}$, and $C=1$ if $i\in\{1,2\}$.
\end{corollary}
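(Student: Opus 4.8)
The plan is to obtain Corollary \ref{treft690ksahdjhsa} by a direct interpolation between the two preceding lemmas, exactly as the statement already advertises. The two endpoint estimates available are: from the first of the two lemmas above, for an $L^p$-normalized (or rather $X(E)$) function,
$$\left(\frac{\sum_{R\in\T}\sum_{p_j\in \p_j(R,\T)}|\<f,\phi_{p_j}\>|^2}{|I_{\T}|}\right)^{1/2}\le C_A\sup_{I\in\I_\T}\frac{1}{|I|^{1/q}}\|1_E\chi_{I}^{N-1}\|_q$$
with $C_A=O(A^{1/2})$ in the ``bad'' cases $i\in\{0,3\},j\in\{1,2\}$, and $C_A=O(1)$ otherwise; and from the second lemma the $L^2$-type bound
$$\left(\frac{\sum_{R\in\T}\sum_{p_j\in \p_j(R,\T)}|\<f,\phi_{p_j}\>|^2}{|I_{\T}|}\right)^{1/2}\lesssim \frac{1}{|I_\T|^{1/2}}\|f\chi_{I_\T}^{N-2}\|_2 ,$$
which, for $f\in X(E)$, gives the bound $\sup_{I\in\I_\T}|I|^{-1/2}\|1_E\chi_I^{N-2}\|_2$ after restricting to subtrees as in Lemma A. First I would fix the exponent $1<p\le 2$ and write $\frac1p=\frac{1-\theta}{q}+\frac{\theta}{2}$ for $\theta\in(0,1)$ with $q>1$ close to $1$ so that $\theta$ is close to $1$; the loss $A^{1/2}$ in the first lemma then interpolates to $A^{(1-\theta)/2}$, and choosing $q$ appropriately makes $(1-\theta)/2 = \epsilon + 1/p - 1/2$, which is exactly the constant $C=A^{\epsilon+1/p-1/2}$ claimed. (Note $\theta = 2-2/p$, so $1-\theta = 2/p-1$ and $(1-\theta)/2 = 1/p-1/2$; the extra $\epsilon$ absorbs the discrepancy between $q$ and $1$ and the difference between the orders $N-1$ and $N-2$ of the weights.)

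The interpolation itself I would run at the level of the sublinear operator $f\mapsto \left(\sum_{R\in\T}\sum_{p_j\in \p_j(R,\T)}|\<f,\phi_{p_j}\>|^2\right)^{1/2}$ together with the outer sup over $I\in\I_\T$, using the standard Riesz--Thorin/Marcinkiewicz machinery for such square-function-type maps on $1_E$; alternatively, and more in the spirit of the time-frequency literature, one simply chases the two pointwise inputs through the same tree decomposition and takes a weighted geometric mean. Concretely: both lemmas bound the left-hand side by $\sup_{I\in\I_\T}|I|^{-1/r}\|1_E\chi_I^{N-1}\|_r$ for $r=q$ and (essentially) $r=2$; since $\|1_E\chi_I^{M}\|_r = \left(\int_E \chi_I^{Mr}\right)^{1/r}$ and $\chi_I\le 1$, the quantity $|I|^{-1/r}\|1_E\chi_I^{M}\|_r$ interpolates log-convexly in $1/r$ with the weight exponent degrading only by bounded amounts, giving the intermediate bound $\sup_{I\in\I_\T}|I|^{-1/p}\|1_E\chi_I^{N-2}\|_p$. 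The second, more quantitative line of the Corollary then follows from the elementary pointwise estimate $|I|^{-1/p}\|1_E\chi_I^{N-2}\|_p\lesssim (\sup_{x\in E}\chi_I(x))^{(N-4)/p}(\inf_{x\in I}M_1(1_E)(x))^{1/p}$: split $\chi_I^{(N-2)p}=\chi_I^{(N-4)p}\cdot\chi_I^{2p}$, bound the first factor by its supremum over $E$, and recognize $|I|^{-1}\int_E\chi_I^{2p}$ as being controlled by the Hardy--Littlewood maximal function of $1_E$ at any point of $I$ (using $2p>2>1$ so that $\chi_I^{2p}$ is integrable and comparable to an $L^1$-normalized approximate identity at scale $|I|$). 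The same two-step argument handles the single-rectangle (``tile'') estimate in the first display of the Corollary, with $C=1$ since no $A$ ever enters there.

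The first part of the proof is entirely routine; the one genuine point requiring care is bookkeeping of the constant $A$ and the justification that the $A^{1/2}$ loss genuinely sits at the $L^q$ endpoint and nowhere near the $L^2$ endpoint. This is precisely why the second lemma is stated with no $A$: the estimate \eqref{whs73219872}, \eqref{whs73219872hhhyt} on which it rests is $A$-free because, by Remark \ref{r.rem.oveimplacuna}, within a single $i$-tree the relevant frequency intervals $\omega_{p_j}$ are pairwise disjoint across scales, so Bessel applies with no multiplicity factor. Thus the main (minor) obstacle is simply to interpolate the two square-function inequalities \emph{with} their differing constants and differing weight orders and to check that the exponent arithmetic $\frac1p = \frac{1-\theta}{q}+\frac{\theta}{2}$ can be solved with $q>1$ so that $A^{(1-\theta)/2}\le A^{\epsilon+1/p-1/2}$ for any prescribed $\epsilon>0$; once that is in place, both displays of Corollary \ref{treft690ksahdjhsa} drop out, and the passage to the maximal-function form is a one-line Hölder/convexity computation. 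I would not expect to need anything beyond what is already in the excerpt.
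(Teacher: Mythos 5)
Your headline strategy -- interpolate the two preceding lemmas -- is exactly what the paper asserts (its proof is that one line), but the way you implement it contains a genuine gap. You bound the fixed quantity $Q$ (the left-hand side) by $\min\bigl(A^{1/2}B_q,\,B_2\bigr)$, with $B_q=\sup_{I\in\I_\T}|I|^{-1/q}\|1_E\chi_I^{N-1}\|_q$ and $B_2$ the corresponding $L^2$ quantity, take the geometric mean $Q\le A^{(1-\theta)/2}B_q^{1-\theta}B_2^{\theta}$, and then invoke ``log-convexity'' to replace $B_q^{1-\theta}B_2^{\theta}$ by $\sup_{I\in\I_\T}|I|^{-1/p}\|1_E\chi_I^{N-2}\|_p$. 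Log-convexity of $r\mapsto\|g\|_r$ goes the other way: it gives the middle norm $\lesssim$ the product of the endpoint norms, whereas you need the product of the endpoints $\lesssim$ the middle one, i.e.\ a reverse H\"older inequality, and this fails for the functions at hand. Concretely, take $I=[0,1]$, $E=E_1\cup E_2$ with $E_1\subset I$, $|E_1|=T^{1-(N-2)p}$, and $E_2=[T,2T]$ for large $T$. Then $\|1_E\chi_I^{N-1}\|_q\approx T^{1/q-(N-1)}$, $\|1_E\chi_I^{N-2}\|_2\approx T^{(1-(N-2)p)/2}$, while $\|1_E\chi_I^{N-2}\|_p\approx T^{1/p-(N-2)}$; the geometric mean exceeds the target by $T^{c}$ with $c=(N-2)-(N-1)(1-\theta)-(N-2)p\theta/2$, which is positive for large $N$ (for $p=3/2$, $q$ near $1$ it equals $(N-4)/6$). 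Since $T$ is unrelated to $A$, the $A^{\epsilon}$ slack cannot absorb this, so multiplying the two finished scalar bounds for a fixed set $E$ cannot yield the corollary.

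The interpolation has to happen before you specialize to the final numerical bounds for a given $E$: either interpolate the sublinear square-function estimates inside the proofs of the two lemmas (after the splitting $f=f1_{2I_\T}+f1_{(2I_\T)^c}$ already used there, using the $A$-free $L^2$/Bessel bound and the $A^{1/2}$-loss bound near $L^1$ as operator bounds and applying Marcinkiewicz/Riesz--Thorin to the map $f\mapsto\bigl(\sum_{R\in\T}\sum_{p_j}|\<f,\phi_{p_j}\>|^2\chi_{I_R}/|I_R|\bigr)^{1/2}$), or equivalently rerun the first lemma's argument at exponent $p$, replacing the crude step $a_{R_j}^2\le A\sup_{p_j}|\<f,\phi_{p_j}\>|^2$ (which is the sole source of the $A^{1/2}$) by a H\"older interpolation between $\ell^2$ and $\ell^\infty$ over the $O(A)$ tiles per rectangle; that is where the improved power $A^{1/p-1/2+\epsilon}$ actually comes from. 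Your exponent arithmetic ($(1-\theta)/2\to 1/p-1/2$ as $q\to 1^+$) and the final passage to the maximal function via $\chi_I^{(N-2)p}=\chi_I^{(N-4)p}\chi_I^{2p}$ are fine; the defect is solely the reversed convexity step, which as written would also make the corollary a trivial consequence of the first lemma alone -- a sign that the claimed implication cannot be right.
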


\subsection{Proof of Theorem \ref{mainmodels}}
\label{sec6.6}
We may assume ${\bf R}$ is finite, and get bounds independent of ${\bf R}$.
At the expense of losing a factor of $A^{O(\frac1N)}$ in the bounds, it suffices to assume that the collection ${\bf R}$ of multi-rectangles
is $M$-separated into scales with $M=A^{\frac{2}{N-2}}$. Since $N$ can be taken arbitrarily large, all the factors of the form $A^{O(\frac1N)}$ contributing to various bounds may and will be tolerated.

By scaling invariance we may assume that $|E_{j_0}|=1$. Define 
$$\Omega=\bigcup_{j=1}^{3}\{x: M_1(1_{E_j})(x)>100|E_j|\}$$ 
and $\tilde{E}_{j_0}:=E_{j_0}\setminus \Omega$, and note that $|\tilde{E}_{j_0}|>\frac12|{E}_{j_0}|$.

Let $f_j\in X_2(E_j)$, $j\not=j_0$ and $f_{j_0}\in X_2(\tilde{E}_{j_0})$. All sizes $\size_{j,i}$ are understood with respect to $f_j$. We need to show that for some $\delta"<\delta$
\begin{equation}
\label{finalkretgh56}
|\Lambda(f_1,f_2,f_3)|\lesssim A^{\delta"}|E_1|^{\gamma_1}|E_2|^{\gamma_2}|E_3|^{\gamma_3},
\end{equation}
where $\gamma_i:=\alpha_i-\frac12.$ We note that due to our restrictions, we have that $0<\gamma_{j_1},\gamma_{j_2}<\frac12$ and $\gamma_{j_1}+\gamma_{j_2}=-\alpha_{j_0}<\frac12$. Thus we can find $0<\beta_1,\beta_2<\frac12$ sufficiently close to $\frac12$ such that
$a_{j_1}:=1-\frac{\gamma_{j_1}}{\beta_1}>0$, $a_{j_2}:=1-\frac{\gamma_{j_2}}{\beta_2}>0$ and $a_{j_0}:=2-a_{j_1}-a_{j_2}\in (0,1).$

We shall make the assumption that either 

(1) $I_R \cap (\R\setminus\Omega)\not=\emptyset$ for all\footnote{The collection of multi-tiles is also appropriately restricted.}  $R\in {\bf R}$, or

(2) $I_R=I$ for all  $R\in {\bf R}$, for some (fixed) dyadic $I\subset \Omega$ with 
\begin{equation}
\label{eq:1297}
2^l< 1+\frac{\text{dist}(I,\R\setminus\Omega)}{|I|}\le 2^{l+1},\hbox{ for some (fixed) $l\ge 0$},
\end{equation}
and prove \eqref{finalkretgh56} for both case (1) and (2), with an additional  multiplicative factor of $2^{-l}|I|$ in the bound, in case (2). If we can prove these special cases with the indicated gain, the general case  follows by summation in $l$ and $I$, since $|\Omega|\lesssim 1.$ We present the argument for case (1), and then will indicate how to modify it for case (2).

Define $p_{j_1},p_{j_2}\in (1,2)$ such that $\beta_1=\frac1{p_{j_1}}-\frac12$, $\beta_2=\frac1{p_{j_2}}-\frac12$ and define $p_{j_0}=2$.
Note that  by Corollary ~\ref{treft690ksahdjhsa} it easily follows that for each $\epsilon>0$
\begin{equation}
\label{eq:11qwlfgt1}
\max_{i}\size_{j_1,i}({\p})\lesssim |E_{j_1}|^{\beta_1}A^{\beta_1+\epsilon}
\end{equation}
\begin{equation}
\label{eq:11qwlfgt2}
\max_{i}\size_{j_2,i}({\p})\lesssim |E_{j_2}|^{\beta_2}A^{\beta_2+\epsilon}
\end{equation}
\begin{equation}
\label{eq:11qwlfgt3}
\max_{i}\size_{j_0,i}({ \p})\lesssim 1.
\end{equation}
Fix $j\in\{1,2,3\}$.
We successively use the Pealing Lemma \ref{pealinglemma}, simultaneously for each $i\in\{0,1,2,3\}$, to decompose $\p=\bigcup_{k=-\infty}^{\max_i\size_{j,i}(\p)}\p_{k}^{(j)}$ such that $\p_k^{(j)}$ consists of the union of a family $\F_k^{(j)}$ of trees $(\T,\p_\T)$, $\p_k^{(j)}=\bigcup_{\T\in\F_k^{(j)}}\p_{\T}$, satisfying
\begin{equation}
\label{19koskmiuiwkew6711}
\max_{i}\size_{j,i}(\p_k^{(j)})\lesssim 2^{k}
\end{equation}
\begin{equation}
\label{19koskmiuiwkew67}
\sum_{\T\in \F_k^{(j)}}|I_\T|\lesssim A^{O(\frac1N)}2^{-2k}.
\end{equation}

We get
$$|\Lambda(f_1,f_2,f_3)|\le \sum_{k_1,k_2,k_3}\sum_{p\in \p^{(1)}_{k_1}\cap \p^{(2)}_{k_2}\cap\p^{(3)}_{k_3}}|I_R|^{-1}|I_{p_3}|^{1/2}\prod_{i=1}^{3}|\langle f_j,\phi_{p_j}\rangle|,$$ where we implicitly assume that 
$$2^{k_j}\le \max_{i}\size_{j,i}(\p).$$

By symmetry we may restrict ourselves to the case $k_{j_*}=\max_{j}k_j$, for some $j_*\in\{1,2,3\}$. 
We can further estimate the sum above by 
\begin{equation}
\label{eq:15}
\sum_{k_1,k_2,k_3}\sum_{(\T,\p_\T)\in \F_{k_{j_*}}^{(j_*)}}\sum_{p\in \tilde{\p}_\T}|I_R|^{-1}|I_{p_3}|^{1/2}\prod_{i=1}^{3}|\langle f_j,\phi_{p_j}\rangle|,
\end{equation}
where $\tilde{\p}_\T:=\p_\T\cap \p_{k_{j}}^{(j)}\cap\p_{k_{j'}}^{(j')}$, $j,j'\in\{1,2,3\}\setminus j_*$. Note that $\T$ gets partitioned by the intersection with various trees from $\F_{k_{j_*^1}}^{(j_*^1)}$ and $\F_{k_{j_*^2}}^{(j_*^2)}$, where $\{j_*^1,j_*^2\}=\{1,2,3\}\setminus j_*$. By the final observation in the proof of the Peeling Lemma, two different such trees in some $\F_{k_{j_*^1}}^{(j_*^l)}$, $l\in\{1,2\}$, will not share any multi-rectangle. It follows that we have the following natural partition
$$\T=\bigcup_{r\in R(\T)}\T_r$$
$$\tilde{\p}_\T=\bigcup_{r\in R(\T)} \p_{\T_r}$$
where $(\T_r,\p_{\T_r})$ is a subtree of $(\T,\p_{\T})$ which arises by an intersection, as described above. Due to the elimination of the saturations $\S(I_{\T'},\vec{\xi}_{\T'})$ in Step 2 and Step 6 of the algorithm in the Peeling Lemma (here $\T'$ is a generic tree in  $\F_{k_{j_*^1}}^{(j_*^l)}$, $l\in\{1,2\}$), it easily follows that the trees $(\T_r,\p_{\T_r})$ can be assigned tops $I_{\T_r}\subseteq I_\T$ which are pairwise disjoint for $r\in R(\T)$. In particular,
$$\sum_{r\in R(\T)}|I_{\T_r}|\le |I_\T|.$$  

We also note that each subtree $(\T_r,\p_{\T_r})$ satisfies $\max_i\size_{j,i}(\p_{\T_r})\lesssim 2^{k_j}$ for each $j\in\{1,2,3\}$. Using these observations and then invoking \eqref{orgtreesizes} and \eqref{19koskmiuiwkew67} with $j=j_*$, we may estimate ~\eqref{eq:15} by

$$
\sum_{k_1,k_2,k_3}\sum_{(\T,\p_\T)\in \F_{k_{j_*}}^{(j_*)}}\sum_{r\in R(\T)}\sum_{p\in \p_{\T_r}}|I_R|^{-1}|I_{p_3}|^{1/2}\prod_{i=1}^{3}|\langle f_j,\phi_{p_j}\rangle|\le
\sum_{k_1,k_2,k_3}\sum_{(\T,\p_\T)\in \F_{k_{j_*}}^{(j_*)}}2^{k_1+k_2+k_3}|I_\T|$$
$$\le \sum_{k_1,k_2,k_3}\sum_{(\T,\p_\T)\in \F_{k_{j_*}}^{(j_*)}}2^{k_1+k_2+k_3}2^{-2k_{j_*}}\le \sum_{k_1,k_2,k_3}2^{k_1(1-a_1)}2^{k_2(1-a_2)}2^{k_3(1-a_3)},$$
where $a_i$ have been defined in the beginning of the argument.
Finally, by invoking \eqref{eq:11qwlfgt1} - \eqref{eq:11qwlfgt3} we can further estimate the above by 
$$\sum_{k_1,k_2,k_3\le 0}2^{k_1(1-a_1)}2^{k_2(1-a_2)}2^{k_3(1-a_3)}(|E_{j_1}|^{\beta_1}A^{\beta_1+\epsilon})^{1-a_1}(|E_{j_2}|^{\beta_2}A^{\beta_2+\epsilon})^{1-a_2}$$
$$\lesssim |E_{j_1}|^{\gamma_1}|E_{j_2}|^{\gamma_2}A^{\gamma_1+\gamma_2+\epsilon a_{j_0}}\lesssim |E_{j_1}|^{\gamma_1}|E_{j_2}|^{\gamma_2}A^{\epsilon a_{j_0}-\alpha_{j_0}}.$$

This ends the proof in case (1), since $\alpha_{j_0}>-\delta$. To deal with case (2) we make the following modifications. Redefine $f_j:=f_j\chi_{I}^{2}$ and $\phi_{p_j}:=\phi_{p_j}\chi_{I}^{-2}$, and note that the new functions $f_j$ have the same properties as the old ones. Moreover $(M_{-c(\omega_{p_j})}\phi_{p_j})\chi_{I_\T}^{-2}$ is $L^2$ adapted of order $N-2$ to $I_{p_j}$ and has the same frequency support as  $M_{-c(\omega_{p_j})}\phi_{p_j}$. Then run the same argument as in case (1). As we said earlier, we expect the presence of $|I|$ and $2^{-l}$ in the bound for \eqref{finalkretgh56}. The presence of $|I|$ is explained by the localized estimate
$$\|f_j\|_2^2\lesssim |I|\inf_{x\in I}|E_j|^{-1}M_1(1_{E_j})(x)$$
which becomes effective in the application of the Pealing Lemma in \eqref{19koskmiuiwkew67}.
The decay in $l$ is due to the classical estimates
$$\inf_{x\in I}M_1(1_{E_j})(x)\lesssim 2^{l}\inf_{x\in 2^{l+1}I}M_1(1_{E_j})(x)\lesssim 2^l,$$
$$\sup_{x\in \tilde{E}_{j_0}}\chi_I(x)\lesssim 2^{-l},$$
which become effective in the application of Corollary \ref{treft690ksahdjhsa} in estimating sizes.


\section{The general case: modulation invariant paraproducts}
\label{sec7}
Given any function $f$ in $\rm BMO$,
we shall construct a trilinear form $\Lambda$
satisfying the bounds in the conclusion of 
Theorem \ref{main} (and thus also its assumptions) and also
$$\Lambda(1,1,.)=f$$
$$\Lambda(1,.,1)=0$$
$$\Lambda(.,1,1)=0$$
Such form and its symmetric
counterparts under permutation of the three arguments will be called modulation invariant paraproducts.
By subtracting three paraproducts we can reduce every trilinear
Calderón-Zygmund form as in Theorem \ref{main} to another one
satisfying the special cancellation conditions $\Lambda(1,1,.)=\Lambda(1,.,1)=\Lambda(.,1,1)=0$.
Thus the construction of the paraproduct
will finish the proof of Theorem \ref{main}.

Define $\phi_1,\phi_2,\phi_3$ such
that $\widehat {\phi_1}=\widehat {\phi_2}=\Phi$ and $\widehat {\phi_3}=\tau_{10}\Phi+\tau_{-10}\Phi$, where $\Phi$ was introduced in \eqref{Phi}. 
Consider
$$\psi(x) = \int
\phi_1(x+(\beta_1-\beta_3)t)\phi_2(x+(\beta_2-\beta_3)t)
\phi_3(x)\, dt.$$
By using Fourier transforms it follows immediately that $\psi$ is a non zero Schwartz function.
The function
\begin{equation}\label{integrand}
\phi_1(x+(\beta_1-\beta_3)t)\phi_2(x+(\beta_2-\beta_3)t)\phi_3(x)
\end{equation}
is easily seen to have zero integral in $x$ for each $t$. 
Hence $\psi$ itself has mean zero.

By Calderón's reproducing formula, we have (with some constant $c$)
$$f= c \int_0^{+\infty}  f * \psi_t * \psi_t  \, \frac{dt}{t}$$
This well-know formula in $L^2$ also extends to distributions modulo polynomials provided
$\widehat \psi$ vanishes in a neighborhood of the origin. In particular, for $f$ in BMO
the formula holds in the sense
$$f=\lim_{\epsilon\to 0}c\int_{\epsilon}^{1/\epsilon}f * \psi_t * \psi_t  \, \frac{dt}{t}$$
at least in the distributional sense when tested against bump functions with mean zero.

Defining
$$\psi_{k,n}(x):=2^{-k/2}\psi(2^{-k} x- n),$$
a simple change of coordinates gives (with a new constant $c$)
\begin{equation}
\label{BMOdistrCRF}
f=\lim_{\kappa\to +\infty}c\int_{|k|\le \kappa}\int_\R \< f,\psi_{k,n}\> \psi_{k,n} \, dk \, dn
\end{equation}
with the equality holding in the sense of distributions when tested again functions with mean zero.

If we define $\phi_{i,k,n}$ by translation and dilation in the analogous
manner, then we have
$$\psi_{k,n}(x) = \int
\phi_{1,k,n}(x+(\beta_1-\beta_3)t)\phi_{2,k,n}(x+(\beta_2-\beta_3)t)
\phi_{3,k,n}(x)\, dt$$
Set
$$c_{k,n}:= c \<f,\psi_{k,n}\>$$
Then for each $\kappa\in\R$ and each $f_i\in\S(\R)$ we define

$$\Lambda_{\kappa}(f_1,f_2,f_3)=
\int_{|k|\le\kappa} \int_\R \int_\R \int_\R
c_{k,n} \left[\prod_{i=1}^3 f_i(x+\beta_i t) \phi_{i,k,n}(x+\beta_i t)\right]
\, dx \, dt \, dn \, dk$$
\begin{equation}
\label{intreprtrunc}
=\int_\R \int_\R\prod_{i=1}^3 f_i(x+\beta_i t)K_{\kappa}(x,t)dxdt,
\end{equation}
where 
$$K_{\kappa}(x,t)=\int_{|k|\le\kappa} \int_\R c_{k,n}\prod_{i=1}^3\phi_{i,k,n}(x+\beta_i t)\, dn \, dk.$$
Since $f$ is in BMO, we have that $|c_{k,n}|\lesssim 2^{k/2}$ (this is a particular instance of \eqref{Carcondhh567}). By discretizing the integral representing $K_{\kappa}$, we can write it as an average over $[0,1]^2$ of sums of the form
$$\tilde{K_{\kappa}}(x,t)=\sum_{2^{-\kappa}\le |I|\le 2^{\kappa}\atop{I\;\operatorname{dyadic}}}c_I\prod_{i=1}^3\phi_{i,I}(x+\beta_i t),$$
with $\phi_{i,I}$ being $L^2$-adapted to $I$ and $|c_I|\lesssim |I|^{1/2}.$ It is now an easy exercise to conclude that each $\tilde{K_{\kappa}}$ satisfies \eqref{e.e1Cal1} and \eqref{e.e1Cal2} with $\delta=1$ and with uniform constants $C_{\tilde{K_{\kappa}}}=O(1)$. Moreover, $\tilde{K_{\kappa}}$ is locally integrable since in particular $|\tilde{K_{\kappa}}(x,t)|\lesssim \min\{2^{\kappa},|t|^{-1}\}$. These facts will easily prove that the integral in \eqref{intreprtrunc} is indeed convergent for arbitrary Schwartz functions.

Let now $f_1,f_2,f_3$ be three bump functions $L^2$- adapted of order 2 to some interval $J$. We next prove an inequality that will justify some of our claims. Note first that for each $x$, $f_2(x+(\beta_2-\beta_1)t)f_3(x+(\beta_3-\beta_1)t)$ is $L^2$- adapted of order 2 (as a function of $t$) to some interval of size similar to $|J|$ with implicit constant $O(|J|^{-1/2})$. Similarly, $\phi_{2,I}(x+(\beta_2-\beta_1)t)\phi_{3,I}(x+(\beta_3-\beta_1)t)$ is $L^2$- adapted of order 2  to some interval of size similar to $|I|$ with implicit constant $O(|I|^{-1/2})$. Moreover, this latter function also has mean zero with respect to $t$. By applying \eqref{02} to these functions and \eqref{01} to $f_1$ and $\phi_{1,I}$ we conclude that
$$
\sum_{I\; \operatorname{dyadic}}|c_I|\left|\int_\R \int_\R
\left[\prod_{i=1}^3 f_i(x+\beta_i t) \phi_{i,I}(x+\beta_i t)\right]
\, dx \, dt\right|$$
$$
=\sum_{I\; \operatorname{ dyadic}}|c_I|\left|\int_\R f_1(x) \phi_{I,1}(x)\int_\R
\left[\prod_{i=2}^3 f_i(x+(\beta_i-\beta_1) t) \phi_{i,I}(x+(\beta_i-\beta_1) t)\right]\, dt\, dx \right|\lesssim $$
$$\lesssim\sum_{I\; \operatorname{ dyadic}}|c_I||I|^{-1/2}|J|^{-1/2}(1+(\max(|I|,|J|))^{-1}\operatorname{dist}(I,J))^{-4}\min(\left(\frac{|I|}{|J|}\right)^2,\left(\frac{|J|}{|I|}\right)^2)\lesssim |J|^{-1/2}.$$

Define now for each $f_i\in\S(\R)$
$$\Lambda(f_1,f_2,f_3)=\lim_{\kappa\to +\infty}\Lambda_{\kappa}(f_1,f_2,f_3).$$
Since each Schwartz function is adapted to the unit interval centered at the origin, the above computations show that the above limit exists. Moreover, due to the earlier estimates for $\tilde{K_\kappa}$, the form $\Lambda$ is associated with a kernel $K$ satisfying \eqref{e.e1Cal1} and \eqref{e.e1Cal2} with $\delta=1$.

Since the function
$$\phi_1(x)\phi_2(x+(\beta_2-\beta_1)t)\phi_3(x+(\beta_3-\beta_1)t)$$
has mean zero in $t$ for every fixed $x$, it is easy to verify using the kernel representation and the definition of $\Lambda_{\kappa}(.,1,1)$ that $\Lambda_{\kappa}(.,1,1)=0$. Then, by invoking \eqref{L} and \eqref{deferrorb12} we conclude that $\Lambda(.,1,1)=0$. Likewise we see $\Lambda(1,.,1)=0$.

To see that $\Lambda(1,1,.)=f$, we replace
the integration variable $x$ by $y=x+\beta_3 t$, then
execute the integration in $t$ to obtain for each compactly supported $f_3\in\S(\R)$ with mean zero
$$\Lambda_{\kappa}(1,1,f_3)=
\int_{\R} \left[\int_{|k|\le \kappa} \int_\R
c_{k,n} \psi_{k,n}(y)
\, dn \, dk \,\right] f_3(y)dy.$$
By \eqref{BMOdistrCRF}, the limit on the right hand side is $\int_{\R}f(y)f_3(y) dy$, and so we conclude that $\Lambda(1,1,.)$ is $f$ as tempered distributions modulo constants.

It remains to prove that $\Lambda$ is bounded as in the conclusion of Theorem \ref{main}. 
Using that 
$$ \int f|_{\gamma^\perp}\,d\gamma^\perp = \int \widehat f |_{\langle \gamma \rangle} d\langle \gamma \rangle,$$
where $d\gamma^\perp$ and $d\langle \gamma \rangle$ are, respectively, the normalized 
Lebesgue measures on $\gamma^\perp$ and $\langle \gamma \rangle$,
we note that for each $f_i\in\S(\R)$, $\Lambda(f_1,f_2,f_3)$ coincides up to some universal constant with 
$$\lim_{\kappa\to +\infty}\int_{|k|\le \kappa}\int_{\R^2}
c_{k,n} 2^{-k} \left[
\prod_{i=1}^3 \int f_i(x) \phi_{i,k,n}(x)
e^{2\pi i \gamma_i 2^{-k} l x} \, dx
\right]\,dn \, dl\, dk. $$
Hence $\Lambda$ is up to a universal constant an average
of forms of the type
\begin{equation}\label{paramodel}
\sum_{k,n,l\in \Z}
c_{k,n} 2^{-k} \prod_{i=1}^3 \<f_i, {\phi}_{i,k,n,l}\>,
\end{equation}
where for some $k_0,n_0,l_0\in [0,1]$
$${\phi}_{i,k,n,l}(x)=\overline{\phi_{i,k+k_0,n+n_0}(x)}
e^{-2\pi i \gamma_i 2^{-(k+k_0)}(l+l_0) x}$$

The form (\ref{paramodel}) is our basic model form that we wish to
estimate.
The function $\phi_{i,k,n,l}$ is an $L^2$- normalized
bump function adapted to the interval
$$[2^{k+k_0} (n+n_0)) , 2^{k+k_0}(n+n_0+1))$$
By changing the bump function constants mildly, we can assume that
the function is adapted to the dyadic interval
$I_{k,n}= [2^kn,2^k(n+1))$.

The function  $\widehat{\phi}_{i,k,n,l}$
is supported in an interval $\omega_{i,k,l}$ of length
$2^{-k+2}$ and we may assume the following properties
(in case $i=3$ we split the generating function $\phi_3$
into a sum of two generating functions, one with Fourier support
contained in $[-12,-8]$ and the other with
Fourier support contained in $[8,12]$.
Without loss of generality we may replace $\phi_3$ by one of the two):
$$\omega_{1,k,l}= \omega_{2,k,l}$$
for all $k,l$, and if
$$\omega_{i,k,l}\cap \omega_{i,k',l'}\not=\emptyset$$
for some $k,k',l,l'$, then for some universal constants $1\ll c_{2}\ll c_{1}$
$$c_{2} \omega_{j,k,l}\cap c_{2} \omega_{j,k',l'}= \emptyset$$
and
$$c_{1} \omega_{j,k,l}\cap c_{1} \omega_{j,k',l'}\neq \emptyset$$
whenever $i\neq j$ and at least one of $i$ and $j$ is equal
to $3$.
By pigeonholing into finitely many summands if necessary, we also
may assume that
$$\omega_{i,k,l}\cap \omega_{i,k,l'}=\emptyset$$
if $n\neq n'$.
We adopt the usual geometric picture that the parameter tuple $(k,n,l)$
is identified with a triple $p=(p_1,p_2,p_3)$ of tiles
$$p_i= I_p\times \omega_{p,i}= I_{k,n}\times \omega_{i,k,l}$$
Compared to the theory of the bilinear Hilbert transform, the new element here is that two tiles in this
triple are equal. This lack
of separation is offset by better estimates for the coefficients $c_{k,n}$
than in the model forms for the bilinear Hilbert transform. Since $f$ is in
BMO, the coefficients $c_{k,n}$ satisfy a Carleson sequence condition
\begin{equation}
\label{Carcondhh567}
\sum_{I_{k,n}\subset J} |c_{k,n}|^2\le C_f|J|.
\end{equation}

This is what we need to know about the basic model operator (\ref{paramodel})
we have to estimate. The proof runs parallel to the proof of boundedness of the bilinear
Hilbert transform, e.g. in \cite{thielelectures}. The only difference concerns
the estimate on an individual tree for which both $p_1$ and $p_2$ are overlapping
and only $p_{3}$ are disjoint. These trees are estimated by using \eqref{Carcondhh567} as follows:

$$\sum_{p\in \T} |I_p|^{-1} c_{I_p} \prod_{j=1}^3 |\<f_j,\phi_{p_j}\>|$$
$$\le \left(\sum_{p\in T} |c_{I_p}|^2\right)^{1/2}
\left(\sum_{p\in \T} |\<f_3,\phi_{p_3}\>|^2\right)^{1/2}
\prod_{j=1}^2\left( \sup_{p\in \T} \frac{|\<f_j,\phi_{p_j}\>|}{|I_p|^{1/2}}\right)$$
$$\lesssim |I_\T|
\left(\frac 1{|I_\T|}\sum_{p\in \T} |\<f_3,\phi_{p_3}\>|^2\right)^{1/2}
\prod_{j=1}^2 \left(\sup_{p\in \T} \frac{|\<f_j,\phi_{p_j}\>|}{|I_p|^{1/2}}\right).$$
The factors in the last expression are as in \cite{thielelectures} estimated by the
tree sizes defined there. The rest of the proof is identical to the one in
\cite{thielelectures} and one obtains the same bounds as for the
bilinear Hilbert transform.

We close this section by mentioning an interesting application of the constructions we have performed above. For each dyadic interval $I$ with length at least 1, let $c_I$ be a coefficient selected in such a way that $|c_I|\le |I|^{1/2}$ and such that 
\begin{equation}
\label{dkhddhhdjh112}
\sup_{J}\frac{1}{|J|}\sum_{I\subseteq J\atop{J\; \operatorname{ dyadic}}}|c_I|^2=\infty.
\end{equation}

It can be easily seen by using \eqref{02} that 
\begin{equation}
\label{BMOtypeconv}
\lim_{\kappa\to\infty}\int \sum_{|I|\le \kappa}c_I\psi_I(y) f_3(y)dy
\end{equation}
exists for each Schwartz function $f_3$ with mean $0$. Define
 
$$
\Lambda(f_1,f_2,f_3)=\sum_{I\; \operatorname{ dyadic}}c_I\int_\R \int_\R
\left[\prod_{i=1}^3 f_i(x+\beta_i t) \phi_{i,I}(x+\beta_i t)\right]
\, dx \, dt.$$
By reasoning as before, it is easy to check that the bilinear form $\Lambda(.,.,1)$ satisfies the weak  boundedness condition, and moreover that $\Lambda(.,1,1)=\Lambda(1,.,1)=0$. Remark \ref{equivTcondbmo} shows that these imply that $\Lambda(.,.,1)$ is a bilinear form bounded on $L^2\times L^2$. Also, by \eqref{BMOtypeconv} it easily follows that the action on $H^1(R)$ atoms of $\Lambda(1,1,.)$ coincides with that of $\sum_{I}c_I\psi_I$. Due to \eqref{dkhddhhdjh112}, $\Lambda(1,1,.)$ can not be identified with a BMO function. By invoking Remark \ref{equivTcondbmo} again, it follows that the bilinear form $\Lambda(1,.,.)$ is not bounded, in spite of being completely represented by the same kernel as $\Lambda(.,.,1)$. Moreover, the trilinear form is itself unbounded, since otherwise $\Lambda(1,1,.)$ would necessarily have to be a BMO function. 
Hence $\Lambda(.,.,1)$ is a bounded bilinear Calderón-Zygmund
form associated with a Calderón-Zygmund kernel that is not the restriction of a bounded trilinear form with the given parameter $\beta$ and associated with the same $K$.


\begin{thebibliography}{99}

\bibitem{BNT} B\'enyi \'A., Nahmod A.R., and Torres R.H., {\em Sobolev
space estimates and symbolic calculus for bilinear pseudodifferential operators},
J. Geom. Anal. {\bf 16.3}, pp. 431-453, [2006].

\bibitem{calderon1} Calder\'on A.P., {\em Commutators of singular
integral operators}, Proc. Natl. Acad. Sci. USA {\bf 53}, pp.
1092-1099, [1977].

\bibitem{calderon2} Calder\'on A.P., {\em Cauchy integrals on Lipschitz curves and
related operators}, Proc. Natl. Acad. Sci. USA {\bf 74}, pp.
1324-1327, [1977].

\bibitem{carleson} Carleson L., {\em On convergence and growth of
partial sums of Fourier series}, Acta\ Math. {\bf 116}, pp. 135-157,
[1966].


\bibitem{CJ} Christ M. and Journ\'{e} J.-L., {\em Polynomial growth estimates for multilinear singular
integral operators}, Acta\ Math. {\bf 159}, pp. 51-80, [1987].

\bibitem{CM1} Coifman R.R. and  Meyer Y., {\em Commutateurs d' integrales singuli\`eres et
op\'erateurs multilin\'eaires}, Ann.  Inst. Fourier (Grenoble) {\bf
28}, pp. 177-202, [1978].

\bibitem{CM2} Coifman R.R. and Meyer Y., {\em Fourier analysis of
multilinear convolutions, Calder\'on's theorem and analysis of Lipschitz
curves}, Euclidean harmonic analysis (Proc. Sem. Univ. Maryland, College
Park, Md.), pp. 104-122, Lecture Notes in Math. {\bf 779}, pp. 104-122, [1979].

\bibitem{CM3} Coifman R.R. and Meyer Y., {\em Ondelettes and op\'erateurs III, Operat\'eurs
multilin\'eaires},  Actualit\'es Mathematiques, Hermman, Paris, [1991].

\bibitem{DJ} David G. and Journé J.L., {\em A boundedness criterion for generalized Calder\'on-Zygmund operators},
Ann. of Math. {\bf 120}, pp. 371-397, [1984].


\bibitem{DTT} Demeter C., Tao, T and Thiele C. {\em Maximal Multilinear Operators}, to appear in TAMS.

\bibitem{D} Duoandikoetxea J., {\em An\'alisis de Fourier.}, Addison-Wesley/Univ. Aut. de Madrid, [1995].

\bibitem{fefferman} Fefferman C., {\em Pointwise convergence of
Fourier series}, Ann. of Math. {\bf 98}, pp. 551-571, [1973].

\bibitem{FJ} Frazier M. and Jawerth B., {\it A discrete transform and decompositions of distribution spaces,}
J. Funct. Anal. {\bf 93.1}, pp. 34-170, [1990].

\bibitem{GN1} Gilbert J. and  Nahmod A., {\em Boundedness of bilinear
operators with non-smooth symbols}, Math. Res. Lett. {\bf 7}, pp. 767-778, [2000].

\bibitem{GN2} Gilbert J. and Nahmod A., {\em Bilinear operators with non-smooth symbols. I},
J. Fourier Anal. Appl. {\bf 5}, pp. 435-467, [2001].

\bibitem{GN3} Gilbert J. and Nahmod A., {\em $L^p$-boundedness for time-frecuency paraproducts. II},
J. Fourier Anal. Appl. {\bf 8}, pp. 109-172, [2002].

\bibitem{GL1} Grafakos L. and Li X., {\em Uniform bounds for the bilinear  Hilbert transform I },
Ann. of Math. {\bf 159.3}, pp. 889-993, [2004].

\bibitem{GT} Grafakos L. and Torres R.H., {\em Multilinear Calder\'on-Zygmund theory }, Adv. in Math. {\bf
165}, pp. 124-164, [2002].


\bibitem{LT1} Lacey M. and Thiele C., {\em $L^p$ bounds on the bilinear Hilbert
transform for $2<p<\infty $}, Ann. of Math. {\bf 146}, pp. 693-724, [1997].

\bibitem{LT2} Lacey M. and Thiele C., {\em On Calder\'on's conjecture.},
Ann. of Math. {\bf 149.2}, pp. 475-496, [1999].

\bibitem{GL2} Li X., {\em Uniform bounds for the bilinear  Hilbert transform II}, Rev. Mat. Iberoamer., to appear.

\bibitem{M} Meyer, Y., {\it Les nouveaux op\'erateurs de Calder\'on-Zygmund}, Colloquium in honor of Laurent Schwartz,
Vol. 1 (Palaiseau, 1983), Ast\'erisque {\bf 131}, pp. 237-254, [1985].

\bibitem{MTT1} Muscalu C., Tao T., and Thiele C., {\em Multilinear operators given by singular
multipliers}, J. Amer. Math. Soc. {\bf 15}, pp. 469-496, [2002].

\bibitem{MTT2} Muscalu C., Tao T., and Thiele C., {\em Uniform estimates
on multi-linear operators with modulation symmetry}, J. Anal. {\bf 88}, pp. 255-307, [2002].

\bibitem{MTT4} Muscalu, C., Tao, T. and Thiele, {\em $L\sp p$ estimates for the biest. I. The Walsh case.}  Math. Ann. {\bf 329}  (2004),  no. 3,  401-426.

\bibitem{MTT3} Muscalu, C., Tao, T. and Thiele, {\em $L\sp p$ estimates for the biest. II. The Fourier case.}  Math. Ann. {\bf 329}  (2004),  no. 3, 427-461.

\bibitem{ST} Stein E. M., {\em Harmonic Analysis: real-variable methods,
orthogonality and oscilatory integrals}, Princeton Univ. Press, [1993].

\bibitem{thielelectures} Thiele C., {\em Wave packet analysis}, CBMS {\bf 105}, [2006].

\bibitem{Th1} Thiele C., {\em A uniform estimate}  Ann. of Math. (2)  {\bf 156}  (2002),  no. 2, 519-563.

\bibitem{To} Torres R.H., {\em Boundedness results for operators with singular kernels on distribution spaces},  Mem. Amer. Math. Soc. {\bf 90} , no. 442 , [1991].
\end{thebibliography}
\end{document}